\newcommand{\Ao}{\ensuremath{\mathbb{A}^1}}
\DeclareSymbolFont{cyrletters}{OT2}{wncyr}{m}{n}
\DeclareMathSymbol{\Be}{\mathalpha}{cyrletters}{"42}
\newcommand{\Bmot}{\ensuremath{\operatorname{DM}_{\Be}}}
\newcommand{\Bmotc}{\ensuremath{\operatorname{DM}_{\Be,c}}}
\newcommand{\et}{\text{\'et}}
\newcommand{\DMT}{\ensuremath{\operatorname{MTDer}}}
\newcommand{\PMT}{\ensuremath{\operatorname{MTPer}}}
\newtheorem{mainthm}{Theorem}
\newtheorem{theorem}{Theorem}[section]
\newtheorem{lemma}[theorem]{Lemma}
\newtheorem{proposition}[theorem]{Proposition}
\newtheorem{corollary}[theorem]{Corollary}
\theoremstyle{definition}
\newtheorem{definition}[theorem]{Definition}
\newtheorem{convention}[theorem]{Notational convention}
\theoremstyle{remark}
\newtheorem{remark}[theorem]{Remark}
\newtheorem{example}[theorem]{Example}
\newtheorem{Bemerkungl}[theorem]{}}
\newcommand{\op}{\operatorname}
\newcommand{\ra}{\rightarrow}
\newcommand{\sra}{\twoheadrightarrow}
\newcommand{\hra}{\hookrightarrow}
\newcommand{\sira}{\stackrel{\sim}{\rightarrow}}
\newcommand{\sirra}{\stackrel{\approx}{\rightarrow}}
\newcommand{\da}{\downarrow}
\newcommand{\RA}{\Rightarrow}
\newcommand{\Bl}[1]{{\mathbb{#1}}}
\newcommand{\DC}{\Bl{C}}
\newcommand{\DZ}{\Bl{Z}}
\newcommand{\DQ}{\Bl{Q}}
\newcommand{\pdef}{\mathrel{\mathop:}=}
\begin{document}
\title[Perverse motives and $\mathcal O$]{Perverse motives and graded 
derived category \texorpdfstring{$\mathcal{O}$}{O}}
\date{October 2015}
\author{Wolfgang Soergel and Matthias Wendt}
\thanks{Wolfgang Soergel was supported by the DFG priority program SPP
  1388.  Matthias Wendt was partially supported  by the DFG SFB TR 45.}
\address{Wolfgang Soergel, Mathematisches Institut,
Albert-Ludwigs-Uni\-ver\-si\-t\"at Freiburg, Eckerstra\ss{}e 1, 79104, 
  Freiburg im Breisgau, Germany}
\email{wolfgang.soergel@math.uni-freiburg.de}
\address{Matthias Wendt, Fakult\"at f\"ur Mathematik, Universit\"at
  Duisburg-Essen, Thea-Leymann-Strasse 9, 45127 Essen}
\email{matthias.wendt@uni-due.de}

\subjclass[2010]{14C15, 14M15, 17B10, 22E47}

\keywords{mixed Tate motives, perverse sheaves, mixed geometry,
  category $\mathcal{O}$} 

\begin{abstract}
For a variety with a Whitney stratification by affine spaces, we study
categories of motivic sheaves which are constant mixed Tate along the
strata. We are particularly interested in those cases where the category of mixed Tate motives over a point is equivalent to the category of finite-dimensional bigraded vector spaces. Examples of such situations include rational motives on varieties over finite fields and modules over the spectrum representing the semisimplification of de Rham cohomology for varieties over the complex numbers. We show that our categories of stratified mixed Tate motives have a natural weight structure. Under an additional assumption of pointwise purity for objects of the heart, tilting gives an equivalence between  stratified mixed Tate sheaves and the bounded homotopy category of the heart of the weight structure. Specializing to the case of flag varieties, we find natural geometric interpretations of graded category $\mathcal O$ and  Koszul duality. 
\end{abstract}

\maketitle 
\setcounter{tocdepth}{1}
\tableofcontents

\section{Introduction}

In \cite{BGi} Beilinson and Ginzburg laid out a vision how ``mixed geometry'' should allow to construct graded versions of the BGG-category $\mathcal{O}$, and why these graded versions should be governed by Koszul rings. Motivated by inversion formulas for Kazhdan--Lusztig polynomials, they also conjectured these Koszul rings to be their own Koszul duals. This was pushed through in  \cite{BGSo}; however, the beauty of the original ideas got kind of obscured by difficulties stemming from the fact, that in all realizations of mixed geometry available back then, there would be some (unwanted) non-trivial extensions between Tate motives. In the current paper, we want to
show how recent advances in constructing triangulated categories of motives and
motivic six functor formalisms allow to clear away this difficulty and
realize the original vision in its full beauty.  

We will define, for a given motivic triangulated category $\mathscr{T}$ and a  ``Whitney--Tate'' stratified variety $(X,\mathcal{S})$ over an arbitrary field, a triangulated $\mathbb{Q}$-linear category $\DMT_{\mathcal{S}}(X,\mathscr{T})$ of $\mathscr{T}$-motives which are constant mixed Tate along the strata, called {\bf stratified mixed Tate motives}. Of particular interest for 
our applications are those motivic triangulated categories $\mathscr{T}$ where the category of mixed Tate $\mathscr{T}$-motives over the base field is equivalent to the category of bigraded $\mathbb{Q}$-vector spaces of finite dimension, viewed as the derived category of the abelian category of finite-dimensional $\mathbb{Z}$-graded $\mathbb{Q}$-vector spaces. This happens for motives with rational coefficients over finite fields and for the category of motives associated to the enriched Weil cohomology theory given by semisimplified Hodge realization \cite{drew}. These two cases are related to the $\ell$-adic and Hodge module approximations to mixed geometry previously available.

For representation-theoretic purposes, the case of $X=G/P$ with the stratification $\mathcal{S}=(B)$ by Borel orbits is particularly interesting. We show in this case, that the category $\DMT_{(B)}(G/P)$ of stratified mixed Tate motives on the flag variety carries two interesting additional structures: a weight structure whose heart is related to categories of Soergel modules, and a perverse t-structure whose heart is a graded version of category $\mathcal{O}$. In particular, we construct an equivalence of triangulated $\DQ$-categories 
$$\DMT_{(B)}(G/B)\cong  \op{Hot}^{\op{b}}(C\op{-SMod}^\DZ_{\op{ev}}),$$
where $C=\op{H}^*(G/B,\mathbb{Q})$ denotes the cohomology ring with
rational coefficients of the complex flag manifold sometimes called
the coinvariant algebra, $C\op{-SMod}^\DZ\subset C\op{-Mod}^\DZ$
denotes a full subcategory of the category of all graded finite-dimensional $C$-modules sometimes called the category of Soergel modules, and $C\op{-SMod}^\DZ_{\op{ev}}$ denotes the full subcategory of Soergel modules concentrated in even degrees only. Put another way, the category $\DMT_{(B)}(G/B)$ is, up to adding a root of the Tate twist, equivalent to the bounded derived category of the graded version of the principal block $\mathcal{O}_0$ of category $\mathcal{O}$ constructed in \cite{BGSo}. The idea of such a geometrical or even motivic construction was already clearly present in the seminal preprint \cite{BGi} of Beilinson and Ginzburg. 

Let us discuss the relation of our work to what has been done already.  
The first geometric realizations of the (not yet derived) graded category $\mathcal O$ were constructed in \cite{BGSo}. A geometric realization of the  graded derived category $\mathcal{O}$ has been constructed by Achar and Riche \cite{AchR}. Another approach by ``winnowing'' categories of mixed Hodge modules in the sense of Saito is worked out by Achar and Kitchen in \cite{AK}. Modular coefficient realizations are discussed in \cite{ARM1,ARM2}. 

These approaches, using Hodge modules, \'etale or $\ell$-adic sheaves, are technically demanding due to problems with non-semisimplicity of the corresponding categories of sheaves for the one-point flag variety. This is the main motivation for us to suggest yet another realization of the graded derived category $\mathcal{O}$. While  the theory of motives is also built on technically demanding foundations, our point in the present paper is that at least such problems as the non-semisimplicity of Frobenius actions disappear, and the geometric construction of the graded derived category $\mathcal{O}$ is clarified and simplified considerably by using true motives. We hope that our explanations contribute to a better understanding of the original vision laid out in the work \cite{BGi} of Beilinson and Ginzburg. We also expect that the use of mixed motivic categories will turn out to be fruitful in a lot of other instances where geometric representation theory relies on ``mixed geometry''. For example, in a joint work in progress with Rahbar Virk, we will discuss how Borel-equivariant motives can be used to establish motivic versions of the results from \cite{virk} and construct a very natural geometric categorification of the Hecke algebra. 

\subsection{Motivic triangulated categories and stratified mixed Tate
  motives}

Let us now outline in more detail the constructions and results to be presented in this work. The most important technical tools used in the paper are the recent works on categories of motives over an arbitrary base and their six-functor formalism: \cite{ayoub:thesis1,ayoub:thesis2}, \cite{cisinski:deglise} and \cite{drew}. With these motivic categories and their six-functor formalism available, many of the standard arguments that have been developed in geometric representation theory can be adapted to the setting of motives, be it \'etale motives, Beilinson  motives or motives with coefficients in enriched mixed Weil cohomology theories. For the applications we have in mind we restrict to categories of mixed Tate motives, which are much better understood, due to the work of Levine \cite{levine:nato,levine:tata}, Wildeshaus \cite{wildeshaus:cras1} and others. We include two sections discussing these technical foundations: \prettyref{sec:cdbmot} is a very abridged recollection of basics on triangulated categories of motives, and \prettyref{sec:mtm} recalls relevant facts about mixed Tate motives.  

With these tools in hands, we construct in \prettyref{sec:dmt}, for a motivic triangulated category $\mathscr{T}$ and a stratified variety $(X,\mathcal{S})$, an analogue of the category of sheaves which are constant along strata. This category, denoted by  $\DMT_{\mathcal{S}}(X,\mathscr{T})$, is called the \emph{category of stratified mixed Tate $\mathscr{T}$-motives}, and consists of those motives which are constant mixed Tate along the strata. For this category to be well-behaved, one needs as in \cite{BGSo} or \cite{wildeshaus:intermediate} a condition ``Whitney--Tate'', which ensures that the extension and restriction functors preserve mixed Tate motives. This condition is satisfied in a large number of cases, including, in particular, partial flag varieties stratified by Borel orbits.

While the construction of $\DMT_{\mathcal{S}}(X)$ works in great generality, we usually work in a more restrictive setting where the underlying motivic triangulated category $\mathscr T$ satisfies additional conditions, cf. Convention~\ref{conditions}: 
\begin{enumerate}
\item one condition is called \emph{weight condition} and requires the existence of suitably compatible weight structures on the motivic categories, 
\item the other  condition is called \emph{grading condition} and requires the category $\DMT(k,\mathscr{T})$ of mixed Tate $\mathscr{T}$-motives over the  base field $k$ to be equivalent to the derived category of the category of $\mathbb{Z}$-graded vector spaces. 
\end{enumerate} 
The grading condition implies that the category $\DMT_{\mathcal{S}}(X)$ can be described very explicitly in terms of the combinatorics of the stratification $\mathcal{S}$. These conditions are satisfied in two important cases: rational \'etale or Beilinson motives over a finite field $\mathbb{F}_q$, 
and motives with coefficients in the semisimplified Hodge realization over $\mathbb{C}$. 

\subsection{Weight structures}

The first block of results in our paper concerns a weight structure on the category of stratified mixed Tate motives. Weight arguments have been used a lot in geometric representation theory, in particular in the framework of mixed geometry. The very recently introduced weight structures alias co-$t$-structures, due independently to Bondarko \cite{bondarko:ktheory} and Pauksztello, are a convenient framework for formalising such weight arguments. The following result establishes the existence of a weight structure on stratified mixed Tate motives,  cf. \prettyref{prop:hebbswz}, and \prettyref{thm:Twer}. It follows rather easily from the existence of weight structures on Beilinson motives, as constructed by H{\'e}bert \cite{hebert} and Bondarko \cite{bondarko:imrn}. 

\begin{mainthm}
\label{thm:main1X} 
Let $k$ be a field, and let $\mathscr{T}$ be a motivic triangulated category over $k$ satisfying the weight condition, cf. Convention~\ref{conditions}. Let $(X,\mathcal{S})$ be an affinely Whitney--Tate stratified $k$-variety in the sense of \prettyref{defin:cellvar} and \prettyref{defin:WT}.
\begin{enumerate}
\item  The category $\DMT_{\mathcal{S}}(X)$ carries a weight structure 
  in the sense of Bondarko.  This weight
  structure is uniquely determined by the requirement that for the 
  inclusion $j_s:X_s\to X$ of a stratum, the functors $j_s^\ast$ and
  $j_s^!$ preserve   non-positivity and non-negativity of weights,
  respectively. 
\item 
Assume that $\mathscr{T}$ also satisfies the 
grading condition of Convention~\ref{conditions}, that all objects of the heart $\DMT_{\mathcal{S}}(X)_{w=0}$ are  pointwise pure in the sense of Definition \ref{poip}, and that  $\DMT_{\mathcal{S}}(X)$ can be embedded as full subcategory of a localization of the derived category of some abelian category. Then the tilting functor of \prettyref{prop:rvt} induces an equivalence 
$$
\op{Hot}^{\op{b}}(\DMT_{\mathcal{S}}(X)_{w=0})
\stackrel{\approx}{\longrightarrow} 
\DMT_{\mathcal{S}}(X)
$$
between the category of stratified mixed Tate motives on $X$ and the bounded homotopy category of the heart of its weight structure.
\end{enumerate}
\end{mainthm}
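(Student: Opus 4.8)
The plan is to establish the two parts essentially independently: part~(1) by reducing to Bondarko's gluing formalism for weight structures, and part~(2) by verifying the hypotheses of the abstract tilting criterion \prettyref{prop:rvt}, whose only non-formal ingredient --- a vanishing of negative self-extensions of the heart --- is precisely what pointwise purity and the grading condition are designed to provide.

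For part~(1), I would start from the observation that the weight condition of Convention~\ref{conditions} equips $\DMT(k,\mathscr{T})$, and hence via homotopy invariance $\DMT(\mathbb{A}^{d},\mathscr{T})\simeq\DMT(k,\mathscr{T})$ also $\DMT_{\mathcal{S}}(X_s)$ for every stratum $X_s\cong\mathbb{A}^{d_s}$, with a bounded weight structure. The argument then proceeds by induction on the number of strata: choosing a closed stratum $X_s$ with open complement $j\colon U\hookrightarrow X$ carrying its induced stratification, the Whitney--Tate hypothesis guarantees that all six functors attached to this closed--open decomposition preserve stratified mixed Tate motives, so one obtains a genuine recollement $\DMT_{\mathcal{S}}(X_s)\to\DMT_{\mathcal{S}}(X)\to\DMT_{\mathcal{S}}(U)$, to which Bondarko's gluing theorem applies. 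The input that theorem needs is the orthogonality condition that $\op{Hom}$ vanishes from the $i_\ast$-image of the non-positive part over $X_s$ into the $j_\ast$-image of the strictly positive part over $U$; this is exactly the step where the affine nature of $\mathcal{S}$ is used, via the analysis of the functors $j_s^\ast j_{t\ast}$ on Tate twists and the resulting weight bounds. The glued weight structure is by construction characterised by the requirement that $j_s^\ast$ be non-positive and $j_s^!$ be non-negative on every stratum; since $\DMT_{\mathcal{S}}(X)$ is generated as a triangulated category by the objects $j_{s!}\mathbb{Q}_{X_s}(n)$ (equivalently the $j_{s\ast}\mathbb{Q}_{X_s}(n)$), any weight structure with this property must agree with it, which is the asserted uniqueness. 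This is carried out in \prettyref{prop:hebbswz} and \prettyref{thm:Twer}.

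For part~(2), I would invoke \prettyref{prop:rvt}, which furnishes the tilting functor $\op{Hot}^{\op{b}}(\DMT_{\mathcal{S}}(X)_{w=0})\to\DMT_{\mathcal{S}}(X)$ and reduces its being an equivalence to three inputs: (a) a bounded weight structure on $\DMT_{\mathcal{S}}(X)$, supplied by part~(1) together with the finiteness of the stratification; (b) an enhancement of $\DMT_{\mathcal{S}}(X)$ making the tilting functor honestly defined and circumventing the classical realisation-functor subtleties --- this is exactly the role of the hypothesis that $\DMT_{\mathcal{S}}(X)$ embeds as a full subcategory of a localisation of the derived category of some abelian category; and (c) the vanishing $\op{Hom}_{\DMT_{\mathcal{S}}(X)}(M,N[n])=0$ for all $M,N$ in the heart $\DMT_{\mathcal{S}}(X)_{w=0}$ and all $n\neq 0$. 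Granting (a)--(c): full faithfulness on complexes concentrated in a single degree is exactly (c), full faithfulness in general follows by d\'evissage along the stupid filtration, and essential surjectivity holds because the essential image is a full triangulated subcategory containing the heart, and the latter generates $\DMT_{\mathcal{S}}(X)$ since the weight structure is bounded.

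The one substantial step left is (c). For $n>0$ the vanishing is automatic from the weight-structure axioms. For $n<0$ I would again induct on the number of strata through the recollement of part~(1): applying $\op{Hom}(M,-)$ to the triangle $i_\ast i^! N\to N\to j_\ast j^\ast N$ produces a long exact sequence comparing $\op{Hom}(M,N[n])$ with $\op{Hom}(j^\ast M,j^\ast N[n])$ over $U$ and with $\op{Hom}(j_s^\ast M,j_s^! N[n])$ over the closed stratum $X_s$. The first group vanishes by the inductive hypothesis --- for an open immersion $j^\ast\cong j^!$, and the strata of $U$ are strata of $X$, so $j^\ast$ is weight-exact on the heart and preserves pointwise purity; the second vanishes because pointwise purity (Definition \ref{poip}) makes $j_s^\ast M$ and $j_s^! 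N$ pure of weight zero over $X_s\cong\mathbb{A}^{d_s}$, hence finite direct sums of Tate twists $\mathbb{Q}(m)[2m]$, between which the grading condition forces every $\op{Hom}$ in nonzero degree to vanish --- this is the base case, over a point, where $\DMT(k,\mathscr{T})$ is the bounded derived category of finite-dimensional bigraded $\DQ$-vector spaces and has no higher extensions among Tate objects. I expect the genuinely delicate points to be the verification of Bondarko's gluing condition in part~(1) and the construction of the enhanced tilting functor in part~(2); by contrast the d\'evissage in (c) and the reduction to the base case should be routine once those are settled.
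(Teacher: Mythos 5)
Your proposal is correct and follows the paper's overall strategy: part~(1) is essentially \prettyref{prop:hebbswz} (induction on the number of strata, Bondarko's gluing theorem applied to the recollement that the Whitney--Tate condition provides, with the weight structures on the strata coming from $\DMT(\mathbb{A}^{d_s})\simeq\DMT(k)$ and the weight condition), and part~(2) is \prettyref{thm:Twer}, i.e.\ an application of \prettyref{prop:rvt} once one knows $\DMT_{\mathcal S}(M,N[n])=0$ for heart objects $M,N$ and $n\neq 0$. Where you genuinely diverge is in proving this vanishing: the paper shows (\prettyref{prop:ptDI}, \prettyref{cor:bswz}) that for pointwise pure $M,N$ the internal $\op{Hom}(M,N)$ is pointwise $!$-pure --- using weight-exactness of $\op{Hom}$ on mixed Tate motives over a point --- so that $\op{fin}_*\op{Hom}(M,N)$ is pure of weight zero, and the grading condition then kills all maps from the unit in nonzero degree; you instead run the induction directly on the Hom-groups via the localization triangle $i_*i^!N\to N\to j_*j^*N$ and the adjunction isomorphisms $\op{Hom}(M,i_*i^!N[n])\cong\op{Hom}(i^*M,i^!N[n])$ and $\op{Hom}(M,j_*j^*N[n])\cong\op{Hom}(j^*M,j^*N[n])$, with the same base case over a point. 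Both arguments use exactly the same inputs (pointwise purity plus the grading condition), so your variant is fine and avoids invoking weight-exactness of the internal Hom, at the cost of redoing the d\'evissage that the paper packages once in \prettyref{prop:ptDI}. Three small corrections: Bondarko's gluing theorem for weight structures needs no extra orthogonality hypothesis beyond the recollement itself --- affineness of the strata enters only through the existence of weight structures on the $\DMT(X_s)$, and Whitney--Tate only through the six functors preserving $\DMT_{\mathcal S}$ --- so the ``orthogonality condition'' you single out is not an input; the paper glues along an open stratum with stratified closed complement rather than a closed stratum with open complement, which is immaterial; and for uniqueness, generation by the $j_{s!}\underline{X_s}(n)$ alone does not suffice as stated --- the clean argument is that any weight structure for which all $j_s^*$ are left and all $j_s^!$ are right weight-exact has both its non-positive and non-negative parts contained in those of the glued structure, and two weight structures related by such containments coincide.
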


\begin{remark}
The slightly awkward condition in (2) is one possibility to ensure the applicability of the tilting result in Proposition~\ref{prop:rvt}. This condition is satisfied in the  cases which for us are the most interesting: rational motives (\'etale or Beilinson) over a finite field, and motives with coefficients in the semisimplification of the Hodge realization over $\mathbb{C}$. The explicit tilting result \prettyref{prop:rvt} is the only place in the whole paper where we need information beyond the axiomatics of motivic triangulated categories --- for the tilting result to hold we need some more information on how the motivic triangulated category at hand is constructed. All other results in the paper only use the axiomatics of motivic triangulated categories. 
\end{remark}

We also adapt pointwise purity arguments of Springer \cite{Spp} and a full faithfulness result of Ginzburg \cite{Gi} to describe the heart of the above weight structure. The following result explicitly describes the category of stratified mixed Tate motives in a case of representation-theoretic interest in terms of Soergel modules, cf. \prettyref{cor:bsgen}, \prettyref{lem:bswz},  \prettyref{thm:ffbs} and \prettyref{cor:ttb}.

\begin{mainthm} 
\label{thm:main1O}
Assume that $k$ is a field and $\mathscr{T}$ is a motivic triangulated category over $k$ satisfying the weight condition. Let $G\supset P\supset B\supset T$ be a split reductive group over  $k$ with a parabolic, a Borel and a split maximal torus all defined over $k$. Let $X=G/P$ be the corresponding partial flag variety with $\mathcal{S}=(B)$ its stratification by $B$-orbits. Then we have:
\begin{enumerate}
\item  The heart $\DMT_{(B)}(G/P)_{w=0}$ of the weight structure from \prettyref{thm:main1X}  is generated, as idempotent complete additive subcategory of $\mathscr{T}(G/P)$, by motives of Bott--Samelson resolutions of Schubert varieties in $G/P$. These are pointwise pure. 
\item Let $k=\mathbb{F}_q$ be a finite field and let $\mathscr{T}$ be the motivic triangulated category of (\'etale or Beilinson) motives with rational coefficients. Then a suitable hypercohomology functor induces an equivalence  of categories 
$$
\DMT_{(B)}(G/P)_{w=0}\sirra
\op{H}^*(G/P)\op{-SMod}^{\mathbb{Z}}_{\op{ev}} 
$$
between the heart of the weight structure and the category of even Soergel
modules over the cohomology ring of $G/P$. This equivalence extends to an equivalence 
$$
\DMT_{(B)}(G/P)
% \stackrel{\approx}{\leftarrow}
% \op{Hot}^{\op{b}}(\DMT_{(B)}(G/P)_{w=0}) 
\sirra \op{Hot}^{\op{b}}(\op{H}^*(G/P)\op{-SMod}^{\mathbb{Z}}_{\op{ev}} )
$$
between the category of stratified mixed Tate motives and the bounded homotopy category of complexes of even Soergel modules. 
\item Take $k=\mathbb{C}$ and $\mathscr{T}$ the category of modules over the  semisimplified Hodge cohomology. Then Hodge (hyper-)cohomology induces an   equivalence of categories 
$$
\DMT_{(B)}(G/P)_{w=0}\sirra 
\op{H}^\ast(G/P)\op{-SMod}_{\op{ev}}^{\mathbb{Z}}
$$
between the heart of the weight structure in (1) and the category of
even Soergel modules over the cohomology ring of $G/P$. This
equivalence extends to an equivalence 
$$
\DMT_{(B)}(G/P)\sirra
\op{Hot}^{\op{b}}(\op{H}^\ast(G/P)\op{-SMod}_{\op{ev}}^{\mathbb{Z}}).
$$
\end{enumerate}
\end{mainthm}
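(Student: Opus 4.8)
The plan is to establish (1) for an arbitrary motivic triangulated category $\mathscr{T}$ satisfying the weight condition, and then to obtain (2) and (3) by specialising, using that in each of those two cases $\mathscr{T}$ satisfies in addition the grading condition of Convention~\ref{conditions} together with the hypotheses needed for the tilting result \prettyref{prop:rvt}.

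For (1), the first step is to check that for a reduced word $\underline{s}$ and the associated Bott--Samelson resolution $\pi_{\underline{s}}\colon BS(\underline{s})\to G/P$, the motive $\pi_{\underline{s},\ast}\mathbf{1}$ lies in $\DMT_{(B)}(G/P)$ and, after the appropriate Tate twist and shift, in the heart $\DMT_{(B)}(G/P)_{w=0}$. Since $BS(\underline{s})$ is an iterated $\mathbb{P}^1$-bundle, hence smooth and projective and carrying a cellular stratification compatible with $\pi_{\underline{s}}$, proper base change along the strata inclusions $j_w\colon X_w\hra G/P$ identifies $j_w^\ast\pi_{\underline{s},\ast}\mathbf{1}$ with the cohomology of an iterated $\mathbb{P}^1$-bundle over $X_w$, which is constant mixed Tate; membership in the heart is then the statement that the motive of a smooth projective variety has weight zero in the relevant normalisation and that this is preserved by the proper pushforward, and pointwise purity follows because the fibres of $\pi_{\underline{s}}$ are again iterated $\mathbb{P}^1$-bundles, hence have pure, even Tate cohomology, together with the self-duality of $\pi_{\underline{s},\ast}\mathbf{1}$ up to twist (which deduces pointwise purity for $j_w^!$ from that for $j_w^\ast$). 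The more substantial point is that these objects \emph{generate} the heart as an idempotent complete additive subcategory of $\mathscr{T}(G/P)$. Here I would argue by induction on the Bruhat order: restricting the top Bott--Samelson motive for $w$ to the open stratum it resolves produces a rank-one motive, which forces a new indecomposable summand $E_w$ that does not occur in the Bott--Samelson motives of strictly shorter elements, and a support and dimension count shows that the $E_w$ together with their Tate twists form a full set of indecomposables of the heart. This is \prettyref{cor:bsgen}.

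For (2) and (3), the grading condition identifies $\DMT(k,\mathscr{T})$ with the bounded derived category of finite-dimensional $\mathbb{Z}$-graded $\mathbb{Q}$-vector spaces, so that pushforward along the structure map $a\colon G/P\to\op{Spec} k$ followed by taking cohomology defines a functor $\mathbb{H}=\op{H}^\ast\circ a_\ast$ from $\DMT_{(B)}(G/P)$ to graded modules over the graded ring $a_\ast\mathbf{1}$, which the grading condition turns into $\op{H}^\ast(G/P)$; it is here that the concrete shape of $\mathscr{T}$ (rational \'etale or Beilinson motives over $\mathbb{F}_q$, respectively modules over the semisimplified Hodge cohomology over $\mathbb{C}$) is used, pinning down $\op{H}^\ast(G/P)$ as the rational cohomology of the complex flag manifold with its grading coming from Tate twists. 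The heart of the matter is to prove that $\mathbb{H}$ is fully faithful on the heart $\DMT_{(B)}(G/P)_{w=0}$. Using adjunction and proper--smooth base change, the group of morphisms between two Bott--Samelson motives is identified with the cohomology of the corresponding fibre product of Bott--Samelson varieties over $G/P$, and since all objects in sight are pointwise pure, the pointwise purity arguments of Springer \cite{Spp} together with the full faithfulness theorem of Ginzburg \cite{Gi}, adapted to the motivic setting --- where the non-semisimplicity problems for the Frobenius action on $\DMT(\mathbb{F}_q)$ simply do not arise --- show that this cohomology computes $\op{Hom}_{\op{H}^\ast(G/P)}(\mathbb{H}M,\mathbb{H}N)$; this is \prettyref{thm:ffbs}. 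The essential image of $\mathbb{H}$ on the heart is then the idempotent completion of the images of Bott--Samelson motives, and these images are precisely the Bott--Samelson modules over $\op{H}^\ast(G/P)$ of Soergel's theory, which are concentrated in even degrees because Bott--Samelson varieties have even Tate cohomology; hence the image is exactly $\op{H}^\ast(G/P)\op{-SMod}^{\mathbb{Z}}_{\op{ev}}$ by the very definition of the category of Soergel modules, giving \prettyref{lem:bswz} and \prettyref{cor:ttb} and the asserted equivalence $\DMT_{(B)}(G/P)_{w=0}\sirra\op{H}^\ast(G/P)\op{-SMod}^{\mathbb{Z}}_{\op{ev}}$.

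To promote this to an equivalence of the full triangulated categories I would apply \prettyref{thm:main1X}(2): in both cases at hand $\DMT_{(B)}(G/P)$ can be embedded as a full subcategory of a localisation of the derived category of an abelian category (this is exactly what the Remark after \prettyref{thm:main1X} asserts, and it rests on the explicit construction of the motivic categories involved), all objects of the heart are pointwise pure by (1), and hence the tilting functor of \prettyref{prop:rvt} yields an equivalence $\op{Hot}^{\op{b}}(\DMT_{(B)}(G/P)_{w=0})\sirra\DMT_{(B)}(G/P)$. Composing its quasi-inverse with $\op{Hot}^{\op{b}}$ of the heart equivalence just constructed gives $\DMT_{(B)}(G/P)\sirra\op{Hot}^{\op{b}}(\op{H}^\ast(G/P)\op{-SMod}^{\mathbb{Z}}_{\op{ev}})$, completing (2) and (3). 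I expect the main obstacle to be the full faithfulness of the hypercohomology functor on the heart: this is the motivic avatar of Soergel's Struktursatz, and while pointwise purity of Bott--Samelson motives is more or less formal from their iterated $\mathbb{P}^1$-bundle structure, converting pointwise purity into the exact computation of morphism groups --- and thereby into the precise identification of the heart with even Soergel modules --- is where the real work lies; the inductive generation statement in (1) is a close second.
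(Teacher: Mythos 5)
Your plan for (2) and (3) is essentially the paper's own: hypercohomology made available by the grading condition, a motivic Ginzburg-type full faithfulness theorem on the heart whose hypotheses are secured by pointwise purity and Springer-style contraction arguments, the identification of the essential image with even Soergel modules (this is \prettyref{thm:ffbs} together with the definition in \ref{FGH}), and finally the tilting equivalence of \prettyref{prop:rvt}/\prettyref{thm:Twer} to pass to $\op{Hot}^{\op{b}}$, exactly as in \prettyref{cor:ttb}. The only cosmetic difference is that the paper verifies the extra surjectivity/injectivity hypotheses of \prettyref{thm:ginzburg} via \prettyref{prop:CBS} and \prettyref{rem:CBS}, not by computing Homs as cohomology of fibre products of Bott--Samelson varieties.

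The genuine gap is in (1), at the generation statement. Your Bruhat-order induction produces indecomposable summands $E_w$ of Bott--Samelson motives and then asserts that ``a support and dimension count'' shows these and their twists are all the indecomposables of the heart; but that is precisely what has to be proved. At best your argument classifies the indecomposables \emph{inside} the Bott--Samelson category $\DMT^{\op{bs}}_{(B)}(G/P)$; it does not show that an arbitrary object $M$ of the glued heart (i.e.\ with $j_s^\ast M$ of weights $\leq 0$ and $j_s^!M$ of weights $\geq 0$ on every stratum) is a direct summand of a sum of twisted Bott--Samelson motives. The paper closes exactly this hole with two inputs you never invoke: the negativity statement \prettyref{cor:bswz} (vanishing of positive-degree morphisms between pointwise $\ast$-pure and pointwise $!$-pure objects, deduced from \prettyref{prop:ptDI}), and Bondarko's uniqueness theorem \cite[Proposition 1.7(6)]{bondarko:imrn}, which says that a negative idempotent complete additive subcategory generating the triangulated category is the heart of a unique weight structure; comparison with \prettyref{prop:hebbswz} then identifies that weight structure with the glued/H\'ebert one, so its heart is the Bott--Samelson category. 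Without negativity plus such a uniqueness (or splitting) principle, nothing prevents the heart from containing objects not split by your $E_w(n)$. A secondary error: fibres of Bott--Samelson resolutions over points of smaller cells are in general \emph{not} iterated $\mathbb{P}^1$-bundles --- they are projective and paved by affine spaces (Haines, used in \prettyref{prop:erpt1}), and typically singular or reducible; the affine paving is what actually yields even pure Tate cohomology. The paper's proof of pointwise purity avoids fibres altogether: \prettyref{lem:bswz} inducts through the closure operations defining Bott--Samelson motives using \prettyref{prop:ptDI}, with the weak-equivariance contraction argument \prettyref{prop:wep} as an alternative.
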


\subsection{Perverse t-structures}

The second block of results to be proved in this paper concerns a
perverse t-structure on the category of stratified mixed Tate
motives. While the existence of motivic t-structures is a very
difficult problem, there are some situations where the
Beilinson--Soul{\'e} vanishing conjectures and hence the existence of
a motivic t-structure on mixed Tate motives are
known. Alternatively, over $\mathbb{C}$, it is possible to work in a
category of motives with coefficients in the semisimplification of the
Hodge realization; in this case,  the category of mixed Tate motives over the
point is the derived category of graded vector spaces and therefore
has a natural ``motivic'' t-structure. In situations as above, we can
use the perverse formalism of \cite{BBD} to equip 
the category of stratified mixed Tate motives with a perverse
t-structure, for any perversity function
$p:\mathcal{S}\to\mathbb{Z}$. Its  heart is an abelian category 
$\PMT_{\mathcal{S}}(X)$ of perverse mixed Tate motives. 
 The following results are combinations of  the results of
\prettyref{sec:tstructure} and \prettyref{sec:cato}, more precisely
\prettyref{thm:perv}, \prettyref{thm:PrOoi}
and \prettyref{thm:cato}; everything is specialized to the two cases
of interest (related to $\ell$-adic resp. Hodge realizations). We suppress the underlying motivic triangulated category in our notation $\PMT_{\mathcal{S}}(X)$  for perverse motives, to underline that our methods give a uniform proof for both theorems below:

\begin{mainthm}
\label{thm:main2}
 Let $k$ be a finite field and $\mathscr{T}$ be the motivic triangulated category of (Beilinson or \'etale) motives with rational coefficients, and let $(X,\mathcal{S})$ be an affinely Whitney--Tate stratified $k$-variety in the  sense of \prettyref{defin:cellvar} and \prettyref{defin:WT}.
\begin{enumerate}
\item The category $\PMT_{\mathcal S}(X)$ has enough projectives
  and the tilting functor of \prettyref{prop:rvt} induces an
  equivalence of categories 
$$
\op{Der}^{\op{b}}(\PMT_{\mathcal S}(X))\sirra \DMT_{\mathcal S}(X)
$$
between the bounded derived category of the abelian category of perverse mixed Tate motives and the triangulated category of stratified mixed Tate motives.
\item If we consider motives with
$\mathbb Q_\ell$-coefficients where $\ell$ is a prime different from the
characteristic of $k$, the $\ell$-adic realization 
$$
 \PMT_{\mathcal S}(X;\DQ_\ell)\ra \op{Perv}_{\mathcal S}(X\times_k\bar
k;\DQ_\ell)
$$
is a degrading functor in the sense of \cite{BGSo}.  
\end{enumerate}
\end{mainthm}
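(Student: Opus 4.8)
\emph{Overview and Step 1: the perverse heart as a graded highest weight category.} For rational motives over $\mathbb F_q$ the grading condition of Convention~\ref{conditions} holds, so $\DMT(k)$ is the bounded derived category of the \emph{semisimple} category of finite-dimensional $\mathbb Z$-graded $\mathbb Q$-vector spaces; equivalently $\op{Hom}_{\DMT(k)}(\mathbb Q,\mathbb Q(n)[m])$ vanishes unless $(n,m)=(0,0)$. Over a finite field the Beilinson--Soul\'e vanishing conjecture is known for mixed Tate motives, so $\DMT(k)$ carries a motivic $t$-structure; glueing it along $\mathcal S$ by the recollement formalism of \cite{BBD} produces the perverse $t$-structure on $\DMT_{\mathcal S}(X)$, with heart the finite-length category $\PMT_{\mathcal S}(X)$ whose simple objects are the intersection motives of the strata twisted by $\mathbb Q(n)$. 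Since $(X,\mathcal S)$ is affinely stratified, each $j_s$ is an affine morphism, so $j_{s!},j_{s*}$ are perverse $t$-exact, giving standard and costandard objects $\Delta(s,n)=j_{s!}\mathbb Q_{X_s}(n)[p(s)]$, $\nabla(s,n)=j_{s*}\mathbb Q_{X_s}(n)[p(s)]$ (for the chosen perversity $p$), characterized by $j_s^\ast\Delta(t,m)=0=j_s^!\nabla(t,m)$ for $s\neq t$ and $j_s^\ast\Delta(s,m)=j_s^!\nabla(s,m)=\mathbb Q_{X_s}(m)[p(s)]$. Adjunction ($j_{s!}\dashv j_s^!$) together with $\mathbb A^1$-invariance of $\DMT$ — the strata being affine spaces — reduces $\op{Hom}_{\DMT_{\mathcal S}(X)}(\Delta(s,n),\nabla(t,m)[i])$ to $\op{Hom}_{\DMT(k)}(\mathbb Q(n),\mathbb Q(m)[i])$ when $s=t$ and to $0$ otherwise, whence the orthogonality
\[
\op{Hom}_{\DMT_{\mathcal S}(X)}\bigl(\Delta(s,n),\nabla(t,m)[i]\bigr)\cong
\begin{cases}\mathbb Q,&(s,n,i)=(t,m,0),\\ 0,&\text{otherwise.}\end{cases}
\]
This is exactly where true motives over $\mathbb F_q$ remove the unwanted extensions present in the $\ell$-adic and Hodge realizations; combined with the finiteness furnished by the grading condition it equips $\PMT_{\mathcal S}(X)$, graded by the Tate twist, with the structure of a graded highest weight category.

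\emph{Step 2: part (1).} The graded form of quasi-hereditary theory (as in \cite{BGSo}) now produces in $\PMT_{\mathcal S}(X)$ a projective cover $P(s,n)$ of each simple object, carrying a filtration by standard objects $\Delta(t,m)$: one builds $P(s,n)$ inductively over lower sets of strata by repeatedly adjoining copies of $\Delta(t,m)$ to cancel $\op{Ext}^1$ with the simples, the process terminating because the occurring weights are bounded and $\mathcal S$ is finite. In particular $\PMT_{\mathcal S}(X)$ has enough projectives, which is the remaining hypothesis of \prettyref{prop:rvt} (the required embedding of $\DMT_{\mathcal S}(X)$ into a localization of the derived category of an abelian category being the extra input recorded there for these coefficients). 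Tilting then yields a functor $\op{Der}^{\op{b}}(\PMT_{\mathcal S}(X))\to\DMT_{\mathcal S}(X)$ which is essentially surjective because, under the Whitney--Tate hypothesis, $\DMT_{\mathcal S}(X)$ is generated as a triangulated category by the standard objects $\Delta(s,n)$, and fully faithful because $\op{Ext}^\bullet_{\PMT_{\mathcal S}(X)}(M,N)\to\op{Hom}_{\DMT_{\mathcal S}(X)}(M,N[\bullet])$ is an isomorphism, which by d\'evissage along $\Delta$- and $\nabla$-filtrations reduces to the orthogonality of Step~1. (Alternatively, one may deduce the equivalence from the tilting equivalence $\op{Hot}^{\op{b}}(\DMT_{\mathcal S}(X)_{w=0})\sira\DMT_{\mathcal S}(X)$ of \prettyref{thm:main1X} by a Ringel-duality argument identifying the weight heart with the big tilting objects of $\PMT_{\mathcal S}(X)$.)

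\emph{Step 3: part (2).} For $\mathbb Q_\ell$-coefficients the $\ell$-adic realization of \cite{ayoub:thesis2,cisinski:deglise} is compatible with the six operations, hence perverse $t$-exact for the stratified perverse $t$-structures; composing with base change along $\op{Spec}\bar k\to\op{Spec}k$, which forgets the Frobenius weight grading, gives an exact functor $v\colon\PMT_{\mathcal S}(X;\mathbb Q_\ell)\to\op{Perv}_{\mathcal S}(X\times_k\bar k;\mathbb Q_\ell)$ with $v\circ\langle1\rangle\cong v$ for the grading shift $\langle1\rangle$ (a square root of the Tate twist). Verifying that $v$ is a degrading functor in the sense of \cite{BGSo} amounts to three points: $v$ sends each simple to the corresponding intersection complex independently of the Tate twist; $v$ is faithful; and the natural map $\bigoplus_n\op{Hom}_{\PMT_{\mathcal S}(X;\mathbb Q_\ell)}(M,N\langle n\rangle)\to\op{Hom}_{\op{Perv}_{\mathcal S}(X\times_k\bar k;\mathbb Q_\ell)}(vM,vN)$ is bijective. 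By the recollement and the $\Delta/\nabla$-filtrations all three reduce to their analogues over a point, where they are the elementary facts that forgetting the grading on a finite-dimensional graded $\mathbb Q_\ell$-vector space is exact and faithful, is bijective on simples up to $\langle1\rangle$, and satisfies $\bigoplus_n\op{Hom}(V,W\langle n\rangle)\sira\op{Hom}(vV,vW)$.

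\emph{Main obstacle.} The crux is Step~1 together with the full-faithfulness half of Step~2: everything is driven by the orthogonality of standards and costandards and by its propagation through the highest weight structure, and this orthogonality rests entirely on the semisimplicity of $\DMT(k)$ over a finite field --- the single structural feature separating the motivic picture from its $\ell$-adic and Hodge-theoretic predecessors. Once the finiteness supplied by the grading condition and the tilting machinery of \prettyref{prop:rvt} are granted, the remainder is standard quasi-hereditary homological algebra and d\'evissage along the recollement.
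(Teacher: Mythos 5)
Your Step~1 contains an unjustified claim that the paper explicitly flags as unavailable: you assert that, because the strata inclusions $j_s$ are affine, the functors $j_{s!}$ and $j_{s*}$ are perverse $t$-exact for the glued motivic $t$-structure, and you build the whole highest-weight/projective-cover machinery on the resulting perversity of $\Delta(s,n)$ and $\nabla(s,n)$. This is motivic Artin vanishing, and the paper states in the remark following \ref{raed} that no such statement is currently known in the motivic setting. The paper instead proves only what is needed --- that the specific objects $\Delta_s=j_{s!}\underline{X}_s[\dim X_s]$ and $\nabla_s=j_{s*}\underline{X}_s[\dim X_s]$ are perverse --- by passing through the $\ell$-adic realization and quoting \cite[4.1.3]{BBD}, using that a stratified mixed Tate motive is perverse if and only if its realization is (\ref{raed}). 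So the realization functor is a logically necessary input already for part~(1), not only for part~(2); as written, your construction of standard objects, and hence of projectives with $\Delta$-filtrations, rests on a step that would need either this realization detour or a genuinely new motivic Artin vanishing theorem.

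Apart from this, your route is essentially the paper's: the orthogonality of standards and costandards is \prettyref{lem:vanish} (adjunction, homotopy invariance and the grading condition reduce it to the point), the BGS-style inductive construction of projective covers with $\Delta$-flags is \prettyref{prop:PrOn} (via a variant of \cite[Theorem 3.2.1]{BGSo}, with \prettyref{lem:cond4} supplying the projectivity of $j_!\mathbb{Q}[d]$), and the tilting equivalence with full faithfulness by d\'evissage along $\Delta$- and $\nabla$-flags is \prettyref{thm:PrOoi}. For part~(2), note also that your reduction of the Hom-bijectivity ``by $\Delta/\nabla$-filtrations'' is loose, since arbitrary perverse motives do not carry such filtrations; the paper argues via projectives (which realize to projective perverse sheaves) together with the twist-summed full faithfulness of the realization on all stratified Tate motives, \prettyref{thm:fulg}, itself proved by d\'evissage to standards and costandards in the triangulated category. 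Your point-level computation is the right kernel of the argument, but the propagation to general objects should go through projective resolutions as in \prettyref{thm:cato}.
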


\begin{mainthm}\label{thm:mainh}
Let $(X,\mathcal{S})$ be an affinely Whitney--Tate stratified
variety over $\mathbb{C}$  and $\mathscr{T}$ the motivic triangulated category of modules over the semisimplified Hodge realization. 
\begin{enumerate}
\item
The category $\PMT_{\mathcal S}(X)$ has enough projectives
  and the tilting functor of \prettyref{prop:rvt} induces an
  equivalence of categories 
$$
\op{Der}^{\op{b}}(\PMT_{\mathcal{S}}(X))
\sirra
\DMT_{\mathcal{S}}(X)
$$
between the bounded derived category of the abelian category of perverse mixed Tate motives and the triangulated category of stratified mixed Tate motives.
\item
Combining in the case of the full flag variety $G/B$ the Hodge realization with the algebraic Riemann--Hilbert correspondence  and Beilinson--Bernstein localization is a degrading functor   
$$
\PMT_{(B)}(G/B)\to\mathcal{O}_0.
$$
\end{enumerate}
\end{mainthm}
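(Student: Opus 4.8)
The plan is to handle part (1) as the Hodge-theoretic twin of \prettyref{thm:main2}(1), and part (2) by composing the Hodge realization with the classical Riemann--Hilbert and Beilinson--Bernstein equivalences, the one genuinely new point being that the composite is a degrading functor in the sense of \cite{BGSo}.

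For (1), I would first record that $\mathscr T$, the category of modules over the semisimplification of the Hodge realization, satisfies all the running hypotheses: the weight condition and the grading condition $\DMT(\mathbb C,\mathscr T)\simeq\op{Der}^{\op b}$ of finite-dimensional $\mathbb Z$-graded $\mathbb Q$-vector spaces hold by \cite{drew}, and, because $\mathscr T$ is by construction a category of modules over a motivic ring spectrum, $\DMT_{\mathcal S}(X)$ embeds as a full subcategory of a localization of the derived category of an abelian category, so that \prettyref{prop:rvt} applies. Granting this, the argument copies that of \prettyref{thm:main2}(1): the grading condition supplies the motivic $t$-structure on $\DMT(\mathbb C,\mathscr T)$, whence the gluing formalism of \cite{BBD} adapted to the motivic setting yields the perverse $t$-structure on $\DMT_{\mathcal S}(X)$ with heart $\PMT_{\mathcal S}(X)$ (\prettyref{thm:perv}); the explicit combinatorial model provided by the grading condition shows that $\PMT_{\mathcal S}(X)$ has enough projectives and finite global dimension (\prettyref{thm:PrOoi}); and then \prettyref{prop:rvt}, applied to the perverse $t$-structure, promotes the realization functor to the claimed equivalence $\op{Der}^{\op b}(\PMT_{\mathcal S}(X))\sirra\DMT_{\mathcal S}(X)$.

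For (2), the functor in question is the composite
$$
\PMT_{(B)}(G/B)\xrightarrow{\op{re}_H}\op{Perv}_{(B)}(G/B(\mathbb C))\xrightarrow{\mathrm{RH}}\mathcal D^{\mathrm{rh}}_{(B)}(G/B)\xrightarrow{\Gamma}\mathcal O_0 ,
$$
where $\op{re}_H$ is the Betti realization underlying the Hodge realization, extended to $\mathbb C$-coefficients: it is $t$-exact for the perverse $t$-structures because it commutes with the six operations and hence with the recollement along the $B$-orbit stratification that defines them, so it restricts to an exact functor between the hearts. The second arrow is the algebraic Riemann--Hilbert correspondence and the third is the quasi-inverse of Beilinson--Bernstein localization at the trivial central character; their composite is the classical identification of $B$-constructible perverse sheaves on $G/B$ with the principal block $\mathcal O_0$ already used in \cite{BGi,BGSo}. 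It therefore remains to see that the composite $F$ is a degrading functor. The formal conditions --- $F$ is exact and faithful, every object of $\PMT_{(B)}(G/B)$ has finite length, $F$ intertwines the Tate twist $(1)$ (the grading autoequivalence, which $\op{re}_H$ kills) with the identity, and $F$ identifies simple, respectively indecomposable projective, objects with their counterparts in $\mathcal O_0$ up to twist --- all follow from the stratified description of $\DMT_{(B)}(G/B)$ in \prettyref{thm:perv} and \prettyref{thm:PrOoi}, since $\op{re}_H$ carries the standard, costandard and simple objects attached to a Schubert cell and a Tate twist to their topological namesakes; faithfulness and finite length in fact drop out of the $\op{Hom}$-comparison below.

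The substantial point is the isomorphism
$$
\bigoplus_{n\in\mathbb Z}\op{Hom}_{\PMT_{(B)}(G/B)}\bigl(M,N(n)\bigr)\ \sira\ \op{Hom}_{\mathcal O_0}\bigl(FM,FN\bigr).
$$
By d\'evissage and \prettyref{thm:PrOoi} it suffices to treat $M,N$ in a set of projective generators; by \prettyref{thm:main1O}(1) these are the motives of Bott--Samelson resolutions, and by \prettyref{thm:main1O}(3) the additive category they span is the category $\op{H}^*(G/B)\op{-SMod}^{\mathbb Z}_{\op{ev}}$ of even Soergel modules, compatibly with the Tate twist. Since $\op{re}_H$ sends each motivic Bott--Samelson object to the pushforward of the constant sheaf along the associated tower of $\mathbb P^1$-bundles, i.e.\ to the topological Bott--Samelson perverse sheaf, and hence under the classical equivalence to the corresponding projective of $\mathcal O_0$, the displayed map becomes the comparison between morphisms of even Soergel modules, summed over all internal degree shifts, and morphisms between the underlying ungraded Soergel modules. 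That this comparison is an isomorphism, and that ungraded Soergel modules compute morphisms among the projectives of $\mathcal O_0$, is exactly Soergel's Endomorphismensatz and Struktursatz; it is precisely the mechanism by which the graded lift $\mathcal O_0^{\mathbb Z}$ was built in \cite{BGSo}. The hard part will be the bookkeeping in this last step: one has to verify that $\op{re}_H$ really does intertwine the geometric operations producing the motivic Bott--Samelson objects with those producing their topological counterparts, and that the three identifications (realization, Riemann--Hilbert, localization) are mutually compatible with the $B$-orbit recollement data, so that the abstract degrading criterion of \cite{BGSo} can be invoked.
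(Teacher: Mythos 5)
Part (1) of your proposal is correct and follows the paper's own route: the perverse t-structure from \prettyref{thm:perv}, enough projectives and $\Delta$-flags from \prettyref{prop:PrOn}, and the tilting equivalence of \prettyref{thm:PrOoi} via \prettyref{prop:rvt}, with the embedding hypothesis for \prettyref{prop:rvt} checked exactly as in \ref{conv9}.

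Part (2) has a genuine gap in the reduction step. You claim that by d\'evissage it suffices to treat $M,N$ among the projective generators of $\PMT_{(B)}(G/B)$, and that ``by \prettyref{thm:main1O}(1) these are the motives of Bott--Samelson resolutions.'' That identification is false: Bott--Samelson motives generate the heart of the \emph{weight} structure $\DMT_{(B)}(G/B)_{w=0}$ (\prettyref{cor:bsgen}), not the projectives of the perverse heart. Pure weight-zero objects realize to semisimple complexes, i.e.\ direct sums of shifted and twisted intersection complexes (cf.\ Corollary~\ref{bhu}(3)); they are in general not perverse, and the perverse objects among them are the \emph{simple} ones, not the projective ones. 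The projective perverse motives are instead characterized by their $\Delta$-flags (\prettyref{prop:PrOn}) and are not pointwise pure. For the same reason your assertion that $\op{re}_H$ sends a Bott--Samelson motive ``under the classical equivalence to the corresponding projective of $\mathcal{O}_0$'' is wrong: a topological Bott--Samelson sheaf corresponds to a projective of $\mathcal{O}_0$ only after applying Soergel's combinatorial functor (Struktursatz), not under the abelian equivalence $\op{Perv}_{(B)}(G/B;\mathbb{C})\cong\mathcal{O}_0$. So the objects you reduce to are not the ones to which the degrading criterion must be applied, and the subsequent appeal to the Endomorphismensatz/Struktursatz does not close the argument as written. (It could be repaired by working at the triangulated level, where Bott--Samelson motives do generate, but then you need a Ginzburg-type full faithfulness statement on the topological side as additional input.)

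The paper avoids all of this. The graded Hom-comparison is proved once and for all at the level of the triangulated categories (\prettyref{thm:fulg}) by d\'evissage to the standard and costandard generators $j_{s!}\underline{X}_s(n)$ and $j_{s*}\underline{X}_s(n)$, base change to a single stratum, and the computation over the point supplied by the grading condition \ref{eqf} and homotopy invariance; no identification of projectives, no Soergel theory, and no restriction to $G/B$ is needed. Since $\PMT_{\mathcal S}(X)$ has enough projectives and the realization sends them to projective perverse sheaves, the comparison restricts to the hearts, giving \prettyref{thm:cato}(2); composing with the exact equivalences furnished by the algebraic Riemann--Hilbert correspondence and Beilinson--Bernstein localization --- which is all that part (2) adds beyond \prettyref{thm:cato} --- preserves the degrading property trivially. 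Your route, even when repaired, is substantially heavier and would in effect re-derive the graded lift of $\mathcal{O}_0$ from Soergel's theorems rather than obtain it from the motivic formalism.
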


\begin{remark}
The algebraic Riemann--Hilbert correspondence of \cite{drew} allows to explicitly relate the category $\DMT_{\mathcal{S}}(X)$ in the Hodge situation to a suitable derived category of holonomic $\mathscr{D}$-modules on $X$. In particular, while in \prettyref{thm:main2} the link from the graded version to actual representations was rather weak,  the Hodge situation with its Riemann--Hilbert correspondence actually provides a direct relation between the above motivic graded categories $\PMT_{(B)}(G/B)$ with category $\mathcal{O}_0$, more precisely the category of finitely generated $\mathfrak g$-modules locally finite under a Borel subalgebra and annihilated by the central annihilator of the trivial one-dimensional representation, the latter realized via Beilinson--Bernstein localization as a category of $\mathscr{D}$-modules on the flag variety.
\end{remark}

\begin{remark}
This result recovers part of the results of
\cite{AK}. Eventually, our construction of a graded version
of category $\mathcal{O}$ also boils down to split some unwanted
extensions. However, using the motivic framework developed thus far
allows to shift all the technical difficulties into the notions of
motivic triangulated categories and representing sheaves of spectra
for enriched Weil cohomology theories. We hope this makes the actual
construction of graded versions of category $\mathcal{O}$  more transparent.  
\end{remark}

\subsection{Koszul duality remarks}

In Corollary \ref{bhu} we investigate the interaction of weights and
perversity in somewhat more detail. Suppose we are in the situation of \prettyref{thm:main1X}. Then for an affinely Whitney--Tate stratified variety $(X,\mathcal S)$  satisfying the pointwise purity conditions the above results put together provide equivalences of categories  
\begin{equation*}
  \op{Der}^{\op{b}} (\PMT_{\mathcal S} (X)) \stackrel{\approx}{\longrightarrow}
  \DMT_{\mathcal S}(X) \stackrel{\approx}{\longleftarrow}
  \op{Hot}^{\op{b}} (\DMT_{\mathcal S}(X)_{w=0}).
\end{equation*}
In this case, we sketch in \ref{Koszul}, why the corresponding 
category of perverse sheaves is governed by a Koszul ring.
A special case, in which all the above conditions are satisfied, is the case of a partial flag variety $X=G/P$ over $\DC$ with $\mathcal{S}=(B)$ the stratification by Borel orbits. In this case, the same arguments as in \cite{BGSo} exhibit $\PMT_{(B)}(G/P)$  as a graded version of the principal block of parabolic category $\mathcal{O}$, up to formally  adding a root of the Tate twist. Summing up, given a partial flag variety $X=G/P$ over  $\DC$   the above results provide equivalences of categories 
\begin{equation*}
  \op{Der}^{\op{b}} (\PMT_{(B)} (G/P)) \stackrel{\approx}{\longrightarrow}
  \DMT_{(B)}(G/P) \stackrel{\approx}{\longleftarrow}
  \op{Hot}^{\op{b}} (\DMT_{(B)}(G/P)_{w=0}).
\end{equation*}
The right hand side in turn is equivalent to $\op{Hot}^{\op{b}} (\op{H}^*(G/P)\op{-SModf}^\DZ_{\op{ev}})$ and can be identified as in  \cite{So-A}, up to formally adding a root of the Tate twist,  with the bounded derived category  of some graded version of some block of some category $\mathcal O$ for the Langlands dual Lie algebra, which is more or less singular depending on our parabolic. On the other hand, $\PMT_{(B)} (G/P)$ can be  identified as in \prettyref{thm:mainh},  up to formally adding a root of the Tate twist, with some graded version of a block of parabolic category $\mathcal O$. Putting  all this together, our above results allow to reconstruct the parabolic-singular duality of \cite{BGSo} in a slightly more concrete  way.  In particular, the Koszul self-duality for the principal block  of category $\mathcal O$ can be interpreted as   a completely canonical equivalence of triangulated $\DQ$-categories 
\begin{equation*}
K:  \DMT_{(B)}(G/B)\;\stackrel{\approx}{\longrightarrow}\; \DMT_{(B^\vee)}
 (G^\vee/B^\vee)
\end{equation*}
with the property $K(M(n))=(KM)(-n)[-2n]$ transforming indecomposable
injective perverse objects to simple perverse objects,
simple perverse objects to projective perverse objects, and perverse 
$\nabla$-sheaves to perverse $\Delta$-sheaves, by the way turning
their Weyl group parameters upside down. In \cite{BezYu} such an
equivalence is established with similar arguments in the setting of mixed $\ell$-adic sheaves. It would be very interesting to have a geometric
construction of such a functor. 

\subsection{Structure of the paper:}
We begin with a short recollection on triangulated categories of
motives in \prettyref{sec:cdbmot}, and a recollection on mixed Tate
motives in \prettyref{sec:mtm}. The Whitney--Tate condition and the
description of the category of stratified mixed Tate motives
is recalled in \prettyref{sec:dmt}, some more detailed discussion of
the Whitney--Tate condition is deferred to Appendix~\ref{sec:WTS}. In
\prettyref{sec:weights}, we 
explain the weight structure on stratified mixed Tate
motives. Pointwise purity and its relevance for the study of the heart
is discussed in  Sections \ref{sec:ptwise1} and \ref{sec:ptwise2}. In \prettyref{sec:ginzburg}, we reformulate
Ginzburg's full faithfulness result in the motivic setting. The latter
result is used in \prettyref{sec:TE} to prove a tilting result
identifying stratified mixed Tate motives  with the homotopy category of Soergel
modules. Some background on tilting can be found in
Appendix~\ref{sec:tilting}. In \prettyref{sec:tstructure}, we discuss the
perverse t-structure on stratified mixed Tate motives, and in
\prettyref{sec:cato} we show how $\ell$-adic and Hodge realization
functors of perverse mixed Tate motives provide a grading on category
$\mathcal{O}$.  

\subsection{Conventions:}
In a category $\mathcal{C}$, we denote by $\mathcal{C}(A,B)$ the set
of morphisms from $A$ to $B$. 
The symbol $\op{Hom}$ is reserved for ``inner hom''.
Homotopy categories are typically
denoted by $\op{Hot}$, derived categories by $\op{Der}$. For an
object $X$ of a category with a final object $*$, 
we denote by $\op{fin}=\op{fin}_X:X\to *$ the unique
morphism.  For an
$S$-scheme $X$, we denote in particular 
by $\op{fin}:X\to S$ the structure morphism. 
Most of the time, we work with the category $\op{Sch}/k$ of schemes which are separated and of finite type over a base field $k$. We occasionally might refer to those objects as varieties.

\subsection{Acknowledgements: }
We would like to thank Rahbar Virk, J\"org Wildeshaus and Jens Eberhard for  pointing out flaws and providing numerous helpful comments on a previous version. We are grateful to Fr{\'e}d{\'e}ric D{\'e}glise for pointing out the relevance of  Bradley Drew's thesis for applications to category
$\mathcal{O}$ in characteristic zero, and for explaining the results of the thesis to us. We also thank Bradley Drew for comments on the Hodge-part of this paper. Finally, we would like to thank the anonymous referees for careful reading and helpful suggestions.

\section{Triangulated categories of motives
  (d'apr{\`e}s Ayoub, Cisinski--D{\'e}glise,...)} 
\label{sec:cdbmot}

In this section, we provide a recollection of the construction and
properties of triangulated categories of motives  and the 
corresponding six-functor formalism. The general idea of motives and
the six functors as a formalization of cohomological properties of
algebraic varieties goes back to the development of \'etale 
cohomology by Grothendieck and his collaborators in the SGA
volumes. While the construction of an abelian category of motives
depends on difficult open conjectures, there are now reasonably good
triangulated categories of motives available. This  is based on
work of Voevodsky \cite{friedlander:suslin:voevodsky} who defined
triangulated categories of motives over a field. One possible approach for
establishing the existence and properties of the six functors in
motivic settings was proposed by Voevodsky and worked out in detail in
the thesis of Ayoub \cite{ayoub:thesis1,ayoub:thesis2}. Building on
this, constructions of 
triangulated categories of motives over rather general base schemes
together with constructions of the relating six functors were also given in
\cite{cisinski:deglise}. 

We will start the recollection with a discussion of the notion of
a motivic triangulated category, a framework for a motivic six-functor
formalism from \cite{cisinski:deglise}. Then we will recall two examples of motivic triangulated categories, namely \'etale motives \cite{ayoub:icm} and Beilinson motives \cite{cisinski:deglise}. After that, we discuss a third example of  motivic triangulated categories, namely the ones associated to enriched mixed Weil cohomology theories \cite{drew}. The results in our paper will be formulated for a general motivic triangulated category, but most of the representation-theoretic applications will additionally require the motivic triangulated category to satisfy the grading and weight conditions \ref{conditions}.

\subsection{Motivic triangulated categories}

As mentioned, there are several ways of encoding the properties of a
motivic six functor formalism. One possibility is the notion of
homotopical stable algebraic derivator of Ayoub \cite{ayoub:thesis1}, and
another is the notion of motivic triangulated categories of
Cisinski--D{\'e}glise \cite{cisinski:deglise}. We are going to list the
relevant properties of motivic triangulated categories which we will
need for the constructions in the  paper. For details, the reader is
referred to \cite{cisinski:deglise}.

\begin{definition}
Let $\mathscr{S}$ be a  category called the ``base category'' together with a class $\mathscr{P}$ of morphisms called ``$\mathscr{P}$-morphisms'', which is stable under composition and base change and contains all isomorphisms.
\begin{enumerate}
\item A  $2$-functor  $\mathscr{M}:\mathscr{S}^{\op{op}}\rightarrow\mathscr{C}at$ is called $\mathscr{P}$-fibred if for any morphism $p$ in $\mathscr{P}$ the functor $p^\ast$ has a left adjoint $p_\sharp$ and  for any cartesian square 
\begin{center}
  \begin{minipage}[c]{10cm}
    \xymatrix{
      Y \ar[r]^q \ar[d]_g & X \ar[d]^f \\
      T \ar[r]_p & S
    }
  \end{minipage}
\end{center}
with $p$ a $\mathscr{P}$-morphism the natural exchange transformation is an isomorphism $q_\sharp g^\ast\sira f^\ast p_\sharp$, cf. \cite[Definitions 1.1.1, 1.1.2 and 1.1.10]{cisinski:deglise}.  
\item Let $\mathscr{C}at^\otimes$ denote the 2-category of symmetric monoidal categories. A $\mathscr{P}$-fibred $2$-functor   $\mathscr{M}:\mathscr{S}^{\op{op}}\rightarrow\mathscr{C}at^\otimes$ is called monoidal if for any $\mathscr{P}$-morphism $f:T\rightarrow S$ and all $M$ and $N$ the natural exchange transformation is an isomorphism  $Ex(f_\sharp,\otimes_T):f_\sharp(M\otimes_T f^\ast N)\sira f_\sharp M\otimes_SN$,  cf. \cite[Definitions 1.1.21 and 1.1.27]{cisinski:deglise}.
\item Let $\mathscr{T}ri^\otimes$ denote the 2-category of triangulated monoidal categories. A $\mathscr{P}$-fibred monoidal $2$-functor $\mathscr{M}:\mathscr{S}^{\op{op}}\rightarrow\mathscr{T}ri^\otimes$ is called a $\mathscr{P}$-premotivic triangulated category if all pull-back functors $f^\ast$ admit triangulated right adjoints $f_\ast$ and all $M\otimes_S(-)$ admit right adjoints $\op{Hom}_S(M,-)$,   cf. \cite[Definition 1.4.2]{cisinski:deglise}. 
\end{enumerate}
\end{definition} 

%\begin{remark}
%\sout{The base category $\mathscr{S}$ should be adequate in the sense of \cite[Section 2.0]{cisinski:deglise}. For our purposes, we will only consider the category $\mathscr{S}=\op{Sch}/k$ of separated finite type schemes over some field, these are adequate. All schemes in the following will be separated of finite type over some field. The class $\mathscr{P}$ is taken to be the class of smooth morphisms of finite type in $\mathscr{S}$.}  \cemph{Ich wette, da"s wir diesen Begriff nie verwenden.} 
%\end{remark}
\begin{remark}
From now on, we  will only consider the base category $\mathscr{S}=\mathscr{S}_k$ of separated schemes of finite type over some field $k$ with $\mathscr{P}$ the class of smooth morphisms of finite type. A $\mathscr{P}$-premotivic category will henceforth just be called a premotivic category over $k$ or, in case the ground field is fixed anyhow, a premotivic category.
\end{remark}

%All schemes in the following will be separated of finite type over some field. \cemph{Derartige nebenbei eingestreute Sprachwechsel machen eine  Arbeit schwer lesbar. Mal sehen, was wir brauchen.}

Next we recall from \cite[Section 2]{cisinski:deglise} further
properties that make a premotivic category motivic.

\begin{definition} Let $k$ be a fixed ground field.
\begin{enumerate}
\item A premotivic triangulated category $\mathscr{T}$ satisfies the \emph{homotopy property} if for any scheme $S\in\mathscr{S}$  the counit of the adjunction associated to the  projection $p:\Ao_S\rightarrow S$  is an isomorphism $1\sira  p_\ast p^\ast$. 
% \begin{enumerate}
% \item The functor $p^\ast:\mathscr{T}(S)\rightarrow\mathscr{T}(\Ao_S)$
%   is fully faithful.
% \item 
%\end{enumerate}} 
\item A premotivic triangulated category $\mathscr{T}$ satisfies the \emph{stability property} if for every $S\in\mathscr S$ and every smooth $S$-scheme $f:X\rightarrow S$ with section $s:S\rightarrow X$ in $\mathscr{S}$, the associated Thom transformation $f_\sharp s_\ast$ is an   equivalence of categories. %\cemph{(OK, hier wei"s ich nicht, ob smooth per definitionem schon separiert und von endlichem Typ beinhaltet. Sonst m"u"ste das dazu.)}
\item A premotivic triangulated category $\mathscr{T}$ satisfies the \emph{localization property} if $\mathscr{T}(\emptyset)=0$ and for each closed immersion $i:Z\rightarrow S$ with open complement $j:U\rightarrow S$, the pair $(j^\ast,i^\ast)$ is conservative and the counit $i^\ast i_\ast\rightarrow 1$ is an isomorphism.
\item A premotivic triangulated category satisfies the \emph{adjoint property} if for any proper morphism $f$ in $\mathscr{S}$, the functor $f_\ast$ admits a right adjoint $f^!$.
\item A \emph{motivic triangulated category} over $k$ is a premotivic  triangulated category which satisfies the homotopy, stability, localization and adjoint properties, cf. \cite[Definition 2.4.45]{cisinski:deglise}.
\end{enumerate}
\end{definition}

\begin{remark}
Via the procedure in \cite[1.1.34]{cisinski:deglise} one can associate motives in $\mathscr{T}(S)$ to smooth morphisms of 
varieties $X\to S$. This uses the fact that $p^\ast:\mathscr{T}(S)\to\mathscr{T}(X)$ is required to have a left adjoint for smooth $p$, and one defines the motive $M_S(X)=p_\sharp(\mathbb{Q}_X)\in\mathscr{T}(S)$. Here $\mathbb{Q}_X$ denotes the tensor unit in $\mathscr{T}(X)$ 
which will also be denoted by $\underline{X}$ later on.
Sometimes we use the abbreviation $\mathscr T(X)=\mathscr T_X$ and often we use the notation   $\mathscr T_X(\mathcal F,\mathcal G)$ for  spaces of
morphisms in $\mathscr T(X)$. From the homotopy, stability and localization properties, one gets a computation of the motive of $\mathbb{P}^1_S$ as $M_S(\mathbb{P}^1)=\mathbb{Q}_S\oplus\mathbb{Q}_S(1)[2]$, where $\mathbb{Q}(1)$ is $\otimes$-invertible. This motive is called the \emph{Tate motive}, and tensoring with it is called \emph{Tate twist}. 
\end{remark}

For a motivic triangulated category $\mathscr T$, the following properties hold, as
proved in \cite{cisinski:deglise}. The list below is a variant of
the dix le{\c c}ons in \cite{hebert}, where we omitted those statements
that are contained in the definition above or are specific to
Beilinson motives.  

\begin{enumerate} 
\item For any morphism $f:Y\rightarrow X$ in $\mathscr{S}$, the adjunctions lead to natural isomorphisms 
$$
\op{Hom}_X(M,f_\ast N)\cong f_\ast \op{Hom}_Y(f^\ast M,N),
$$
cf. \cite[1.1.33]{cisinski:deglise}.
\item For any  morphism $f:Y\rightarrow X$ in $\mathscr{S}$, 
one can construct a further pair of adjoint functors, the {\bf exceptional 
  functors} 
$$
f_!:\mathscr T(Y)\leftrightarrows
\mathscr T(X):f^! 
$$
% \sout{$$
% f_!:\Bmot(Y)\leftrightarrows
% \Bmot(X):f^! 
% $$}
which fit together to form a covariant (resp. contravariant) 
$2$-functor  $f\mapsto f_!$ (resp. $f\mapsto f^!$). 
There exists a natural transformation $\alpha_f:f_!\rightarrow
f_\ast$ which is an isomorphism when $f$ is proper. Moreover,
$\alpha$ is a morphism of $2$-functors. 
\item For any cartesian square 
\begin{center}
  \begin{minipage}[c]{10cm}
    \xymatrix{
      X'\ar[r]^{g'} \ar[d]_{f'} & X\ar[d]^f \\
      Y' \ar[r]_g & Y
    }
  \end{minipage}
\end{center}
there exist natural isomorphisms of functors
$$
g^\ast f_!\stackrel{\sim}{\longrightarrow}f_!'g'^\ast,\qquad
g'_\ast f'^!\stackrel{\sim}{\longrightarrow}f^!g_\ast,
$$ 
cf. \cite[Theorem 2.2.14]{cisinski:deglise}
\item For any morphism  $f:Y\rightarrow X$ in
  $\mathscr{S}$, there exist natural isomorphisms 
$$
Ex(f_!^\ast,\otimes):(f_!K)\otimes_XL\stackrel{\sim}{\longrightarrow}
f_!(K\otimes_Y f^\ast L),
$$
$$
\op{Hom}_X(f_!L,K)\stackrel{\sim}{\longrightarrow}
f_\ast \op{Hom}_Y(L,f^!K),
$$
$$
f^!\op{Hom}_X(L,M)\stackrel{\sim}{\longrightarrow}
\op{Hom}_Y(f^\ast L,f^!M),
$$
cf. \cite[Theorem 2.2.14]{cisinski:deglise}.
\item For $f:X\to Y$ a smooth morphism of relative dimension $d$,
  there are canonical natural isomorphisms  
$$
\mathfrak{p}_f:f_\sharp\rightarrow f_!(d)[2d], \qquad
\mathfrak{p}_f^\prime:f^\ast\rightarrow f^!(-d)[-2d],
$$
cf. \cite[Theorem 2.4.50]{cisinski:deglise}.
\item An alternative formulation of the {\bf localization property},
  cf. \cite[Definition   2.3.2]{cisinski:deglise}: for $i:Z\rightarrow
  X$ a closed immersion with open complement $j:U\to 
X$, there are distinguished triangles of natural transformations
$$
j_!j^!\rightarrow 1\rightarrow i_\ast i^\ast \rightarrow j_!j^![1]
$$
$$
i_\ast i^!\to 1\to j_\ast j^\ast \to i_\ast i^! [1]
$$
where the first and second maps are the counits and units  of the
respective adjunctions, cf. \cite[Proposition 2.3.3, Theorem
2.2.14]{cisinski:deglise}. 

\item For any closed immersion $i:Z\rightarrow S$ of pure codimension
  $n$ between regular schemes in $\mathscr S$, 
the standard map $\op{M}_Z(Z)\rightarrow
    i^! \op{M}_S(S)(n)[2n]$ is an isomorphism, cf. \cite[Theorem
    14.4.1]{cisinski:deglise}. 
\item 
Define the subcategory of constructible objects $\mathscr{T}^c(S)\subset \mathscr{T}(S)$ to be the thick full subcategory generated by $\op{M}_S(X)(n)$ for $n\in\mathbb{Z}$ and $X\to S$ smooth. This subcategory coincides with the full subcategory of compact objects if motives of smooth varieties are compact, cf. \cite[Proposition 1.4.11]{cisinski:deglise}. Under some conditions satisfied in all cases we consider, the motives of smooth varieties are compact and the six functors preserve compact objects, cf. \cite[Section 4.2]{cisinski:deglise} and \cite[Theorem 15.2.1]{cisinski:deglise} for the  statement for Beilinson motives. 
\item  For $f:X\to S=\op{Spec} k$ a morphism in $\mathscr{S}$, the motive  $f^!(\op{M}_S(S))$ is a dualizing object, i.e., setting $D_X(M)=\op{Hom}(M,f^!(\op{M}_S(S)))$ the natural map $M\rightarrow D_X(D_X(M))$  is an isomorphism for all $M\in \mathscr{T}^c(X)$. For  all $M,  N\in \mathscr{T}^c(X)$, there is a canonical duality isomorphism  
$$
D_X(M\otimes D_X(N))\simeq \op{Hom}_X(M,N).
$$ 
Furthermore, for any morphism $f:Y\rightarrow X$ in $\mathscr{S}$ and any $M\in\mathscr{T}^c(X)$ and $N\in \mathscr{T}^c(Y)$, there are natural isomorphisms  
$$
D_Y(f^\ast(M))\simeq f^!(D_X(M)),\qquad 
f^\ast(D_X(M))\simeq D_Y(f^!(M))
$$
$$
D_X(f_!(N))\simeq f_\ast(D_Y(M)),\qquad
f_!(D_Y(N))\simeq D_X(f_\ast(N)),
$$
cf. \cite[Theorem 15.2.4]{cisinski:deglise}.
\end{enumerate}

\subsection{Examples: \'etale motives, Beilinson motives}
\label{CBM} 

Next, we recall two instances of motivic triangulated categories,
namely \'etale motives \cite{ayoub:icm} and Beilinson motives
\cite{cisinski:deglise}. 
For \'etale motives, the construction of the categories
is carried out in detail in \cite[Section 4.5]{ayoub:thesis2}, and
an overview of the construction is given in \cite[Section
2.3]{ayoub:icm}. Beilinson motives are constructed in \cite[Section
14]{cisinski:deglise}. In the case of rational coefficients, which is the only relevant case for our work, both constructions turn out to lead to the same result and the reader can choose the construction they prefer.

Let $S$ be a separated scheme of finite type over a field $k$. The category ${\op{Sm}}/S$  of smooth schemes of finite type over $S$ admits among others  the Nisnevich topology and the \'etale topology. For $\tau\in\{\op{Nis},\et\}$, we denote by $\op{Sh}_{\tau}({\op{Sm}}/S,\mathbb{Q})$ the category of $\tau$-sheaves of $\mathbb{Q}$-vector spaces on $\op{Sm}/S$, cf. \cite[Example 5.1.4]{cisinski:deglise}. There is a model structure on the category of unbounded complexes in $\op{Sh}_{\tau}({\op{Sm}}/S,\mathbb{Q})$ whose weak equivalences are the quasi-isomorphisms, its homotopy category is the derived category $\op{Der}(\op{Sh}_{\tau}({\op{Sm}}/S,\mathbb{Q}))$. For $X\in{\op{Sm}}/S$ a smooth $S$-scheme, $\mathbb{Q}(X)$ denotes the ``representable'' sheaf associating to $U\in{\op{Sm}}/S$ the $\mathbb{Q}$-vector space freely generated by $\op{Sch}_S(U,X)$.  
One can then use a Bousfield localization (on the model category level) or a Verdier quotient (on the derived category level) to enforce $\Ao$-invariance, i.e., to turn the natural projection $\mathbb{Q}(X\times\Ao)\to \mathbb{Q}(X)$ into a quasi-isomorphism for any smooth $S$-scheme $X$, cf. \cite[5.2.b]{cisinski:deglise}.  The result is  the {\bf effective  $\Ao$-derived category}, denoted by $\op{Der}_{\Ao}^{\op{eff}}(\op{Sh}_{\tau}({\op{Sm}}/S,\mathbb{Q}))$,  cf. \cite[Example 5.2.17]{cisinski:deglise}. 

The monomorphism $1:S\rightarrow\mathbb{G}_{\op{m},S}$ in the category ${\op{Sm}}/S$ gives rise to a morphism $1:\DQ(S)\rightarrow\DQ(\mathbb{G}_{\op{m},S})$  of representable sheaves, viewed as complexes concentrated in degree  $0$. The cone of this morphism in $\op{Der}_{\Ao}^{\op{eff}}(\op{Sh}_{\tau}({\op{Sm}}/S,\mathbb{Q}))$ is  called suspended Tate $S$-premotive $\mathbb{Q}_S(1)[1]$.
%\cemph{(Sollte es hier nicht eher $\mathbb{Q}_S(-1)[-1]$ hei\ss en?)}\matthias{Nein, das Motiv von $\mathbb{G}_{\op{m}}$ ist $\mathbb{Q}\oplus\mathbb{Q}(1)[1]$} Wolfgang: OK, sehe ich ein.
 One can then use the formalism of symmetric spectra, cf. \cite{hovey}, to invert tensoring with the suspended Tate $S$-premotive, cf. \cite[Section 5.3]{cisinski:deglise}. The homotopy category of the corresponding model structure on symmetric spectra in the effective $\Ao$-derived category is called the {\bf stable $\Ao$-derived category} $\op{Der}_{\Ao,\tau}(S,\mathbb{Q})$, cf. \cite[Example 5.3.31]{cisinski:deglise}. 

%Alternatively, it is possible to  construct an equivalent
%$\mathbb{Q}$-linear triangulated category using the 
%rationalized stable homotopy category $\op{SH}_{\mathbb{Q}}(S)$ of
%finite type smooth schemes over $S$. \cemph{\cite{??}}

At this point, for $\tau=\et$, we can define the category of \'etale
motives with rational coefficients to be the \'etale stable
$\Ao$-derived category with rational coefficients, $\op{Der}_{\Ao,\et}(S,\mathbb{Q})$.

On the other hand, for $\tau=\op{Nis}$,  Beilinson motives are
constructed as a  category of modules over the $0$-th graded part of
rational K-theory: 
following  the work of Voevodsky, Riou and Panin--Pimenov--R\"ondigs, 
there exists for each scheme $S$ a spectrum $\op{KGL}_S$ representing
Weibel's homotopy invariant K-theory in the stable homotopy category
$\op{SH}(S)$. With rational coefficients, the ring spectrum
$\op{KGL}_{\mathbb{Q},S}$ decomposes as a direct sum of Adams eigenspaces
$\op{KGL}_S^{(i)}$. The zeroth eigenspace $\op{KGL}_S^{(0)}$ is called the
{\bf Beilinson motivic cohomology spectrum} $\op{H}_{\Be,S}$,
cf. \cite[Definition 14.1.2]{cisinski:deglise}.  
The category  $\Bmot(S)$ of Beilinson motives over $S$ is then defined
to be the Verdier quotient of the $\Ao$-derived category
$\op{Der}_{\Ao,\op{Nis}}(S,\mathbb{Q})$ by the subcategory of $\op{H}_{\Be}$-acyclic objects.  Alternatively (glossing over the difficulties making $\op{H}_{\Be,S}$ a strict commutative ring spectrum), one can construct $\Bmot(S)$ as homotopy category of a model structure on a category of $\op{H}_{\Be,S}$-modules. For $X\in{\op{Sm}}/S$ a smooth $S$-scheme, the image of $\mathbb{Q}(X)$ in $\Bmot(S)$ is defined to be the motive $\op{M}_S(X)$ of $X$. 

Finally, we need to explain how morphisms between motives as above are
computed. For our applications, we will only need to compute morphisms between Tate motives. Working over a base field $k$ and with rational coefficients, the result is easy enough to state, cf. \cite[Theorem 4.12]{ayoub:icm} for the \'etale case and  \cite[Corollary 14.2.14]{cisinski:deglise} for the Beilinson case:
$$
\mathbf{DA}^{\et}_{\op{Spec}(k)}(\mathbb{Q},\mathbb{Q}(p)[q])\cong
\Bmot(\mathbb{Q},\mathbb{Q}(p)[q])\cong
\op{gr}_\gamma^p\op{K}_{2p-q}(k)_{\mathbb{Q}}.
$$
This means that in both cases the morphisms between Tate motives are given in terms of graded pieces of the $\gamma$-filtration on rational algebraic K-groups. The basic reason for this coincidence is the fact that rationally algebraic K-theory and \'etale K-theory are isomorphic. In particular, for mixed Tate motives with rational coefficients  discussed later, it does not matter which of the above categories we are working in. 

\subsection{Realization functors and mixed Weil cohomology theories}
\label{sec:realization}

Next, we discuss realization functors on the category of Beilinson
motives, cf. \cite[Section 17]{cisinski:deglise}. 

Fix a coefficient field $\mathbb{K}$ of characteristic $0$. For a
sheaf $E$ of commutative differential graded $\mathbb{K}$-algebras on
$\op{Sm}/k$, there is an associated cohomology theory
$$
\op{H}^n(X,E)
:=\op{Der}_{\Ao}^{\op{eff}}(\op{Sh}_{\op{Nis}}(\op{Sm}/k,\mathbb{Q}))
(\mathbb{Q}(X),E[n]).
$$
In the above, $X\in \op{Sm}/k$ is a smooth scheme.
This cohomology theory is called a {\bf mixed Weil cohomology
  theory}, if  it satisfies the following axioms,
cf. \cite[17.2.1]{cisinski:deglise}:
\begin{enumerate}
\item $\op{H}^0(\op{Spec}k,E)\cong \mathbb{K}$ and $\op{H}^i(\op{Spec}
  k,E)\cong 0$ for $i\neq 0$. 
\item $\dim_{\mathbb{K}}\op{H}^i(\mathbb{G}_{\op{m}},E)=\left\{\begin{array}{ll}
1&i=0 \textrm{ or }i=1;\\0&\textrm{otherwise.}
\end{array}\right.$
\item For any two smooth $k$-schemes $X$ and $Y$, the K\"unneth
  formula holds:
$$
\bigoplus_{p+q=n} \op{H}^p(X,E)\otimes_{\mathbb{K}} \op{H}^q(Y,E)\cong
\op{H}^n(X\times_k Y,E).
$$
\end{enumerate}
By \cite[Proposition 17.2.4]{cisinski:deglise}, any mixed Weil
cohomology theory $E$ is representable by a commutative ring spectrum
$\mathcal{E}$ in $\Bmot(k)$. In \cite[17.2.5]{cisinski:deglise},
realization functors on the category of Beilinson motives are defined
by considering the homotopy category of $\mathcal{E}$-modules over $X$
and taking the realization functor to be 
$$
\Bmot(X)\rightarrow \op{Der}(X,\mathcal{E}): M\mapsto
\mathcal{E}_X\otimes_X^L M.
$$ 
In the above, the category $\op{Der}(X,\mathcal{E})$ is the homotopy
category of a model structure on the category of $\mathcal{E}$-modules
in $\Bmot(X)$. These realization functors preserve compact objects,
cf. \cite[17.2.18]{cisinski:deglise}, hence we obtain realization
functors 
$$
\Bmotc(X)\rightarrow \op{Der}_c(X,\mathcal{E}): M\mapsto
\mathcal{E}_X\otimes_X^L M. 
$$
Both these realization functors commute with the six functor
formalism. Moreover, for any field extension $L/k$, there is an
equivalence of symmetric monoidal triangulated categories
$$
\op{Der}(L,\mathcal{E})\cong \op{Der}(\mathbb{K}\textrm{-mod})
$$
between the $\mathcal{E}$-modules over $L$ and the derived category of
$\mathbb{K}$-modules. This equivalence restricts to an equivalence
$\op{Der}_c(L,\mathcal{E})\cong
\op{Der}^{\op{b}}(\mathbb{K}\textrm{-modf})$ between the 
compact $\mathcal{E}$-modules over $L$ and the bounded derived
category of finitely generated $\mathbb{K}$-modules.

We list some examples of mixed Weil cohomology theories to which the
above results can be applied, cf. \cite[Section 3]{cd:weil}:

\begin{enumerate}
\item Algebraic de Rham cohomology is a mixed Weil cohomology with
  associated commutative ring spectrum $\mathcal{E}_{dR}$,
  cf. \cite[Section 3.1]{cd:weil}. 
\item Rigid cohomology is a mixed Weil cohomology theory with
  associated commutative ring spectrum $\mathcal{E}_{rig}$,
  cf. \cite[Section 3.2]{cd:weil}. 
\item $\ell$-adic cohomology is a mixed Weil cohomology theory with
  associated commutative ring spectrum $\mathcal{E}_{\op{et},\ell}$,
  cf. \cite[Section 3.3]{cd:weil}. 
\end{enumerate}

In particular, the $\ell$-adic and de Rham realization functors will
be relevant for our discussion.

\subsection{Enriched mixed Weil cohomology theories}
There is one other example of motivic triangulated category which we will consider. It arises from an enriched refinement of the mixed Weil cohomology theories discussed above. %In a way, these can be seen as motives with coefficients in (the spectra representing) an enriched mixed Weil cohomology  theory. 
The existence of such motivic triangulated categories and their properties were worked out in the thesis of Drew, cf. \cite{drew}. 

Recall from \cite[Definition 2.1.1]{drew} the following definition of
a mixed Weil cohomology theory enriched in a Tannakian category. 

\begin{definition} 
Let $S$ be a noetherian scheme of finite Krull dimension, 
and let $\mathcal{T}_0$ be a Tannakian category of finite Ext-dimension, 
and denote $\mathcal{T}=\op{Ind-}\mathcal{T}_0$. A mixed Weil
cohomology theory enriched in $\mathcal{T}$ is a presheaf $\op{E}_S$ of
commutative differential graded algebras in $\mathcal{T}$ on the
category of smooth affine $S$-schemes, satisfying the following
axioms:
\begin{enumerate}[(W1)]
\item descent for Nisnevich hypercoverings,
\item $\mathbb{A}^1$-invariance,
\item normalization, i.e., $\op{E}_S(S)$ is contractible,
\item for $\sigma_1:S\mapsto \mathbb{G}_{\op{m},S}$ the unit section,
  the object
  $\mathbb{Q}_{\mathcal{T}}(-1):=\ker(\op{E}_S(\sigma_1)[1])$ belongs
  to the heart of the natural t-structure of $\op{Der}(\mathcal{T})$
  and induces an autoequivalence
  $\mathbb{Q}_{\mathcal{T}}(-1)\otimes^{\op{L}}_{\mathcal{T}}(-)$ of
  $\op{Der}(\mathcal{T})$. 
\item K\"unneth formula, i.e., for any smooth affine schemes $X,Y$
  over $S$, the canonical morphism
  $\op{E}_S(X)\otimes^{\op{L}}_{\mathcal{T}}\op{E}_S(Y)\to\op{E}_S(X\times_S
  Y)$ is a weak equivalence. 
\end{enumerate}
\end{definition}

\begin{example}[\cite{drew}, Theorem 2.1.8]
Associating to a smooth $\mathbb{C}$-scheme the singular cohomology of
its associated complex manifold, equipped with its polarisable mixed
Hodge structure, yields a mixed Weil cohomology 
theory with coefficients in  $\mathcal{MHS}^{\op{pol}}_{\mathbb{Q}}$.
\end{example}

\begin{proposition}[\cite{drew}, Theorem 2.1.4]
Let $S$ be a noetherian scheme of finite Krull dimension, and let
$\mathcal{T}_0$ be a Tannakian category of finite Ext-dimension, 
and denote $\mathcal{T}=\op{Ind-}\mathcal{T}_0$. For a mixed Weil
cohomology theory $\op{E}_S$ enriched in $\mathcal{T}$, there exists a
commutative ring spectrum $\mathcal{E}_S$ in the category
$\mathcal{SH}(S,\mathcal{T})$ of symmetric
$\mathbb{Q}_{\mathcal{T}}(1)$-spectra over $S$ with values in
$\mathcal{T}$-complexes which represents $\op{E}_S$.
\end{proposition}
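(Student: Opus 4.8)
The plan is to mimic the construction of the ring spectrum representing an ordinary mixed Weil cohomology theory, \cite[Proposition 17.2.4]{cisinski:deglise}, but carried out internally to the Tannakian coefficient category $\mathcal{T}$ instead of over $\mathbb{Q}$-vector spaces. First I would build the ambient category $\mathcal{SH}(S,\mathcal{T})$ in three stages, exactly parallel to the construction recalled in \prettyref{sec:cdbmot}: start from presheaves on $\op{Sm}/S$ with values in complexes in $\mathcal{T}$, equip this with its projective model structure, then perform the two Bousfield localizations enforcing Nisnevich (hyper)descent and $\Ao$-invariance to obtain the effective $\Ao$-derived category with $\mathcal{T}$-coefficients, and finally stabilize by passing to symmetric $\mathbb{Q}_{\mathcal{T}}(1)$-spectra in the sense of \cite{hovey} (using the finiteness hypotheses on $S$ and on $\mathcal{T}_0$ to guarantee the requisite properness and compact generation). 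Axioms (W1) and (W2) say exactly that the presheaf $\op{E}_S$ --- defined a priori only on smooth \emph{affine} $S$-schemes, hence first left Kan extended to all of $\op{Sm}/S$ along the inclusion, which is harmless since smooth affines form a basis for the Nisnevich topology --- descends to a Nisnevich-local, $\Ao$-local object of the effective category.

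Second, I would promote this effective object to a spectrum. Axiom (W4) isolates the Tate object $\mathbb{Q}_{\mathcal{T}}(-1)$ as an invertible object of the heart of $\op{Der}(\mathcal{T})$, and together with the normalization (W3) and K\"unneth (W5) axioms it yields the $\mathbb{P}^1$-suspension (projective bundle) computation showing that $\mathbb{Q}_{\mathcal{T}}(1)\otimes(-)$ acts invertibly on the cohomology carried by $\op{E}_S$. This is precisely the input needed to assemble an $\Omega$-spectrum whose bonding maps come from the orientation class associated with the generator of $\op{H}^1(\mathbb{G}_{\op{m}},\op{E}_S)$ singled out in (W4); one then checks, as in the non-enriched case, that the resulting symmetric spectrum $\mathcal{E}_S$ is stably fibrant and has the right homotopy sheaves in every bidegree.

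Third, I would upgrade $\mathcal{E}_S$ to a \emph{commutative} ring spectrum. The CDGA structure on $\op{E}_S$ in $\mathcal{T}$ makes the effective presheaf a commutative monoid, and the two localizations and the stabilization functor are all (lax) symmetric monoidal, so they transport commutative monoids to commutative monoids; the remaining point is to realize the homotopy-commutative structure as a strictly commutative monoid in the model category of symmetric $\mathbb{Q}_{\mathcal{T}}(1)$-spectra, for which one uses a positive model structure on commutative monoids together with the $\mathbb{Q}$-linearity of $\mathcal{T}$ (the same rectification phenomenon that underlies the strictification of $\op{H}_{\Be,S}$ in \cite{cisinski:deglise}). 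Representability --- that morphisms in $\mathcal{SH}(S,\mathcal{T})$ out of $\Sigma^\infty_+X$ into twists and shifts of $\mathcal{E}_S$ reproduce $\op{H}^\ast(X,\op{E}_S)$ as an object of $\mathcal{T}$ --- then follows formally, since each localization and the stabilization are designed precisely so that maps out of representables compute the localized, stabilized invariant, which the axioms identify with $\op{E}_S$.

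The step I expect to be the main obstacle is the strictification in the third paragraph: producing an honest strictly commutative monoid (rather than merely an $E_\infty$- or homotopy-coherently commutative object) in the stable model structure on $\mathbb{Q}_{\mathcal{T}}(1)$-spectra, with a multiplication compatible with the six-functor formalism to be built on $\mathcal{SH}(-,\mathcal{T})$. This is the enriched counterpart of the difficulty \cite{cisinski:deglise} flag for Beilinson motivic cohomology, and it is where the $\mathbb{Q}$-linear, Tannakian nature of $\mathcal{T}$ is genuinely used; the set-up of the model structures and the verification of the monoidal compatibilities of the localizations in the first two steps are routine but lengthy, and are carried out in detail in \cite{drew}.
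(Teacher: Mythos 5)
The paper gives no proof of this proposition at all: it is quoted directly from Drew's thesis (Theorem 2.1.4), so the ``official'' argument is exactly the construction you outline --- presheaves of $\mathcal{T}$-valued complexes on $\op{Sm}/S$, Nisnevich- and $\Ao$-localization, stabilization by symmetric $\mathbb{Q}_{\mathcal{T}}(1)$-spectra, and rectification of the commutative multiplication, in parallel with \cite[Proposition 17.2.4]{cisinski:deglise}. Your sketch matches that route (and correctly identifies the strictification of the ring structure as the genuinely delicate point), so, like the paper itself, it amounts to deferring the substantive verifications to \cite{drew}; there is nothing further to compare.
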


\begin{proposition}[\cite{drew}, Proposition 2.2.1]
The assignment $X\mapsto \op{Mod}(\mathcal{E}_X)$ extends to a
monoidal motivic triangulated category $X\mapsto
\op{Der}(\mathcal{E}_X)$. In 
particular, the full six functor formalism (including the duality
statements for the compact objects) applies to $\op{Der}(\mathcal{E}_X)$.  
\end{proposition}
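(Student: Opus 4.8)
The plan is to deduce the proposition from the general machinery of Cisinski--D\'eglise in two stages: first to show that the ``$\mathcal{T}$-linear'' motivic stable homotopy category $X\mapsto\mathcal{SH}(X,\mathcal{T})$ of the previous proposition is itself a monoidal motivic triangulated category; and then to show that the fibred category of modules over the cartesian commutative ring spectrum $\mathcal{E}$ inherits this structure together with the full six-functor formalism and the constructible duality theory.

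For the first stage I would rerun the construction of the stable $\Ao$-derived category $\op{Der}_{\Ao,\tau}(-,\mathbb{Q})$ recalled above, but with (a suitable combinatorial model for) the category of $\mathcal{T}$-complexes in place of complexes of $\mathbb{Q}$-vector spaces: form presheaves of $\mathcal{T}$-complexes on $\op{Sm}/X$, left Bousfield localize to impose Nisnevich descent and $\Ao$-invariance, and then stabilize with respect to the Tate object $\mathbb{Q}_{\mathcal{T}}(1)$, which is $\otimes$-invertible by axiom (W4). Since $\mathcal{T}$-complexes form a $\mathbb{Q}$-linear, stable, symmetric monoidal, combinatorial model category enjoying all the formal properties used in the vector-space case, the arguments of \cite{ayoub:thesis1,ayoub:thesis2} and \cite{cisinski:deglise} establishing that one obtains a monoidal premotivic category satisfying the homotopy, stability, localization and adjoint properties carry over essentially verbatim --- stability holds by construction, homotopy invariance from the $\Ao$-localization, and localization together with the existence of $f^!$ from the general theorems of \cite[Section~2]{cisinski:deglise}. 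This is what \cite[Chapter~1]{drew} carries out.

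For the second stage I would use that $\mathcal{E}$ is supplied as a \emph{cartesian} commutative monoid, i.e.\ that the structural maps $f^\ast\mathcal{E}_X\to\mathcal{E}_Y$ are isomorphisms for every $f\colon Y\to X$. Setting $\op{Der}(\mathcal{E}_X)$ to be the homotopy category of $\mathcal{E}_X$-modules in the model category underlying $\mathcal{SH}(X,\mathcal{T})$, one has the free/forgetful adjunction to $\mathcal{SH}(X,\mathcal{T})$; base change $f^\ast$ on modules is induced by $f^\ast$ on underlying objects via $f^\ast\mathcal{E}_X\cong\mathcal{E}_Y$, its right adjoint is restriction of scalars followed by $f_\ast$, the functor $f_\sharp$ for smooth $f$ is induced through the projection formula in $\mathcal{SH}(-,\mathcal{T})$, and likewise $f_!$ and $f^!$; the monoidal structure is the relative tensor product $\otimes_{\mathcal{E}_X}$, compatible with $f^\ast$ because $\mathcal{E}$ is cartesian, and internal homs exist by compact generation. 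That this premotivic category of modules is again motivic is then checked one axiom at a time after applying the conservative forgetful functors, where each reduces to the statement already proved in the ambient category; likewise the constructible objects $\op{Der}_c(\mathcal{E}_X)$ --- the thick subcategory generated by the free modules on the $\op{M}_X(Y)(n)$, which are the compact objects --- are preserved by the six functors and carry a dualizing object with biduality, transported from the finiteness and duality theorems for $\mathcal{SH}(-,\mathcal{T})$ using that $\mathcal{E}_X$ is a compact unit.

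The hard part is not this formal bookkeeping but ensuring that the two genuinely geometric inputs --- the adjoint property (existence of $f^!$ for proper $f$, hence the exceptional functoriality) and the finiteness and duality statements for constructible objects --- actually survive the passage first to $\mathcal{T}$-enriched coefficients and then to $\mathcal{E}$-modules, since these rest on continuity arguments, localization triangles and, in positive characteristic, resolution-of-singularities reductions in \cite{cisinski:deglise}. The cleanest organization is to isolate once and for all a transfer lemma --- a monoidal motivic triangulated category with the constructible six-functor formalism passes to modules over any cartesian commutative monoid whose underlying object is the compact unit --- and then to apply it to $\mathscr{T}=\mathcal{SH}(-,\mathcal{T})$ with the ring spectrum $\mathcal{E}$. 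This is the strategy of \cite[Chapter~2]{drew}.
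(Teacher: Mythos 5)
Your proposal is correct in outline, but note that the paper itself gives no proof of this statement: it is quoted verbatim from Drew's thesis (\cite[Proposition 2.2.1]{drew}), which is exactly the two-stage strategy you describe --- first establish that $X\mapsto\mathcal{SH}(X,\mathcal{T})$ is a monoidal motivic triangulated category by rerunning the Ayoub/Cisinski--D\'eglise constructions with $\mathcal{T}$-complex coefficients, then pass to modules over the cartesian commutative ring spectrum $\mathcal{E}$ and transfer the axioms and the constructible duality through the conservative forgetful functors. So you have reconstructed essentially the same approach as the cited source, and nothing further is needed here.
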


The particular enriched mixed Weil cohomology theory relevant for us
is a simplification of the abovementioned Hodge realization, which we
want to discuss now. Before, we shortly recall some statements
concerning real mixed Hodge structure from \cite{deligne}:

\begin{definition}
Let $V$ be a finite-dimensional $\mathbb{R}$-vector space. A real mixed
Hodge structure on $V$ is given by
\begin{enumerate}
\item a finite ascending \emph{weight filtration} $W^{\leq n}$ on
  $V$,
\item a finite descending \emph{Hodge filtration} $F^{\geq p}$ on
  $V\otimes_{\mathbb{R}}\mathbb{C}$,
\end{enumerate}
such that for $p+q\neq n$ and $\overline{F}^{\geq p}$ the conjugate
filtration to $F^{\geq p}$, we have 
$$
\op{Gr}^p_F \op{Gr}^q_{\overline{F}} \op{Gr}^n_W(V)=0.
$$
\end{definition}

\begin{proposition}
The category  of real mixed Hodge structures is an
abelian rigid tensor category. The functor sending a mixed Hodge
structure $M$ to its underlying real vector space is a fiber functor
making the category of real mixed Hodge structures a neutral Tannakian
category.  
\end{proposition}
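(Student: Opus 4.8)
The plan is to reduce almost everything to linear algebra of bigraded vector spaces by invoking Deligne's canonical functorial splitting of a mixed Hodge structure, to deduce from it the abelian, tensor and rigid structures, and finally to obtain the Tannakian statement from the standard reconstruction criterion. Recall that to a real mixed Hodge structure $(V,W^{\leq\bullet},F^{\geq\bullet})$ Deligne attaches a bigrading $V\otimes_{\DR}\DC=\bigoplus_{p,q}I^{p,q}$, functorial in $V$, satisfying $W^{\leq n}\otimes_{\DR}\DC=\bigoplus_{p+q\leq n}I^{p,q}$ and $F^{\geq p}=\bigoplus_{p'\geq p,\,q}I^{p',q}$, and with $\overline{I^{p,q}}\equiv I^{q,p}$ modulo $\bigoplus_{r<p,\,s<q}I^{r,s}$, cf. \cite{deligne}. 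Since a morphism of mixed Hodge structures preserves both $W$ and $F$, it preserves the $I^{p,q}$; in particular it is \emph{strictly} compatible with both filtrations. This strictness, equivalently the existence of the functorial splitting, is the one substantive input, and I expect it to be the main obstacle: everything afterwards is formal.

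First I would check that the category is abelian with the forgetful functor $\omega\colon M\mapsto V$ to finite-dimensional real vector spaces exact and faithful. Given a morphism $f\colon M\to M'$, equip $\ker f$ and $\op{coker}f$ (as vector spaces) with the induced and quotient filtrations; strictness of $f$ together with functoriality of the Deligne splitting shows these are again mixed Hodge structures, so kernels and cokernels exist and are computed on underlying vector spaces. Strictness of $f$ forces the canonical map $\op{coim}f\to\op{im}f$ to be an isomorphism of mixed Hodge structures, since its underlying linear map is bijective and both filtrations are then matched. Hence the category is abelian and $\omega$ is $\DR$-linear, exact and faithful.

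Next, the tensor structure. On $V\otimes_{\DR}V'$ I put the convolution filtrations $W^{\leq n}(V\otimes V')=\sum_{a+b=n}W^{\leq a}V\otimes W^{\leq b}V'$ and $F^{\geq p}\bigl((V\otimes V')\otimes_{\DR}\DC\bigr)=\sum_{a+b=p}F^{\geq a}(V\otimes_{\DR}\DC)\otimes F^{\geq b}(V'\otimes_{\DR}\DC)$. Then $\op{Gr}^W_n(V\otimes V')=\bigoplus_{a+b=n}\op{Gr}^W_aV\otimes\op{Gr}^W_bV'$, and since the tensor product of pure Hodge structures of weights $a$ and $b$ is pure of weight $a+b$, the defining vanishing axiom holds; the unit object is $\DR$ in weight $0$ and Hodge type $(0,0)$, and the associativity and symmetry constraints of vector spaces are visibly morphisms. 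For rigidity, on the dual vector space $V^{\vee}$ I put $W^{\leq n}(V^{\vee})=(W^{\leq -n-1}V)^{\perp}$ and $F^{\geq p}(V^{\vee}\otimes_{\DR}\DC)=(F^{\geq 1-p}(V\otimes_{\DR}\DC))^{\perp}$; using $\op{Gr}^W_n(V^{\vee})\cong(\op{Gr}^W_{-n}V)^{\vee}$ and the fact that the dual of a pure Hodge structure of weight $m$ is pure of weight $-m$, this is a mixed Hodge structure, and the evaluation $V^{\vee}\otimes V\to\DR$ and coevaluation $\DR\to V\otimes V^{\vee}$ are morphisms satisfying the triangle identities. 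Thus the category is an abelian rigid $\DR$-linear tensor category, and $\omega$ is a tensor functor (the coherence isomorphisms $\omega(M)\otimes\omega(M')\cong\omega(M\otimes M')$ and $\omega(\DR)\cong\DR$ are the identities of vector spaces).

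Finally, $\op{End}(\mathbf 1)=\DR$, since an endomorphism of $\DR$ with its trivial filtrations is an arbitrary scalar. Hence the triple consisting of the category of real mixed Hodge structures, the tensor product $\otimes$, and $\omega$ satisfies the hypotheses of the standard reconstruction theorem for neutral Tannakian categories: an abelian rigid tensor category over a field $k$ with $\op{End}(\mathbf 1)=k$, together with an exact faithful $k$-linear tensor functor to finite-dimensional $k$-vector spaces, is neutral Tannakian, and $\omega$ is then a fiber functor, the category being (pro)representable by the affine $\DR$-group scheme $\underline{\op{Aut}}^{\otimes}(\omega)$. This gives the proposition; the only genuinely hard ingredient is Deligne's functorial bigrading and the resulting strictness of morphisms, from \cite{deligne}, while the remaining points are the routine compatibilities of convolution filtrations and duals with purity.
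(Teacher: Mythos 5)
Your proof is correct, and it is the standard argument (Deligne's): the functorial bigrading $I^{p,q}$ gives strictness, hence abelianness with exact faithful forgetful functor; convolution filtrations and the dual filtration give the rigid tensor structure; and the Deligne--Milne reconstruction criterion then yields the neutral Tannakian statement. Note that the paper itself offers no proof of this proposition at all --- it is merely recalled from \cite{deligne} --- so your write-up supplies exactly the argument the paper implicitly relies on, and the only substantive input is indeed the one you isolate, namely the functoriality of the splitting and the resulting strict compatibility of morphisms with both filtrations.
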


\begin{proposition}
The functor 
$$
\op{Gr}^W:\mathcal{MHS}_{\mathbb{Q}}^{\op{pol}}\to
\mathcal{HS}_{\mathbb{C}}^{\op{pol},\mathbb{Z}}:
A\mapsto \bigoplus_{n\in\DZ}\op{Gr}^W_n(A\otimes_{\mathbb{Q}}\mathbb{C}),  
$$
%\cemph{(Why complexify here to $A\otimes_{\mathbb{Q}}\mathbb{C}$?
%Later we often implicitely use notation presupposing we are working
%in $\DQ$-linear categories.)} OK, wenn spater kein Q vorkommt.
which sends a mixed Hodge structure to the Hodge structure given by the direct sum of the subquotients of the weight filtration is an exact tensor functor. 
\end{proposition}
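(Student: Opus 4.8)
The plan is to separate the statement into three verifications: that $\op{Gr}^W$ actually takes values in $\mathcal{HS}_{\mathbb{C}}^{\op{pol},\mathbb{Z}}$, that it is exact, and that it is symmetric monoidal. The only substantial external input is the strictness theorem of \cite{deligne}, namely that every morphism of mixed Hodge structures is strictly compatible with the weight filtration $W$; once this is available everything reduces to routine linear algebra of finite filtrations over $\mathbb{Q}$ and $\mathbb{C}$. For well-definedness, note that for $A\in\mathcal{MHS}_{\mathbb{Q}}^{\op{pol}}$ each $\op{Gr}^W_n(A)$ is by definition a polarizable pure $\mathbb{Q}$-Hodge structure of weight $n$; a polarization is a morphism $\op{Gr}^W_n(A)\otimes_{\mathbb{Q}}\op{Gr}^W_n(A)\to\mathbb{Q}(-n)$ with the usual positivity, and extending scalars along $\mathbb{Q}\hookrightarrow\mathbb{C}$ turns it into a polarization of the pure complex Hodge structure $\op{Gr}^W_n(A)\otimes_{\mathbb{Q}}\mathbb{C}$ with its induced Hodge decomposition. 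The direct sum over the finitely many relevant $n$ is then an object of $\mathcal{HS}_{\mathbb{C}}^{\op{pol},\mathbb{Z}}$, and the assignment is manifestly functorial.

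Exactness is exactly where strictness is used. Given a short exact sequence $0\to A'\to A\to A''\to 0$ in $\mathcal{MHS}_{\mathbb{Q}}$, strict compatibility of the two maps with $W$ implies that $0\to\op{Gr}^W_n(A')\to\op{Gr}^W_n(A)\to\op{Gr}^W_n(A'')\to 0$ is exact for every $n$. Since $-\otimes_{\mathbb{Q}}\mathbb{C}$ is exact and a finite direct sum of exact sequences of vector spaces is exact, the functor $\op{Gr}^W(-)=\bigoplus_n\op{Gr}^W_n(-)\otimes_{\mathbb{Q}}\mathbb{C}$ is exact; the Hodge filtrations play no role in this point.

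For the monoidal structure one invokes that the weight filtration on a tensor product of mixed Hodge structures is the convolution, $W^{\le n}(A\otimes B)=\sum_{p+q=n}(W^{\le p}A)\otimes(W^{\le q}B)$, together with the standard fact that for finite filtrations on vector spaces the canonical map $\bigoplus_{p+q=n}\op{Gr}^W_p(A)\otimes\op{Gr}^W_q(B)\to\op{Gr}^W_n(A\otimes B)$ is an isomorphism. This yields a natural isomorphism $\op{Gr}^W(A\otimes B)\cong\op{Gr}^W(A)\otimes\op{Gr}^W(B)$ of $\mathbb{Z}$-graded vector spaces; one then checks that the induced Hodge and conjugate filtrations on the two sides match, so that it is an isomorphism of $\mathbb{Z}$-graded complex Hodge structures, and that $\op{Gr}^W(\mathbb{Q}(0))$ is $\mathbb{Q}(0)$ placed in degree $0$, i.e.\ the monoidal unit. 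Since the associativity, unit and symmetry constraints are induced from those on filtered vector spaces, the relevant coherence diagrams commute, and $\op{Gr}^W$ is a symmetric monoidal functor. I expect this last step --- verifying that the canonical comparison of associated gradeds respects not only the gradings but all the Hodge-theoretic data and the monoidal coherence constraints --- to be the only place requiring genuine care, and even there it is a bookkeeping exercise rather than a real obstacle; everything else is immediate once the strictness theorem is cited.
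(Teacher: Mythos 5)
Your argument is correct, and it is the standard one. Note, however, that the paper does not prove this proposition at all: it is stated as part of a recollection of known facts about (mixed) Hodge structures, implicitly quoted from Deligne's Hodge theory and used as input for Drew's formalism, so there is no proof in the paper to compare against beyond the citation. Your write-up supplies exactly the details that the citation delegates to the literature: well-definedness is essentially by fiat, since graded-polarizability (each $\op{Gr}^W_n$ being a polarizable pure Hodge structure, hence polarizable after extension of scalars to $\mathbb{C}$) is built into the definition of $\mathcal{MHS}^{\op{pol}}_{\mathbb{Q}}$; exactness follows from Deligne's strictness theorem, since in a short exact sequence of mixed Hodge structures the strict compatibility with $W$ forces each sequence of $\op{Gr}^W_n$'s to be exact, and $-\otimes_{\mathbb{Q}}\mathbb{C}$ and finite direct sums preserve exactness; and the tensor compatibility comes from the convolution description of $W$ on a tensor product together with the fact that finite filtrations of vector spaces split, so the canonical map $\bigoplus_{p+q=n}\op{Gr}^W_p(A)\otimes\op{Gr}^W_q(B)\to\op{Gr}^W_n(A\otimes B)$ is an isomorphism compatible with the Hodge data and the coherence constraints. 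So your proposal is a faithful expansion of the intended (standard) argument rather than a genuinely different route.
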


Assume $\Phi:\mathcal{T}_0\to \mathcal{T}'_0$ is an exact tensor
functor of Tannakian categories, and $\op{E}$ is a mixed Weil
cohomology theory with values in  $\mathcal{T}_0$. By \cite[Lemma
2.1.3]{drew},  $\Phi\circ \op{E}$ is a mixed Weil cohomology theory
with values in $\mathcal{T}'_0$. We apply this to
$\op{Gr}^W:\mathcal{MHS}_{\mathbb{Q}}^{\op{pol}}\to
\mathcal{HS}_{\mathbb{C}}^{\op{pol},\mathbb{Z}}$ and the refined Betti
cohomology $\op{E}_{\op{Hodge}}$ of \cite[Theorem 2.1.8]{drew}. 
We get a mixed Weil cohomology theory $\op{E}_{\op{GrH}}$ with
coefficients in graded pure Hodge structures. We get an
associated motivic category $\op{Der}(\mathcal{E}_{\op{GrH}})$ with a
full six-functor formalism, weight structures and all, by the
above-cited results of \cite{drew}. Moreover, by \cite[Theorem
2.2.7]{drew}, restricting to mixed Tate objects, there is an equivalence
$$\DMT(\mathcal{E}_{\op{GrH}})(\op{Spec}\mathbb{C})\cong
\op{Der}^{\op{b}}(\op{Modf}^{\mathbb{Z}}(\mathbb{C})),$$
and this equivalence respects t-structures, weight structures and
compact objects.  
Here $\op{Modf}^{\mathbb{Z}}(\mathbb{C})$ denotes the category of finitely generated $\DZ$-graded complex vector spaces.

\subsection{Weight structures}

Finally, we have to discuss weight structures on categories of
motives. We first recall, for the  reader's convenience, the
definition of weight structures from 
\cite[Definition 1.1.1]{bondarko:ktheory}. Note, however, that our
sign convention for the weight is opposite to the one of loc.cit. We
follow the sign convention used in most other works on weight
structures, such as \cite{wildeshaus:intermediate} and \cite{hebert}. 

\begin{definition}
\label{defin:wtstruct}
Let $\mathcal{C}$ be a triangulated category. A {\bf weight structure 
  on $\mathcal{C}$} is a pair $w=(\mathcal{C}_{w\leq
  0},\mathcal{C}_{w\geq 0})$ of full subcategories of $\mathcal{C}$
such that with the notations
$
\mathcal{C}_{w\leq n}:=\mathcal{C}_{w\leq 0}[n] $ and $
\mathcal{C}_{w\geq n}:=\mathcal{C}_{w\geq 0}[n]
$ the following conditions are satisfied: 
\begin{enumerate}
\item the categories $\mathcal{C}_{w\leq 0}$ and $\mathcal{C}_{w\geq 0}$  
are closed under taking direct summands;
\item $\mathcal{C}_{w\leq 0}\subset \mathcal{C}_{w\leq 1}$ and
  $\mathcal{C}_{w\geq 1}\subset \mathcal{C}_{w\geq 0}$;
\item for any pair of objects $X\in\mathcal{C}_{w\leq 0}$, $Y\in
  \mathcal{C}_{w\geq 1}$, we have $
\mathcal{C}(X,Y)=0$; %\cemph{Not $\op{Hom}_{\mathcal{C}}$.}  
\item for any object $X\in\mathcal{C}$ there is a distinguished
  triangle 
$
A\to X\to B\to A[1]
$
with $A\in\mathcal{C}_{w\leq 0}$ and $B\in\mathcal{C}_{w\geq 1}$. 
\end{enumerate}
The full subcategory $\mathcal{C}_{w=0}=\mathcal{C}_{w\leq
  0}\cap\mathcal{C}_{w\geq 0}$ is called the {\bf heart of the weight
  structure $w$}. 
\end{definition}

H{\'e}bert has constructed weight structures on the categories of
Beilinson motives. The result is the following, cf. \cite[Theorems
3.3 and 3.8]{hebert}:

\begin{theorem}
\label{thm:hebert}
Let $k$ be a field. For any separated scheme $X$ of finite type over
$k$, there is a canonical weight structure $w$ on $\Bmotc(X)$. The
family of these weight structures on $\Bmotc$ is characterized uniquely
by the following  three properties: 
\begin{enumerate}
\item if $X$ is regular, then $\mathbb{Q}_X(n)[2n]\in\Bmotc(X)_{w=0}$
  for all $n\in\mathbb{Z}$; 
\item for any separated finite type morphism $f:X\to Y$, the functors
  $f^\ast$, $f_!$ (and $f_\sharp$ for $f$ smooth) are $w$-left exact,
  i.e., they preserve non-positivity of weights;
\item for any separated finite type morphism $f:X\to Y$, the functors
  $f_\ast$, $f^!$ (and $f^\ast$ for $f$ smooth) are $w$-right exact,
  i.e., they preserve non-negativity of weights. 
\end{enumerate}
\end{theorem}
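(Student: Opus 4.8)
The strategy I would follow is Bondarko's: construct the weight structure over \emph{regular} bases by generating it from a suitable ``negative'' subcategory, and then propagate it to arbitrary finite-type $k$-schemes by a gluing argument along the localization recollements together with Noetherian induction. So I would first fix a regular $X$ and take as candidate heart the full subcategory $\mathscr{B}_X^{\natural}\subset\Bmotc(X)$ of retracts of objects of the additive subcategory $\mathscr{B}_X$ generated by the motives $p_!\DQ_Y(n)[2n]$ for $p\colon Y\to X$ proper with $Y$ regular and $n\in\DZ$; by the smooth purity isomorphism $p^\ast\cong p^!(-d)[-2d]$ this $\mathscr{B}_X$ contains all $\op{M}_X(Y)(n)[2n]=p_\sharp\DQ_Y(n)[2n]$ with $p$ smooth and proper. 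Since $\Bmotc(X)$ is idempotent complete, Bondarko's generation theorem \cite{bondarko:ktheory} produces a bounded weight structure in the sense of \prettyref{defin:wtstruct} on $\Bmotc(X)$ with heart $\mathscr{B}_X^{\natural}$ as soon as we verify that $\mathscr{B}_X$ is \emph{negative}, i.e.\ $\Bmotc(X)(A,B[i])=0$ for all $A,B\in\mathscr{B}_X$ and all $i>0$, and that $\mathscr{B}_X$ \emph{generates} $\Bmotc(X)$ as a thick triangulated subcategory.

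For negativity, given proper $p\colon Y\to X$, $q\colon Z\to X$ with $Y,Z$ regular, I would compute $\Bmotc(X)\bigl(p_!\DQ_Y(n)[2n],q_!\DQ_Z(m)[2m+i]\bigr)$ by moving everything to a point: the adjunction $p_!\dashv p^!$, base change along the cartesian square over $Y\times_X Z$, the purity isomorphisms (smooth purity, and absolute purity for closed immersions of regular schemes), and the duality isomorphisms for compact objects rewrite this group as a piece of motivic cohomology of a regular scheme and, over $k$ itself, as a graded piece of rational $K$-theory via the displayed formula $\Bmot(\DQ,\DQ(p)[q])\cong\op{gr}^p_\gamma\op{K}_{2p-q}(k)_{\DQ}$; this vanishes for $i>0$ because negative $K$-theory of a regular ring vanishes and because $\op{gr}^p_\gamma$ vanishes for $p<0$. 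The point requiring care is that $Y\times_X Z$ need not be regular, so to get to the Tate case one first uses de Jong's alterations (legitimate with $\DQ$-coefficients) together with the Gysin and blow-up triangles coming from the localization property to express $\op{M}_X(Y)$ for $Y$ regular in terms of $\op{M}_X(Y')(\ast)[2\ast]$ with $Y'$ smooth projective over strata of $X$. The very same alteration argument, applied now to an \emph{arbitrary} $X$-variety and pushed through the localization triangles, shows that $\mathscr{B}_X$ generates $\Bmotc(X)$, and in fact gives the stronger statement, needed later, that $\op{M}_X(V)\in\Bmotc(X)_{w\le 0}$ for every $X$-variety $V$.

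To descend to a \emph{singular} base $X$ I would argue by Noetherian induction on $\dim X$: pick a dense open regular $j\colon U\hookrightarrow X$ with closed complement $i\colon Z\hookrightarrow X$ of strictly smaller dimension; by induction $\Bmotc(U)$ and $\Bmotc(Z)$ carry weight structures, and the localization triangles $j_!j^!\to\op{id}\to i_\ast i^\ast\to$ and $i_\ast i^!\to\op{id}\to j_\ast j^\ast\to$ exhibit $\Bmotc(X)$ as a gluing of $\Bmotc(U)$ and $\Bmotc(Z)$ in Bondarko's sense, yielding a weight structure on $\Bmotc(X)$ for which $\Bmotc(X)_{w\le 0}$ (resp.\ $\Bmotc(X)_{w\ge 0}$) is characterized by $j^\ast,i^\ast$ being weight-non-positive (resp.\ $j^\ast,i^!$ being weight-non-negative). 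Independence of the chosen open $U$ is then best seen from the characterization below.

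It remains to establish the characterization (1)--(3) and uniqueness. Property (1) holds by the very definition of the heart. For (2) and (3) one reduces to the standard generators: $f^\ast,f_!,f_\sharp$ (resp.\ $f_\ast,f^!,f^\ast$) of $p_!\DQ_Y(n)[2n]$ is again a weight-non-positive (resp.\ non-negative) object --- base change and the projection formula identify $f_!\,p_!\DQ_Y(n)[2n]$ with $(fp)_!\DQ_Y(n)[2n]$ and $f^\ast q_!\DQ_Z(n)[2n]$ with $\op{M}$ of a variety over $X$, which sits in $\Bmotc(X)_{w\le 0}$ by the stronger fact proved above --- and the non-proper case reduces to the proper one by a Nagata--de Jong compactification plus a localization triangle. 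Uniqueness is then formal: a bounded weight structure is determined by its heart (the classes $\Bmotc(X)_{w\le 0}$, $\Bmotc(X)_{w\ge 0}$ are recovered as the objects admitting a weight filtration with heart-object graded pieces), and any $w'$ satisfying (1)--(3) has heart exactly $\mathscr{B}_X^{\natural}$, since (1) puts $\DQ_Y(n)[2n]$ into the $w'$-heart for regular $Y$, (2)$+$(3) applied to proper $p\colon Y\to X$ with $Y$ regular force $p_!\DQ_Y(n)[2n]=p_\ast\DQ_Y(n)[2n]$ into the $w'$-heart, these objects generate, and a heart is idempotent complete. \textbf{The main obstacle} is the combination of negativity with purity over \emph{singular} intermediate schemes: the fibre products $Y\times_X Z$ in the negativity computation and the base changes $V\times_Y X$ in (2)/(3) are not regular, so absolute purity alone does not suffice and one must genuinely invoke de Jong alterations to reduce motives of singular varieties to smooth projective ones while keeping control of weights; concretely, proving $\op{M}_X(V)\in\Bmotc(X)_{w\le 0}$ for all $V$ is the technical core, after which everything else is formal manipulation with the six functors and Bondarko's gluing formalism.
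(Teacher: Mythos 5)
The paper contains no proof of this statement: it is recalled verbatim from H\'ebert \cite[Theorems 3.3 and 3.8]{hebert} (with Bondarko's machinery \cite{bondarko:ktheory,bondarko:imrn} in the background), so there is nothing internal to compare against, and your sketch is essentially a reconstruction of that cited proof. The ingredients match the published argument: take as candidate heart the retract-closure of the $p_!\DQ_Y(n)[2n]$ with $p$ proper and $Y$ regular, prove negativity by pushing the Hom-group through the six functors to Borel--Moore type groups of the (possibly singular) fibre product and then, via de Jong alterations (legitimate with $\DQ$-coefficients) and localization/Gysin triangles, to the vanishing of $\op{gr}^\bullet_\gamma\op{K}_{-i}$ of regular schemes for $i>0$; invoke Bondarko's theorem that a negative generating subcategory of an idempotent complete triangulated category yields a bounded weight structure; and deduce the characterization and uniqueness formally from weight-exactness on generators. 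The one genuine deviation is your treatment of singular bases by Noetherian induction and gluing along a regular dense open stratum: H\'ebert's construction is uniform over an arbitrary base, since the generators $p_!\DQ_Y(n)[2n]$ make sense and remain negative when $X$ itself is singular, and this spares the extra checks your route incurs --- independence of the chosen stratification and the identification of the glued heart with $\mathscr{B}_X^{\natural}$, for which you would in effect already need the weight-exactness of $i^!$ on such generators (note $i^!p_!\DQ_Y(n)[2n]$ involves a possibly singular closed fibre, so it is not pure by inspection). This is not a fatal flaw, but it does mean the gluing variant buys nothing over the direct construction; on the other hand, your identification of the technical core --- establishing the weight bounds for motives of arbitrary, possibly singular, $X$-varieties via alterations, which is exactly where the real work sits in \cite{hebert} --- is accurate.
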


By \cite[Theorem 2.3.2-2.3.4]{drew}, there are
$\mathcal{E}_X$-analogues of H{\'e}bert's theorems on weight
structures for Beilinson motives provided the following axiom is
satisfied:
\begin{enumerate}[(W6)]
\item for all smooth affine schemes $X$ over the base and 
  $r,s\in\mathbb{Z}$ with $2r<s$ we have
$$
\op{Der}(\mathcal{T})(\mathbb{Q}_{\mathcal{T}},\op{E}_S(X)(r)[s])=0,
$$
\end{enumerate}

Whenever this axiom is satisfied, then there is,  for each $S$-scheme
$X$, a weight structure on $\op{Der}(\mathcal{E}_X)$ whose heart is
generated by $\mathcal{E}_X$-motives of $Y$ for $f:Y\to X$ projective
and $Y$ regular.   
This in particular applies to the motivic categories
associated to the Hodge realization with values in
$\mathcal{MHS}^{\op{pol}}_{\mathbb{Q}}$, cf. the remark on p.8 of
\cite{drew}. Since $\op{E}_{\op{GrH}}$ arises from taking the
associated graded of the Hodge realization, validity of axiom (W6) for
$\op{E}_{\op{Hodge}}$ implies the validity of axiom (W6) for
$\op{E}_{\op{GrH}}$ so that the categories of
$\op{E}_{\op{GrH}}$-modules will have weight structures satisfying the
statement of \prettyref{thm:hebert}.

\section{Mixed Tate motives}
\label{sec:mtm}

In this section, we discuss triangulated categories of mixed Tate
motives as well as weight and t-structures on them. We mainly follow
\cite{levine:nato}, \cite{levine:tata} and \cite{wildeshaus:cras1} for
the t-structures and \cite{hebert},\cite{wildeshaus:at} for the
weight structures. While triangulated categories 
of mixed Tate motives and weight structures on them can be defined in
a rather general setup, the existence of a 
non-degenerate t-structure and a corresponding abelian category of
mixed Tate motives depends on the Beilinson--Soul{\'e} vanishing
conjectures. For our purposes, this will suffice as we are mostly
interested in the existence of mixed Tate motives over smooth
varieties with an $\mathbb{A}^n$-filtration over fields where the
Beilinson--Soul{\'e} conjectures are known to hold.

\subsection{Triangulated mixed Tate motives}
Fix a motivic triangulated category $\mathscr{T}$. 
Recall that for a scheme $S$, the suspended Tate motive
$\mathbb{Q}_S(1)[1]$  is defined as the cone of $\op{M}_S(S)\to
\op{M}_S(\mathbb{G}_{\op{m},S})$ in $\mathscr{T}(S)$,
cf. \prettyref{sec:cdbmot} above or
\cite[5.3.15]{cisinski:deglise}. Then one sets 
$\mathbb{Q}_S(n)=\mathbb{Q}_S(1)^{\otimes n}$. The following is
\cite[Definition 3.14]{levine:tata}. 

\begin{definition}
For each smooth $k$-scheme $S$, we define the {\bf triangulated
  category of mixed Tate motives over $S$}, denoted by
$\DMT(S)=\DMT(\mathscr T,S)$, to be the strictly full triangulated subcategory
of $\mathscr{T}(S)$ generated by the objects $\mathbb{Q}_S(n)$.  
\end{definition}
In the following, whenever we consider a category of mixed Tate
motives over a scheme $S$, this scheme will always be smooth. 
There are some direct consequences of this definition,
cf. \cite[Proposition 3.15]{levine:tata}. 
\begin{proposition}
\label{prop:dmttensor}
The category $\DMT(S)$ is a tensor triangulated
category. Its objects are compact, i.e., there is an
inclusion $\DMT(S)\subset\mathscr{T}^c(S)$.
\end{proposition}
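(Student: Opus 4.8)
The plan is to establish the two assertions of \prettyref{prop:dmttensor} as formal consequences of the definition of $\DMT(S)$ as the strictly full triangulated subcategory of $\mathscr{T}(S)$ generated by the Tate twists $\mathbb{Q}_S(n)$, together with properties of $\mathscr{T}$ already recalled in \prettyref{sec:cdbmot}. For the tensor structure, the key observation is that $\mathbb{Q}_S(m)\otimes_S\mathbb{Q}_S(n)\cong\mathbb{Q}_S(m+n)$, since $\mathbb{Q}_S(1)$ is $\otimes$-invertible and $\mathbb{Q}_S(0)=\mathbb{Q}_S$ is the tensor unit. So the generating objects are closed under tensor product. First I would argue that for a fixed generator $\mathbb{Q}_S(n)$, the full subcategory of $\mathscr{T}(S)$ consisting of objects $M$ with $M\otimes_S\mathbb{Q}_S(n)\in\DMT(S)$ is a strictly full triangulated subcategory (using that $-\otimes_S\mathbb{Q}_S(n)$ is a triangulated autoequivalence, being tensoring with an invertible object) containing all $\mathbb{Q}_S(m)$, hence all of $\DMT(S)$. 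Then, swapping the roles, for fixed $M\in\DMT(S)$ the full subcategory of objects $N$ with $M\otimes_S N\in\DMT(S)$ is again strictly full triangulated and, by the previous step, contains all generators $\mathbb{Q}_S(n)$, hence all of $\DMT(S)$. This shows $\DMT(S)$ is closed under $\otimes_S$; associativity, commutativity and unit constraints are inherited from $\mathscr{T}(S)$, so it is a tensor triangulated category.

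For the compactness statement, the strategy is to recall that $\mathscr{T}^c(S)\subset\mathscr{T}(S)$, the subcategory of constructible objects, is the thick full subcategory generated by $\op{M}_S(X)(n)$ for $X\to S$ smooth and $n\in\mathbb{Z}$ --- in particular it is a strictly full triangulated subcategory closed under direct summands and it contains $\op{M}_S(S)(n)=\mathbb{Q}_S(n)$ for all $n$. Since $\DMT(S)$ is by definition the strictly full triangulated subcategory generated by the $\mathbb{Q}_S(n)$, and these all lie in $\mathscr{T}^c(S)$, which is itself triangulated, we get $\DMT(S)\subset\mathscr{T}^c(S)$ immediately. Under the conditions in force (satisfied in all cases considered, namely \'etale or Beilinson motives and the Drew categories attached to enriched mixed Weil cohomology theories), motives of smooth varieties are compact and $\mathscr{T}^c(S)$ coincides with the subcategory of compact objects of $\mathscr{T}(S)$; hence every object of $\DMT(S)$ is compact. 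This is really the content of \cite[Proposition 3.15]{levine:tata}.

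I do not expect a serious obstacle here: both parts are formal, the only mild subtlety being the standard ``bootstrap'' argument for tensor-closedness (varying one factor at a time through the triangulated-subcategory-generation property), and the implicit appeal to the hypothesis that motives of smooth varieties are compact, which is where $\DMT(S)\subset\mathscr{T}^c(S)$ becomes an inclusion into compact objects rather than merely into constructible ones. If one wanted to avoid that hypothesis, one could still conclude $\DMT(S)\subset\mathscr{T}^c(S)$ unconditionally, which is the inclusion actually asserted; the compactness clause is then read as the statement that under our running assumptions constructible equals compact. The main thing to be careful about is to phrase everything with \emph{strictly full} triangulated subcategories throughout, so that the generation-by-$\mathbb{Q}_S(n)$ universal property applies verbatim at each step.
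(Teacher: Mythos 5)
Your proposal is correct and follows essentially the same route as the paper: the tensor statement from $\mathbb{Q}_S(n)\otimes\mathbb{Q}_S(m)\cong\mathbb{Q}_S(n+m)$ (the paper leaves the bootstrap through generators implicit), and the inclusion $\DMT(S)\subset\mathscr{T}^c(S)$ from the fact that $\mathbb{Q}_S(1)$ is built only from the smooth $S$-schemes $S$ and $\mathbb{G}_{\op{m},S}$, together with \cite[Section 4.2]{cisinski:deglise} identifying constructible with compact objects under the running hypotheses. Your extra remark separating the unconditional inclusion into constructibles from the compactness clause is a fair reading of the same argument, not a different one.
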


\begin{proof}
The first follows from $\mathbb{Q}_S(n)\otimes\mathbb{Q}_S(m)\cong
\mathbb{Q}_S(n+m)$. The second follows from \cite[Section
4.2]{cisinski:deglise} and the definition of $\mathbb{Q}_S(1)$ which
only uses the smooth $S$-schemes $S$ and $\mathbb{G}_{\op{m},S}$. 
\end{proof}

\begin{remark}
  If $k$ is a field, then by \cite[Theorem 2.5]{wildeshaus:at} the restriction
  of H\'ebert's weight structure $w$ on $\Bmotc(k)$ is a weight structure on
  $\DMT(k)$.\label{WDM} 
\end{remark}

\subsection{Weight t-structure on mixed Tate motives 
(d'apr{\`e}s Levine)}

In the following, we recall \cite[Definition 3.16 and Theorem
3.19]{levine:tata}. It provides an approach to defining weights for
mixed Tate motives different from the weight structures discussed
above. Levine's approach uses a suitable t-structure on the
triangulated category of mixed Tate motives, and produces different
weights related to the weights defined above via d{\'e}calage.

In the following, we consider categories $\DMT(S)$ which are
triangulated categories of Tate type, in the sense of \cite[Definition
1.1]{levine:nato}. This means, in addition to \prettyref{prop:dmttensor}, that the following conditions on morphisms in $\mathscr{T}_S$ are satisfied: %\cemph{(I propose to simplify  $\DMT(S)$ to $\mathscr T_S$ in the following to denote morphism spaces.)} 
\begin{enumerate}
\item $\mathscr{T}_S(\mathbb{Q}_S(n)[a],\mathbb{Q}_S(m)[b])=0$ if $n>m$.
\item $\mathscr{T}_S(\mathbb{Q}_S(n)[a],\mathbb{Q}_S(n)[b])=0$ if $a\neq
  b$.
\item
  $\mathscr{T}_S(\mathbb{Q}_S(n),\mathbb{Q}_S(n))=\mathbb{Q}\cdot\op{id}$.
\end{enumerate}
We will only need the weight t-structure to define the t-structure
on mixed Tate motives below. Therefore, it suffices to say that these
conditions are in particular satisfied whenever $S$ satisfies the
Beilinson--Soul{\'e} vanishing conditions. However, the conditions hold
in greater generality: if $S$ is the spectrum of a field, vanishing
results in K-theory show that $\DMT(S)$ is of Tate type,
cf. \cite[Theorem 4.1]{levine:nato}.

\begin{definition}
Denote by $W_n\DMT(S)$ the strictly full triangulated
subcategory of $\DMT(S)$ generated by the Tate motives
$\mathbb{Q}_{S}(-a)$ with $a\leq n$. 
Denote by $W_{[n,m]}\DMT(S)$ the strictly full triangulated
subcategory of $\DMT(S)$ generated by the Tate motives
$\mathbb{Q}(-a)$ with $n\leq a\leq m$. 
Denote by $W^{>n}\DMT(S)$ the strictly full triangulated
subcategory of $\DMT(S)$ generated by the Tate motives
$\mathbb{Q}(-a)$ with $a>n$. 
% \cemph{I don't know where this notation comes
% from. Couldn't we just for any $I\subset \DZ$ let
%  $W_I\DMT(S)$ be the strictly full triangulated
% subcategory of $\DMT(S)$ generated by the Tate motives
% $\mathbb{Q}_{S}(-a)$ with $a\in I$, and then have 
%  $W_{\leq n}\DMT(S)$ and $W_{\geq n}\DMT(S)$ and  $W_{[m, n]}\DMT(S)$
% as special cases? 
% Then what is denoted $W_{[n,n]}$ below would simplify to
% $W_n$, which would get a new meaning.
% Another bad point about the notation introduced here
% is that usually
% the weight of the Tate sheaf is considered to be two.}
% \matthias{The notation and statements are taken from Levine's papers. The notion of weight there is different from the usual one, I completely agree. However, this is only an intermediate step in the definition of the t-structure. It is not really significant, and not used later in the paper, only included for completeness. Why not leave it like that...} WOLFGANG: Also gut, lassen wir es halt.
\end{definition}

The following recalls \cite[Theorem 3.19]{levine:tata}.
\begin{proposition}
\label{prop:wt}
Assume $\DMT(S)$ is a tensor triangulated category of Tate
type. Then the following statements are true:
\begin{enumerate}
\item $(W_n\DMT(S),W^{>{n-1}}\DMT(S))$ is a t-structure on  $\DMT(S)$ with
  heart $W_{[0,0]}\DMT(S)$. 
% \cemph{It seems that in the  introduced 
% above in black we rather get $W_n\DMT(S)\cap W^{>n}\DMT(S)=0$.}\matthias{ok, should we say $(W_n,W^{>n-1})$?}
\item The truncation functors
$$
W_n:\DMT(S)\rightarrow W_n\DMT(S), 
W^{>n}:\DMT(S)\rightarrow W^{>n}\DMT(S)
$$
are exact, $W_n$ is right adjoint to the corresponding inclusion and
$W^{>n}$ is left adjoint to the corresponding inclusion. 
\item For each $n<m$ there is an exact functor
  $$
  W_{[n+1,m]}:\DMT(S)\rightarrow W_{[n+1,m]}\DMT(S)
  $$ 
  and a natural distinguished triangle
  $$
  W_n\rightarrow W_m\rightarrow W_{[n+1,m]}\rightarrow W_n[1].
  $$
\item 
$\DMT(S)=\bigcup_{n\in\mathbb{Z}}W_n\DMT(S)=
\bigcup_{n\in\mathbb{Z}}W^{>n}\DMT(S)$.
\end{enumerate}
\end{proposition}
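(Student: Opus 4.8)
This is essentially \cite[Theorem~3.19]{levine:tata}, and I would follow Levine's argument. The whole proposition rests on one construction — the weight-truncation functors $W_n$ on $\DMT(S)$ — after which parts (1)--(4) come out by formal manipulations. The one input that makes everything run is the Tate-type vanishing: conditions (1)--(3) say exactly that the family $\{\mathbb{Q}_S(-a)\}_{a\le n}$ generating $W_n\DMT(S)$ and the family $\{\mathbb{Q}_S(-a)\}_{a>n}$ generating $W^{>n}\DMT(S)$ are ``semiorthogonal'': by (1) we have $\mathscr{T}_S(\mathbb{Q}_S(-a)[i],\mathbb{Q}_S(-b)[j])=0$ whenever $a<b$, hence $\mathscr{T}_S(A,B)=0$ for every $A\in W_n\DMT(S)$ and every $B\in W^{>n}\DMT(S)$.

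First I would establish the key d\'evissage: for every $X\in\DMT(S)$ and every $n$ there is a distinguished triangle
$$
X_{\le n}\longrightarrow X\longrightarrow X_{>n}\longrightarrow X_{\le n}[1]
$$
with $X_{\le n}\in W_n\DMT(S)$ and $X_{>n}\in W^{>n}\DMT(S)$. I would prove this by induction on the number of cones needed to build $X$ from the shifted Tate motives $\mathbb{Q}_S(-a)[i]$ (retracts, if they are allowed in the definition of the generated subcategory, being handled afterwards using the functoriality established below). For a single shifted generator the triangle is trivial, one term being zero according to whether $a\le n$ or $a>n$. For the inductive step write $X=\operatorname{cone}(f\colon Y\to Z)$ with $Y,Z$ already carrying such triangles; the composite $Y_{\le n}\to Y\xrightarrow{f}Z\to Z_{>n}$ vanishes by semiorthogonality, so $Y_{\le n}\to Z$ factors through $Z_{\le n}$, giving a commutative square to which the $3\times 3$-lemma (iterated octahedra) applies and which produces the triangle for $X$, with $X_{\le n}=\operatorname{cone}(Y_{\le n}\to Z_{\le n})$ and $X_{>n}=\operatorname{cone}(Y_{>n}\to Z_{>n})$.

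Once this triangle exists, semiorthogonality makes it unique up to unique isomorphism, hence functorial in $X$; the resulting functors are Levine's $W_n$ and $W^{>n}$. Applying $\mathscr{T}_S(A,-)$ for $A\in W_n\DMT(S)$, resp. $\mathscr{T}_S(-,B)$ for $B\in W^{>n}\DMT(S)$, to the triangle and using the vanishing shows that $W_n$ is right adjoint and $W^{>n}$ left adjoint to the respective inclusion — this is (2); exactness of $W_n$ and $W^{>n}$ then follows from their functoriality together with the $3\times 3$-lemma applied to a triangle in $\DMT(S)$. For (3) one reads off the relations $W_nW_m=W_{\min(n,m)}$ and $W^{>n}W^{>m}=W^{>\max(n,m)}$ from uniqueness, sets $W_{[n+1,m]}:=W_m\circ W^{>n}$, and extracts the natural triangle $W_n\to W_m\to W_{[n+1,m]}\to W_n[1]$ from the octahedron on $W_nX\to W_mX\to X$. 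Part (4) is immediate: every object of $\DMT(S)$ is built from finitely many $\mathbb{Q}_S(-a)[i]$, so it lies in $W_n\DMT(S)$ for $n\gg 0$ and in $W^{>n}\DMT(S)$ for $n\ll 0$. Finally, (1) records that the triangles above are the truncation triangles of a t-structure (Levine's weight t-structure): stability of the two halves under the relevant shifts is built into their description, the orthogonality axiom is an instance of the vanishing in (1), and existence and functoriality of truncations are exactly the d\'evissage; the heart is then identified, via the weight-graded functors $W_{[m,m]}$, with $W_{[0,0]}\DMT(S)$, i.e.\ with the category of finite-dimensional $\mathbb{Z}$-graded $\mathbb{Q}$-vector spaces, using (2)--(3) to see that every object of $W_{[m,m]}\DMT(S)$ is a finite direct sum of shifts of $\mathbb{Q}_S(-m)$.

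The only genuinely non-formal point is the d\'evissage, and inside it the inductive step: one must choose the factorization $Y_{\le n}\to Z_{\le n}$ so that the resulting square really assembles, through the $3\times 3$-lemma, into a triangle of triangles with the prescribed corners, and then verify that the choice is irrelevant once uniqueness has been proved. All the rest — the two adjunctions, exactness, the triangle of (3), the exhaustion of (4), and the t-structure axioms of (1) — is formal bookkeeping once $W_n$ and $W^{>n}$ are in hand.
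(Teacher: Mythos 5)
Your proposal is correct and coincides with the paper's treatment: the paper gives no proof of its own but simply recalls this as Levine's Theorem 3.19 of \cite{levine:tata}, and your d\'evissage-plus-semiorthogonality argument (decomposition triangles built by induction on cones from the Tate-type vanishing, then uniqueness, adjunctions, and the octahedron for $W_{[n+1,m]}$) is exactly Levine's proof. Nothing essential is missing beyond the retract/heart bookkeeping you already flag.
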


We denote by $\op{gr}^W_n:\DMT(S)\to
W_{[n,n]}\DMT(S)$ the corresponding composition of truncation
functors, it assigns to a mixed Tate motive the $n$-th subquotient of
the weight filtration. 

If $\DMT(S)$ is a category of Tate type, then the category 
$W_{[n,n]}\DMT(S)$ can be identified with the derived category
$\op{Der}^{\op{b}}(\mathbb{Q}\op{-modf})$ of finite-dimensional
$\mathbb{Q}$-vector spaces.

\subsection{t-structure on mixed Tate motives (d'apr{\`e}s Levine)} 

Now we recall the existence of abelian categories of mixed Tate
motives under the assumption of the Beilinson--Soul{\'e} vanishing
conjectures, cf. \cite[Definition 3.21, Theorem 3.22]{levine:tata}, 
cf. also the field case \cite[Theorem 1.4, Proposition 2.1, Theorem
4.2]{levine:nato}. 

\begin{definition}
\label{defin:vanish}
Fix a motivic triangulated category $\mathscr{T}$. 
We say that a separated smooth finite type $k$-scheme $S$
satisfies {\bf Beilinson--Soul{\'e} vanishing} for $\mathscr{T}$ if for
$m<0$,  we have 
$$
{\mathscr{T}_k}(\op{M}_k(S),\mathbb{Q}_k(n)[m])=0. 
%\cemph{\text{ Shouldn't we rather write ${\mathscr{T}}_{\op{pt}}$ or ${\mathscr{T}}_{k}$ instead of ${\mathscr{T}}$ here?}}
$$
As mentioned above, we identify $W_{[n,n]}\DMT(\mathscr{T},S)$ with
$\op{Der}^{\op{b}}(\mathbb{Q}\op{-modf})$, and this allows to define
for each mixed Tate motive $M\in \DMT(\mathscr{T},S)$ the
$\mathbb{Q}$-vector space $\op{H}^m(\op{gr}^W_nM)$, cf. \cite[Remark
3.20]{levine:tata}.  

Let $\DMT(\mathscr{T},S)^{\leq 0}$ be the full subcategory of those
$M\in \DMT(\mathscr{T},S)$ such that  
$$
\op{H}^m(\op{gr}^W_nM)=0 \textrm{ for all }m>0 \textrm{ and
}n\in\mathbb{Z}. 
$$

Let $\DMT(\mathscr{T},S)^{\geq 0}$ be the full subcategory of those
$M\in \DMT(\mathscr{T},S)$ such that 
$$
\op{H}^m(\op{gr}^W_nM)=0 \textrm{ for all }m<0 \textrm{ and
}n\in\mathbb{Z}. 
$$

Finally, we set $\op{MT}(\mathscr{T},S)=\DMT(\mathscr{T},S)^{\leq 0}\cap
\DMT(\mathscr{T},S)^{\geq 0}$.
\end{definition}

\begin{theorem}
\label{thm:levbs}
Suppose the smooth scheme $S$ satisfies the Beilinson--Soul{\'e}
vanishing conjectures for $\mathscr{T}$.  
\begin{enumerate}
\item $(\DMT(\mathscr{T},S)^{\leq 0},\DMT(\mathscr{T},S)^{\geq 0})$ is a
  non-degenerate t-structure on the category
  $\DMT(\mathscr{T},S)$ with heart $\op{MT}(\mathscr{T},S)$
  containing the Tate motives $\mathbb{Q}_S(n)$, 
  $n\in\mathbb{Z}$. 
\item The category $\op{MT}(\mathscr{T},S)$ is a rigid $\mathbb{Q}$-linear
  abelian tensor category. 
\end{enumerate}
\end{theorem}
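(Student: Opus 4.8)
This is essentially \cite[Theorem~3.22]{levine:tata} (together with \cite[Theorem~4.2]{levine:nato} for the field case), and the plan is to verify the axioms of \cite{BBD} for the pair $(\DMT(\mathscr T,S)^{\leq 0},\DMT(\mathscr T,S)^{\geq 0})$, organising everything around the weight-truncation functors $W_n,W^{>n},\op{gr}^W_n$ of \prettyref{prop:wt} and the identification $W_{[n,n]}\DMT(\mathscr T,S)\simeq\op{Der}^{\op{b}}(\mathbb Q\op{-modf})$. The basic observation is that both subcategories are cut out by vanishing conditions on the objects $\op{H}^m(\op{gr}^W_nM)$ of $\op{Der}^{\op{b}}(\mathbb Q\op{-modf})$ and that $\op{gr}^W_n$ is triangulated: closure of $\DMT^{\leq 0}$ and $\DMT^{\geq 0}$ under the appropriate shifts is then immediate, and closure under extensions follows from the long exact $\op{H}^\bullet(\op{gr}^W_n-)$-sequence attached to a distinguished triangle. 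For non-degeneracy I would use that by \prettyref{prop:wt}(4) every object lies in $W_{[a,b]}\DMT(S)$ for some $a\leq b$, hence has a finite weight filtration with graded pieces $\op{gr}^W_aM,\dots,\op{gr}^W_bM$; if $M$ lies in $\bigcap_n\DMT^{\leq n}$ or $\bigcap_n\DMT^{\geq n}$ then all these vanish, so $\mathscr T_S(\mathbb Q_S(m)[k],M)=0$ for all $m,k$ by dévissage along the filtration, and since the $\mathbb Q_S(m)$ generate $\DMT(S)$ this forces $M=0$. Finally $\mathbb Q_S(n)$ lies in the heart because $\op{gr}^W_{-n}\mathbb Q_S(n)\simeq\mathbb Q$ sits in degree $0$ and all its other weight-graded pieces vanish.

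The substantive axiom is the orthogonality $\mathscr T_S(M,N)=0$ for $M\in\DMT^{\leq 0}$, $N\in\DMT^{\geq 1}$, and this is the one place where the Beilinson--Soul\'e hypothesis is genuinely used. Using the finite weight filtrations of $M$ and $N$ together with exactness of $\op{Hom}$ in each variable, the claim reduces to $\mathscr T_S(\op{gr}^W_aM,\op{gr}^W_bN)=0$ for all $a,b$; since $W_{[a,a]}\DMT(S)\simeq\op{Der}^{\op{b}}(\mathbb Q\op{-modf})$ is semisimple, $\op{gr}^W_aM$ is a finite direct sum of shifts $\mathbb Q_S(-a)[k]$ with $k\geq 0$ and $\op{gr}^W_bN$ a finite direct sum of $\mathbb Q_S(-b)[l]$ with $l\leq-1$, so everything comes down to $\mathscr T_S(\mathbb Q_S(-a),\mathbb Q_S(-b)[\mu])=0$ for $\mu<0$. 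For $a<b$ and for $a=b$ this is one of the Tate-type conditions recalled before \prettyref{prop:wt}; for $a>b$ it is exactly \prettyref{defin:vanish}, after identifying $\mathscr T_S(\mathbb Q_S,\mathbb Q_S(a-b)[\mu])$ with $\mathscr T_k(\op{M}_k(S),\mathbb Q_k(a-b)[\mu])$ via $\op{M}_k(S)=\op{fin}_\sharp\mathbb Q_S$ and the $(\op{fin}_\sharp,\op{fin}^\ast)$-adjunction.

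The remaining axiom is the existence of truncation triangles, and here the obstacle is bookkeeping rather than any deep input. Given $M\in W_{[a,b]}\DMT(S)$ I would build $\tau^{\leq 0}M\to M\to\tau^{\geq 1}M$ by ascending induction along $0=W_{a-1}M\subset W_aM\subset\dots\subset W_bM=M$, arranging inductively that $\op{gr}^W_n(\tau^{\leq 0}M)$ is the standard truncation of $\op{gr}^W_nM$ in $\op{Der}^{\op{b}}(\mathbb Q\op{-modf})$ and that $\tau^{\leq 0}(W_nM)$ stays in $W_n\DMT(S)$. In the step from $W_{n-1}M$ to $W_nM$, the obstruction to lifting the appropriate connecting map through the already-constructed $\tau^{\leq 0}(W_{n-1}M)$ lies in the group of morphisms from $\tau^{\leq 0}\op{gr}^W_nM\in W_{[n,n]}\DMT(S)$ to a shift of $\tau^{\geq 1}(W_{n-1}M)\in W_{n-1}\DMT(S)$; such a group vanishes by the Tate-type condition $\mathscr T_S(\mathbb Q_S(-n)[\ast],\mathbb Q_S(-c)[\ast])=0$ for $c<n$ alone, so the lift exists and an octahedron produces the next stage. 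At $n=b$ this yields the desired triangle, with $\tau^{\leq 0}M\in\DMT^{\leq 0}$ and $\tau^{\geq 1}M\in\DMT^{\geq 1}$ by construction; this proves~(1).

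For~(2): the heart of a t-structure is always abelian, and $\mathbb Q$-linearity is inherited from $\DMT(\mathscr T,S)$. Since $\otimes$ is exact in each variable and $\mathbb Q_S(p)\otimes\mathbb Q_S(q)\simeq\mathbb Q_S(p+q)$, a routine dévissage shows the weight filtration is multiplicative, so $\op{gr}^W_\ell(M\otimes N)\simeq\bigoplus_{p+q=\ell}\op{gr}^W_pM\otimes\op{gr}^W_qN$; under $W_{[\ell,\ell]}\DMT(S)\simeq\op{Der}^{\op{b}}(\mathbb Q\op{-modf})$ the tensor product becomes $\otimes_{\mathbb Q}$, which preserves concentration in cohomological degree $0$, so $\otimes$ restricts to $\op{MT}(\mathscr T,S)$. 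For rigidity, recall $\DMT(S)\subset\mathscr T^c(S)$; since $S$ is smooth over $k$, say of dimension $d$, property~(5) above gives $\op{fin}^!\mathbb Q_k\simeq\mathbb Q_S(d)[2d]$, so the Verdier dual $D_S$ preserves $\DMT(S)$ and the normalised dual $M^\vee:=D_S(M)(-d)[-2d]$ is a contravariant exact involution with $\mathbb Q_S(n)^\vee\simeq\mathbb Q_S(-n)$ and $\op{gr}^W_{-w}(M^\vee)\simeq(\op{gr}^W_wM)^\vee$; since the duality on $\op{Der}^{\op{b}}(\mathbb Q\op{-modf})$ sends cohomological degree $m$ to $-m$, this involution preserves $\op{MT}(\mathscr T,S)$, exhibiting it as a rigid tensor category. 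The main work, then, is the orthogonality step, together with getting the inductive construction of the truncation triangles exactly right.
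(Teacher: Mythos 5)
The paper does not actually prove this statement: it is quoted from Levine (\cite[Theorem 3.22]{levine:tata}, \cite[Theorem 4.2]{levine:nato}), so the comparison is with Levine's argument, whose overall architecture (check the BBD axioms using the weight truncations $W_n$, $\op{gr}^W_n$ of \prettyref{prop:wt} and the identification $W_{[n,n]}\DMT(S)\simeq\op{Der}^{\op{b}}(\mathbb{Q}\op{-modf})$) your sketch follows. Your treatment of closure under shifts and extensions, non-degeneracy, the orthogonality axiom, the heart containing the $\mathbb{Q}_S(n)$, and part (2) is fine.

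The genuine gap is in the construction of the truncation triangles, which you describe as ``bookkeeping rather than any deep input.'' The obstruction group you correctly identify consists of morphisms from $\tau^{\leq 0}\op{gr}^W_nM$ (built from $\mathbb{Q}_S(-n)[k]$, $k\geq 0$) to a shift of $\tau^{\geq 1}(W_{n-1}M)$ (built from $\mathbb{Q}_S(-c)[l]$ with $c<n$), hence from groups $\mathscr{T}_S(\mathbb{Q}_S,\mathbb{Q}_S(n-c)[\mu])$ with \emph{positive} twist $n-c\geq 1$ and $\mu\leq 0$. You claim these vanish ``by the Tate-type condition $\mathscr{T}_S(\mathbb{Q}_S(-n)[\ast],\mathbb{Q}_S(-c)[\ast])=0$ for $c<n$ alone,'' but that is not one of the Tate-type conditions and is false: condition (1) gives vanishing only when the source twist exceeds the target twist, i.e.\ for $c>n$; for $c<n$ these groups are positive-twist motivic cohomology groups of $S$ (e.g.\ $\mathscr{T}_S(\mathbb{Q}_S,\mathbb{Q}_S(1)[1])\cong\mathcal{O}(S)^\times\otimes\mathbb{Q}$), which are nonzero in general. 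What actually kills the obstruction in the relevant range $\mu\leq 0$ is precisely the Beilinson--Soul\'e hypothesis again --- for $\mu<0$ this is \prettyref{defin:vanish}, and for the $\mu=0$ part one needs the strengthened version $\mathscr{T}_S(\mathbb{Q}_S,\mathbb{Q}_S(q))=0$ for $q>0$ alluded to in the proof of \prettyref{prop:vanish} (and built into Levine's Tate-type hypotheses). So your assertion that Beilinson--Soul\'e enters only through the orthogonality axiom is wrong; the existence of truncations is the other, and in fact the decisive, place where it is used, and the step should be rewritten with the obstruction vanishing justified by this vanishing rather than by a (nonexistent) semiorthogonality in that direction.
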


\begin{remark}
Recall that for $\mathscr{T}$ given by \'etale or Beilinson motives
the homomorphisms in $\DMT(k)$ can be computed from rational
K-theory as 
$$
\DMT(k)(\mathbb{Q}(n),\mathbb{Q}(n+q)[p])\cong
\op{K}_{2q-p}(k)^{(q)}. 
$$
In the case of global fields and finite fields, there is also a
precise relation between Ext-groups in the abelian category of mixed
Tate motives and rational K-theory. More precisely, there are natural
isomorphisms, cf. \cite[Corollary 4.3]{levine:nato}: 
$$
\op{Ext}^p_{\op{MT}(k)}(M,N)\sira\DMT(k)(M,N[p]).
$$
In particular, the vanishing of rational K-theory for finite fields
and global function fields implies that for such $k$, there are no
extensions between objects in $\op{MT}(k)$. 
\end{remark}

\begin{proposition}
\label{prop:vanish}
Let $k$ be a field satisfying the Beilinson--Soul{\'e} vanishing
conjectures for $\mathscr{T}$. Assume $S$ is smooth and $\op{M}_k(S)$
is in $\DMT(k)$. Then $S$ also satisfies the Beilinson--Soul{\'e}
vanishing conjectures for $\mathscr{T}$. 
\end{proposition}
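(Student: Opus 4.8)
The goal is to show $\mathscr{T}_k(\op{M}_k(S),\mathbb{Q}_k(n)[m])=0$ for all $n$ and all $m<0$ (see \prettyref{defin:vanish}). The plan is to reduce this to the single assertion that $\op{M}_k(S)$ lies in the non-positive part $\DMT(k)^{\le 0}$ of the MT $t$-structure. Since $k$ satisfies Beilinson--Soul\'e vanishing, \prettyref{thm:levbs} gives a non-degenerate $t$-structure on $\DMT(k)$ whose heart $\op{MT}(k)$ contains all $\mathbb{Q}_k(n)$; hence $\mathbb{Q}_k(n)[m]\in\DMT(k)^{\ge 1}$ as soon as $m<0$, and the desired vanishing becomes the orthogonality $\mathscr{T}_k(\DMT(k)^{\le 0},\DMT(k)^{\ge 1})=0$ built into that $t$-structure, evaluated inside $\DMT(k)\subseteq\mathscr{T}^c(k)$ by \prettyref{prop:dmttensor}. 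Conversely $\op{M}_k(S)\in\DMT(k)^{\le 0}$ is actually equivalent to Beilinson--Soul\'e vanishing for $S$, so this reduction loses nothing and isolates the real content.

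It then remains to prove $\op{M}_k(S)\in\DMT(k)^{\le 0}$. By \prettyref{defin:vanish} together with the identification $W_{[n,n]}\DMT(k)\cong\op{Der}^{\op{b}}(\mathbb Q\text{-modf})$ of \prettyref{prop:wt}, this says that every weight-graded piece $\op{gr}^W_n\op{M}_k(S)$ is concentrated in non-positive degrees, equivalently that $\op{M}_k(S)$ lies in the closure of $\{\mathbb{Q}_k(a)[i]:a\ge 0,\ i\ge 0\}$ under extensions and retracts. This is the motivic shadow of the classical estimate that $H^i$ of a smooth variety carries weights $\ge i$, and I would derive it by d\'evissage from the smooth proper case. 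In the characteristics where Beilinson--Soul\'e vanishing is known one has resolution of singularities, resp.\ de Jong's alterations with $\mathbb Q$-coefficients, so $S$ admits an open immersion into a smooth proper $\bar S$ with strict normal crossing complement $D=\bigcup D_l$; the localization triangles for the induced stratification of $\bar S$, together with the purity isomorphism $i^!\mathbb{Q}_{\bar S}=\mathbb{Q}(-c)[-2c]$ for regular pairs, present $\op{M}_k(S)$ through at most $\dim S$ iterated cones and shifts of $\op{M}_k(\bar S)$ and of the twisted motives $\op{M}_k(\widetilde{D^{(p)}})(p)[2p]$ of the normalisations of the $p$-fold intersections of the $D_l$. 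Each intermediate variety is smooth and proper and, by decreasing induction on $p$, lies in $\DMT(k)$; being moreover mixed Tate it has, by purity, motive a direct sum $\bigoplus_{a\ge 0}\mathbb{Q}_k(a)[2a]^{\oplus b_a}$ with no odd summands. Tracking twists and shifts, the $p$-th layer then enters $\op{M}_k(S)$ only through objects $\mathbb{Q}_k(a+p)[2a+p]$ with $a,p\ge 0$, all of whose shifts are $\ge 0$; hence $\op{M}_k(S)\in\DMT(k)^{\le 0}$. When $S$ is affine one can instead invoke motivic affine vanishing, which confines $\op{M}_k(S)$ to a bounded range of non-negative homological degrees and so bypasses the compactification.

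The hard part will be exactly the assertion $\op{M}_k(S)\in\DMT(k)^{\le 0}$, and inside its proof the two ingredients of the d\'evissage: that a smooth proper mixed Tate variety has motive a direct sum of the $\mathbb{Q}_k(a)[2a]$ with $a\ge 0$ (motivic purity), and the availability of a strict normal crossing compactification compatible with the mixed Tate condition. For Beilinson motives both are provided by H\'ebert's weight structure (Remark~\ref{WDM}, \prettyref{thm:hebert}), but for a general motivic triangulated category they constitute the genuine input. Once $\op{M}_k(S)\in\DMT(k)^{\le 0}$ is granted, the conclusion is the purely formal $t$-structure orthogonality recorded in the first paragraph.
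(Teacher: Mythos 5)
Your opening reduction is fine, and even sharpens the statement: under Beilinson--Soul\'e for $k$ Levine's $t$-structure on $\DMT(k)$ exists, every object of its heart is a finite iterated extension of Tate twists, and so the asserted vanishing is indeed equivalent to $\op{M}_k(S)\in\DMT(k)^{\le 0}$. The genuine gap is in how you propose to prove that positivity. First, the hypotheses hand you no compactification to work with: over the fields where the paper actually applies this (finite fields, characteristic $p$) resolution of singularities is unavailable, and de Jong's alterations do not produce an open immersion of $S$ itself into a smooth proper variety with strict normal crossing boundary --- they only replace $S$ by a generically finite cover, and passing to $\mathbb{Q}$-coefficients does not repair this without a substantial descent argument you do not supply. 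Second, and more fundamentally, even granting such a $\bar S$, nothing in the hypothesis ``$\op{M}_k(S)\in\DMT(k)$'' forces $\op{M}_k(\bar S)$ or the motives of the boundary strata $\widetilde{D^{(p)}}$ to be mixed Tate: the localization triangles propagate mixed-Tateness from the total space and the closed strata to the open stratum, not in the reverse direction. So the step ``each intermediate variety \dots by decreasing induction on $p$, lies in $\DMT(k)$'' is unjustified, and with it the whole purity d\'evissage collapses; you yourself flag ``a strict normal crossing compactification compatible with the mixed Tate condition'' as the genuine input, but that input simply is not available from the stated hypotheses. The affine fallback fares no better: the paper itself remarks (in the discussion of perversity of standard objects) that a motivic Artin vanishing theorem is not currently known.

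For comparison, the paper's own proof is a two-line d\'evissage with no geometry at all: Beilinson--Soul\'e for $k$ gives $\mathscr{T}_k(\mathbb{Q}_k(a),\mathbb{Q}_k(b)[m])=0$ for $m<0$; since $\op{M}_k(S)\in\DMT(k)$, it is a finite iterated cone of Tate objects, and the corresponding long exact sequences yield $\mathscr{T}_k(\op{M}_k(S),\mathbb{Q}_k(n)[m])=0$ for $m<0$. Your instinct that one must control which shifts $\mathbb{Q}(a)[j]$ occur in such a d\'evissage is not unreasonable --- the paper's argument is silent on this point, and in the intended applications (cellular varieties, Bruhat cells, Bott--Samelson resolutions) the shifts are visibly non-negative --- but the cure you propose imports geometric hypotheses that the proposition does not have, rather than isolating and proving the needed positivity from the assumptions actually given.
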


\begin{proof}
As in the proof of \prettyref{prop:wt}, we have (together with
Beilinson--Soul{\'e} for the base field)
$$
\op{Hom}_{\mathscr{T}(k)}(\mathbb{Q}_k(a),\mathbb{Q}_k(b)[m])\cong 
\op{Hom}_{\mathscr{T}(k)}(\op{M}_k(k),\mathbb{Q}_k(b-a)[m])=0
$$
for $m< 0$ (or in the stronger version for $m\leq 0$ and $b\neq a$). 
By definition, every object $M$  of
$\DMT(\mathscr{T},k)$ can be constructed from $\mathbb{Q}_k(n)$,
$n\in\mathbb{Z}$ using triangles. The corresponding long exact
sequences then yield the claim. 
\end{proof}

Most of the time, we will apply the above result to flag varieties
$G/B$, Bruhat cells $BxB/B\cong\mathbb{A}^n$ in flag varieties or
Bott--Samelson resolutions of Schubert varieties. These varieties have
motivic cell structures, hence their motives are mixed Tate.

\begin{remark}
 The Beilinson--Soul{\'e} vanishing conjecture holds for finite
  fields by the K-theory computations of Quillen, cf. \cite{quillen}.
The Beilinson--Soul{\'e} vanishing holds for global fields - for number
fields by the K-theory computations of Borel, and for function fields
by the group homology computations of Harder, cf. \cite{harder}.
\end{remark}

\begin{remark}
Finally, we want to remark that the Beilinson--Soul{\'e} condition for
$\mathscr{T}(\mathbb{C})$ given by semisimplified Hodge realization is a
triviality. This holds more generally whenever $\mathscr{T}$ satisfies
the grading condition - the motivic t-structure is then the natural 
t-structure on the category of $\mathbb{Z}$-graded
$\mathbb{C}$-vector spaces. This is one of the reasons why the motives
with coefficients in semisimplified Hodge realization are useful: we
get results over the base field $\mathbb{C}$ where the
Beilinson--Soul{\'e} conjectures for \'etale or Beilinson motives are
not known.  
\end{remark}

\subsection{Summary of structures}

Let $k$ be a field satisfying the Beilinson--Soul{\'e} vanishing
conjectures. The category $\DMT(k)$ is a tensor triangulated category,
equipped with a weight structure and a t-structure. 

The results of Wildeshaus \cite[Th{\'e}or{\`e}me 1.1, Corollaire
1.4]{wildeshaus:cras1} imply that  there is an exact functor 
$$\op{real}:\op{Der}^{\op{b}}(\op{MT}(k))\to \DMT(k)
$$
which is an equivalence of categories and induces the identity on
$\op{MT}(k)$. Note that the result in loc.cit. is stated for number
fields, but all that is required for the proof is the
Beilinson--Soul{\'e} vanishing. 

The above-mentioned identification $W_{[n,n]}\DMT(k)\approx
\op{Der}^{\op{b}}(\mathbb{Q}\op{-modf})$ restricts to an equivalence
$W_{[n,n]}\op{MT}(k)\approx \mathbb{Q}\op{-modf}$. By \cite[Theorem
4.2]{levine:nato}, this equivalence provides an exact faithful tensor
functor 
$$
\bigoplus_{i\in\mathbb{Z}}\op{gr}^W_i: \op{MT}(k)\to
\mathbb{Q}\op{-modf}^{\mathbb{Z}}
$$
from the  category of mixed Tate motives to the category of
finite-dimensional graded $\mathbb{Q}$-vector spaces. In
the special cases where $k$ is a finite field or a global function
field, the vanishing of rational K-theory allows to identify $\op{MT}(k)$
with the category of finite-dimensional graded $\mathbb{Q}$-vector
spaces. The result is an identification of $\DMT(k)$ with the
bounded derived category of finite-dimensional graded
$\mathbb{Q}$-vector spaces.   

In the case of a field $k$, there are now two ways of
defining weights for mixed Tate motives. The comparison between these
two is given by \cite[Theorem 3.8]{wildeshaus:at}: a mixed Tate
motive $M$ is in $\DMT(k)_{w=0}$ if and only if
$\op{H}^i(\op{gr}^W_j M)=0$ for $i\neq 2j$. The weight structure $w$
assigns weight $q-2p$ to the motives $\mathbb{Q}(p)[q]$, the weight
t-structure $W$ assigns weight $p$. 

In the case of a finite field (again using vanishing of rational
K-theory), the motives of $w$-weight $0$ form a tilting
collection. This provides another equivalence  of triangulated categories
$\op{Der}^{\op{b}}(\DMT(k)_{w=0})\cong \DMT(k)$. The result is
an easy version of Koszul duality that ``interchanges the weight and
t-structure.'' It is the unique triangulated self-equivalence that
maps $\mathbb{Q}(n)$ to $\mathbb{Q}(-n)[-2n]$: the first object has
cohomological degree $0$ and weight $-2n$, the latter has
cohomological degree $-2n$ and weight $0$. The results of our paper
can be interpreted as saying that the Koszul duality of \cite{BGSo}
for stratified mixed Tate motives over partial flag varieties is
essentially obtained by perverse glueing from this toy example. 

It is interesting to note that in the case of a number fields, the
hearts of the weight and t-structure are not equivalent. The heart
of the weight structure is semi-simple, while the heart of the
t-structure has a lot of interesting arithmetic extensions of Tate
motives. A functor as above still exists and embeds the heart of the
t-structure into the heart of the weight structure, splitting the
extensions. It is hence not exactly clear if the above Koszul duality
functor can have a ``geometric construction''. We thank J\"org
Wildeshaus for discussions on this point.

\section{Stratified mixed Tate motives}
\label{sec:dmt}

In the following section, we consider categories of motives over
stratified varieties. We want to study motives which are constant
mixed Tate along the strata. For this, we need a condition which, in
analogy with the case of sheaves on topological spaces, we call
{\bf Whitney--Tate}. This condition is in particular satisfied for
partial flag varieties with the stratification by Schubert cells. A
further discussion of Whitney--Tate stratifications is deferred to Appendix~\ref{sec:WTS}.

\begin{convention}
\label{conditions}
From this moment on, we will consider motivic triangulated categories
$\mathscr{T}$ over $\mathscr{S}=\op{Sch}/k$, i.e., we will only work
over schemes separated and of finite type over some field. All the
constructions  will take place in the motivic triangulated
category $\mathscr{T}$, which will sometimes be suppressed from the
notation. In particular, whenever we speak of \emph{motives}, we are
referring to objects in some category $\mathscr{T}(X)$ where hopefully
the exact nature of $\mathscr{T}$ will be clear from context.

For the representation-theoretic applications, we will usually
consider a more restricted setting in which the following
two additional conditions are satisfied:
\begin{description}
\item[(weight condition)] A motivic triangulated category $\mathscr{T}$ over   $\op{Sch}/k$ is said to satisfy the \emph{weight condition} if for each scheme $X$ there is a weight structure on $\mathscr{T}(X)$, such that this collection of weight structures satisfies the conclusion of H{\'e}bert's theorem  \ref{thm:hebert}. %\sout{\prettyref{thm:hebert}} 
\item[(grading condition)] A motivic triangulated category $\mathscr{T}$ (with coefficients in a field $\mathbb{K}$ of characteristic $0$) over $\op{Sch}/k$  is said to satisfy the \emph{grading condition} if $\DMT(\mathscr{T},k)$ is  equivalent (as tensor-triangulated category) to the bounded derived category of finite-dimensional $\mathbb{Z}$-graded $\mathbb{K}$-vector spaces.
\end{description}

From the discussion in \prettyref{sec:cdbmot} and \prettyref{sec:mtm},
these two conditions are satisfied for 
rational motives over finite fields and for
$\mathcal{E}_{\op{GrH}}$-motives over $\mathbb{C}$. These are the
situations of interest for our representation-theoretic applications. 
\end{convention}

\begin{definition}\label{defin:cellvar} 
 By a {\bf stratification} of a variety we mean a finite partition 
$$
X=\bigsqcup_{s\in\mathcal{S}}X_s
$$
of $X$ into locally closed smooth subvarieties, called the {\bf strata} of our stratification, such that the closure of each stratum  is again a union of strata. If all strata are isomorphic to affine spaces $\mathbb{A}^{n_s}$ of some dimension depending on $s\in\mathcal{S}$, we speak of a {\bf stratification by affine spaces} or of an {\bf affinely stratified variety}. 
\end{definition}

  \begin{Bemerkungl}
\label{DMTk}
    Given a stratified variety $(X,\mathcal{S})$ we
    consider the full
    triangulated
    subcategories $$\DMT^*_{\mathcal{S}}(X),\DMT^!_{\mathcal{S}}(X)\subset
    \mathscr{T}(X)$$  of all  motives $M$ 
    such that for each inclusion $j_s:X_s\hookrightarrow X$ of a stratum
    $j_s^\ast M$ respectively $j_s^! M$ belongs to $\DMT(X_s)$.
  \end{Bemerkungl}

\begin{lemma}\label{lem:genZ} 
  Given a  stratified variety $(X,\mathcal{S})$
 the category $\DMT^*_{\mathcal{S}}(X)$ 
is generated  as a triangulated 
category by the objects $j_{s!}M$ for
$s\in \mathcal S$ and $M\in \DMT(X_s)$. Similarly
$\DMT^!_{\mathcal{S}}(X)$ is generated by the objects $j_{s*}M$.
\end{lemma}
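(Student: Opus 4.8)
The plan is to proceed by induction on the number of strata, using the recollement-type triangles from the localization property. First I would order the strata $s_1, \dots, s_n$ so that each $U_i = X_{s_1} \sqcup \cdots \sqcup X_{s_i}$ is open in $X$ (possible since stratum closures are unions of strata), writing $j_i : X_{s_i} \hookrightarrow X$ for the stratum inclusions and letting $U = U_{n-1}$ be open with closed complement the single stratum $Z = X_{s_n}$; denote by $a : U \hookrightarrow X$ and $b : Z \hookrightarrow X$ the corresponding immersions. The key tool is the distinguished triangle $a_! a^\ast M \to M \to b_\ast b^\ast M \to a_! a^\ast M[1]$ from the alternative formulation of the localization property (item (6) in the list of properties of motivic triangulated categories). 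For $M \in \DMT^\ast_{\mathcal{S}}(X)$ one checks that $b^\ast M \in \DMT(Z)$ by definition (as $Z$ is a stratum), and that $a^\ast M \in \DMT^\ast_{\mathcal{S} \cap U}(U)$, since restricting $a^\ast M$ to a stratum of $U$ is the same as restricting $M$ to that stratum of $X$. Thus the triangle exhibits $M$ as an extension of $b_\ast b^\ast M$ — which is $b_\ast$ of an object of $\DMT(Z)$, hence of the required form $j_{s_n \ast}(-)$... wait, but we want $j_{s!}$-objects, not $j_{s\ast}$-objects.

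Let me reconsider. For the $\DMT^\ast$ statement one should instead use the \emph{other} localization triangle $b_\ast b^! M \to M \to a_\ast a^\ast M \to b_\ast b^! M[1]$ — no, that also produces a $b_\ast$ term. The cleanest approach: use that $\DMT^\ast_{\mathcal{S}}(X)$ is generated by the $j_{s!}M$, and argue directly that every generator of $\mathscr{T}(X)$-type sitting in $\DMT^\ast_{\mathcal{S}}(X)$ is built from these. Concretely, for $M \in \DMT^\ast_{\mathcal{S}}(X)$ and the open-closed decomposition $(a,b)$ above, the triangle $a_! a^\ast M \to M \to b_\ast b^\ast M \to $ has third term $b_\ast b^\ast M$; since $b$ is a closed immersion, $b_\ast = b_!$, so $b_\ast b^\ast M = b_! b^\ast M$ with $b^\ast M \in \DMT(Z)$ — this is a $j_{s_n !}$-object. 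And $a_! a^\ast M$: since $a_!$ commutes with the triangulated structure and $a^\ast M \in \DMT^\ast_{\mathcal{S}\cap U}(U)$, by the inductive hypothesis $a^\ast M$ is built from $k_{t!}N$ for strata $k_t : X_t \hookrightarrow U$ and $N \in \DMT(X_t)$; applying $a_!$ and using $a_! k_{t!} = j_{t!}$ (functoriality of $(-)_!$), we get that $a_! a^\ast M$ is built from $j_{t!}N$-objects. Hence $M$ lies in the triangulated subcategory generated by the $j_{s!}M$, as desired. The base case (one stratum, $X = X_s$ itself) is immediate.

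The $\DMT^!$ statement is entirely dual: one uses the triangle $b_\ast b^! M \to M \to a_\ast a^\ast M \to b_\ast b^! M[1]$, noting that for $M \in \DMT^!_{\mathcal{S}}(X)$ one has $b^! M \in \DMT(Z)$ by definition and $a^\ast M \in \DMT^!_{\mathcal{S}\cap U}(U)$ — here one must check $a^\ast$ preserves the $\DMT^!$-condition, which follows since $a$ is an open immersion, so $a^\ast = a^!$ up to shift/twist (indeed $a^\ast = a^!$ exactly for open immersions by item (5) with $d=0$), and restriction $(-)^!$ along strata commutes appropriately with $a^\ast = a^!$. Then $a_\ast a^\ast M = a_\ast a^! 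M$... one wants $a_\ast$ here but the inductive hypothesis produces $k_{t\ast}$-objects inside $U$, and $a_\ast k_{t\ast} = j_{t\ast}$, so $a_\ast a^\ast M$ is built from $j_{t\ast}N$; combined with the $b_\ast b^! M = j_{s_n \ast}(b^! M)$ term, $M$ lies in the subcategory generated by the $j_{s\ast}M$.

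\textbf{Main obstacle.} The one genuinely non-formal point is verifying that $a^\ast$ preserves the defining conditions of $\DMT^\ast_{\mathcal{S}}$ and $\DMT^!_{\mathcal{S}}$ when passing to the open union of strata $U$, i.e.\ that restricting to a stratum of $U$ commutes with $a^\ast$ (clear, by functoriality of $(-)^\ast$ and $k_t = a \circ (k_t : X_t \to U)$) and with $a^!$ in the $\DMT^!$ case (here one uses that $a$ is an open immersion so $a^! \cong a^\ast$, reducing to the previous case). Everything else — the existence of the localization triangles, $b_\ast \cong b_!$ for closed $b$, and the compatibilities $a_! k_{t!} \cong j_{t!}$, $a_\ast k_{t\ast} \cong j_{t\ast}$ — is part of the six-functor formalism recalled above, so the argument is bookkeeping once the induction is set up correctly.
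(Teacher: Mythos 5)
Your proof is correct and takes essentially the same route as the paper: induction on the number of strata using the localization triangle, the identification $b_!\cong b_\ast$ for closed immersions (resp. $a^!\cong a^\ast$ for open ones), and the compatibility of $(-)_!$ (resp. $(-)_\ast$) with composition of stratum inclusions. The only, inessential, difference is that you peel off a closed stratum and apply the induction hypothesis to its open complement, whereas the paper removes an open stratum and inducts on the closed complement, so that the term $j_{s!}j_s^\ast M$ of the triangle is literally of the required form without any further identification.
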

\begin{proof}
We prove the first statement, the second is similar. 
We argue by induction on the number of strata, the case of no stratum
 being obvious. 
Let $j_s:X_s\hra X$ be the inclusion of an open stratum and
$i:Z\hra X$ the inclusion of its complement. 
For $M\in \DMT_{\mathcal{S}}^\ast(X)$ consider the
``Gysin'' or ``localization'' triangle 
$$j_{s!}j_s^*M\ra M\ra i_!i^*M\ra j_{s!}j_s^*M[1].
$$  
Obviously, $j_s^\ast M\in\DMT(X_s)$, and so the first term is of
the required form. On the other hand, $i^\ast M\in
\DMT_{\mathcal{S}}^{\ast}(Z)$ and the induction hypothesis implies that
$i^\ast M$ is built from motives $k_{t!}N$ with $k_t:Z_t\hra Z$ a
stratum of $Z$ and $N\in\DMT(Z_t)$. Hence $i_!i^\ast M$ is of the
required form, and  the claim is proved. 
\end{proof}

\begin{definition}
\label{defin:WT}
  A stratified variety $(X,\mathcal S)$ is called {\bf Whitney--Tate}
  % (I propose this terminology to replace (ERPT))
  if and only if for all $s,t\in\mathcal S$ and $M\in \DMT(X_s)$ we
  have $j^{*}_tj_{s*}M\in \DMT(X_t)$. 
\end{definition}

\begin{remark}
By Verdier duality, this condition   is equivalent to asking 
 $j^{!}_tj_{s!}M\in \DMT(X_t)$. Using  \prettyref{lem:genZ} we deduce
 in this case the equality
 $\DMT^!_{\mathcal{S}}(X)=\DMT^*_{\mathcal{S}}(X)$. 
\end{remark}

\begin{definition}
\label{defin:dmts}
Given a Whitney--Tate stratified variety, the category
$$
\DMT^!_{\mathcal{S}}(X)=\DMT^\ast_{\mathcal{S}}(X)=
\DMT_{\mathcal{S}}(X)\subset \mathscr{T}(X)$$  
is called the {\bf category of stratified mixed Tate motives}.
\end{definition}

  \begin{remark}
    Similar categories have appeared before, in the setting of $\ell$-adic
    sheaves in \cite[Section 4.4]{BGSo}, and in the setting of Tate motives in
    \cite[Section 4, Theorem 4.4]{wildeshaus:intermediate}. In
    particular, \cite[Theorem 4.4]{wildeshaus:intermediate} states
    that an affinely stratified variety is Whitney--Tate if the orbit
    closures are regular.  
  \end{remark}

\begin{remark}
Let us recall some facts concerning the Bruhat decomposition of a split reductive group $G$. If $T\subset B$ is a maximal torus in a Borel subgroup of $G$, multiplication gives an isomorphism of varieties $T\times B_{\op{u}}\sira B$, where $B_{\op{u}}$ is the unipotent radical of $B$. If  $U_\alpha\subset G$ are the root subgroups for $T$, which are all isomorphic to the additive group, and $R^+$  is the system of positive roots of $T$ in $B$, multiplication gives an isomorphism of varieties $\prod_{\alpha\in R^+}U_\alpha\sira B_{\op{u}}$ and an open embedding $\prod_{\alpha\in R^+}U_{-\alpha}\times B\hra G$ for the products taken in an arbitrary but fixed order. For $W\supset W_P$ the Weyl groups of $G$ and a parabolic subgroup $P$, respectively, we have the Bruhat decompositions 
$$
G=\bigsqcup_{x\in W} BxB, \quad P=\bigsqcup_{x\in W_P} BxB \quad\textrm{ and }\quad G=\bigsqcup_{\bar  y\in W/W_P} B\bar yP.
$$ 
The double cosets can be described quite explicitly by isomorphisms
$$
\prod_{\alpha\in R^+\backslash yR^+}U_\alpha\times \{\dot y\}\times P
\sira B\bar yP
$$ 
given again by multiplication with $y\in W$ the shortest representative of $\bar y$ and $\dot y$ representing $y$.  In particular, $G\ra G/P$ and also all $G/Q\ra G/P$ for inclusions $Q\subset P$ of parabolic subgroups are  trivial fibre bundles over any cell $B\bar yP/P$. Moreover, the preimage in $G/Q$ of the cell  $B\bar yP/P$ is the union of cells $B\bar xQ/Q$ for  $\bar  x\in W/W_Q$ satisfying $\bar xW_P= \bar y$. The maps induced between these cells are trivial fibre bundles with affine spaces as fibres. For more details, one might consult \cite{BorAG}. 
\end{remark}

  \begin{proposition}
\label{prop:erpt2}
Let $G$ be a connected split reductive algebraic group over the field $k$. 
Let $T \subset  B\subset P\subset G$ be a choice of split maximal
torus $T$, a Borel subgroup $B$ and a parabolic subgroup $P$.  Then
the stratification  of $G/P$ by $B$-orbits is Whitney--Tate.  
\end{proposition}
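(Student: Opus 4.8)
The plan is to prove the Whitney--Tate condition by induction on the stratification, using the explicit fibre-bundle structure of the Bruhat decomposition recalled above. The key point is that $G/P$ is an affinely stratified variety, and we want to show $j_t^\ast j_{s\ast}M \in \DMT(X_t)$ for all Borel orbits $X_s, X_t$ and all $M \in \DMT(X_s)$. First I would reduce to the case $M = \underline{X_s}(n)$ for $n\in\DZ$, since these generate $\DMT(X_s)$ as a triangulated category and the functors $j_t^\ast j_{s\ast}$ are exact; so it suffices to control $j_t^\ast j_{s\ast}\underline{X_s}(n)$. Since $X_s \cong \mathbb{A}^{n_s}$ is an affine space, hence $\op{M}_k(X_s) \in \DMT(k)$, the relevant motives are mixed Tate over the point, and by base change it is enough to understand the geometry of the closure relations.

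Next I would set up the induction. Order the Borel orbits so that $X_s$ is the open stratum of its closure $\overline{X_s}$, and let $i\colon Z \hookrightarrow \overline{X_s}$ be the complement (a union of smaller orbits). Using the localization triangle for $j_s$ together with the base change isomorphisms $g^\ast f_\ast \cong f'_\ast g'^\ast$ from the six-functor formalism, one computes $j_t^\ast j_{s\ast}\underline{X_s}(n)$ in terms of the restriction of the motive of $\overline{X_s}$ to the strata it meets, together with the correction terms coming from $i_\ast i^\ast$ on the boundary, which are handled by the induction hypothesis applied to the smaller variety $\overline{X_s}$ with its induced stratification. The decisive geometric input is the remark preceding the proposition: for any $\bar y \in W/W_P$, the projection $G \to G/P$ and the partial quotients $G/Q \to G/P$ are trivial fibre bundles over each Schubert cell $B\bar yP/P$, with affine spaces as fibres, and the Schubert cells themselves are affine spaces. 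This means that, locally over each stratum $X_t$, the relevant incidence varieties are products of affine spaces, so their motives are mixed Tate; combined with the homotopy property $1 \sira p_\ast p^\ast$ for $p\colon \mathbb{A}^1_S \to S$, the pushforwards stay within $\DMT$.

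I expect the main obstacle to be the bookkeeping of the closure relations and the careful verification that the localization/base-change manipulations really do land in $\DMT(X_t)$ rather than merely in $\mathscr{T}^c(X_t)$ — that is, making precise that ``mixed Tate along strata is preserved'' at each inductive step. A cleaner route, which I would pursue if the direct induction gets unwieldy, is to invoke Wildeshaus's criterion cited in the remark after Definition~\ref{defin:WT}: an affinely stratified variety is Whitney--Tate provided the closures of the strata are regular varieties whose motives are mixed Tate. For $G/P$ the strata are Schubert cells and their closures are the Schubert varieties, which are in general \emph{not} regular, so this criterion does not apply directly; hence one genuinely needs the fibre-bundle argument, resolving Schubert varieties by Bott--Samelson varieties (which are iterated $\mathbb{P}^1$-bundles, hence regular with mixed Tate motive) and pushing forward along the proper birational Bott--Samelson map. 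Proper base change then transports the mixed Tate property from the Bott--Samelson resolution down to $G/P$, and the fact that the resolution is an isomorphism over the open cell controls the discrepancy. Assembling these pieces — reduction to Tate twists, Bott--Samelson resolution, proper base change, and the trivial-bundle structure over cells — gives the claim.
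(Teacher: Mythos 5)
Your fallback route via Bott--Samelson resolutions is viable --- it is in fact the paper's \emph{alternative} proof, via the resolution criterion of Appendix~\ref{sec:WTS} and \prettyref{prop:erpt1} --- but as written it has a genuine gap. The decisive hypothesis there is not that $BS(w)$ is smooth with mixed Tate motive, nor that $\rho_w$ is an isomorphism over the open cell; it is that for \emph{every} stratum $X_v\subset\overline{X_w}$ the restriction $BS(w)\times_{\overline{X_w}}X_v$ has mixed Tate relative motive over $X_v$, equivalently (using $B$-equivariance, which trivializes $\rho_w$ over each cell) that every fibre $\rho_w^{-1}(x)$ admits a paving by affine spaces --- the input the paper takes from Haines. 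Proper base change only yields $j_v^\ast(\rho_w)_\ast\cong(\rho_v)_\ast (j_v')^\ast$, and mixed-Tateness of the right-hand side is exactly this fibrewise condition over the \emph{boundary} strata; it does not follow from the global mixed-Tateness of the iterated $\mathbb{P}^1$-bundle $BS(w)$, and ``the resolution is an isomorphism over the open cell'' says nothing about the strata where all the content lies. Moreover, even granting the fibre condition, deducing Whitney--Tate from it is the nontrivial induction with localization and absolute purity carried out in the appendix, which your sketch compresses into ``proper base change transports the mixed Tate property.'' Your first route (direct induction on strata) is also circular as stated: the ``correction terms'' you propose to handle by induction involve precisely $j_t^\ast j_{s\ast}$ for the \emph{open} stratum of $\overline{X_s}$, which is the quantity being proved, not something covered by the hypothesis on smaller strata, and the constant motive of the (singular) closure $\overline{X_s}$ is not known to restrict to mixed Tate motives without exactly this kind of input.

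For comparison, the paper's own proof of \prettyref{prop:erpt2} avoids resolutions altogether: for each simple reflection $s$ it uses the $\mathbb{P}^1$-bundle $\pi_s:G/B\to G/P_s$ and shows, first, that every $j_{r!}\underline{X_r}$ is generated from the skyscraper at the point cell by the operations $\pi_s^\ast\pi_{s\ast}$ (via the localization triangle on $Y\cong\mathbb{P}^1\times D$), and second, that $\DMT^!_{(B)}(G/B)$ is stable under $\pi_s^\ast\pi_{s\ast}$, using smooth and proper base change together with homotopy invariance along the $\mathbb{A}^1$-factor; the case of a general $P$ is then reduced to $G/B$ via $\pi:G/B\to G/P$ and its trivialization over cells. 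If you prefer your route, the fix is to state and use the affine-paving of Bott--Samelson fibres explicitly and then run the resolution criterion of Appendix~\ref{sec:WTS}.
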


\begin{proof}
Let us first concentrate on the case $P=B$.

Recall that Bruhat cells are parametrized by the elements of the Weyl group $W$ of $G$. Given an element $t$ in the Weyl group, let $X_t=BtB/B$ be the corresponding Bruhat cell. For every simple reflection $s$ in $W$, corresponding to a simple positive root $\alpha$ w.r.t. $B$, denote by $P_s$ the parabolic corresponding to the set $\{\alpha\}$, i.e., the subgroup generated by the Borel and the root subgroup for $-\alpha$. The inclusion $B\subset P_s$ induces a projection $\pi_s:G/B\ra G/P_s$ from $G/B$ onto the  partial flag variety for the parabolic $P_s$. The projection $\pi_s$ is Zariski-locally trivial with fiber $P_s/B\cong\mathbb{P}^1$. Now consider the pullback square 
\begin{center}
  \begin{minipage}[c]{10cm}
    \xymatrix{
      X_t\sqcup X_{ts}\ar@{=}[r]&Y \ar[r]^{u} \ar[d]_{p} &
      G/B \ar[d]^{\pi_s}\\ 
     & D\ar[r]_{v} & G/P_s 
    }
  \end{minipage}
\end{center}
where $D$ is a $B$-orbit in $G/P_s$. By what was said above, $Y\cong\mathbb{P}^1\times D$ and it decomposes into two $B$-orbits as shown. Without loss of generality, the $B$-orbits correspond to the Weyl group elements $t$ and $ts$ with $t<ts$ in the Bruhat order, and with this choice we have  $X_{ts}\cong \mathbb{A}^1\times D$ and $X_t\cong \{\infty\}\times D$. We denote the open immersion $j:X_{ts}\hra Y$ and the closed immersion $i: X_t\hra Y$. In the following, we denote $j_t:X_t\to G/B$ the inclusion of cells in $G/B$. 
The projection induces an isomorphism $p:X_t\sira D$, thus we get $p^*p_*\underline X_t\cong i^\ast \underline Y$ and applying $u_!$ and base change we get  
$\pi_s^*\pi_{s*}j_{t!}\underline X_t\cong u_!\underline Y$.
On the other hand we have a triangle
$j_!j^*\underline Y\ra \underline Y\ra i_!i^*\underline Y\ra
j_!j^\ast\underline Y[1]$
 and with $u_!$ a triangle
$$j_{ts!}\underline X_{ts}\ra \pi_s^*\pi_{s*} 
j_{t!}\underline X_t\ra j_{t!}\underline X_t\ra
j_{ts!}\underline X_{ts}[1]$$
on $G/B$. This shows that any triangulated subcategory of
$\mathscr{T}(G/B)$ stable under all $\pi_s^*\pi_{s*}$ for all simple
reflections $s$ and
containing  the skyscraper $j_{e!}\underline X_e$ at the one-point cell $X_e$ 
has to contain   $j_{r!}\underline X_r$ for all Bruhat cells $BrB/B$. 
Now it is sufficient to see that our triangulated subcategory 
$\DMT_{(B)}^!(G/B)$ from \ref{DMTk} has all these properties,
since then all $j_{r!}\underline X_r$ belong to it and our stratification is indeed Whitney--Tate.
Given $M\in \DMT_{(B)}^!(G/B)$ we thus need to show $ \pi_s^*\pi_{s*}M\in \DMT_{(B)}^!(G/B)$, i.e., the $!$-restriction of $u^!\pi_s^\ast \pi_{s\ast}M$ to $X_t$ resp. $X_{ts}$ is constant mixed Tate. Since $\pi_s$ is smooth, we can exchange $u^!\pi_s^\ast\cong p^\ast v^!$, and base change allows to exchange $v^!\pi_{s\ast}\cong p_\ast u^!$. Hence, it will be sufficient to show $p^*p_{*}u^!M\in \DMT_{(B)}^!(Y)$.
Now sure enough we have $N\pdef u^!M\in  \DMT_{(B)}^!(Y)$.
We consider the push-forward of the localization triangle:
$$
p_{*}i_*i^!N\ra p_{*}N\ra p_{*}j_*j^!N\ra p_{*}i_*i^!N[1].
$$
Here  $p\circ i$ is an isomorphism and $p\circ j$ is the projection $\mathbb{A}^1\times D\to D$, as remarked above. By the homotopy property we deduce $p_{*}N\in \DMT(D)$, and then $p^*p_{*}N\in \DMT^!_{(B)}(Y)$ follows easily.

The case of a general parabolic $P$ can be deduced from the Borel case above as follows: the inclusion $B\subset P$ induces a projection $\pi:G/B\to G/P$ which is smooth and projective; actually, it is a Zariski-locally trivial bundle with fiber $P/B$ and for any cell $X_t=BtP/P\subset G/P$, we have $\pi^{-1}(X_t)\cong X_t\times P/B$. To determine the restriction $j_t^\ast j_{s\ast}M$ from a cell $X_t$ to a cell $X_s$ of $G/P$, we can use the following diagram 
\begin{displaymath}
  \xymatrix{
    X_t\times P/B  \ar@{^{(}->}[r]\ar[d] & G/B \ar[d] & X_s\times P/B \ar[d]  \ar@{_{(}->}[l] & X_s\ar@{_{(}->}[l] \ar@{=}[ld]\\
     X_t \ar@{^{(}->}[r]_j & G/P & X_s \ar@{_{(}->}[l]
  }
\end{displaymath}
Moreover, as in the proof for the case $B$ above, there is a Bruhat-cell of $X_s\times P/B$ which is isomorphic to $X_s$. To compute $j_t^\ast j_{s\ast}M$, it suffices, by base change,  to pull back $M$ to $X_t\times P/B$, extend to $G/B$ and restrict to $X_s\subset X_s\times P/B$. Since $G/B$ is Whitney--Tate, the result is a constant mixed Tate motive on $X_s$, and we are done.
\end{proof}

\begin{remark}
Further conditions for a stratification to be Whitney--Tate can be
found in Appendix~\ref{sec:WTS}. These conditions allow another proof of the above \prettyref{prop:erpt2}, using that fibres of Bott--Samelson resolutions  of Schubert cells have mixed Tate motives, cf. \prettyref{prop:erpt1}.  
\end{remark}

\begin{example} 
\label{ex:paraboloid}
Let $G$  be a connected split reductive algebraic group over the field $k$. 
Let $T \subset  B\subset P\subset G$ be a choice of split maximal
torus $T$, a Borel subgroup $B$ and a parabolic subgroup $P$.  
It is well-known that the partial flag varieties $G/P$ are 
affinely stratified by the $B$-orbits alias Schubert cells.
By a {\bf paraboloid $B$-variety}, we mean a $B$-variety $Y$ which 
is isomorphic to a locally closed $B$-stable subset of a partial flag
variety $G/P$.  Plainly, these are affinely stratified
by $B$-orbits  as well.
In this case we denote the stratification by $(B)$ and call 
the objects of $\DMT_{(B)}(Y)$ {\bf Bruhat--Tate sheaves}. By the
arguments in \prettyref{sec:WTS}, the Bruhat stratifications of
paraboloid $B$-varieties are also Whitney--Tate. 
\end{example}

\begin{Bemerkungl}
  Other examples of affinely stratified varieties can be found among smooth
  projective spherical varieties, Hessenberg varieties and symmetric
  spaces. In all these cases, locally closed cells arise from the
  Bia{\l}ynicki-Birula decomposition associated to suitably chosen
  $\mathbb{G}_{\op{m}}$-actions and in most cases of interest, these
  also give rise to stratifications.
\end{Bemerkungl}

\section{Weight structure for stratified mixed Tate motives}
\label{sec:weights}

In the following section, we discuss the existence and properties of a weight structure on the category $\DMT_{\mathcal{S}}(X)$ of stratified mixed Tate motives for a Whitney--Tate stratified variety $(X,\mathcal{S})$. 
We fix a motivic triangulated category $\mathscr{T}$, which is
required to satisfy the weight condition - so that we can talk about weight
structures. For the later results on combinatorial models for the
heart, we will additionally need the grading condition, but this is not
required for the definition of the weight structure. 

\begin{proposition}
\label{prop:hebbswz} 
Let $(X,\mathcal{S})$ be an affinely Whitney--Tate stratified variety. 
Then  on the category $\DMT_{\mathcal{S}}(X)$ of stratified mixed Tate
motives, cf. \prettyref{defin:dmts}, we obtain a weight structure $w$
by setting
$$
\begin{array}{l}
\DMT_{\mathcal{S}}(X)_{w\leq 0} \pdef \left\{
M\mid j_s^\ast M \in \DMT(X_s)_{w\leq 0}\text{ for all strata
}s\in\mathcal S\right\}

\\[2mm]
\DMT_{\mathcal{S}}(X)_{w\geq 0} \pdef \left\{
M\mid j_s^! M \in \DMT(X_s)_{w\geq 0}\text{ for all strata
}s\in\mathcal S\right\}
\end{array}
$$
This weight structure coincides with the restriction of H{\'e}bert's
weight structure on $\mathscr{T}(X)$ to $\DMT_{\mathcal{S}}(X)$.
\end{proposition}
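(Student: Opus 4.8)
The plan is to verify the four axioms of \prettyref{defin:wtstruct} for the pair $(\DMT_{\mathcal S}(X)_{w\leq 0}, \DMT_{\mathcal S}(X)_{w\geq 0})$ defined above, and simultaneously to identify this structure with the restriction of H\'ebert's weight structure. The key inputs are: H\'ebert's \prettyref{thm:hebert} (which gives, for $\mathscr T$ satisfying the weight condition, a weight structure on each $\mathscr T(X)$ with $f^\ast, f_!$ $w$-left exact and $f_\ast, f^!$ $w$-right exact), \prettyref{lem:genZ} (generation of $\DMT_{\mathcal S}^\ast(X)$ by the $j_{s!}M$ and of $\DMT_{\mathcal S}^!(X)$ by the $j_{s\ast}M$), the Whitney--Tate hypothesis (so that $\DMT_{\mathcal S}^\ast(X)=\DMT_{\mathcal S}^!(X)=\DMT_{\mathcal S}(X)$), and the fact that $\DMT(X_s)$ carries H\'ebert's weight structure for each stratum (Remark~\ref{WDM}).

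First I would observe that for an object $M\in\DMT_{\mathcal S}(X)$ the membership conditions in the definition are compatible with H\'ebert's weight structure on $\mathscr T(X)$: if $M$ has $w$-weights $\leq 0$ in the ambient $\mathscr T(X)$ then $j_s^\ast M$ has weights $\leq 0$ by $w$-left exactness of $j_s^\ast$, and dually for $j_s^!$ and weights $\geq 0$; so $\DMT_{\mathcal S}(X)_{w\leq 0}$ (resp. $\geq 0$) contains $\DMT_{\mathcal S}(X)\cap \mathscr T(X)_{w\leq 0}$ (resp. $\geq 0$). For the converse inclusion I would use the generators: an object of $\DMT_{\mathcal S}(X)_{w\leq 0}$ should be built, via the recollement triangles as in the proof of \prettyref{lem:genZ}, from pieces $j_{s!}N$ with $N\in\DMT(X_s)$; and $j_s^\ast j_{s!}N\cong N$ forces $N\in\DMT(X_s)_{w\leq 0}$, whence $j_{s!}N\in\mathscr T(X)_{w\leq 0}$ by $w$-left exactness of $j_{s!}$. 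Running this argument by induction on the number of strata (peeling off an open stratum $j_s$ with closed complement $i$, and using the localization triangle $j_{s!}j_s^\ast M\to M\to i_\ast i^\ast M\to$) gives the identification of both halves of the two weight structures. Axioms (1)-(3) then follow for free from the corresponding properties of H\'ebert's weight structure on $\mathscr T(X)$, once one knows $\DMT_{\mathcal S}(X)$ is closed under summands (clear, since the defining conditions are) and that the Hom-vanishing $\DMT_{\mathcal S}(X)_{w\leq 0}(X,Y[1])\hookrightarrow\mathscr T(X)_{w\leq 0}(X,Y[1])=0$ is inherited from the ambient category.

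The main obstacle is axiom (4): producing, for an arbitrary $M\in\DMT_{\mathcal S}(X)$, a weight decomposition triangle $A\to M\to B\to A[1]$ with $A\in\DMT_{\mathcal S}(X)_{w\leq 0}$ and $B\in\DMT_{\mathcal S}(X)_{w\geq 1}$ \emph{inside} $\DMT_{\mathcal S}(X)$. The ambient category $\mathscr T(X)$ provides such a triangle with $A\in\mathscr T(X)_{w\leq 0}$, $B\in\mathscr T(X)_{w\geq 1}$, but a priori one must check $A,B\in\DMT_{\mathcal S}(X)$. Here is where I expect to spend effort: I would argue that $A$ and $B$ are stratified mixed Tate by checking the restrictions stratum by stratum. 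Applying $j_s^\ast$ to the triangle and using $w$-left exactness gives a triangle $j_s^\ast A\to j_s^\ast M\to j_s^\ast B$ with $j_s^\ast A$ of weights $\leq 0$; since $j_s^\ast M\in\DMT(X_s)$ and $\DMT(X_s)$ carries a weight structure, one extracts that $j_s^\ast A$ lies in $\DMT(X_s)$ by comparing with the weight decomposition of $j_s^\ast M$ taken \emph{within} $\DMT(X_s)$ and invoking uniqueness of weight decompositions up to (non-unique) isomorphism, or — more robustly — by an induction on strata together with the Whitney--Tate property to control the $!$-restrictions. The cleanest route is likely: use \prettyref{lem:genZ} to reduce to the generators $j_{s!}N$, whose ambient weight decompositions are $j_{s!}$ applied to weight decompositions of $N$ in $\DMT(X_s)$ (again $w$-left exactness, plus the dual statement for $j_{s!}$ preserving the relevant weight bounds), hence manifestly lie in $\DMT_{\mathcal S}(X)$; then propagate through the triangles building a general $M$ using that $\DMT_{\mathcal S}(X)_{w\leq 0}$ and $\DMT_{\mathcal S}(X)_{w\geq 1}$ are (by the axioms we are proving, or rather by the ambient ones) extension-closed. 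Finally, uniqueness of the weight structure characterised by the $j_s^\ast$/$j_s^!$ exactness requirement follows from the general fact that a weight structure is determined by its truncation functors, together with the observation that any weight structure with the stated exactness properties must have its $\leq 0$ and $\geq 0$ parts sandwiched exactly as in the displayed formulas.
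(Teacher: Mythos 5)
Your comparison of the two candidate descriptions of the weight classes (the inclusions in both directions, closure under summands, and the inheritance of orthogonality) is sound and runs parallel to what the paper does. The genuine gap is in your treatment of axiom (4), the existence of weight decompositions inside $\DMT_{\mathcal S}(X)$. Your ``cleanest route'' asserts that a weight decomposition of a generator $j_{s!}N$ is obtained by applying $j_{s!}$ to a weight decomposition $w_{\leq 0}N\to N\to w_{\geq 1}N$ in $\DMT(X_s)$, invoking ``the dual statement for $j_{s!}$''. But $j_{s!}$ is only $w$-left exact: it preserves weights $\leq 0$ and does \emph{not} preserve weights $\geq 1$, and the class $\DMT_{\mathcal S}(X)_{w\geq 1}$ is controlled by the $j_t^!$-restrictions to \emph{all} strata, which $j_{s!}$ does not respect. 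Concretely, take $X=\mathbb P^1$ with open stratum $j:\mathbb A^1\hookrightarrow\mathbb P^1$ and closed stratum $i:\op{pt}\hookrightarrow\mathbb P^1$, and $N=\mathbb Q[1]$, pure of weight $1$. Then $i^!j_!\mathbb Q[1]\cong\mathbb Q\oplus\mathbb Q(-1)[-1]$ has weights $0$ and $-1$, so $j_!N$ does not lie in the defined class $\DMT_{\mathcal S}(X)_{w\geq 1}$, nor in $\mathscr T(X)_{w\geq 1}$ for H\'ebert's structure (indeed $\mathscr T_X(i_*\mathbb Q,\,j_!\mathbb Q[1])\cong\mathscr T_{\op{pt}}(\mathbb Q,\,i^!j_!\mathbb Q[1])\neq 0$, violating the orthogonality that membership in $\mathscr T(X)_{w\geq1}$ would force). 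A correct weight decomposition of $j_!\mathbb Q[1]$ necessarily mixes the recollement functors, e.g. $i_*i^!j_!\mathbb Q[1]\to j_!\mathbb Q[1]\to j_*\mathbb Q[1]$. So the ``manifest'' decompositions of the generators do not exist in the form you claim, and the subsequent propagation-through-extensions step has nothing to propagate. (Your fallback idea --- showing the pieces of an ambient H\'ebert decomposition of $M$ are mixed Tate by ``uniqueness of weight decompositions'' --- also does not work, since weight decompositions are not unique up to isomorphism.)

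The paper avoids this by not constructing decompositions by hand: it proceeds by induction on the number of strata and, at each step, applies Bondarko's glueing theorem for weight structures along the recollement given by an open stratum and its closed complement \cite[Proposition 1.7(13),(15)]{bondarko:imrn}; that theorem is exactly the nontrivial input producing decompositions of the type illustrated above. The identification with the restriction of H\'ebert's weight structure is then done essentially as in your comparison paragraphs: one inclusion from $w$-exactness of $j_s^\ast$ and $j_s^!$, the reverse inclusion from Bondarko's description of the glued $(w\leq 0)$-part as generated by $j_!\DMT(X_s)_{w\leq 0}$ and $i_\ast\DMT_{\mathcal S}(Z)_{w\leq 0}$ together with extension-stability, and dually. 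To make your argument work you would in effect have to reprove that glueing theorem; citing it (or the BBD-style construction behind it) is the missing ingredient.
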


\begin{proof}
  To prove the existence of such a weight structure
we proceed by induction on the number of
  strata. If there is no stratum, the claim is correct. Otherwise,
decompose $X$ as the disjoint union of an open stratum $j:X_s\hra X$ 
and its closed
complement $i:Z\hra X$.  Using Bondarko's result \cite[Proposition 1.7 (13),
(15)]{bondarko:imrn} on glueing weight structures, we obtain a weight
structure on $\DMT_{\mathcal{S}}(X)$  by setting
$$
\begin{array}{l}
\DMT_{\mathcal{S}}(X)_{w\leq 0}\pdef \left\{M\mid i^\ast M\in
  \DMT(Z)_{w\leq 0}, \;
  j^\ast M\in \DMT(X_s)_{w\leq 0}\right\}

\\[2mm]
\DMT_{\mathcal{S}}(X)_{w\geq 0}\pdef\left\{M\mid i^!M\in
  \DMT(Z)_{w\geq 0}, \;
  j^! M\in \DMT(X_s)_{w\geq 0}\right\}  
\end{array}$$
Now recall that for any separated finite type morphism $f$, the functors
$f^\ast$ and $f^!$ are left and right weight-exact, respectively, for
H{\'e}bert's weight structure. 
This implies that objects of weight $\leq 0$ for H{\'e}bert's
 weight structure are
also of weight $\leq 0$ for our weight structure, 
and similarly for $\geq 0$. 
For the reverse inclusions, we use the same induction. 
Assume the result is established for $Z$. 
 By
\cite[Proposition 1.7 (13)]{bondarko:imrn}, the $(w\leq 0)$-part of the
glued weight structure on $\DMT_{\mathcal{S}}(X)$ is generated
by $j_!\DMT(X_s)_{w\leq 0}$ and $i_\ast
\DMT_{\mathcal{S}}(Z)_{w\leq 0}$.
%\cemph{I hope t}
 This implies all its objects also belong to
the $(w\leq 0)$-part of H{\'e}bert's weight structure. 
 A dual argument takes care of the $(w\geq
0)$-part of the weight structures.  
Finally, it also follows directly from the above arguments that the
weight structure constructed this way has the 
description claimed in the statement of the proposition.
\end{proof}

\begin{remark}
  This  generalizes  \cite[Corollary
  4.12]{wildeshaus:intermediate} to some cases where closures of
  strata are not necessarily regular.
\end{remark}

\section{Pointwise purity, Bott--Samelson motives and the heart}
\label{sec:ptwise1}

In the next section, we investigate the heart of the weight structure
defined in \prettyref{sec:weights}, in the special case of flag
varieties. We show that motives of Bott--Samelson resolutions of
Schubert cells satisfy an additional property called {\bf pointwise
  purity} and deduce that the heart of the weight structure is
generated by motives of Bott--Samelson resolutions.

\begin{definition}
Let $(X,\mathcal{S})$ be an affinely Whitney--Tate 
stratified variety. A stratified Tate motive
$M\in\DMT_{\mathcal{S}}(X)_{w=0}$ is called {\bf pointwise $\ast$-pure}
 if for each inclusion $i_s:X_s\to X$ of a stratum, we have
$i_s^\ast M\in \DMT(X_s)_{w=0}$. Similarly, we define the concept
{\bf pointwise $!$-pure}. If both conditions are satisfied, the motive
is called  {\bf pointwise pure}.\label{poip}
\end{definition}

\begin{proposition}\label{prop:ptDI}   
Let $(X,\mathcal{S})$ be an affinely Whitney--Tate 
stratified variety, and 
denote by $\op{fin}:X\to\op{pt}$ the structure morphism. For 
any pointwise $\ast$-pure stratified Tate  
motive $M\in \DMT_{\mathcal{S}}(X)$, the
object $\op{fin}_! M$ is pure
Tate of weight $0$, in formulas 
$\op{fin}_!M\in \DMT(\op{pt})_{w=0}$.  
\end{proposition}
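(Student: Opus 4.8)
The plan is to induct on the number of strata, and it is cleanest to run the induction on the slightly more flexible statement: $\op{fin}_!M\in\DMT(\op{pt})_{w=0}$ holds for any $M\in\DMT_{\mathcal S}(X)$ such that $j_s^\ast M\in\DMT(X_s)_{w=0}$ for \emph{every} stratum $s$. Pointwise $\ast$-purity obviously implies this hypothesis, so this suffices; the relaxation matters only because the condition ``$j_s^\ast(-)\in\DMT(X_s)_{w=0}$ for all $s$'' is manifestly inherited by a $\ast$-restriction to a closed union of strata, whereas membership of such a restriction in the weight heart of that closed subvariety is not obvious.

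For the inductive step I would pick an open stratum $j:X_s\hra X$ with closed complement $i:Z\hra X$; the induced stratification of $Z$ is again affinely Whitney--Tate (using that $i$ is closed), with strata $X_t$, $t\neq s$. Since $i$ is a closed immersion, $i_!\cong i_\ast$, and since $j$ is open, $j^!\cong j^\ast$; thus the localization triangle of point (6) reads $j_!j^\ast M\to M\to i_\ast i^\ast M\to j_!j^\ast M[1]$. Applying the triangulated functor $\op{fin}_!$ and using functoriality of $f\mapsto f_!$ together with $\op{fin}_X\circ j=\op{fin}_{X_s}$ and $\op{fin}_X\circ i=\op{fin}_Z$ gives a distinguished triangle
$$
(\op{fin}_{X_s})_!\,j^\ast M\ \longrightarrow\ \op{fin}_!M\ \longrightarrow\ (\op{fin}_Z)_!\,i^\ast M\ \longrightarrow\ (\op{fin}_{X_s})_!\,j^\ast M[1].
$$
Now $i^\ast M$ still satisfies the inductive hypothesis on $Z$, because $j_t^\ast i^\ast M=(i\circ j_t)^\ast M=j_t^\ast M\in\DMT(X_t)_{w=0}$ for each stratum $X_t$ of $Z$; so by induction the third term lies in $\DMT(\op{pt})_{w=0}$.

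It remains to show that $(\op{fin}_{X_s})_!$ sends $\DMT(X_s)_{w=0}$ into $\DMT(\op{pt})_{w=0}$, where $X_s\cong\mathbb A^{n_s}$ and $j^\ast M\in\DMT(X_s)_{w=0}$. Here I would argue: by homotopy invariance the smooth pullback $\op{fin}_{X_s}^\ast$ induces an equivalence $\DMT(\op{pt})\sira\DMT(\mathbb A^{n_s})$ with quasi-inverse $(\op{fin}_{X_s})_\sharp$ (since $(\op{fin}_{X_s})_\sharp\op{fin}_{X_s}^\ast\cong(-)\otimes\op{M}_{\op{pt}}(\mathbb A^{n_s})\cong\op{id}$); by \prettyref{thm:hebert}(2),(3) the functor $\op{fin}_{X_s}^\ast$ is weight-exact on both sides (as $\op{fin}_{X_s}$ is smooth), hence its quasi-inverse $(\op{fin}_{X_s})_\sharp$ is weight-exact and carries $\DMT(\mathbb A^{n_s})_{w=0}$ to $\DMT(\op{pt})_{w=0}$; finally $(\op{fin}_{X_s})_!\cong(\op{fin}_{X_s})_\sharp(-n_s)[-2n_s]$ by the purity isomorphism $\mathfrak{p}_f$ of point (5), and the twist-shift $(-n_s)[-2n_s]$ is weight-neutral (it changes the weight $q-2p$ of $\mathbb Q(p)[q]$ by $2n_s-2n_s=0$). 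So the first term of the triangle also lies in $\DMT(\op{pt})_{w=0}$. Since $\DMT(\op{pt})_{w\leq 0}$ and $\DMT(\op{pt})_{w\geq 0}$ are each closed under extensions in distinguished triangles (a standard property of weight structures), the middle term satisfies $\op{fin}_!M\in\DMT(\op{pt})_{w\leq 0}\cap\DMT(\op{pt})_{w\geq 0}=\DMT(\op{pt})_{w=0}$, which completes the induction.

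The only content-carrying step is the affine-space computation, namely that pushforward with proper support along $\mathbb A^{n_s}$ takes pure weight-$0$ Tate motives to pure weight-$0$ Tate motives; I expect this to be the main point, and it hinges on the interplay of $\mathfrak{p}_f$ with the weight-exactness of smooth pullback and the weight-neutrality of the purity twist. Everything else --- reducing to it through the localization triangle, inheriting the hypothesis on the closed complement, and reassembling the two weight-$0$ outer terms by extension-closure --- is formal manipulation of the six functors and of H\'ebert's weight structures.
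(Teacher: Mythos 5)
Your proof is correct and follows essentially the same route as the paper: induction on the number of strata, applying $\op{fin}_!$ to the localization triangle $j_!j^\ast M\to M\to i_\ast i^\ast M$, handling the closed part by induction, the open affine stratum by homotopy invariance plus weight-compatibility of the pushforward, and concluding by extension-stability of the heart. Your two refinements --- relaxing the inductive statement so the hypothesis is visibly inherited by $i^\ast M$, and writing $(\op{fin}_{X_s})_!\cong(\op{fin}_{X_s})_\sharp(-n_s)[-2n_s]$ to check weight-exactness explicitly --- only spell out what the paper's proof leaves implicit.
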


\begin{proof}
% By duality, it suffices to establish only one statement; we prove that
% $\op{fin}_!M\in\Bmot(k)_{w=0}$.
The statement is proved by induction
on the number of strata. If there is no stratum, the claim is evident. 
% For the base case, let $X_s$ be a closed stratum. By our assumption,
% we have $i_s^\ast M\in  \DMT(X_s)_{w=0}$.  The homotopy property
% implies that the pushforward $\op{fin}_\ast:\DMT(X_s)\to
% \DMT(k)$ is in fact an equivalence which is compatible with the
% weight structures and duality. Therefore, $(\op{fin}_s)_!i_s^\ast M\in
% \DMT(k)_{w=0}$.   
For the inductive step, consider the embedding $j:X_s\to 
X$ of an open stratum and let $i:Z\hra X$ be the embedding of its complement.
We have the localization sequence 
\begin{equation*}
  j_!j^\ast  M\cong j_!j^! M \rightarrow M
  \rightarrow i_! i^\ast  M 
  \cong i_\ast i^\ast  M\to j_!j^*  M[1].
\end{equation*}
After proper pushforward, this sequence becomes 
$$  
\op{fin}_!j^\ast M \rightarrow \op{fin}_!M
\rightarrow \op{fin}_! i^\ast M \to \op{fin}_!j^*  M[1].
$$
By induction we may assume $\op{fin}_! i^\ast
M\in\DMT(\op{pt})_{w=0}$. 
On the other hand, the homotopy property
 implies that since $X_s\cong \mathbb A^n$, 
the pushforward $\op{fin}_\ast:\DMT(X_s)\to
 \DMT(\op{pt})$ is in fact an equivalence which is compatible with the
 weight structures and duality. Therefore, $(\op{fin}_s)_!j^\ast M\in
 \DMT(\op{pt})_{w=0}$.   
 By \cite[Proposition 1.7(2)]{bondarko:imrn},
hearts of weight structures are extension-stable, so
$\op{fin}_! M\in \DMT(\op{pt})_{w=0}$. 
\end{proof}

\begin{corollary}\label{cor:bswz}  
Let $(X,\mathcal{S})$ be an affinely Whitney--Tate stratified variety. Given $M,N\in \DMT_{\mathcal{S}}(X)$ with $M$  pointwise $*$-pure and $N$ pointwise $!$-pure we have $\mathscr{T}_{X}(M,N[a])=0$ for any $a>0$.
\end{corollary}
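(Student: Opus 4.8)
The plan is to reduce the vanishing $\mathscr{T}_X(M,N[a])=0$ for $a>0$ to the case of a point via adjunctions, and then invoke the weight-structure axiom (3) together with \prettyref{prop:ptDI}. The key observation is that, by the duality isomorphisms listed in the recollection on motivic triangulated categories (item (10) of the list in \prettyref{sec:cdbmot}), one has $\mathscr{T}_X(M,N[a]) \cong \mathscr{T}_X(\mathbb{Q}_X, \op{Hom}_X(M,N)[a])$, and since $M$ is constructible we may rewrite $\op{Hom}_X(M,N) \cong D_X(M\otimes D_X N)$. The point, however, is that $D_X N$ should again be pointwise $\ast$-pure: indeed $i_s^\ast D_X N \cong D_{X_s}(i_s^! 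N)$ by the exchange isomorphism $f^\ast D_X \simeq D_Y f^!$, and $i_s^! N \in \DMT(X_s)_{w=0}$ since $N$ is pointwise $!$-pure, while Verdier duality on $X_s\cong\mathbb{A}^n$ preserves the heart of the weight structure. So the problem is to show that the tensor product $M \otimes D_X N$ of two pointwise $\ast$-pure objects, pushed forward to a point, lands in $\DMT(\op{pt})_{w\geq 0}$ — or rather, one should be a little more careful and work directly with $\op{Hom}$ rather than passing through $D_X$ twice.

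Concretely, first I would rewrite
$$
\mathscr{T}_X(M, N[a]) \;\cong\; \mathscr{T}_{\op{pt}}\bigl(\mathbb{Q}_{\op{pt}}, \op{fin}_\ast \op{Hom}_X(M,N)[a]\bigr)
$$
using the adjunction $(\op{fin}^\ast, \op{fin}_\ast)$ and $\op{fin}^\ast \mathbb{Q}_{\op{pt}} = \mathbb{Q}_X$. Then I would argue that $\op{fin}_\ast \op{Hom}_X(M,N)$ has weights $\geq 0$, so that the group of maps from $\mathbb{Q}_{\op{pt}}$ (which has weight $0$) into its $a$-th shift (which then has weights $\geq a > 0$) vanishes by weight-structure axiom (3). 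To get the weight bound on $\op{fin}_\ast \op{Hom}_X(M,N)$, I would use that $\op{fin}_\ast \op{Hom}_X(M,N) \cong \op{fin}_\ast D_X(M \otimes D_X N) \cong D_{\op{pt}}\bigl(\op{fin}_!(M \otimes D_X N)\bigr)$ by the duality isomorphism $D_X f_! \simeq f_\ast D_Y$ applied to $\op{fin}$ — wait, one must be careful with the direction; the cleaner route is $\op{fin}_! D_X(-) \cong D_{\op{pt}} \op{fin}_\ast(-)$ read backwards. In any case, the upshot is that it suffices to show $\op{fin}_!(M \otimes D_X N)$ is pure of weight $0$, and then applying $D_{\op{pt}}$ (which sends weight $0$ to weight $0$ on a point) and the shift handles the rest.

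Now $M \otimes D_X N$ is pointwise $\ast$-pure: for each stratum, $i_s^\ast(M \otimes D_X N) \cong i_s^\ast M \otimes i_s^\ast D_X N$ since $i_s^\ast$ is monoidal, and both factors lie in $\DMT(X_s)_{w=0}$ as observed above, and the heart of the weight structure on $\DMT(X_s)\cong\DMT(\op{pt})$ is closed under tensor product (on a point this is just: a direct sum of copies of $\mathbb{Q}(j)[2j]$ tensored with another such is again such). Therefore \prettyref{prop:ptDI} applies directly to $M\otimes D_X N$ and gives $\op{fin}_!(M\otimes D_X N) \in \DMT(\op{pt})_{w=0}$, completing the argument.

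The main obstacle I anticipate is bookkeeping with the duality and exchange isomorphisms — getting the variance right so that pointwise $!$-purity of $N$ translates into pointwise $\ast$-purity of $D_X N$, and then assembling $\op{Hom}_X(M,N)$ correctly so that $\op{fin}_\ast$ of it is identified with the dual of $\op{fin}_!$ of a pointwise $\ast$-pure object. A secondary point requiring a line of justification is that the heart $\DMT(X_s)_{w=0}$ is closed under $\otimes$; over a point this is immediate from the explicit description as bigraded vector spaces, but one should note it does not even require the grading condition, only that $\mathbb{Q}(j)[2j]\otimes\mathbb{Q}(l)[2l]=\mathbb{Q}(j+l)[2(j+l)]$ and that the heart on $\mathbb{A}^n$ is equivalent via $\op{fin}_\ast$ to the heart on the point, which was already used in the proof of \prettyref{prop:ptDI}.
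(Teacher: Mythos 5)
Your argument is correct, and its overall skeleton coincides with the paper's: both proofs rewrite $\mathscr{T}_X(M,N[a])$ as $\mathscr{T}_{\op{pt}}(\underline{\op{pt}},\op{fin}_\ast\op{Hom}_X(M,N)[a])$, show that $\op{fin}_\ast\op{Hom}_X(M,N)$ is pure of weight zero, and conclude by the orthogonality axiom of the weight structure. The difference lies in how purity of the internal Hom is certified. The paper checks directly that $\op{Hom}_X(M,N)$ is pointwise $!$-pure, using the exchange isomorphism $j_s^!\op{Hom}_X(M,N)\cong\op{Hom}(j_s^\ast M,j_s^! N)$ together with weight-exactness of $\op{Hom}$ on mixed Tate motives over a point (the Hom-analogue of H\'ebert's Th\'eor\`eme 3.7(iv)), and then applies the $!$-dual form of \prettyref{prop:ptDI}. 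You instead pass through $\op{Hom}_X(M,N)\cong D_X(M\otimes D_X N)$, verify that $M\otimes D_X N$ is pointwise $\ast$-pure, apply \prettyref{prop:ptDI} exactly as stated, and dualize over the point; this avoids the weight-exactness input for $\op{Hom}$ and the dual form of the proposition, but it does need that the hearts $\DMT(X_s)_{w=0}$ are stable under $\otimes$ and under $D_{X_s}$, which ultimately rests on Bondarko's identification of the heart over a point (equivalently over $\mathbb{A}^n$) as the idempotent-complete additive hull of the objects $\mathbb{Q}(j)[2j]$ --- a fact the paper uses elsewhere (e.g.\ in \prettyref{cor:bsgen}), so your route is legitimate, just trading one standard input for another. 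Your own caveat about variance bookkeeping is the only real care point, and your formulas $i_s^\ast D_X N\cong D_{X_s}(i_s^! N)$ and $\op{fin}_\ast D_X(-)\cong D_{\op{pt}}\op{fin}_!(-)$ are the correct ones from the six-functor list, valid since all objects involved are constructible.
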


% \begin{corollary}
% \label{cor:bswz}
% Let $k$ be a field, let $G$ be a split reductive group with Borel
% subgroup $B\subset G$. Let $M,N\in \DMT^{\op{bs}}_{(B)}(G/P)$. Then
% $\Bmot(M,N[a])=0$ %$\DMT_{(B)}(G/B)(M,N[a])=0$ 
% for any $a>0$.
% \end{corollary}

\begin{proof}
We first note, that point (iv) of \cite[Th{\'e}or{\`e}me 3.7]{hebert} has a $\op{Hom}$-analogue, and implies that on the category $\DMT(k)$ of mixed Tate motives, the functor $\op{Hom}$ is in fact weight-exact. Using this, we see that for any stratum, $j_s^!\op{Hom}(M,N)\cong \op{Hom}(j_s^* M,j_s^! N)$ is pure of weight zero by the assumption, thus  $\op{Hom}(M,N)$ is pointwise  $!$-pure. By \prettyref{prop:ptDI} we deduce that its direct image $\op{fin}_*\op{Hom}( M,N)$ is pure of weight zero. This in turn means  $$\mathscr{T}_{\op{pt}}(\underline{\op{pt}},\op{fin}_*\op{Hom}( M,N)[a])=0$$ for $a>0$ by the definition of a weight structure. But this is just another way to write the space we claim to vanish.
\end{proof}

\begin{definition}
Let $k$ be a field and let $G\supset B\supset T$  be a split reductive
group with a choice of maximal torus $T$ and Borel subgroup $B$.
We define the collection of full
subcategories 
\begin{equation*}
  \DMT^{\op{bs}}_{(B)} (G/Q) \subset \DMT_{(B)}(G/Q)
\end{equation*}
for all standard parabolic subgroups $B\subset Q \subset G$ to be
the smallest collection with the following properties:
\begin{enumerate}
\item the collection contains the skyscraper at the one-point-cell of
  $G/B$, i.e., for $j_e:\op{pt}\hra G/B$ the embedding of the 
  $B$-orbit $B/B$, we have  $(j_e)_*\underline{\op{pt}} \in
  \DMT^{\op{bs}}_{(B)} (G/B)$,
\item the collection is stable under $M\mapsto M(n)[2n]$ and direct
  summands,
\item the collection is extension-stable in the sense 
%of  \cite{bondarko:imrn}
that for a distinguished triangle $A\to B\to C\to A[1]$ with $A$ and
$C$ in the subcategory, $B$ is also in the subcategory, and 
\item if $\pi : G/P \rightarrow G/Q$ is a projection for standard
  parabolic subgroups $P \subset Q$, then we have
  \begin{displaymath}
    \begin{array}{ccc}
      M \in \DMT^{\op{bs}}_{(B)} (G/P) &\Rightarrow &\pi_! M,\pi_\ast M
      \in \DMT^{\op{bs}}_{(B)} (G/Q)\\[2mm] 
      M\in \DMT^{\op{bs}}_{(B)} (G/Q) &\Rightarrow & \pi^! M, \pi^\ast
      M \in \DMT^{\op{bs}}_{(B)} (G/P) 
    \end{array}
  \end{displaymath}
\end{enumerate}
\end{definition}

\begin{Bemerkungl}
  It is not difficult to see that the direct images of the constant
motives on Bott--Samelson resolutions all belong to 
$\DMT^{\op{bs}}_{(B)} (G/P)$. We will call the objects of this category
the {\bf Bott--Samelson motives}. 
\end{Bemerkungl}

\begin{lemma}
\label{lem:bswz}
Bott--Samelson motives are pointwise pure.
% Let $M\in \DMT^{\op{bs}}_{(B)}(G/P)$. Then $M$ is pointwise pure,
% i.e., for each stratum $X_s$ of $G/P$ we have $j_s^\ast M, j_s^!M\in
% \DMT(X_s)_{w=0}$. 
\end{lemma}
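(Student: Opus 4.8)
The plan is to verify the property ``pointwise pure'' directly from the inductive definition of $\DMT^{\op{bs}}_{(B)}(G/P)$, checking that the class of pointwise pure objects is stable under each of the four defining operations. So I would let $\mathcal{P}(G/P)\subset\DMT_{(B)}(G/P)$ denote the full subcategory of pointwise pure objects, and show that the collection $\{\mathcal{P}(G/P)\}$ satisfies properties (1)--(4) of the definition; minimality of $\DMT^{\op{bs}}_{(B)}$ then forces $\DMT^{\op{bs}}_{(B)}(G/P)\subseteq\mathcal{P}(G/P)$, which is the claim.

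For (1): the skyscraper $(j_e)_*\underline{\op{pt}}$ at the closed point-cell $B/B$ has $j_e^*=j_e^!$ on that stratum equal to $\underline{\op{pt}}$, which is pure of weight $0$, and its restriction to every other stratum vanishes; so it is pointwise pure. For (2): the operation $M\mapsto M(n)[2n]$ shifts $w$-weight by $0$ (as noted after Remark~\ref{WDM}, $\DMT(k)_{w=0}$ is stable under this twist), and commutes with all $j_s^*,j_s^!$ up to the same twist, so it preserves pointwise purity; stability of $\DMT(X_s)_{w=0}$ under direct summands is part of the axioms of a weight structure (Definition~\ref{defin:wtstruct}(1)), and $j_s^*,j_s^!$ are additive, hence direct summands of pointwise pure objects are pointwise pure. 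For (3) (extension stability), I would use that $j_s^*$ and $j_s^!$ are triangulated, so an extension of pointwise pure motives restricts, on each stratum, to an extension in $\DMT(X_s)$ of objects of $\DMT(X_s)_{w=0}$; since the heart of a weight structure is extension-stable by \cite[Proposition 1.7(2)]{bondarko:imrn} (as already used in the proof of Proposition~\ref{prop:ptDI}), the extension is again pointwise pure.

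The substantive step is (4), stability under $\pi^*$, $\pi^!$, $\pi_!$, $\pi_*$ along a smooth projective fibration $\pi:G/P\to G/Q$ with fibre $Q/P$. Here I would exploit the explicit geometry recalled in the remark before Proposition~\ref{prop:erpt2}: over each cell $X_t\subset G/Q$, $\pi$ is a trivial bundle $\pi^{-1}(X_t)\cong X_t\times(Q/P)$, and $Q/P$ is itself affinely paved by Bruhat cells, one of which is a point giving a section. For $\pi^*$ and $\pi^!$: since $\pi$ is smooth of relative dimension $d$, $\pi^!\cong\pi^*(d)[2d]$, and both are weight-exact for H\'ebert's weight structure (Theorem~\ref{thm:hebert}), and base change identifies the restriction of $\pi^*M$ to a cell of $G/P$ with the pullback along a projection of affine spaces of the restriction of $M$ to a cell of $G/Q$; the homotopy property makes pullback along such an affine projection an equivalence compatible with weights, so pointwise purity is preserved. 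For $\pi_!$ and $\pi_*$: $\pi$ is proper, so $\pi_!\cong\pi_*$; to compute $j_s^*\pi_*M$ (resp. $j_s^!\pi_!M$) for a cell $X_s\subset G/Q$, I would pass to the cartesian square over $X_s$, where $\pi$ restricts to the trivial bundle $X_s\times(Q/P)\to X_s$. By proper base change this reduces to computing the pushforward of a pointwise pure motive on $X_s\times(Q/P)$ along the projection to $X_s$, and by induction on the cells of the fibre $Q/P$ (using the localization triangle for an open cell and its closed complement, exactly as in the proof of Proposition~\ref{prop:ptDI}, together with the homotopy property to handle the affine-cell pieces) one sees the result lies in $\DMT(X_s)_{w=0}$, using once more extension-stability of the heart. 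The main obstacle is organizing this fibrewise induction cleanly — in particular making sure that restriction of $M$ to each cell $X_s\times(\text{cell of }Q/P)$ is again pointwise pure so that the inductive hypothesis applies — but this follows because such a cell is a cell of $G/P$ and $M$ was assumed pointwise pure there, and the remaining bookkeeping about weights is governed entirely by weight-exactness of $f^*,f^!,f_!,f_*$ from Theorem~\ref{thm:hebert}.
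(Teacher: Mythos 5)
Your proposal is correct and follows essentially the same route as the paper: check stability of pointwise purity under the four generating operations of $\DMT^{\op{bs}}_{(B)}$, with the pullback case handled by relative purity and weight-exactness, and the pushforward case reduced by proper base change to the trivial bundle over a cell, where the paper simply quotes Proposition~\ref{prop:ptDI} (after identifying $\DMT(D)\cong\DMT(\op{pt})$) while you rerun its fibrewise localization induction in place.
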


\begin{proof}
Pointwise purity is obviously satisfied for $(j_e)_*\underline{\op{pt}} \in  \DMT^{\op{bs}}_{(B)} (G/B)$  
% $(j_e)_*\mathbb{Q} \in  \DMT^{\op{bs}}_{(B)} (G/B)$ 
and is stable under $M\mapsto M(n)[2n]$ and direct summands. It is also extension-stable, because the heart of the weight structure on  $\DMT(\op{pt})$ %$\DMT(k)$ 
is extension-stable. It then suffices to show that pointwise purity is stable under push-forwards and pullbacks along projections $\pi:G/P\to G/Q$ where $P\subset Q$ are any two parabolic subgroups of $G$. Recall that $\pi$ is a Zariski-locally trivial fiber bundle with fiber $Q/P$. 

For pullbacks this is more or less evident: let $M\in \DMT^{\op{bs}}_{(B)}(G/Q)$ and assume that for each stratum $j_s:X_s\to G/Q$, we have $j_s^\ast M, j_s^!M\in \DMT(X_s)_{w=0}$. We want to show that for each stratum $j_t:X_t\to G/P$, we have $j_t^\ast \pi^\ast M\in \DMT(X_t)_{w=0}$. The projection is $B$-equivariant, and the fiber $X_s\times Q/P$ of $\pi$ over $X_s$ is a union of $B$-orbits in $G/P$. From the evident commutative diagram and the fact that $\DMT(X_s)\cong \DMT(k)$ because $X_s$ is an affine space, we find that $j_t^\ast \pi^\ast M$ is the restriction of a motive from $\DMT(X_s\times Q/P)_{w=0}$. Evidently, $j_t^\ast \pi^\ast M$ is pure of weight $0$ for every stratum $j_t:X_t\to G/P$.  By relative purity applied to the smooth projection $\pi:G/P\to G/Q$, the same statement also holds for $\pi^! M$. 

We next consider the direct image functors. The inclusion of a $B$-orbit $j: D \hookrightarrow G/Q$ can be embedded into a commutative diagram
\begin{displaymath}
  \xymatrix{
    Q/P \ar[d] &Y \ar[l] \ar@{^{(}->}[r]^{j'}\ar[d]_{\pi'} & G/P \ar[d]^\pi\\
    \op{pt} & D\ar[l] \ar@{^{(}->}[r]_j & G/Q
  }
\end{displaymath}
in which both squares are pullback squares. As noted above, $Q/P$ is the fiber of $\pi$ over any point of $D$, and $Y\cong \pi^{-1}(D)$. By the $B$-equivariance of $\pi$, the $B$-orbits in $Y$ are precisely the inverse images of the $B$-orbits in $Q/P$. By base change, we have $j^\ast\pi_\ast  M\cong (\pi')_\ast (j')^\ast M$, and, because $\pi$ is smooth and projective, similar statements for the other pullbacks and push-forwards. Since $D$ is an affine space, we can identify $\DMT(\op{pt})\cong\DMT(D)$ and $\DMT(Q/P)\cong\DMT(Y)$, hence we are reduced to the case $Q/P$ projecting to a point. But by \prettyref{prop:ptDI} we know that $\op{fin}_!M\in \DMT(\op{pt})_{w=0}$ for any pointwise $\ast$-pure motive $M\in \DMT_{(B)}(Q/P)$. 
%  such that $j_s^\ast M$ is pure of weight
% $0$ for each stratum $j_s:X_s\to Q/P$.
This shows that pointwise purity is stable under direct image functors and finishes the proof. 
\end{proof}

\begin{corollary}
\label{cor:bsgen}
There is an equality of full subcategories
$$
\DMT^{\op{bs}}_{(B)}(G/P)= \DMT_{(B)}(G/P)_{w=0}.
$$ 
\end{corollary}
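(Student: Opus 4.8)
The plan is to prove the two inclusions of the claimed equality separately. Write $\mathcal{A}:=\DMT^{\op{bs}}_{(B)}(G/P)$ and let $w$ denote the weight structure of \prettyref{prop:hebbswz}. The inclusion $\mathcal{A}\subseteq\DMT_{(B)}(G/P)_{w=0}$ is immediate from \prettyref{lem:bswz}: if $M$ is pointwise pure, then $j_s^\ast M\in\DMT(X_s)_{w=0}\subseteq\DMT(X_s)_{w\leq 0}$ and $j_s^! M\in\DMT(X_s)_{w=0}\subseteq\DMT(X_s)_{w\geq 0}$ for every stratum $X_s$, so by the explicit description of $w$ we get $M\in\DMT_{(B)}(G/P)_{w\leq 0}\cap\DMT_{(B)}(G/P)_{w\geq 0}=\DMT_{(B)}(G/P)_{w=0}$.

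For the reverse inclusion, the first observation is that $\mathcal{A}$ is an additive, idempotent-complete full subcategory of $\DMT_{(B)}(G/P)$ which is \emph{negative}, in the sense that $\mathscr{T}_{G/P}(A,A'[i])=0$ for all $A,A'\in\mathcal{A}$ and $i>0$. Additivity follows from extension-stability (a direct sum sits in a split triangle), idempotent-completeness is part of the definition, and negativity is exactly \prettyref{cor:bswz}, applied using that by \prettyref{lem:bswz} every object of $\mathcal{A}$ is both pointwise $\ast$-pure and pointwise $!$-pure.

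The second and more substantial step is to verify that $\mathcal{A}$ thick-generates $\DMT_{(B)}(G/P)$ as a triangulated category. For $P=B$ this is precisely the computation appearing in the proof of \prettyref{prop:erpt2}: the triangulated subcategory generated by $\mathcal{A}$ contains the skyscraper $(j_e)_\ast\underline{\op{pt}}$ and is preserved by each $\pi_s^\ast\pi_{s\ast}$ (by the defining property of $\DMT^{\op{bs}}_{(B)}$ concerning the projections $\pi_s\colon G/B\to G/P_s$), hence contains $j_{r!}\underline{X_r}$ for all Bruhat cells $X_r$; being moreover stable under the twist $M\mapsto M(n)[2n]$ and under shifts, it then contains every $j_{r!}(\underline{X_r}(n))$, and these generate $\DMT_{(B)}(G/B)$ by \prettyref{lem:genZ} (recall $X_r\cong\mathbb{A}^{n_r}$, so $\DMT(X_r)$ is generated by the $\underline{X_r}(n)$). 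For a general parabolic $P$ I would reduce to this case along the smooth projective projection $\pi\colon G/B\to G/P$: for $N\in\DMT_{(B)}(G/P)$, restriction of $\pi$ to cells being an affine bundle shows $\pi^\ast N\in\DMT_{(B)}(G/B)$, which by the $B$-case lies in the thick closure of $\DMT^{\op{bs}}_{(B)}(G/B)$, so $\pi_\ast\pi^\ast N$ lies in the thick closure of $\mathcal{A}$; and since $\pi$ is a Zariski-locally trivial bundle with cellular fibre $P/B$, the projection formula gives $\pi_\ast\pi^\ast N\cong N\otimes\pi_\ast\underline{G/B}\cong\bigoplus_j N(a_j)[2a_j]$ with the summand $N$ occurring (the term $a_j=0$), exhibiting $N$ as a retract of an object of the thick closure of $\mathcal{A}$.

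Granting these two steps, I would conclude by the general theory of weight structures generated by negative subcategories (Bondarko, \cite{bondarko:ktheory}): on a triangulated category that is the thick closure of a negative additive idempotent-complete subcategory, any weight structure having that subcategory inside its heart has exactly that subcategory as its heart. Since $\mathcal{A}\subseteq\DMT_{(B)}(G/P)_{w=0}$ by the first inclusion, this forces $\DMT_{(B)}(G/P)_{w=0}=\mathcal{A}$. I expect the main obstacle to be the passage from $G/B$ to a general $G/P$, and within it the retract claim $N\mid\pi_\ast\pi^\ast N$, which rests on the relative cellularity of $\pi\colon G/B\to G/P$ (equivalently, that $\underline{G/P}$ occurs as a direct summand of $\pi_\ast\underline{G/B}$); the remaining verifications are bookkeeping with the closure properties built into the definition of $\DMT^{\op{bs}}_{(B)}$.
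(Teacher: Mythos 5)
Your argument is correct and follows essentially the same route as the paper: pointwise purity (\prettyref{lem:bswz}) gives the inclusion of the Bott--Samelson category into the heart, negativity comes from \prettyref{cor:bswz}, and Bondarko's uniqueness statement for weight structures generated by a negative, additive, idempotent-complete subcategory yields the equality once generation is known. The only difference is that you spell out the generation step --- via the $\pi_s^\ast\pi_{s\ast}$-mechanism from the proof of \prettyref{prop:erpt2} for $G/B$, and for general $G/P$ the retract of $N$ inside $\pi_\ast\pi^\ast N$ (which is indeed justified, e.g.\ by weight orthogonality: the connecting maps in the relative cell filtration of $\pi_\ast\underline{G/B}$ go from weight~$0$ to weight~$1$ objects and hence vanish) --- whereas the paper merely asserts this as ``an induction on the dimension of the partial flag varieties''.
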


\begin{proof}
From \prettyref{cor:bswz}, we find that 
$\DMT^{\op{bs}}\pdef \DMT^{\op{bs}}_{(B)}(G/P)$ is
negative in the sense of \cite{bondarko:imrn}. An induction on the
dimension of the partial flag varieties shows that the smallest
triangulated idempotent complete subcategory of $\DMT_{(B)}(G/P)$ which
contains $\DMT^{\op{bs}}$ is $\DMT_{(B)}(G/P)$ itself. 
From \cite[Proposition 1.7(6)]{bondarko:imrn}, there is a unique
weight structure on $\DMT_{(B)}(G/P)$ such that
$\DMT^{\op{bs}}$ is pure of weight $0$. By
\prettyref{prop:hebbswz}, the weight structure defined by
$\DMT^{\op{bs}}$ has to coincide with the weight structure of
H{\'e}bert. The heart of this weight structure is then
$\DMT^{\op{bs}}$, by \cite[Proposition 1.7(6)]{bondarko:imrn}. 
 \end{proof}

% \begin{remark}
% We deduce from the corollary  that the heart of the weight structure
% is generated by motives of Bott-Samelson resolutions. 
% The result also proves  that objects of the heart
% $\DMT_{(B)}(G/P)_{w=0}$ are pointwise pure, without making use of
% weak equivariance properties of \prettyref{prop:sg}. 
% \cemph{TODO for me!} 
% \end{remark}

\section{Pointwise purity via equivariance} 
\label{sec:ptwise2}

In this section, we discuss another way of establishing the condition
of pointwise purity that was so crucial in identifying the objects of
the heart in \prettyref{sec:ptwise1}. We adapt an argument of Springer
\cite{Spp} to the motivic setting, showing that suitably equivariant
motives on locally $\mathbb{A}^1$-contractible $G$-varieties are
pointwise pure. 

\begin{definition}
\label{defin:wequiv}
 Given a variety $X$ with an action of an algebraic group $G$, a
 motive $M \in \mathscr{T} (X) $ is called 
{\bf weakly $G$-equivariant} if and only if
there exists an isomorphism
\begin{equation*}
\op{act}^\ast M \cong \op{pr}^\ast M
\end{equation*}
of motives in $\mathscr{T} (G \times X)$. 
Here $\op{act} ,\op{pr} : G \times X \rightarrow X$ denote
the action and projection map, respectively.
\end{definition}

\begin{remark}
  We want to stress that the isomorphism in \prettyref{defin:wequiv} is
  {\bf not} part of the data, nor do we require  any 
compatibilities for it. Therefore, the  condition of weak
$G$-equivariance is indeed quite weak. 
By proper and smooth base change, we see easily 
that weak equivariance is preserved
under $f^\ast, f_\ast, f^!, f_!$ for any $G$-equivariant morphism
$f$. In particular, the Bott--Samelson motives of
\prettyref{sec:ptwise1} are weakly $G$-equivariant.
\end{remark}

The following is a
straightforward translation of arguments of Springer \cite[Proposition 1
and Corollaries]{Spp}, repeating \cite[1.3]{So-N}.
This leads to an alternative proof that Bott--Samelson motives are
pointwise pure. 

\begin{proposition}
\label{prop:wep}  
 Let $X$ be a variety, let $Z\subset X$ be a closed subvariety, and
 assume that there exists an action $\mathbb{G}_{\op{m}}\times X\to X$ which
 contracts $X$ onto $Z$. Let $M \in \mathscr{T}(X)$ be weakly
$\mathbb G_{\op{m}}$-equivariant. Let $a : Z \hookrightarrow X$ denote 
the inclusion and 
$ p : X \rightarrow Z$ the morphism mapping each point to its limit.
Then in $\mathscr{T}(Z)$ there
exists an isomorphism
\begin{equation*}
 p_\ast M \cong a^\ast M.
\end{equation*}
\end{proposition}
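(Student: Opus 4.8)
The plan is to reduce the statement to a localization triangle relating $p_\ast M$, $a^\ast M$, and a ``vanishing cycles''-type contribution, and then to show the latter vanishes using weak equivariance together with the contracting $\mathbb{G}_{\op{m}}$-action. First I would set up notation: write $j:U\hookrightarrow X$ for the open complement of $Z$, so that for any $N\in\mathscr{T}(X)$ there is the localization triangle $a_\ast a^! N\to N\to j_\ast j^\ast N\to a_\ast a^! N[1]$. Applying $p_\ast$ and using $p\circ a=\op{id}_Z$ gives $a^! N\cong p_\ast a_\ast a^! N$, so after pushing forward I obtain a triangle $a^! M\to p_\ast M\to p_\ast j_\ast j^\ast M\to a^!M[1]$. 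Dually, using the other localization triangle $j_!j^\ast N\to N\to a_\ast a^\ast N\to$, one has the triangle $p_! j_! j^\ast M\to p_! M\to a^\ast M\to$. Since $p$ is in general not proper, $p_!$ and $p_\ast$ differ, and I would rather argue directly: the morphism $a^\ast M\to a^\ast a_\ast a^\ast M\cong a^\ast M$ combined with the unit $M\to a_\ast a^\ast M$ and the counit $p_\ast p^\ast\to\op{id}$ (using $p^\ast a^\ast$? no — rather the adjunction unit $\op{id}\to p_\ast p^\ast$ precomposed with $p^\ast a^\ast$) produces a natural candidate map $p_\ast M\to a^\ast M$, namely the composite $p_\ast M\to p_\ast a_\ast a^\ast M=a^\ast M$. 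The content of the proposition is that this map is an isomorphism.

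The key step is then to show that the ``error term'' $p_\ast j_\ast j^\ast M$ vanishes, equivalently that $a^\ast M\to p_\ast M$ is an isomorphism, equivalently (by the triangle above, once one checks the candidate map agrees) that $p_\ast$ of the localization triangle splits. Here is where Springer's argument enters. The $\mathbb{G}_{\op{m}}$-action on $X$ with limit map $p$ fits into a diagram: consider $\mathbb{A}^1\times X\to X$ extending the action (the action extends to $\mathbb{A}^1$ precisely because it is contracting onto $Z$, by the Bia{\l}ynicki-Birula/limit argument); over $0\in\mathbb{A}^1$ this restricts to $a\circ p:X\to X$, over $\mathbb{G}_{\op{m}}\subset\mathbb{A}^1$ it restricts to the action $\op{act}$. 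Weak equivariance of $M$ says $\op{act}^\ast M\cong \op{pr}^\ast M$ on $\mathbb{G}_{\op{m}}\times X$. By the homotopy property, pushing the extended action map $\mu:\mathbb{A}^1\times X\to X$ forward along the projection $q:\mathbb{A}^1\times X\to X$ is an equivalence $q_\ast\mu^\ast M\cong \mu^\ast M$ appropriately read off fibers; comparing the fiber over $1\in\mathbb{G}_{\op{m}}$ (which gives $M$ itself, via $\op{act}|_{1\times X}=\op{id}$) with the fiber over $0$ (which gives $(a\circ p)^\ast M=p^\ast a^\ast M$) forces $M\cong p^\ast a^\ast M$ as objects of $\mathscr{T}(X)$. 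Granting this, $p_\ast M\cong p_\ast p^\ast a^\ast M\cong a^\ast M$ by the homotopy property applied to the $\mathbb{A}^1$-bundle-like morphism $p$ (more precisely, $p$ is, Zariski-locally on $Z$, a projection from an affine space bundle, so $p_\ast p^\ast\cong\op{id}$), which is exactly the claim.

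The main obstacle, and the step that needs genuine care, is the passage ``weak equivariance $\Rightarrow$ $M\cong p^\ast a^\ast M$''. Weak equivariance only supplies an abstract isomorphism $\op{act}^\ast M\cong\op{pr}^\ast M$ over $\mathbb{G}_{\op{m}}\times X$, with no cocycle condition and no control over its restriction to $\{1\}\times X$; one must promote this to information over all of $\mathbb{A}^1\times X$. Springer's trick is exactly to run the argument with $\mathbb{A}^1$ in place of $\mathbb{G}_{\op{m}}$ and to use $\mathbb{A}^1$-invariance (homotopy property) to ``fill in'' the fiber over $0$: concretely, one shows $\mu^\ast M$ and $\op{pr}^\ast M$ become isomorphic after applying $q_\ast$ and $q^\ast$, and since $q_\ast q^\ast\cong\op{id}$ one transports the isomorphism between their restrictions over $\mathbb{G}_{\op{m}}$ to an isomorphism over $0$. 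The delicate point is that $\mu$ is only defined because the action is contracting — over a non-contracting $\mathbb{G}_{\op{m}}$-action there is no such extension — so the hypothesis ``contracts $X$ onto $Z$'' is used precisely to build $\mu:\mathbb{A}^1\times X\to X$. I would handle this following \cite{Spp} and \cite{So-N} essentially verbatim, replacing the use of constructible sheaves and their six operations by the motivic six-functor formalism of \prettyref{sec:cdbmot}, which supports all the base-change and $\mathbb{A}^1$-invariance statements invoked above.
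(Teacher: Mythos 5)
Your reduction in the last two paragraphs rests on the intermediate claim that weak equivariance plus the contraction forces $M\cong p^\ast a^\ast M$ in $\mathscr{T}(X)$, and this claim is false. Take $X=\mathbb{A}^1$ with the standard contracting $\mathbb{G}_{\op{m}}$-action onto $Z=\{0\}$, and let $M=b_!\underline{\mathbb{G}_{\op{m}}}$ be the extension by zero of the unit from the open complement $b:\mathbb{G}_{\op{m}}\hookrightarrow\mathbb{A}^1$. Since $\op{act}^{-1}(\mathbb{G}_{\op{m}})=\op{pr}^{-1}(\mathbb{G}_{\op{m}})$ inside $\mathbb{G}_{\op{m}}\times\mathbb{A}^1$, base change gives $\op{act}^\ast M\cong\op{pr}^\ast M$, so $M$ is weakly equivariant; but $a^\ast M=0$, hence $p^\ast a^\ast M=0\neq M$, while the conclusion of the proposition ($p_\ast M\cong a^\ast M=0$) does hold. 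The step you propose to justify it — that an isomorphism $\nu^\ast(\mu^\ast M)\cong\nu^\ast(q^\ast M)$ over $\mathbb{G}_{\op{m}}\times X$ can be ``transported over $0$'' using $q_\ast q^\ast\cong\op{id}$ — is exactly what fails: $\mu^\ast M$ is in general not in the essential image of $q^\ast$ (in the example above, $\mu^\ast M$ and $q^\ast M$ restrict over $\{0\}\times X$ to $0$ and $M$ respectively), and restriction to the open subset $\mathbb{G}_{\op{m}}\times X$ is not faithful enough to extend isomorphisms. (Your final step, that $p_\ast p^\ast\cong\op{id}$ because $p$ is Zariski-locally an affine space bundle, is also unjustified for a general contraction of a possibly singular $X$, but that is secondary.)

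The actual argument, following Springer, avoids any statement about $M$ itself over $X$. One first reduces, via the localization triangle $b_!b^!M\to M\to a_\ast a^\ast M$, to showing that $p_\ast N=0$ for every weakly equivariant $N$ with $a^\ast N=0$ (applied to $N=b_!b^!M$, which inherits weak equivariance). For such $N$ one works on $\mathbb{A}^1\times X$ with the extended action $\tau(t,x)=(t,\op{act}(t,x))$ and produces an endomorphism of $q_\ast\pi^\ast N$ (which computes $\omega^\ast p_\ast N$ by smooth base change) that restricts to an isomorphism over $\mathbb{G}_{\op{m}}\times Z$ — here is where the weak-equivariance isomorphism and the vanishing $\kappa^\ast\tau^\ast\pi^\ast N=p^\ast a^\ast N=0$ enter — and at the same time has $\omega_\ast$ equal to zero, because it factors through $\pi_\ast\nu_!\nu^\ast\pi^\ast N$, which vanishes by the homotopy property. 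A splitting lemma for the projection $\omega:\mathbb{A}^1\times Z\to Z$ with its unit section then forces $p_\ast N=0$. So the contraction and weak equivariance are used to build an endomorphism that is simultaneously invertible and zero, not to trivialize $M$ along $p$; if you want to salvage your write-up, this is the mechanism you need to put in place of the claim $M\cong p^\ast a^\ast M$.
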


\begin{proof}
 We will prove the stronger claim that the adjunction map $M
 \rightarrow a_\ast  a^\ast M$ becomes an isomorphism after applying
 $p_\ast$. To 
 prove this, let $b : U \hookrightarrow X$ be the open embedding of
 the complement of $Z$. By the localization sequence
 $$
b_!b^! M \rightarrow  M \rightarrow a_\ast a^\ast
M \to b_!b^!M[1]
$$ it will be sufficient to show
$p_\ast b_! b^! M = 0$.

In fact, we will show $p_\ast N = 0$  for any weakly equivariant $N
\in \mathscr{T} (X)$ with $a^\ast N = 0$. The strategy is to construct an
automorphism of $p_\ast N$ that factors through zero. 
For a $\mathbb G_{\op{m}}$-action to contract to a subvariety $Z$ means
 that the action $ \mathbb G_{\op{m}} \times X \rightarrow X$
can be extended to a morphism 
$\op{act}: \mathbb A^1 \times X \rightarrow X$ such that
we have $\op{act}\circ \kappa=a\circ p$ where
$\kappa:X\to\mathbb{A}^1\times X:x\mapsto (0,x)$ is the $0$-section,
and   $p:X\ra Z$ is the morphism mapping each point to its limit.
Consider now the morphism
$$\begin{array}{cccl}
 \tau :& \mathbb A^1 \times X & \rightarrow & \mathbb A^1 \times X\\
&(t,x) &\mapsto & (t, \op{act} (t, x))
\end{array}$$
To make the notation more transparent, let us consider the 
commutative diagram
$$\begin{array}{ccccccc}
&&\mathbb G_{\op{m}} \times X& \stackrel{\nu}{\ra} & \mathbb A^1 \times X
&\stackrel{\pi}{\ra} &X\\
&&r\da&&q\da&&p\da\;\;\\
Z&\stackrel{u}{\ra}&\mathbb G_{\op{m}} \times Z& \stackrel{\mu}{\ra} & \mathbb A^1 \times Z
&\stackrel{\omega}{\ra} &Z
\end{array}$$
in which all morphisms except $u$ are the product with suitable identities,
so all squares are cartesian. The morphism $u$ is the unit section
$z\mapsto (1,z)$. 
By weak equivariance, there exists an isomorphism
$
 \nu^\ast \pi^\ast N \cong \nu^\ast 
\tau^\ast \pi^\ast N
$.
On the other hand,  we have
$\kappa^\ast \tau^\ast \pi^\ast N =  p^\ast a^\ast N =0$
 by assumption. The localization sequence for $\kappa$ and $\nu$ thus gives us the third  isomorphism of a chain of morphisms
$$
\pi^\ast N \leftarrow  \nu_!\nu^\ast \pi^\ast N \cong \nu_!\nu^\ast 
\tau^\ast \pi^\ast N\sira
\tau^\ast \pi^\ast N
$$
with adjunction morphisms at the beginning. Clearly all these morphisms pull back to  isomorphisms under $\nu^*$. Applying $q_*$, 
we get a morphism $\alpha: q_*\tau^\ast \pi^\ast N\ra q_*\pi^\ast N$, and  base change shows $\mu^*(\alpha)$ is an isomorphism. On the other hand, the adjunction morphism $\pi^* N\ra \tau_*\tau^*\pi^*N$ also pulls back under $\nu$ to an isomorphism  $\nu^*\pi^* N\sira \nu^*\tau_*\tau^*\pi^*N$, and thus for the induced morphism $\beta: q_*\pi^* N\ra q_*\tau_*\tau^*\pi^*N \sira q_*\tau^*\pi^*N$ by the same argument $\mu^*(\beta)$ is an isomorphism.
We have thus constructed  a morphism 
$$\alpha\circ \beta: q_*\pi^* N\ra q_*\pi^*N$$
with the property, that $\mu^*(\alpha\circ \beta)$ is an isomorphism.
Thus
 $u^*\mu^*(\alpha\circ \beta)$ has to be an isomorphism as well.
Next we show $\omega_*(\alpha\circ \beta)=0$. 
Since this factors through $\omega_*q_*\tau^*\pi^* N
\cong p_*\pi_* \nu_!\nu^*\tau^*\pi^*N
\cong p_*\pi_* \nu_!\nu^*\pi^* N$, it is sufficient to show that the
latter object is zero. For this consider the localization triangle 
$$
\pi_*\nu_!\nu^*\pi^*N\ra \pi_*\pi^*N
\ra \pi_*\kappa_*\kappa^*\pi^*N\ra
\pi_*\nu_!\nu^*\pi^*N[1]
$$
and remark that its second arrow has to be an isomorphism,
so the first term has to be zero. 
However by smooth base change, we get a canonical isomorphism 
$\omega^*p_* N\cong q_*\pi^* N$.
Thus we may apply   \prettyref{lem:conn} below
to our morphism $\alpha\circ \beta$ and deduce that,
since  $u^*\mu^*(\alpha\circ \beta)$ is an isomorphism,
$\omega_*(\alpha\circ \beta)$ has to be an isomorphism, too. 
This however implies $0=\omega_* q_*\pi^* N\cong
\omega_*\omega^*p_* N\cong p_*N$ as claimed.
\end{proof}

\begin{lemma}
\label{lem:conn} 
 Let $Y$ be a variety and $\omega:\mathbb A^n\times Y\ra  Y$ 
the projection. Given $M\in \mathscr{T}(Y)$, the adjunction map is an
isomorphism $\alpha_{M}:M\sira \omega_*\omega^*M$. 
If in addition 
$s:Y\ra \mathbb A^n\times Y$ is any section of the projection,
in other words a morphism with  $\omega\circ s=\op{id}_Y$,  
then for any two objects $M,N\in \mathscr{T}(Y)$
and any morphism $f\in\mathscr{T}_{\mathbb A^n\times Y}(\omega^*M,\omega^*N)$ 
the obvious morphisms form a commutative diagram
$$\begin{array}{cccccc}
s^*\omega^*M &\sira&M\sira  &\omega_*\omega^*M\\
s^*f\da&&&\omega_*f\da \\
s^*\omega^*N &\sira&N\sira  &\omega_*\omega^*N
\end{array}$$
\end{lemma}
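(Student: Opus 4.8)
The plan is to deduce the isomorphism claim from the homotopy property by iteration, and then to observe that the commutativity of the square is a purely formal consequence of the full faithfulness of $\omega^\ast$.

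First I would handle the isomorphism $\alpha_M$. Factoring $\omega\colon\mathbb{A}^n\times Y\to Y$ as the composite of the $n$ affine-line projections $p_i\colon\mathbb{A}^{i}\times Y\to\mathbb{A}^{i-1}\times Y$ (each base $\mathbb{A}^{i-1}\times Y$ lying in $\mathscr{S}$), the homotopy property supplies for each $p_i$ an isomorphism $1\sira(p_i)_\ast p_i^\ast$ given by the unit of the adjunction $(p_i^\ast,(p_i)_\ast)$. Since $\omega^\ast$ is the composite of the $p_i^\ast$ and $\omega_\ast$ the composite of the $(p_i)_\ast$, and the unit of a composite adjunction is the evident composite of units (after inserting the coherence isomorphisms of the underlying pseudofunctor $\mathscr{M}$), an induction on $n$ with trivial base case $n=0$ shows that $\alpha_M\colon M\to\omega_\ast\omega^\ast M$ is an isomorphism for every $M$.

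Next I would record that $\omega^\ast$ is fully faithful. For $M,N\in\mathscr{T}(Y)$ the adjunction $(\omega^\ast,\omega_\ast)$ identifies $\mathscr{T}_{\mathbb{A}^n\times Y}(\omega^\ast M,\omega^\ast N)$ with $\mathscr{T}_Y(M,\omega_\ast\omega^\ast N)$ via $f\mapsto\omega_\ast f\circ\alpha_M$, and post-composing with $\alpha_N^{-1}$ turns this into a bijection onto $\mathscr{T}_Y(M,N)$, $f\mapsto g_f\pdef\alpha_N^{-1}\circ\omega_\ast f\circ\alpha_M$. Naturality of $\alpha$ gives $\omega_\ast\omega^\ast g\circ\alpha_M=\alpha_N\circ g$ for all $g$, whence $g_{\omega^\ast g}=g$; so $\omega^\ast$ is bijective on Hom-sets, and the given $f$ equals $\omega^\ast g$ for the unique $g=g_f$. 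In particular $\omega_\ast f\circ\alpha_M=\alpha_N\circ g$.

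Finally I would chase the square. Let $c_M\colon s^\ast\omega^\ast M\sira M$ be the structural isomorphism coming from $\omega\circ s=\op{id}_Y$ together with the pseudofunctoriality of $\mathscr{M}$; it is natural in $M$, so with $f=\omega^\ast g$ (hence $s^\ast f=s^\ast\omega^\ast g$) one has $c_N\circ s^\ast f=g\circ c_M$. The two composites around the rectangle in the lemma statement are then $\omega_\ast f\circ\alpha_M\circ c_M$ and $\alpha_N\circ c_N\circ s^\ast f=\alpha_N\circ g\circ c_M$, and they agree by the identity $\omega_\ast f\circ\alpha_M=\alpha_N\circ g$ just obtained. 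The only step I expect to require genuine care is the bookkeeping in the first part — verifying that the unit of the composite adjunction $(\omega^\ast,\omega_\ast)$ is the composite of the units of the $p_i$-adjunctions once the coherence isomorphisms of $\mathscr{M}$ are accounted for — but this is part of the formalism of \cite{cisinski:deglise} and introduces no real difficulty; everything after that is formal diagram-chasing with adjunction units and the naturality of $s^\ast\omega^\ast\cong\op{id}$ and $1\cong\omega_\ast\omega^\ast$.
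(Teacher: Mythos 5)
Your proof is correct: iterating the homotopy property over the $n$ affine-line projections gives that the unit $M\to\omega_\ast\omega^\ast M$ is an isomorphism, and the resulting full faithfulness of $\omega^\ast$ (so $f=\omega^\ast g$) together with naturality of $s^\ast\omega^\ast\cong\operatorname{id}$ and of the unit makes the rectangle commute. The paper states this lemma without proof, and your argument is precisely the formal adjunction/naturality chase it implicitly relies on, so there is nothing further to add.
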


\section{Full faithfulness and combinatorial models}
\label{sec:ginzburg}

In this section, we adapt the arguments of Ginzburg \cite{Gi} to the motivic setting. We establish a full faithfulness result which allows to compute morphisms between pure stratified Tate motives in terms of maps between their bigraded motivic cohomology rings. This full faithfulness result will allow us to identify the category $\DMT_{(B)}(G/B)$ in terms of a homotopy category of Soergel modules. 

\begin{Bemerkungl}
\label{eqf}
The full faithfulness result now requires that we work in a motivic triangulated category $\mathscr{T}$ which satisfies both the weight and grading conditions. In particular, the grading condition implies that  $\mathscr{T}_{\op{pt}}(\underline{\op{pt}},\underline{\op{pt}}(p)[q])\neq 0$ only for $p=q=0$, in which case this is a one-dimensional vector space over $\DQ$ generated by the identity morphism of $\underline{\op{pt}}$. For $\mathbb  Q\op{-Modf}^{\mathbb Z\times\DZ}$ the category of finite dimensional $(\mathbb Z\times\DZ)$-graded $\mathbb Q$-vector spaces, we thus get an equivalence of $\DQ$-linear monoidal categories  
$$\mathbb Q\op{-Modf}^{\mathbb Z\times\DZ} 
\sirra \DMT (\op{pt})$$ 
mapping $\mathbb Q$ sitting in bidegree $(p,q)$ to the motive $\underline{\op{pt}}(p)[q]$. 
\end{Bemerkungl}

\begin{definition}
\label{defin:SDF}
Let $(X,\mathcal S)$ be an affinely Whitney--Tate stratified
 variety.  Given stratified mixed Tate motives 
$M,N\in \DMT_{\mathcal S}(X)$
we define the bigraded vector space 
$$
  \overline{\DMT}_{\mathcal S}(M ,N)\pdef  \bigoplus_{(i,j) \in
    \mathbb Z\times \mathbb Z} {\mathscr{T}}_{X} (M, N (i)[j]) 
   $$
This can also be interpreted  as the bigraded vector space corresponding to the motive $\op{fin}_\ast\op{Hom}_X(M,N)$ under the  equivalence \ref{eqf}.  

We consider the bigraded ring 
$$
{\op{H}}X\pdef \overline{\DMT}_{\mathcal S}(\underline X
,\underline X)
$$ 
and the hypercohomology functor 
$$\mathbb H:\DMT_{\mathcal S}(X)\ra \op{Mod}_{{\op{H}}X}:M\mapsto
\overline{\DMT}_{\mathcal S}(\underline X,M). 
$$
\end{definition}

In the following, we will bootstrap Ginzburg's arguments from \cite{Gi} in the setting of motives, and the above bigraded cohomology rings. We fix some terminology to be used throughout the section. For a stratum $X_s$, we denote by $\nu_s:\overline{X_s}\to X$ the inclusion of its closure, by $j_s:X_s\to\overline{X_s}$ the inclusion of the stratum into its closure and by $i_s:\overline{X_s}\setminus X_s\to \overline{X_s}$ the inclusion of the closed complement. We use the notation $\op{fin}$ rather freely, for all sorts of structure morphisms of $k$-varieties, trusting the readers to figure out on their own the variety belonging to the structure morphism.  

We first establish an exact sequence as in \cite[Proposition
3.6]{Gi}. 

\begin{proposition}
\label{prop:gi36}
Let $L,M\in \DMT_{\mathcal{S}}(X)$ be stratified mixed Tate motives, such that $L$ is pointwise $\ast$-pure and $M$ is pointwise $!$-pure. We set $L_s=\nu_s^\ast L$, $M_s=\nu_s^! M$. Then there is an exact sequence
$$
0\to \overline{\DMT}_{\mathcal{S}}(i^\ast L_s,i^!M_s)\to
\overline{\DMT}_{\mathcal{S}}(L_s,M_s)\to
\overline{\DMT}_{\mathcal{S}}(j^\ast L_s,j^\ast M_s)\to 0.
$$
\end{proposition}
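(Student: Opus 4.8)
The plan is to obtain the exact sequence from the localization triangle on $\overline{X_s}$ associated to the open stratum $j = j_s : X_s \hookrightarrow \overline{X_s}$ and its closed complement $i = i_s : \overline{X_s}\setminus X_s \hookrightarrow \overline{X_s}$. Applying $\op{Hom}_{\overline{X_s}}(L_s, -)$ to the triangle $i_\ast i^! M_s \to M_s \to j_\ast j^\ast M_s \to i_\ast i^! M_s[1]$, then applying $\op{fin}_\ast$ and passing to the bigraded cohomology groups via the equivalence \ref{eqf}, produces a long exact sequence of bigraded vector spaces
$$
\cdots \to \overline{\DMT}_{\mathcal S}(L_s, i_\ast i^! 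M_s) \to \overline{\DMT}_{\mathcal S}(L_s, M_s) \to \overline{\DMT}_{\mathcal S}(L_s, j_\ast j^\ast M_s) \to \cdots
$$
where I have written $\overline{\DMT}_{\mathcal S}$ for $\bigoplus_{i,j} {\mathscr T}_{\overline{X_s}}(L_s, -(i)[j])$. By adjunction $(i^\ast, i_\ast)$ and $(j^\ast, j_\ast)$ one rewrites the outer terms as $\overline{\DMT}_{\mathcal S}(i^\ast L_s, i^! M_s)$ and $\overline{\DMT}_{\mathcal S}(j^\ast L_s, j^\ast M_s)$, which already matches the three terms in the statement.

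First I would set up the purity input: since $L$ is pointwise $\ast$-pure and $M$ is pointwise $!$-pure, the restrictions $i^\ast L_s$ and $i^! M_s$ are again pointwise $\ast$-pure and $!$-pure on the closed stratum-variety $\overline{X_s}\setminus X_s$ (this is immediate from the stratumwise nature of the conditions), and similarly $j^\ast L_s$, $j^\ast M_s$ live on the affine space $X_s$. Then I would invoke Corollary~\ref{cor:bswz}: for a pointwise $\ast$-pure object and a pointwise $!$-pure object, ${\mathscr T}(-, -[a]) = 0$ for all $a > 0$. This vanishing, applied on $\overline{X_s}\setminus X_s$ and on $X_s$ (where, $X_s$ being an affine space, all three sheaves are automatically in the heart), kills all the $[j]$-shifted contributions in the long exact sequence except the ones appearing, so the long exact sequence collapses to the claimed short exact sequence. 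Concretely one checks the connecting maps $\overline{\DMT}_{\mathcal S}(j^\ast L_s, j^\ast M_s) \to \overline{\DMT}_{\mathcal S}(i^\ast L_s, i^! M_s[1])$ vanish because the target is zero, and exactness at the two remaining nodes is what the long exact sequence gives.

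The main obstacle I anticipate is bookkeeping with the degree shifts: the bigraded spaces $\overline{\DMT}_{\mathcal S}$ already sum over all cohomological shifts $[j]$, so the three-term long exact sequence of bigraded objects is really an intertwined family indexed by $j$, and one must be careful that the vanishing from Corollary~\ref{cor:bswz} is exactly strong enough --- it only gives vanishing in \emph{positive} relative shifts, so one needs both the $\ast$-purity of $L$ and the $!$-purity of $M$ to be used symmetrically, and one needs to confirm that $\op{Hom}_{\overline{X_s}}(i^\ast L_s, i^! M_s)$ has no contribution in the relevant degree beyond the heart. A secondary technical point is justifying that $\op{fin}_\ast$ of the $\op{Hom}$-triangle computes the bigraded groups correctly --- this is the compatibility already recorded in Definition~\ref{defin:SDF}, namely that $\overline{\DMT}_{\mathcal S}(M,N)$ is the bigraded vector space attached to $\op{fin}_\ast \op{Hom}(M,N)$ --- and that $\op{Hom}$ commutes with the localization triangle in its second variable, which follows from $\op{Hom}(L_s,-)$ being triangulated. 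Once these are in place the argument is a direct transcription of Ginzburg's \cite[Proposition 3.6]{Gi}.
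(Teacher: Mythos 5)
Your overall route (apply $\op{Hom}(L_s,-)$ to the localization triangle of $M_s$, take the resulting long exact sequence of bigraded Hom-spaces, and kill the connecting maps) could in principle be made to work, but the justification you give for the crucial vanishing is wrong. You claim the connecting maps vanish ``because the target $\overline{\DMT}_{\mathcal S}(i^\ast L_s,i^!M_s[1])$ is zero''. It is not: by Definition~\ref{defin:SDF} the bigraded space $\overline{\DMT}_{\mathcal S}(A,B)$ sums over \emph{all} twists and shifts, so $\overline{\DMT}_{\mathcal S}(A,B[1])=\bigoplus_{(p,q)}\mathscr{T}(A,B(p)[q+1])$ is just a regraded copy of $\overline{\DMT}_{\mathcal S}(A,B)$; in particular it contains the component $\mathscr{T}(i^\ast L_s,i^!M_s)$, which is exactly the (generally nonzero) first term of the sequence you are trying to establish. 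Corollary~\ref{cor:bswz} only kills the components $\mathscr{T}(i^\ast L_s,i^!M_s(p)[q+1])$ with $q+1>2p$ (apply it after twisting by $(p)[2p]$, which preserves pointwise purity); it says nothing about the components with $q+1\le 2p$. So ``vanishing of the target'' fails, and with it your argument for surjectivity on the right and for injectivity on the left.

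To repair your route you must argue bidegree by bidegree that each individual connecting map $\mathscr{T}(j^\ast L_s,j^\ast M_s(p)[q])\to\mathscr{T}(i^\ast L_s,i^!M_s(p)[q+1])$ vanishes: for $q+1>2p$ the target vanishes by the twisted form of Corollary~\ref{cor:bswz} on $\overline{X_s}\setminus X_s$, while for every $q\neq 2p$ (so in particular for $q+1\le 2p$) the source vanishes, because on the affine stratum $\DMT(X_s)\cong\DMT(k)$ and, under the grading condition \ref{eqf}, morphisms between the weight-zero objects $j^\ast L_s$ and the twisted object $j^\ast M_s(p)[q]$ are concentrated in bidegrees with $q=2p$; note that this step genuinely uses the grading condition and not only Corollary~\ref{cor:bswz}, whose vanishing range is by itself not strong enough. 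The paper's own proof avoids this case analysis: it applies the localization triangle to the internal Hom motive $\op{Hom}(L,M)$, observes (as in the proof of Corollary~\ref{cor:bswz}, using weight-exactness of $\op{Hom}$ on strata) that all three terms are pointwise $!$-pure, hence by Proposition~\ref{prop:ptDI} their images under $\op{fin}_\ast$ are pure of weight zero over the point, so the single degree-one morphism of the pushed-forward triangle vanishes for weight reasons; the split triangle then yields the short exact sequence in all bidegrees at once via Definition~\ref{defin:SDF}.
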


\begin{proof}
By an induction on the number of strata, we can assume that $\overline{X}_s=X$. This allows to simplify notation to $j=j_s$ the inclusion of the open stratum and $i=i_s$ it closed complement. 

Consider the internal Hom motive $\op{Hom}(L,M)$ (in motives over $X$) and form the localization  triangle
$$
i_!i^!\op{Hom}(L,M)\ra \op{Hom}(L,M) \ra j_*j^*\op{Hom}(L,M)\to
%\text{\sout{$i_!i^!\op{Hom}(L_s,M_s)$}}
[1]
$$
By standard isomorphisms it can be transformed to a distinguished triangle
$$
i_*\op{Hom}(i^*L,i^!M)\ra \op{Hom}(L,M) \ra j_*\op{Hom}(j^*L,j^!M)\to[1]
$$ 
Now as in the proof of \prettyref{cor:bswz}  the object $\op{Hom}(L,M)$ and its exceptional pullbacks $i^!\op{Hom}(L,M)$ and $j^!\op{Hom}(L,M)$ are pointwise $!$-pure.   Applying $\op{fin}_*$ will thus lead to a triangle of motives on the point, which are all pure of weight zero, so that the degree-one morphism has to vanish. Applying \prettyref{defin:SDF} this establishes the required short exact sequence. 
%From the above, the claim follows via induction  on the dimension of the strata, the base case being trivially true.
\end{proof}

\begin{theorem}[{\bf Full faithfulness of cohomology}] 
\label{thm:ginzburg}
Let $(X,\mathcal S)$ be an affinely Whitney--Tate stratified proper variety and let $L,M\in\DMT_{\mathcal S}(X)$ be pointwise $\ast$-pure  and  pointwise  $!$-pure, respectively. Assume in addition that, for each embedding $j$ of a stratum, $\mathbb H L\ra \mathbb Hj_*j^* L$ is surjective and $\mathbb H j_!j^!M\ra \mathbb H M$ is injective. Then the hypercohomology functor induces a bijection 
$$
  \overline{\DMT}_{\mathcal S}(L ,M)\sira \op{Mod}_{{\op{H}}X}
%\op{Mod-}{\op{H}}X %\op{Hom}_{{\op{H}}X}
(\mathbb HL,\mathbb HM).
$$ 
\end{theorem}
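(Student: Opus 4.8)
The plan is to prove the claimed bijection by induction on the number of strata, via a d\'evissage along an open stratum that is run in parallel on the geometric side $\overline{\DMT}_{\mathcal S}(L,M)$ and on the algebraic side $\op{Mod}_{\op{H}X}(\mathbb H L,\mathbb H M)$, and to conclude by the five lemma. Choose an open stratum $j\colon U=X_s\hra X$ (one maximal for the closure order will do), with closed complement $i\colon Z\hra X$ carrying the induced stratification, which is again affinely Whitney--Tate and proper; the base case of the induction is that of a single stratum.

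First I would write down the two short exact sequences. Geometrically, the argument of \prettyref{prop:gi36} applied to the open stratum $U$ yields
$$
0\to \overline{\DMT}_{\mathcal S}(i^\ast L,i^!M)\to \overline{\DMT}_{\mathcal S}(L,M)\xrightarrow{\ j^\ast\ }\overline{\DMT}_{\mathcal S}(j^\ast L,j^\ast M)\to 0 ,
$$
because $\op{Hom}_X(L,M)$ together with its $\ast$-restriction to $U$ and $!$-restriction to $Z$ are pointwise $!$-pure (as in the proof of \prettyref{cor:bswz}) and $X$ is proper, so the connecting map of the $\op{fin}_\ast$-pushforward of the internal-Hom localization triangle vanishes. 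Algebraically, since $L$ is pointwise $\ast$-pure, $M$ pointwise $!$-pure and $X$ is proper, applying $\op{fin}_\ast=\op{fin}_!$ to the localization triangles $j_!j^\ast L\to L\to i_\ast i^\ast L\to[1]$ and $i_\ast i^!M\to M\to j_\ast j^\ast M\to[1]$ gives, by \prettyref{prop:ptDI} and its Verdier dual, triangles of weight-zero motives on the point, whence short exact sequences of $\op{H}X$-modules
$$
0\to \mathbb H(j_!j^\ast L)\to \mathbb H L\to \mathbb H_Z(i^\ast L)\to 0 ,\qquad 0\to \mathbb H_Z(i^!M)\to \mathbb H M\to \mathbb H_U(j^\ast M)\to 0 ,
$$
the outer terms being modules over $\op{H}Z$ resp.\ $\op{H}U=\DQ$, pulled back along the restriction ring maps.

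Next I would feed these into the induction. On $Z$ the restrictions $i^\ast L$, $i^!M$ are again pointwise $\ast$- resp.\ $!$-pure, and the surjectivity/injectivity hypotheses descend to the strata of $Z$ thanks to the surjection $\mathbb H L\twoheadrightarrow\mathbb H_Z(i^\ast L)$ and the injection $\mathbb H_Z(i^!M)\hra\mathbb H M$ just obtained; so the inductive hypothesis gives $\overline{\DMT}_{\mathcal S}(i^\ast L,i^!M)\xrightarrow{\sim}\op{Mod}_{\op{H}Z}(\mathbb H_Z i^\ast L,\mathbb H_Z i^!M)$. On $U\cong\mathbb A^{n_s}$ one has $\DMT(U)\cong\DMT(\op{pt})$, which the grading condition identifies with the derived category of bigraded $\DQ$-vector spaces; under this, heart objects become honest bigraded vector spaces and $\overline{\DMT}_{\mathcal S}(j^\ast L,j^\ast M)$ becomes $\op{Hom}_\DQ(\mathbb H_U j^\ast L,\mathbb H_U j^\ast M)=\op{Mod}_{\op{H}U}(\mathbb H_U j^\ast L,\mathbb H_U j^\ast M)$, which also disposes of the base case. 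It then remains to produce a short exact sequence
$$
0\to\op{Mod}_{\op{H}Z}(\mathbb H_Z i^\ast L,\mathbb H_Z i^!M)\to\op{Mod}_{\op{H}X}(\mathbb H L,\mathbb H M)\to\op{Mod}_{\op{H}U}(\mathbb H_U j^\ast L,\mathbb H_U j^\ast M)\to 0 ,
$$
the first map being ``compose with $\mathbb H L\twoheadrightarrow\mathbb H_Z i^\ast L$ and $\mathbb H_Z i^!M\hra\mathbb H M$'' and the second ``restrict along $\op{H}X\to\op{H}U$'', and then to apply the five lemma to the morphism of short exact sequences induced by $\mathbb H$, whose outer vertical maps are the isomorphisms just obtained.

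The hard part will be the construction of this last exact sequence, equivalently surjectivity of the comparison map: one must show that every $\op{H}X$-linear map $\mathbb H L\to\mathbb H M$ carries $\mathbb H(j_!j^\ast L)$ into $\mathbb H_Z(i^!M)$, so that ``restriction to $U$'' is defined, and that the resulting $\DQ$-linear map over $U$ can be lifted to an $\op{H}X$-linear map. This is exactly where the hypotheses must be used in an essential way: the surjectivity of $\mathbb H L\to\mathbb H(j_{t\ast}j_t^\ast L)$ and the injectivity of $\mathbb H(j_{t!}j_t^!M)\to\mathbb H M$ force the module structures of $\mathbb H L$ over the open stratum and of $\mathbb H M$ over its complement to be as free as the weight filtration permits, so that the a priori nonzero cross term $\op{Hom}_\DQ(\mathbb H(j_!j^\ast L),\mathbb H_U(j^\ast M))$ --- the discrepancy between compactly supported and ordinary cohomology over $U$ --- drops out and the descent becomes possible. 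As a sanity check, injectivity of the comparison map falls out quickly from the same d\'evissage: if $\mathbb H f=0$, then $\mathbb H_U(j^\ast f)=0$ because $\mathbb H L\to\mathbb H_U(j^\ast L)$ is surjective, hence $j^\ast f=0$ by the single-stratum case, so $f$ factors as $L\to i_\ast i^\ast L\xrightarrow{\,i_\ast\tilde f\,}i_\ast i^!M\to M$ for some $\tilde f\colon i^\ast L\to i^!M$; the surjection $\mathbb H L\twoheadrightarrow\mathbb H_Z(i^\ast L)$ and the injection $\mathbb H_Z(i^!M)\hra\mathbb H M$ then give $\mathbb H_Z(\tilde f)=0$, and the inductive hypothesis on $Z$ forces $\tilde f=0$, hence $f=0$.
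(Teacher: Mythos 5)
Your skeleton is the same as the paper's (and Ginzburg's): induct on the number of strata, produce a short exact sequence of geometric Hom-spaces via \prettyref{prop:gi36}, pair it with an exact sequence of module homomorphism spaces, and conclude by a diagram chase. But the proposal has a genuine gap exactly at the point you yourself flag as ``the hard part'': you never construct the map
$\op{Mod}_{{\op{H}}X}(\mathbb HL,\mathbb HM)\to\op{Mod}_{{\op{H}}U}(\mathbb H j^\ast L,\mathbb H j^\ast M)$,
nor prove exactness of the bottom row at the middle term, and the heuristic you offer (``the hypotheses force the module structures to be as free as the weight filtration permits, so the cross term drops out'') is not an argument. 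The paper's mechanism is concrete and is the heart of the proof: one introduces the class $c\in{\op{H}}X$ of the open cell, defined via the inclusion $z$ of a point of $U\cong\mathbb A^d$ as the composite $\underline X\ra z_*z^*\underline X\sira z_!z^!\underline X(d)[2d]\ra\underline X(d)[2d]$. Using properness of $X$, contractibility of the cell, the purity of $L,M$, and precisely your surjectivity/injectivity hypotheses, one gets identifications $\op{im}\bigl(c:\mathbb HL\to\mathbb HL\bigr)\cong\mathbb H(j^\ast L)$ and $\op{im}\bigl(c:\mathbb HM\to\mathbb HM\bigr)\cong\mathbb H(j^\ast M)$, together with $\op{cok}(c)\cong\mathbb H(i_*i^\ast L)$ and $\ker(c)\cong\op{im}\bigl(\mathbb H(i_!i^!M)\hra\mathbb HM\bigr)$. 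Since any ${\op{H}}X$-linear map commutes with multiplication by $c$, it automatically restricts to $\op{im}(c)$; this is what defines the right-hand map and simultaneously yields exactness of the bottom row at the middle (a map killed on $\op{im}(c)$ factors through $\op{cok}(c)\to\ker(c)$, i.e.\ comes from $\op{Mod}_{{\op{H}}Z}(\mathbb H i^\ast L,\mathbb H i^!M)$), plus commutativity of the right-hand square. None of this appears in your write-up, so the descent step you need is unproven.

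Two smaller points. First, you assert the algebraic row is short exact and invoke the five lemma; right exactness is neither needed nor established --- the paper only proves the bottom row left exact, and the chase (top row short exact, bottom row exact at the first two spots, outer verticals isomorphisms) already gives bijectivity of the middle vertical, so you should drop the unverified surjectivity claim. Second, your reduction of the hypotheses to the strata of $Z$ is essentially correct, but the clean justification is that $\mathbb HL\to\mathbb H(j_{t*}j_t^\ast L)$ factors through $\mathbb H(i_*i^\ast L)$, so surjectivity of the composite forces surjectivity of the second map (and dually for $M$); as stated, your one-line appeal to the surjection $\mathbb HL\sra\mathbb H(i_*i^\ast L)$ leaves this implicit.
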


\begin{Bemerkungl}
In the above, $\op{Mod}_{{\op{H}}X}$ means the vector space of all homomorphisms of ${\op{H}}X$-modules, ignoring any gradings. Requiring the grading to be respected, we have under the same conditions a bijection 
$$
  \DMT_{\mathcal S}(L ,M)\sira 
  \op{Mod}^{\DZ\times \DZ}_{{\op{H}}X}(\mathbb HL,\mathbb HM)
  $$ 
between morphisms of stratified mixed Tate motives and
morphisms of bigraded ${\op{H}}X$-modules which are homogeneous of
bidegree $(0,0)$. 
We discuss in  \prettyref{rem:CBS} below why the conditions of the theorem are
satisfied for Bott--Samelson  sheaves. In this special case, there is
also an alternative proof comparing dimensions of the homomorphism
spaces involved.  
\end{Bemerkungl}

\begin{proof}
We first note that the morphism is simply given by applying
hypercohomology: an element $f\in
\overline{\DMT}_{\mathcal{S}}(L,M)$ is a map
$f:L\to M(i)[j]$, and the image of $f$ in 
$\op{Mod}_{{\op{H}}X}(\mathbb HL,\mathbb HM)$ is
$\mathbb{H}(f)$. 

The  proof is due to Ginzburg \cite{Gi}, whose arguments we
repeat. Let $u:D\hra X$ be the embedding of an open stratum and
$i:Z\hra X$ the embedding of its closed complement. The proof consists of 
embedding our morphism as  middle vertical in a commutative diagram 
$$\begin{array}{ccccc}
  \overline{\DMT}_{\mathcal S}(i^*L,i^!M)&\hra&\overline{\DMT}_{\mathcal
    S}(L,M)&\sra&\overline{\DMT}_{\mathcal S}(u^*L,u^*M)\\
\da&&\da&&\da\\
\op{Mod}_{{\op{H}}Z}(\mathbb H i^*L,\mathbb H i^!M) &\hra&
\op{Mod}_{{\op{H}}X}(\mathbb HL,\mathbb HM)&\ra&
\op{Mod}_{{\op{H}}D}(\mathbb H u^*L,\mathbb H u^*M) 
\end{array}$$
with the upper row short exact, the lower row left exact, and all vertical maps  given by the corresponding hypercohomology functors. Once this is established, the left vertical is an isomorphism by an induction on the number of strata, for the right vertical this is clear anyhow, and by a diagram
chase we are done. So the problem is to construct the horizontal maps and show the required exactness of the horizontal sequences. The upper sequence is established in \prettyref{prop:gi36}. 

To discuss the lower horizontal, let $c\in {\op{H}}X$ be the class
corresponding to the open cell $D$. This class is given by the following composition, where  $z:\op{pt}\hra X$ denotes the inclusion of any point of the cell $D$ and $d$ denotes the dimension of the cell $D\cong\mathbb{A}^d$:
$$
\underline X %\ra u_*u^*\underline X 
\ra z_*z^*\underline X\sira z_!z^!\underline
X(d)[2d]\ra % u_!u^!\underline X[2d](d)\ra
\underline X(d)[2d]
$$
Note that, given a cohomology class $\gamma:\underline{X}\ra \underline{X}(q)[p]$, its effect on the hypercohomology of a motive $\mathcal F$ can be realized as hypercohomology of the morphism $\gamma\otimes \op{id}:\underline{X}\otimes \mathcal F \ra \underline{X}(q)[p]\otimes \mathcal F$ up to the natural identification  $\underline{X}\otimes \mathcal F\cong\mathcal F$. In our case, the action of the class $c$ on  the hypercohomology of a stratified motive $\mathcal{F}$ is thus induced from the composition 
$$
\mathcal F
%\ra u_*u^*\underline X
\ra z_*z^*\mathcal F\sira z_!z^!\mathcal F (d)[2d]\ra
% u_!u^!\underline X[2d](d)\ra
\mathcal F (d)[2d]
$$
where the first and third maps are units and counits of the respective adjunctions and the middle isomorphism is due to $\mathcal F$ being constant on the open cell $D$. The outer morphisms admit natural factorizations as $\mathcal F \ra u_*u^*\mathcal F \ra z_*z^*\mathcal F$ and $z_!z^!\mathcal F(d)[2d] \ra  u_!u^!\mathcal F(d)[2d]\ra \mathcal F(d)[2d] $, respectively. On the total bigraded hypercohomology, these induce isomorphisms 
$$
\mathbb H (u_*u^*\mathcal F) \sira  \mathbb H (z_*z^*\mathcal F) \textrm{ and }
\mathbb H (z_!z^!\mathcal F(d)[2d]) \sira \mathbb H (u_!u^!\mathcal F(d)[2d]),
$$  
since $X$ is assumed to be proper. Applying this to the motives $M$ and $N$,  we get commutative diagrams
$$\begin{array}{ccccc}
 \mathbb H (u_*u^* L)&\twoheadleftarrow&\mathbb H (L)\\
\wr\da&&c\da&&\\
\mathbb H (u_!u^!L(d)[2d])&\hra&\mathbb H(L(d)[2d])&\sra&\mathbb H (i_*i^* L(d)[2d])
\end{array}
$$
$$
\begin{array}{ccccc}
 \mathbb H (i_!i^!M)&\hra&\mathbb H (M)&\sra&\mathbb H (u_*u^*M)\\
&&c\da&&\wr\da\\
&&\mathbb H (M(d)[2d])&\hookleftarrow&\mathbb H (u_!u^!M(d)[2d])
\end{array}
$$
The upper surjection in the upper diagram and the lower injection in the lower diagram are from the assumptions. The horizontal exact sequences are obtained as in the proof of Proposition~\ref{prop:gi36}, 
using our pointwise purity assumptions on $L$ and $M$. 
For the upper diagram, we need to dualize, which is ok because $X$ is proper and hence $\op{fin}_\ast\cong\op{fin}_!$.

These diagrams lead to isomorphisms 
$(\op{im}c:\mathbb H L \ra  \mathbb HL)\sira \mathbb H u_*u^* L\sira
\mathbb H u^*L$ and  $(\op{im}c:\mathbb H M\ra  \mathbb HM)\sira
\mathbb H u_*u^* M\sira \mathbb H u^*M$. For  the 
lower right horizontal in our diagram from the beginning of the proof
we then just take the map restricting a module
homomorphism to the induced homomorphism on $\op{im}c$. 

We have to check that the right square commutes. The map
$\overline{\DMT}_{\mathcal{S}}(L,M)\to
\overline{\DMT}_{\mathcal{S}}(u^\ast L,u^\ast M)$
comes from $u^\ast$-restriction  of the inner Hom. Since the map $c$
in the diagram is similarly defined via the restriction functors, the
right square of the diagram commutes.
The lower left horizontal in our diagram from the
beginning of the proof comes from the natural
morphisms $L\ra i_*i^*L$ and $i_!i^!M\ra M$
and is an injection, since we have 
$\mathbb H L\sra \mathbb H i_*i^* L$ and $\mathbb H
i_!i^! M\hra\mathbb H M$ by the above.
The composition in the  lower horizontal is clearly zero. The only thing left
to show is that in the middle each element in the kernel also
belongs to the image. Now if $f:\mathbb H L\ra \mathbb H M$
goes to zero, it will obviously factor as
$\mathbb H L\sra (\op{cok}c)\ra (\op{ker}c)\subset \mathbb H M$.
But the left
diagram above gives us a natural isomorphism
$(\op{cok}c)\sira \mathbb H
i_*i^* L$ and the right diagram above shows that  $(\op{ker}c)$ 
is the image
of $\mathbb H i_!i^! M\hra \mathbb H M$.
Thus $ f$ will actually come from some
$\tilde f:\mathbb H
i_*i^* L\ra \mathbb H i_!i^! M$ as claimed.
\end{proof}

\begin{theorem}\label{thm:ffbs} %\label{FGH} 
  Let $G\supset P\supset B$ be a reductive algebraic group over $k$ with
a choice of Borel subgroup $B$ and parabolic subgroup $P$. Then on the
heart of the weight structure
$
\DMT^{\op{bs}}_{(B)}(G/P)= \DMT_{(B)}(G/P)_{w=0}
$ from \prettyref{cor:bsgen} the hypercohomology functor $\mathbb H$ 
from \prettyref{defin:SDF} restricts to
 a fully faithful functor 
$$
\mathbb H:\DMT^{\op{bs}}_{(B)}(G/P)\stackrel{\sim}{\hra}
\op{Mod}_{\op{H}(G/P)}^{\DZ\times\DZ}
$$
\end{theorem}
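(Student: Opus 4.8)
The plan is to deduce this from \prettyref{thm:ginzburg} (full faithfulness of cohomology), by verifying its hypotheses for an arbitrary pair $L,M\in\DMT^{\op{bs}}_{(B)}(G/P)$ and then invoking the graded refinement of \prettyref{thm:ginzburg} recorded in the Bemerkung after its statement. Two of the inputs are immediate: $G/P$ is projective, hence proper, as required; and by \prettyref{cor:bsgen} we have $\DMT^{\op{bs}}_{(B)}(G/P)=\DMT_{(B)}(G/P)_{w=0}$, while by \prettyref{lem:bswz} every object of this category is pointwise pure. In particular $L$ is pointwise $\ast$-pure and $M$ is pointwise $!$-pure, which is the purity assumption of \prettyref{thm:ginzburg}.

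It then remains to check, for each stratum inclusion $j$, that $\mathbb{H}L\to\mathbb{H}j_\ast j^\ast L$ is surjective and $\mathbb{H}j_!j^!M\to\mathbb{H}M$ is injective. I would first reduce, exactly as in the proof of \prettyref{prop:gi36}, to the case in which the stratum closure is all of $X=G/P$, so that $j:D\hra X$ is the inclusion of a dense open stratum $D\cong\Ao$ of dimension $d$ with closed complement $i:Z\hra X$. Applying $\op{fin}_\ast$ to the localization triangle $i_\ast i^!L\to L\to j_\ast j^\ast L\to i_\ast i^!L[1]$ and using $\op{fin}_\ast j_\ast=\op{fin}_\ast$ produces a triangle $\op{fin}_\ast i^!L\to\op{fin}_\ast L\to\op{fin}_\ast j^\ast L\to\op{fin}_\ast i^!L[1]$ in $\DMT(\op{pt})$, all of whose vertices lie in the heart $\DMT(\op{pt})_{w=0}$: the last one because $j^\ast L$ is pure of weight $0$ on the affine cell $D$ and pushforward along $\Ao\to\op{pt}$ is a weight-exact equivalence by the homotopy property; the middle one because $\op{fin}_\ast L=\op{fin}_!L$ (as $X$ is proper) is pure by \prettyref{prop:ptDI}; and the first one because $\op{fin}_\ast i^!L=\op{fin}_!i^!L$ (as $Z$ is proper), which is pure by the $!$-analogue of \prettyref{prop:ptDI} since $i^!L$ is pointwise $!$-pure on $Z$. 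Because the heart of a weight structure admits no morphisms into a positive shift of itself, the connecting morphism vanishes, so applying $\mathbb{H}$ yields a short exact sequence $0\to\mathbb{H}i^!L\to\mathbb{H}L\to\mathbb{H}j_\ast j^\ast L\to 0$; in particular $\mathbb{H}L\to\mathbb{H}j_\ast j^\ast L$ is surjective. The injectivity of $\mathbb{H}j_!j^!M\to\mathbb{H}M$ follows by the entirely parallel argument applied to the other localization triangle $j_!j^!M\to M\to i_\ast i^\ast M\to j_!j^!M[1]$ and $\op{fin}_!=\op{fin}_\ast$.

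For the induction on the number of strata inside \prettyref{thm:ginzburg} to be legitimate, one must also know that the restrictions $\nu_s^\ast L$ and $\nu_s^!M$ to the closed Schubert subvarieties $\overline{X_s}$ again satisfy these hypotheses; since $\overline{X_s}$ is again proper, this reduces to the claim that restriction of a pointwise pure motive to a closed union of strata is again pointwise pure. I would prove this by a further induction on strata: $\ast$-purity is immediate from the identity $j_t^\ast\nu_s^\ast=j_t^\ast$ on strata $X_t\subset\overline{X_s}$, and for $!$-purity one uses that on a stratum $X_t$ disjoint from a chosen open stratum of $\overline{X_s}$ the $!$-restriction factors through the $!$-restriction to the closed complement, so the inductive hypothesis applies; the statement for $\nu_s^!M$ then follows by Verdier duality. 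Granting this, \prettyref{thm:ginzburg} applies to any $L,M\in\DMT^{\op{bs}}_{(B)}(G/P)$ and gives the bijection $\DMT_{(B)}(L,M)\sira\op{Mod}^{\DZ\times\DZ}_{\op{H}(G/P)}(\mathbb{H}L,\mathbb{H}M)$, which is precisely the asserted full faithfulness.

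The main obstacle I anticipate is the bookkeeping around this recursion --- namely confirming that the package ``pointwise pure $+$ proper'' (and hence the two $\mathbb{H}$-surjectivity/injectivity conditions) is genuinely stable under passage to closed Schubert subvarieties and under the functors $\nu_s^\ast,\nu_s^!$, so that the inductive machine of \prettyref{thm:ginzburg} is fed only with admissible data. An alternative, more computational route valid in this situation is to verify the bijection directly for Bott--Samelson motives by comparing both sides: the ring $\op{H}(G/P)$ and the hypercohomology of a Bott--Samelson motive are free over the relevant coinvariant-type subrings, which lets one match dimensions of the two Hom spaces in each bidegree and separately check that the hypercohomology map is injective.
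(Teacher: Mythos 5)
Your overall strategy coincides with the paper's (apply \prettyref{thm:ginzburg}, feed it \prettyref{cor:bsgen} and \prettyref{lem:bswz}), and your weight-theoretic verification of the hypotheses is correct for the \emph{dense open} stratum: there all three terms of $\op{fin}_\ast$ of the localization triangle are indeed pure, so the connecting map dies. The gap is in how you handle the remaining strata. The reduction ``as in \prettyref{prop:gi36} to the case $\overline{X_s}=X$'' is not available here: the hypotheses of \prettyref{thm:ginzburg} are conditions over $X$ itself, and after factoring $j_s=\nu_s\circ j'$ you would in addition need surjectivity of $\mathbb H L\to\mathbb H\,\nu_{s\ast}\nu_s^{\ast}L$, which you never address and which does not follow formally from weights (the relevant connecting map only admits one-sided weight bounds, since $\op{fin}_!u_!u^{!}L$ for the open complement is merely of weights $\leq 0$). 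Worse, the lemma your reduction rests on --- that the restriction $\nu_s^{\ast}L$ of a pointwise pure motive to a closed union of strata is again pointwise pure --- is false. For example $\underline{G/B}$ is a pointwise pure Bott--Samelson motive, but its pullback to a non-rationally-smooth Schubert variety (e.g.\ the four-dimensional singular Schubert variety in $SL_4/B$, whose transverse slice is the affine cone over $\mathbb{P}^1\times\mathbb{P}^1$) has a $!$-costalk containing a summand of type $\mathbb{Q}(-2)[-3]$, of weight $1$; so $j_t^{!}\nu_s^{\ast}L$ need not be pure and your purity argument for the third vertex collapses on $\overline{X_s}$. Your sketched induction for that lemma also breaks down at exactly this point: it has to control $i^{!}\nu_s^{\ast}L$, a $!$-restriction applied after a $\ast$-restriction, which is not of the shape covered by the inductive hypothesis.

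This is not a bookkeeping issue but the place where the paper injects genuinely geometric input beyond pointwise purity: the surjectivity of $\mathbb H L\to\mathbb H j_{s\ast}j_s^{\ast}L$ and the dual injectivity are proved via the contraction argument of \prettyref{prop:wep} and \prettyref{prop:CBS}, using that Bott--Samelson motives are weakly $\mathbb{G}_{\op{m}}$-equivariant and that each $T$-fixed point of a cell has a contracting open neighbourhood in $G/P$ (not a union of strata), followed by the factorization through the cell via homotopy invariance in \prettyref{rem:CBS}. Without this (or some substitute, such as a worked-out version of the dimension count over Bott--Samelson resolutions that you only gesture at), the hypotheses of \prettyref{thm:ginzburg} for the non-open strata remain unverified.
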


\begin{Bemerkungl}\label{FGH} 
 Since $\op{fin}_!=\op{fin}_*$ has to preserve weights, 
it is clear that the modules in the image of our functor
will only live in bidegrees $(2j,j)$. If we just keep the first, i.e.,
the cohomological grading, the category of graded modules 
over the cohomology ring ${\op{H}}^*(G/P)$ of the flag variety 
forming the essential image of our functor will be denoted
${\op{H}}^*(G/P)\op{-SMod}^\DZ_{\op{ev}} $. It consists of the modules with
even grading in a category of graded modules sometimes called
``Soergel modules''. 
\end{Bemerkungl}

\begin{proof}
 We apply \prettyref{thm:ginzburg} on the full faithfulness of
 hypercohomology and have to check that the conditions needed are
 satisfied. We already know from \prettyref{lem:bswz} or alternatively
 \prettyref{prop:wep}  
that Bott--Samelson sheaves are pointwise pure. The remaining conditions
are easily deduced from \prettyref{rem:CBS} below.  
\end{proof}

\begin{proposition}
\label{prop:CBS}
   Let $X$ be a proper variety and let
$M\in\mathscr{T}^c(X)_{w=0}$ be pure. 
Let $v:V\hra X$ be the embedding of
an open subset and suppose there is an
action of $\mathbb G_{\op{m}}$ on $V$ contracting $V$ to a fixed point
$x\in V$, for which $M$ is weakly equivariant.   
Then for the inclusion $i:x\hra X$ the obvious map is a surjection
$$\mathbb HM\sra \mathbb H i_*i^*M.$$
\end{proposition}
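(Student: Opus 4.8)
The plan is to reduce the surjectivity statement to the weak-equivariance input via Proposition~\ref{prop:wep}, applied to the open contractible piece $V$. First I would observe that since $X$ is proper, the map $\mathbb{H}M\to\mathbb{H}i_\ast i^\ast M$ in question can be rewritten: because $i$ factors as $x\hookrightarrow V\xhookrightarrow{v} X$ with $v$ open, and $\mathbb{H}=\op{fin}_\ast\op{Hom}_X(\underline X,-)$ composed with the equivalence of \ref{eqf}, properness gives $\op{fin}_\ast\cong\op{fin}_!$ and hence allows one to compare $\mathbb{H}$ of $M$, of $v_\ast v^\ast M$, and of $i_\ast i^\ast M$. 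The key point is that $\mathbb{H}M\to\mathbb{H}v_\ast v^\ast M$ is surjective: indeed, the localization triangle for the closed complement $Z=X\setminus V$ with inclusion $a:Z\hookrightarrow X$ reads $a_!a^!M\to M\to v_\ast v^\ast M\to a_!a^!M[1]$, and applying $\op{fin}_\ast\op{Hom}_X(\underline X,-)$ one gets a long exact sequence; surjectivity of $\mathbb{H}M\to\mathbb{H}v_\ast v^\ast M$ follows provided the connecting map into $\mathbb{H}a_!a^!M[1]$ vanishes. That vanishing is a weight argument: $M$ being pure of weight $0$ on a proper $X$ implies (as in the proofs of \prettyref{prop:ptDI} and \prettyref{cor:bswz}) that the relevant objects over the point are pure of weight $0$, so that degree-shifted maps between them vanish.

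Next I would handle the piece over $V$. By \prettyref{prop:wep} applied to $V$, the $\mathbb{G}_{\op{m}}$-action contracting $V$ onto the fixed point $x$, together with the weak equivariance of $M|_V$, gives an isomorphism $p_\ast(M|_V)\cong a_x^\ast(M|_V) = i^\ast M$ in $\mathscr{T}(x)=\mathscr{T}(\op{pt})$, where $p:V\to\{x\}$ is the limit morphism and $a_x:\{x\}\hookrightarrow V$ the inclusion. Since $p$ and the structure map $\op{fin}_V:V\to\op{pt}$ agree (both send $V$ to the point $x$, and $p$ is the structure morphism up to the identification of $\{x\}$ with $\op{pt}$), this says $\op{fin}_{V\ast}(M|_V)\cong i^\ast M$. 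Unwinding, this gives that the restriction map $\mathbb{H}v_\ast v^\ast M \to \mathbb{H}i_\ast i^\ast M$ is in fact an isomorphism: the source is $\op{fin}_{V\ast}(v^\ast M)$-hypercohomology pushed forward, and Proposition~\ref{prop:wep} identifies this with $i^\ast M$-hypercohomology. Composing the surjection $\mathbb{H}M\twoheadrightarrow\mathbb{H}v_\ast v^\ast M$ with this isomorphism yields the desired surjection $\mathbb{H}M\twoheadrightarrow\mathbb{H}i_\ast i^\ast M$.

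Assembling: the map $\mathbb{H}M\to\mathbb{H}i_\ast i^\ast M$ factors as $\mathbb{H}M\to\mathbb{H}v_\ast v^\ast M\xrightarrow{\sim}\mathbb{H}i_\ast i^\ast M$, the first arrow surjective by the weight/localization argument and the second an isomorphism by weak equivariance and the contraction lemma \prettyref{prop:wep}; hence the composite is surjective, which is the claim.

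The main obstacle I expect is bookkeeping rather than a genuine difficulty: one must be careful that the map appearing in Proposition~\ref{prop:CBS} really is the composite of the two maps above, i.e. that the adjunction unit $M\to i_\ast i^\ast M$ factors through $M\to v_\ast v^\ast M$ compatibly after applying $\mathbb{H}$, and that the identification $p_\ast(M|_V)\cong \op{fin}_{V\ast}(M|_V)$ is the one entering the hypercohomology computation. A secondary subtlety is verifying the weight-vanishing that kills the connecting homomorphism; but this is exactly the kind of argument already run in \prettyref{prop:ptDI} (using that the heart of the weight structure on $\DMT(\op{pt})$ contains only weight-zero objects and that $\op{Hom}$ between them in shifted degrees vanishes), so it can be invoked essentially verbatim once one knows $a_!a^!M$ lands in the appropriate weight range — which follows since $a^!$ is weight-right-exact and $a_!$ weight-left-exact for H\'ebert's weight structure, so $a_!a^!M$ has weights $\ge 0$, forcing the relevant shifted hom into it to vanish.
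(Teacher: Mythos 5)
Your route is recognizably the paper's (localization triangle for the closed complement, a weight argument killing the connecting morphism, and \prettyref{prop:wep} to bring in the contraction), but the justification you give for the crucial vanishing does not stand on its own, and the order in which you invoke the ingredients is the whole point. To kill the connecting morphism $\op{fin}_*v_*v^*M\to\op{fin}_*a_!a^!M[1]$ you need an \emph{upper} bound on the weights of the source, namely $\op{fin}_*v_*v^*M\in\mathscr{T}(\op{pt})_{w\le 0}$; the lower bound on the target is the easy half (and even there your bookkeeping slips: weight-left-exactness of $a_!$ only preserves weights $\le 0$, so to get weights $\ge 0$ for $a_!a^!M$ you must use $a_!=a_*$ for the closed immersion together with weight-right-exactness of $a_*$). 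Purity of $M$ plus properness of $X$ do \emph{not} give that upper bound: $v^*M$ is pure since $v$ is smooth, but $v_*$ and $\op{fin}_{V*}$ only preserve non-negativity of weights, and $V$ is not proper, so no non-positivity is available by pure functoriality. Concretely, for $X=\mathbb{P}^1$, $V=\mathbb{G}_{\op{m}}$ and $M=\underline{X}$ one has $\op{fin}_*v_*v^*M\cong \underline{\op{pt}}\oplus\underline{\op{pt}}(-1)[-1]$, which has a summand of weight $1$; so the implication you assert ($M$ pure on proper $X$ forces the relevant objects over the point to be pure) is false without the contraction. The citations of \prettyref{prop:ptDI} and \prettyref{cor:bswz} do not repair this, since those arguments run on pointwise purity, which is precisely not a hypothesis of the present proposition.

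The missing upper bound is exactly what \prettyref{prop:wep} supplies, and this is why the paper invokes it \emph{first}: the contraction and weak equivariance give $\op{fin}_*v^*M\cong i^*M$, and since $i^*$ preserves non-positivity of weights while $\op{fin}_{V*}v^*$ preserves non-negativity, both sides of this isomorphism are pure of weight zero. With that in hand, the degree-one morphism in the triangle $\op{fin}_*a_!a^!M\to\op{fin}_*M\to\op{fin}_*v_*v^*M\to[1]$ vanishes, the long exact sequence degenerates into a short exact one, and the surjection $\mathbb{H}M\to\mathbb{H}v_*v^*M\cong\mathbb{H}i_*i^*M$ follows (the compatibility of the maps being, as you note, the content of the stronger statement proved inside \prettyref{prop:wep}, that the unit $M|_V\to a_*a^*(M|_V)$ becomes an isomorphism after applying $p_*$). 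So your proposal contains all the ingredients, but presenting the surjectivity onto $\mathbb{H}v_*v^*M$ as independent of \prettyref{prop:wep}, to be combined with it only afterwards, leaves the key step unproved; reorder the argument so that the purity of $\op{fin}_*v_*v^*M$ is deduced from \prettyref{prop:wep} before the weight argument, and also fix the $a_!=a_*$ point, and you recover the paper's proof.
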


\begin{proof}
  This is due to Ginzburg \cite{Gi}, whose arguments we repeat. 
By \prettyref{prop:wep} the contraction induces
an isomorphism
$\op{fin}_*v^* M\sira  i^*M
$
and both sides are pure of weight zero. 
If we now let $r$ be the embedding of
the complement of $V$, we get a distinguished triangle  
$$\op{fin}_* r_!r^! M\ra  \op{fin}_* M\ra
 \op{fin}_* v_*v^* M\ra \op{fin}_* r_!r^! M[1].$$
Here the degree one morphism  has to vanish, since both $r^!$ and
$r_!=r_*$  never make weights smaller, so we get a short exact sequence 
$$0\ra\mathbb H r_!r^! M\hra  \mathbb H M\sra
 \mathbb H  v_*v^* M\ra 0$$
and in particular a surjection $\mathbb H M\sra
 \mathbb H  i_*i^* M$.
\end{proof}

\begin{remark}
\label{rem:CBS}
  If in addition the inclusion of the point $x$ factors over the inclusion
of an affine space $j:D\hra X$ and $j^*M\in \DMT(D)$ is
mixed Tate, then our surjection factors as
$\mathbb H M\ra  \mathbb H j_*j^*M\sira
\mathbb H i_*i^*M$ and thus the first map has to be
a surjection as well. 
   Dual arguments show  the injectivity $\mathbb H
  i_!i^!M\hra \mathbb H M$ 
and of  $\mathbb H
  j_!j^!M\hra \mathbb H M$
under the dual assumptions. 
\end{remark}

\begin{remark}
Similar results hold under more general assumptions. For instance, if
$k$ is a number field and $\mathscr{T}=\Bmot$ or
$\mathscr{T}=\mathbf{DA}_{\et}$, then the grading condition is not
satisfied. Nevertheless, a full faithfulness result as above remains
true. It expresses morphisms between stratified mixed Tate motives in
terms of morphisms between the associated motivic cohomology
rings. However, the actual description of the motivic cohomology of
the flag variety is more complicated. It combines the motivic
cohomology of the base field (which is quite nontrivial) with the cell
structure information of the flag variety. Since it is not clear if
full faithfulness in such more general situations can be useful, we
chose not to spell out the details. 
\end{remark}

\section{Tilting for motives}
\label{sec:TE} 

\begin{Bemerkungl}
\label{conv9}
In this section, we require that the motivic triangulated category
$\mathscr{T}$ is one of the following: $\mathscr{T}=\Bmot$,
$\mathscr{T}=\mathbf{DA}_{\et}$ or
$\mathscr{T}=\mathcal{E}_{\op{GrH}}$. All these categories satisfy 
both the weight and grading condition. However, for the tilting results in this
section, we need additional information on how the motivic
triangulated categories are constructed. All the above categories are
constructed as suitable localizations of abelian categories of
(symmetric spectra in) complexes of sheaves on a site. In this
situation, we can apply the tilting result \prettyref{prop:rvt}. This
is the only place of the paper where we need such explicit information
on the construction, and can not make do with the axiomatics of
motivic triangulated categories. Sorry.
\end{Bemerkungl}

\begin{theorem}
\label{thm:Twer} 
Let $(X,\mathcal{S})$ be a Whitney--Tate affinely 
stratified variety. Assume the setting laid out in \ref{conv9}. Assume
furthermore that all objects of $\DMT_{\mathcal{S}}(X)_{w=0}$ are
pointwise pure. Then the tilting functor, cf. \prettyref{prop:rvt},
induces an equivalence 
$$
\op{Hot}^{\op{b}}(\DMT_{\mathcal{S}}(X)_{w=0})
\stackrel{\approx}{\longrightarrow} 
\DMT_{\mathcal{S}}(X)
$$
between the category of stratified mixed Tate
motives on $X$ and the bounded homotopy category of the heart of the
weight structure.
\end{theorem}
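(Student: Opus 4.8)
The plan is to deduce the statement from the abstract tilting theorem \prettyref{prop:rvt}, so that the work consists of verifying its hypotheses for the triangulated category $\mathcal{D}:=\DMT_{\mathcal{S}}(X)$, equipped with the weight structure $w$ of \prettyref{prop:hebbswz} and its heart $\mathcal{A}:=\DMT_{\mathcal{S}}(X)_{w=0}$ (which is closed under direct summands by the weight-structure axioms, hence is a reasonable additive category). There are three points to address. First, $\mathcal{D}$ must admit the enhancement required by \prettyref{prop:rvt}, i.e.\ it must sit as a full triangulated subcategory of a localization of the derived category of some abelian category. Second, the weight structure $w$ must be \emph{bounded}, i.e.\ every object must have weights in some finite range; this is what guarantees that $\mathcal{D}$ is generated as a triangulated category by $\mathcal{A}$ and that the induction for essential surjectivity below terminates. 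Third, the pointwise purity hypothesis is what makes the remaining input of \prettyref{prop:rvt} available: in particular, by \prettyref{cor:bswz}, applied with $M$ and $N$ both running over objects of $\mathcal{A}$ --- all of which are simultaneously pointwise $\ast$- and $!$-pure by assumption --- one obtains the negativity $\mathscr{T}_X(M,N[a])=0$ for $a>0$ of the collection $\mathcal{A}$, and moreover one knows that the restriction to $\mathcal{D}$ of H\'ebert's weight structure on $\mathscr{T}(X)$ is exactly the one of \prettyref{prop:hebbswz}, with this heart.

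For the first point, Convention~\ref{conv9} is tailored precisely to it: in each of the three admissible cases $\mathscr{T}\in\{\Bmot,\mathbf{DA}_{\et},\mathcal{E}_{\op{GrH}}\}$ the category $\mathscr{T}(X)$ is by construction a localization (a Verdier quotient, or the homotopy category of a Bousfield localization on the model level) of the derived category of a category of (symmetric spectra in unbounded complexes of) sheaves on a site, and since $\DMT_{\mathcal{S}}(X)\subset\mathscr{T}(X)$ is by \prettyref{defin:dmts} a full triangulated subcategory, it inherits such a presentation. For the second point I would argue by induction on the number of strata, reusing the induction in the proof of \prettyref{prop:hebbswz}: writing $j:X_s\hookrightarrow X$ for an open stratum and $i:Z\hookrightarrow X$ for its closed complement, homotopy invariance gives $\DMT(X_s)\cong\DMT(\op{pt})$ since $X_s\cong\mathbb{A}^{n_s}$, and $\DMT(\op{pt})$ --- being by the grading condition equivalent to the bounded derived category of finite-dimensional bigraded $\mathbb{Q}$-vector spaces, in which every object is a finite sum of shifts $\underline{\op{pt}}(p)[q]$ of bounded weight $q-2p$ --- carries a bounded weight structure; Bondarko's gluing result then presents any object of $\mathcal{D}$ as an extension of $j_!$ of a bounded-weight object on $X_s$ by $i_\ast$ of a bounded-weight object of $\DMT_{\mathcal{S}}(Z)$, so by the inductive hypothesis boundedness propagates. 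With these three points in hand, \prettyref{prop:rvt} immediately yields that the tilting functor $\op{Hot}^{\op{b}}(\mathcal{A})\to\DMT_{\mathcal{S}}(X)$ is an equivalence.

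I would not reprove \prettyref{prop:rvt}, only recall what it does: the tilting functor sends a bounded complex of objects of $\mathcal{A}$, together with its differentials, to the totalization of that complex formed inside the enhancement of $\mathcal{D}$; full faithfulness is the classical computation comparing $\op{Hot}^{\op{b}}$-morphisms with morphisms in $\mathcal{D}$ via stupid truncation, where negativity of $\mathcal{A}$ collapses the relevant spectral sequence, and essential surjectivity is an induction on the weight amplitude of an object, splitting off its top weight summand by means of the weight-truncation triangle. The genuine obstacle --- and the only step that leaves the axiomatics of motivic triangulated categories, which is exactly why Convention~\ref{conv9} is imposed rather than allowing a general $\mathscr{T}$ --- is the first point above together with the assertion that the totalization defining the tilting functor exists and is functorial on $\DMT_{\mathcal{S}}(X)$ regarded inside the localized derived category; once that is granted, the remainder is bookkeeping with the weight structure of \prettyref{prop:hebbswz} and the purity input of \prettyref{cor:bswz}.
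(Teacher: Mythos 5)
There is a genuine gap at the central hypothesis of \prettyref{prop:rvt}. That proposition requires the tilting collection to satisfy $\op{Der}_{\mathcal A}(T_i,T_j[n])=0$ for \emph{all} $n\neq 0$, but you only verify (via \prettyref{cor:bswz}) the vanishing $\mathscr{T}_X(M,N[a])=0$ for $a>0$. That positive-degree vanishing is in fact automatic for any two objects of the heart of any weight structure (it is exactly the orthogonality axiom of Definition~\ref{defin:wtstruct}), so it is not where pointwise purity does its work, and it is not enough: if there were a nonzero morphism $M\to N[a]$ with $a<0$ between heart objects, no triangulated functor $\op{Hot}^{\op{b}}(\DMT_{\mathcal S}(X)_{w=0})\to\DMT_{\mathcal S}(X)$ extending the inclusion of the heart could be full, since the corresponding Hom in $\op{Hot}^{\op{b}}$ of the heart vanishes. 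Your claim that ``negativity collapses the relevant spectral sequence'' therefore does not give full faithfulness. The negative-degree vanishing is also not a formality: for Beilinson motives over a number field one has $\mathscr{T}_k(\DQ,\DQ(1)[1])\cong k^\times\otimes\DQ\neq 0$, which is a nonzero morphism of degree $-1$ between the weight-zero objects $\DQ$ and $\DQ(1)[2]$; this is precisely why the grading condition is imposed in \ref{conv9} and why the tilting statement fails beyond it.

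What is missing, and what the paper actually does, is to rerun the argument of \prettyref{cor:bswz} one step further: pointwise purity of $M$ and $N$ gives that $\op{fin}_*\op{Hom}(M,N)$ is pure of weight zero in $\DMT(\op{pt})$, and then the grading condition, via the identification \ref{eqf} of $\DMT(\op{pt})$ with bigraded vector spaces (where pure weight-zero objects are sums of $\underline{\op{pt}}(p)[2p]$ and the unit has no nonzero maps to these in degrees $\neq 0$), yields $\DMT_{\mathcal S}(M,N[a])=0$ for all $a\neq 0$, not just $a>0$. With that strengthened vanishing in hand, your remaining points (the embedding into a localization of a derived category of an abelian category provided by \ref{conv9}, choosing suitable resolutions so that $\op{Hot}$- and $\op{Der}$-morphisms agree as in \prettyref{rem:tilt}, and essential surjectivity from boundedness of the weight structure, i.e.\ generation by the heart) do match the paper's proof; your induction establishing boundedness is a reasonable way to make the generation step explicit, which the paper merely asserts.
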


\begin{Bemerkungl}
Remark that by \cite[Proposition 1.7(6)]{bondarko:imrn}
two  weight structures on an idempotent complete triangulated category
with the same heart are equal, if this heart already generates the
whole triangulated category in question.
Now  the obvious embedding $\DMT_{\mathcal{S}}(X)_{w=0}
\hra \op{Hot}^{\op{b}}(\DMT_{\mathcal{S}}(X)_{w=0})$ as complexes
concentrated in degree zero induces an equivalence with the heart
of the obvious weight structure on $\op{Hot}^{\op{b}}$. On the other hand,
its composition with the equivalence of the theorem also
induces an equivalence with the heart of the motivic weight structure
on $\DMT$, since by construction this composition is isomorphic to
the embedding of the heart of the weight structure. Thus under the
equivalence of the theorem the obvious weight structure on
$\op{Hot}^{\op{b}}$ coincides with the motivic weight structure on
$\DMT$. 
\end{Bemerkungl}

\begin{proof}
This is a special case of the general tilting equivalence 
from \prettyref{prop:rvt}.  
Repeating the proof of \prettyref{cor:bswz}, for any two
 pointwise pure stratified
mixed Tate motives $M,N\in \DMT_{\mathcal{S}}(X)_{w=0}$, 
we deduce $\DMT_{\mathcal{S}}(M,N[a])=0$
for $a\neq 0$ from the grading condition
and thus by \ref{eqf} there are no nonzero
morphisms between objects of different weight in $\DMT(\op{pt})$. 
Now, in the situation fixed in \ref{conv9}, $\mathscr{T}(X)$ is
constructed from $\op{Der}(\op{Sh}_{\tau}(\op{Sm}/S,\mathbb{Q}))$
by $\Ao$-localization, stabilization via symmetric spectra and possibly a
further Bousfield localization at $\op{H}_{\Be}$ or
$\mathscr{E}_{\op{GrH}}$. In particular,  $\mathscr{T}(X)$ can be 
embedded as a full subcategory of  
the derived category of an abelian category: the abelian category is
the one of symmetric sequences in
$\op{Sh}_{\tau}(\op{Sm}/S,\mathbb{Q})$. % , and in the corresponding
% derived category we take the full subcategory of objects which are
% $\Ao$- and $H_{\Be}$-local.
Finally, $\DMT_{\mathcal{S}}(X)$ embeds by
definition as full subcategory of $\mathscr{T}(X)$. 
 Using this embedding, it is possible
to choose injective resolutions for
the objects of $\DMT_{\mathcal{S}}(X)_{w=0}$. These form a tilting
collection satisfying all the conditions necessary to apply
\prettyref{prop:rvt}. 
This implies the existence of a fully faithful functor  
$$
\op{Hot}^{\op{b}}(\DMT_{\mathcal{S}}(X)_{w=0})
\stackrel{\sim}{\hookrightarrow}
\DMT_{\mathcal{S}}(X)
$$
The heart of the weight structure on $\DMT_{\mathcal{S}}(X)$ generates
the category, therefore the functor is also essentially surjective. 
\end{proof}

\begin{corollary}
\label{cor:ttb}   
  Let $G\supset P\supset B$ be a split reductive algebraic group over the field $k$, with a choice of Borel subgroup $B$ and parabolic subgroup $P$,  and let $Y$ be a paraboloid $B$-variety. Then the tilting functor of \prettyref{prop:rvt} provides an equivalence of categories 
$$
\op{Hot}^{\op{b}}(\DMT_{(B)}(Y)_{w=0})\sirra  \DMT_{(B)}(Y)
$$
between the bounded homotopy category of the additive category of pure Bruhat--Tate sheaves and the triangulated category of all Bruhat--Tate sheaves.  
For $Y=G/P$, we obtain an equivalence of triangulated categories 
$$\op{Hot}^{\op{b}}(\op{H}^*(G/P)\op{-SModf}^{\mathbb Z}_{\op{ev}})
\sirra
\DMT_{(B)}(G/P)$$ 
between the bounded homotopy category of even Soergel modules and
stratified mixed Tate motives over $G/P$. 
\end{corollary}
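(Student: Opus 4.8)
The plan is to obtain both equivalences directly from the tilting theorem \prettyref{thm:Twer}, combined in the flag variety case with the Soergel-module description of the heart from \prettyref{thm:ffbs}.

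\emph{The first equivalence.} Let $Y$ be a paraboloid $B$-variety, stratified by its $B$-orbits as in \prettyref{ex:paraboloid}; then $(Y,(B))$ is an affinely Whitney--Tate stratified variety (see \prettyref{ex:paraboloid} and Appendix~\ref{sec:WTS}), and the standing hypotheses of this section (\ref{conv9}) are in force, so the only assumption of \prettyref{thm:Twer} still to be checked is that every object of the heart $\DMT_{(B)}(Y)_{w=0}$ is pointwise pure. For $Y=G/P$ this is already available: \prettyref{cor:bsgen} identifies the heart with the category $\DMT^{\op{bs}}_{(B)}(G/P)$ of Bott--Samelson motives, and these are pointwise pure by \prettyref{lem:bswz}. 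For an arbitrary paraboloid $B$-variety we use instead the equivariance technique of Section~\ref{sec:ptwise2}: the heart is generated, under finite direct sums, direct summands and extensions, by motives obtained by restricting along the locally closed $B$-immersion $j:Y\hra G/P$ the direct image of the constant motive of a Bott--Samelson resolution; such a motive is weakly $B$-equivariant (weak equivariance passes through the direct image and through the pieces of $j^\ast$ by smooth and proper base change) and hence, each Bruhat cell of $Y$ being the attracting set of a generic one-parameter subgroup of $T$, it is pointwise pure by \prettyref{prop:wep} applied stratum by stratum. Since the heart of a weight structure is stable under direct sums, summands and extensions, pointwise purity then propagates to all of $\DMT_{(B)}(Y)_{w=0}$, and \prettyref{thm:Twer} yields the equivalence $\op{Hot}^{\op{b}}(\DMT_{(B)}(Y)_{w=0})\sirra\DMT_{(B)}(Y)$.

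\emph{The second equivalence.} Specialize to $Y=G/P$, which is moreover proper, and combine the first equivalence with \prettyref{thm:ffbs}. By \prettyref{cor:bsgen} the heart $\DMT_{(B)}(G/P)_{w=0}$ equals $\DMT^{\op{bs}}_{(B)}(G/P)$, and by \prettyref{thm:ffbs} together with the remark accompanying it the hypercohomology functor $\mathbb{H}$ restricts to an equivalence of additive categories
$$
\mathbb{H}:\DMT_{(B)}(G/P)_{w=0}\sirra\op{H}^\ast(G/P)\op{-SModf}^{\DZ}_{\op{ev}}.
$$
Here the finiteness built into $\op{SModf}$ is automatic, since objects of the heart are constructible and the hypercohomology of a pure object is a finite-dimensional bigraded vector space concentrated in bidegrees $(2j,j)$ because $\op{fin}_\ast=\op{fin}_!$ is weight-exact on the proper variety $G/P$. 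Applying $\op{Hot}^{\op{b}}(-)$, which sends equivalences of additive categories to equivalences of bounded homotopy categories, and composing $\op{Hot}^{\op{b}}(\mathbb{H})^{-1}$ with the tilting equivalence of the first part, we obtain the chain
$$
\op{Hot}^{\op{b}}(\op{H}^\ast(G/P)\op{-SModf}^{\DZ}_{\op{ev}})\sirra\op{Hot}^{\op{b}}(\DMT_{(B)}(G/P)_{w=0})\sirra\DMT_{(B)}(G/P),
$$
which is the asserted equivalence.

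\emph{Main obstacle.} The formal steps --- applying $\op{Hot}^{\op{b}}(-)$ to an equivalence, and the bookkeeping between $(\DZ\times\DZ)$-graded $\op{H}(G/P)$-modules and singly graded even $\op{H}^\ast(G/P)$-modules --- are routine. The real point is verifying the pointwise purity hypothesis of \prettyref{thm:Twer} for a general paraboloid $B$-variety: the results of Section~\ref{sec:ptwise1} are stated only for partial flag varieties, so one must fall back on the weak-equivariance argument of Section~\ref{sec:ptwise2}, and here some care is needed to exhibit a generating family of the heart consisting of weakly $B$-equivariant motives and to check that the contracting $\mathbb{G}_{\op{m}}$-actions on the Bruhat cells of $Y$ interact correctly with the recollement along the strata, so that \prettyref{prop:wep} really applies cell by cell.
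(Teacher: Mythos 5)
For $Y=G/P$ and for the passage to Soergel modules your argument is the paper's: \prettyref{cor:bsgen} and \prettyref{lem:bswz} give pointwise purity of the heart, \prettyref{thm:Twer} gives the tilting equivalence, and \prettyref{thm:ffbs} together with \ref{FGH} converts the heart into even Soergel modules. The gap is in your treatment of a general paraboloid $B$-variety $Y$, which is exactly the part the paper compresses into ``the same statements follow easily''. You propose to generate the heart of $\DMT_{(B)}(Y)$ by $j^\ast$-restrictions along the locally closed immersion $j:Y\hra G/P$ of Bott--Samelson motives and assert that these are ``pointwise pure by \prettyref{prop:wep} applied stratum by stratum''. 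Two steps here are unjustified. First, \prettyref{prop:wep} by itself only identifies $p_\ast M$ with $a^\ast M$ for a weakly equivariant $M$; purity of stalks and costalks is deduced in the paper (\prettyref{lem:bswz}, \prettyref{prop:CBS}) by combining this with weight-exactness of the six functors and with the fact that the object being restricted is \emph{globally} pure. Writing $j=c\circ u$ with $u:Y\hra \overline{Y}$ open and $c:\overline{Y}\hra G/P$ closed, the functor $c^\ast$ only preserves $w\leq 0$, so $j^\ast N$ is not known to be pure in $\mathscr T(Y)$, and the Springer-type argument does not apply to it; concretely, nothing you cite bounds the weights of $j_s^!c^\ast N$ (the $!$-restriction to a cell of a $\ast$-restriction along the closed part of $j$) from below. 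Second, the claim that these restrictions generate the heart of $\DMT_{(B)}(Y)$ is itself unproved: the Bondarko argument of \prettyref{cor:bsgen} would give it, but it needs negativity, which via \prettyref{cor:bswz} again presupposes the pointwise purity you have not established.

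The intended easy argument avoids $\ast$-restriction along the closed part. On $\overline{Y}$, which is a union of Schubert varieties, take the direct images $N'=\pi'_\ast\underline{BS(w)}$ of the Bott--Samelson resolutions of the Schubert varieties contained in $\overline{Y}$; then $c_\ast N'$ is a Bott--Samelson motive on $G/P$, and since $c^\ast c_\ast\cong\op{id}\cong c^!c_\ast$ for the closed immersion $c$, the $\ast$- and $!$-restrictions of $N'$ to the cells of $\overline{Y}$ agree with those computed on $G/P$, so pointwise purity on $\overline{Y}$ is inherited verbatim from \prettyref{lem:bswz}; these objects are negative by \prettyref{cor:bswz} and generate $\DMT_{(B)}(\overline{Y})$ by the induction of \prettyref{cor:bsgen}, so by Bondarko they exhaust the heart, which is therefore pointwise pure. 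Then restrict along the open immersion $u$: $u^\ast=u^!$ is weight-exact and preserves pointwise purity, and its images generate $\DMT_{(B)}(Y)$ since $u^\ast$ is a Verdier localization, so the heart of $\DMT_{(B)}(Y)$ is again pointwise pure and \prettyref{thm:Twer} applies. With this repair the remainder of your proof (including the second equivalence via $\op{Hot}^{\op{b}}$ of the equivalence from \prettyref{thm:ffbs}) is exactly the paper's.
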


\begin{proof}
By \prettyref{cor:bsgen}
we have
$
\DMT^{\op{bs}}_{(B)}(G/P)= \DMT_{(B)}(G/P)_{w=0}
$ and by \prettyref{lem:bswz} all objects of this category are
pointwise pure. The same statements follow easily for any paraboloid 
$B$-variety, and thus the first equivalence is a special case of 
\prettyref{thm:Twer}.  The second equivalence follows using
the faithfulness \prettyref{thm:ffbs} in conjunction with 
the definition of Soergel modules from \ref{FGH}. 
\end{proof}

% \cemph{Wohin das?} 
% In \ref{FGH}           %\prettyref{sec:soergelmod}, 
%  we established  an equivalence between
% the heart of the weight structure on $\DMT_{\mathcal{S}}(X)$ with a
% category of Soergel modules. In the present section, we extend this to
% an equivalence of triangulated categories, describing
% $\DMT_{\mathcal{S}}(X)$ as homotopy category of complexes in the heart
% of the weight structure $\DMT_{\mathcal{S}}(X)_{w=0}$. In the special
% case $(X=G/P,\mathcal{S}=(B))$ of a partial flag variety with the
% stratification by $B$-orbits, the tilting results of the present
% section will allow us to set up a triangulated equivalence
% $$\op{Hot}^{\op{b}}(\op{H}^*(G/P)\op{-SModf}^{\mathbb Z })\sirra 
% \DMT_{(B)}(G/P)$$ 
% between the homotopy category of Soergel modules and the category of
% stratified mixed Tate motives on $G/P$. In this way we see that
% $\DMT_{(B)}(G/P)$ is indeed, up to a ``square root of Tate twist'',
% equivalent to the derived category of the graded version of a suitable
% block of category $\mathcal O$, which was constructed in \cite{BGSo}
% in a rather synthetic way.  

\section{Perverse Tate motives}
\label{sec:tstructure}

In this section, we describe a t-structure on the category
$\DMT_{\mathcal{S}}(X)$ of stratified mixed Tate motives, for
$(X,\mathcal{S})$ an affinely Whitney--Tate stratified variety.  
The t-structure is obtained via the BBD-glueing formalism \cite{BBD}
from the t-structure on mixed Tate motives $\DMT(k)$, which
exists for  base fields satisfying the Beilinson--Soul{\'e} vanishing
conjectures. The heart of the t-structure is an abelian category of
perverse mixed Tate 
motives. In the next section, we will show that the perverse mixed
Tate motives provide a grading on category $\mathcal{O}$.

\begin{Bemerkungl}\label{BeiS} 
In this section, we assume that the motivic triangulated category
satisfies the grading condition. Alternatively, working with \'etale or
Beilinson motives, the results also work if we assume that the ground
field $k$ satisfies  the Beilinson--Soul\'e vanishing conjectures.
\end{Bemerkungl}

\begin{Bemerkungl}
Using the work of Levine \cite{levine:nato}, this assumption implies
that the categories $\DMT(X_s)$ of mixed Tate motives on the strata
$X_s\cong\mathbb{A}^{n_s}$ have non-degenerate t-structures. For a
more detailed recollection of the motivic t-structures and abelian
categories of mixed Tate motives, see \prettyref{sec:mtm}. 
\end{Bemerkungl}

\begin{theorem}
\label{thm:perv}
Let  $(X,\mathcal S)$ be an affinely Whitney--Tate stratified 
variety. For any perversity function $p:\mathcal S\ra \DZ$ 
the following subcategories define  a  t-structure on
$\DMT_{\mathcal S}(X)$:
$$
\begin{array}{l}
\DMT_{\mathcal{S}}(X)^{\leq 0} \pdef \left\{
M\mid j_s^\ast M \in \DMT(X_s)^{\leq p(s)}\text{ for all strata
}s\in\mathcal S\right\}

\\[2mm]
\DMT_{\mathcal{S}}(X)^{\geq 0} \pdef \left\{
M\mid j_s^! M \in \DMT(X_s)^{\geq p(s)}\text{ for all strata
}s\in\mathcal S\right\}
\end{array}
$$
\end{theorem}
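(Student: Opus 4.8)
The plan is to build the perverse t-structure by the glueing formalism of \cite[1.4.10]{BBD}, proceeding by induction on the number of strata. The inductive set-up is the usual one: choose an open stratum $j\colon U=X_s\hookrightarrow X$ with closed complement $i\colon Z\hookrightarrow X$, equip $Z$ with the induced stratification $\mathcal S|_Z$ (which is again affinely Whitney--Tate, since $i^\ast i_\ast\cong\op{id}$ identifies restriction along $i$ of the $X$-functors with the $Z$-functors), and observe that the six functors $i^\ast,\ i_\ast=i_!,\ i^!$ and $j_!,\ j^\ast=j^!,\ j_\ast$ --- the middle equalities holding because $i$ is a closed, hence proper, immersion and $j$ is an open immersion, hence smooth of relative dimension $0$ --- assemble into a recollement. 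All the abstract recollement identities --- the two localization triangles, $i^\ast i_\ast\cong\op{id}$, $j^\ast j_\ast\cong\op{id}\cong j^\ast j_!$, $j^\ast i_\ast=0$, and the existence of the displayed adjoints --- are part of the axiomatics of motivic triangulated categories recalled in \prettyref{sec:cdbmot}.

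The one point that genuinely uses the hypotheses is that this recollement restricts from $\mathscr T(X)$ to the subcategories of stratified mixed Tate motives. For $i^\ast$ and $i^!$ this is immediate, since $j_t^\ast i^\ast M=j_t^\ast M$ and $j_t^! i^! M=j_t^! M$ for any stratum $j_t$ of $Z$; for $j^\ast=j^!$ it is the definition of $\DMT_{\mathcal S}(X)$; and for $j_!$, $j_\ast$ and $i_\ast=i_!$ it follows from the Whitney--Tate hypothesis, which by \prettyref{lem:genZ} gives $\DMT^!_{\mathcal S}(X)=\DMT^\ast_{\mathcal S}(X)=\DMT_{\mathcal S}(X)$: indeed $j_!N$ is a generator of $\DMT^\ast_{\mathcal S}(X)$ and $j_\ast N$ a generator of $\DMT^!_{\mathcal S}(X)$, while $i_\ast P=i_!P$ is, by the inductive description of $\DMT_{\mathcal S|_Z}(Z)=\DMT^\ast_{\mathcal S|_Z}(Z)$ and the identity $i_\ast k_{t!}(-)=j_{t!}(-)$ for strata $k_t\colon X_t\hookrightarrow Z$, built from objects $j_{t!}(-)$. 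Hence the recollement lives inside $\DMT_{\mathcal S}(X)$. The main obstacle I expect is exactly this bookkeeping --- verifying that the recollement is internal to the subcategories $\DMT_{\mathcal S}$ rather than merely to $\mathscr T(X)$ --- whereas the glueing step itself is entirely formal.

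With the recollement in hand, the base case of the induction is a single stratum $X=X_s\cong\mathbb A^{n_s}$, where $\DMT_{\mathcal S}(X)=\DMT(X_s)$ carries Levine's t-structure of \prettyref{thm:levbs} --- available because $\op{M}_k(X_s)=\mathbb Q_k\in\DMT(k)$ so $X_s$ satisfies Beilinson--Soul{\'e} vanishing by \prettyref{prop:vanish}, or directly via the grading condition, cf.\ \ref{BeiS} --- which one shifts by $p(s)$. For the inductive step one feeds the t-structure for the perversity $p|_Z$ on $\DMT_{\mathcal S|_Z}(Z)$ (by induction) and the $p(s)$-shifted Levine t-structure on $\DMT(U)$ into \cite[1.4.10]{BBD}. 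This produces a t-structure on $\DMT_{\mathcal S}(X)$ with ${}^pD^{\leq 0}$ cut out by $j^\ast M\in\DMT(U)^{\leq p(s)}$ together with $i^\ast M$ in the non-positive aisle on $Z$, and ${}^pD^{\geq 0}$ cut out by $j^! M\in\DMT(U)^{\geq p(s)}$ together with $i^! M$ in the non-negative aisle on $Z$. Unwinding the inductive description of the $Z$-t-structure via $k_t^\ast i^\ast M=j_t^\ast M$ and $k_t^! i^! M=j_t^! M$ turns the conditions on $i^\ast M$ and $i^!M$ into the remaining pointwise conditions $j_t^\ast M\in\DMT(X_t)^{\leq p(t)}$, resp.\ $j_t^!M\in\DMT(X_t)^{\geq p(t)}$, over the strata $t$ of $Z$, which is precisely the description in the statement. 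Finally one can record non-degeneracy of the glued t-structure, which follows from non-degeneracy of Levine's t-structure on each stratum and finiteness of $\mathcal S$ by d{\'e}vissage along the localization triangles.
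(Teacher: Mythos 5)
Your proposal is correct and follows essentially the same route as the paper: induction on the number of strata, Levine's t-structure (shifted by the perversity) on each affine-space stratum as the base case, and the BBD glueing theorem for the inductive step, with the Whitney--Tate condition ensuring the recollement is internal to $\DMT_{\mathcal S}$. Your verification of that last point via the generators of \prettyref{lem:genZ} is only a mild repackaging of what the paper does in \prettyref{prop:adj}, so there is nothing substantively different to compare.
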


\begin{proof}
The proof proceeds by induction on the number of strata. For the base case, we can use the t-structure given by \prettyref{thm:levbs}. 

Otherwise choose an open stratum
$j:U\hra X$ and its closed complement $i:Z\hra X$.
By inductive assumption, we have a non-degenerate t-structure on
$\DMT_{\mathcal{S}}(\overline{X_s}\setminus X_s)$. 
On the open stratum $U$, we have a t-structure on
$\DMT_{\mathcal{S}}(Z)$, again from \prettyref{thm:levbs}. 

We want to glue these two t-structures to obtain  a t-structure on
$\DMT_{\mathcal S}(X) $ with 
$$
\begin{array}{l}
\DMT_{\mathcal{S}}(X)^{\leq 0}\pdef\left\{M\mid i^\ast M\in
  \DMT(Z)^{\leq 0}, \;
  j_t^\ast M\in \DMT(X_t)^{\leq p(t)}\right\}

\\[2mm]
\DMT_{\mathcal{S}}(X)^{\geq 0}\pdef\left\{M\mid i^!M\in
  \DMT(Z)^{\geq 0}, \;
  j_t^! M\in \DMT(X_t)^{\geq p(t)}\right\}  
\end{array}$$
The claim that this is indeed  a non-degenerate t-structure on 
$\DMT_{\mathcal{S}}(\overline{X_s})$ is a consequence of
\cite[Theorem 1.4.10]{BBD} once we verify the axioms
\cite[1.4.3]{BBD}. 

Some proofs of parts of the axioms are deferred to the following
subsection. The first two axioms, 1.4.3.1 and
1.4.3.2, are satisfied by the assumption and \prettyref{prop:adj}. 
The axioms 1.4.3.3 and 1.4.3.5 are easy to see, using basic properties
of the six-functor formalism for motives. With all the functors
restricting to the subcategories $\DMT_{\mathcal{S}}$, the
localization sequence of the 
motivic triangulated category $\mathscr{T}$ also restricts to the
triangulated subcategories $\DMT_{\mathcal{S}}$, hence we also have
axiom 1.4.3.4.

It is then clear that this t-structure can also
be described by the non-inductive formulas given in the proposition.
\end{proof}

\begin{Bemerkungl}
 We are only interested in the case of the so-called middle perversity
 given by $p(s)=-\op{dim}X_s$. For this perversity, we denote the
 heart of the corresponding t-structure by $$\PMT_{\mathcal
   S}(X) $$ and call its objects {\bf perverse mixed Tate  motives on $X$.}
\end{Bemerkungl}

\begin{proposition}
\label{prop:adj}
Let $(X,\mathcal{S})$ be an affinely Whitney--Tate stratified
variety. Then we have the following:

\begin{enumerate}
\item
The functor $j^\ast:\mathscr{T}(\overline{X_s})\hookrightarrow
\mathscr{T}(X_s)$
restricts to a functor
$$
j^\ast:\DMT_{\mathcal{S}}(\overline{X_s})\rightarrow
\DMT(X_s).
$$
\item The functors 
$i^\ast,i^!:\mathscr{T}(\overline{X_s})\hookrightarrow
\mathscr{T}(\overline{X_s}\setminus X_s)$
restrict to functors
$$
i^\ast,i^!:\DMT_{\mathcal{S}}(\overline{X_s})\rightarrow
\DMT_{\mathcal{S}}(\overline{X_s}\setminus X_s).
$$
\end{enumerate}
In particular, the adjunction conditions \cite[1.4.3.1 and
1.4.3.2]{BBD} are satisfied. 
\end{proposition}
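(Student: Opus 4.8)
The plan is to prove both statements by an induction on the number of strata, exploiting \prettyref{lem:genZ} which says that $\DMT^{\ast}_{\mathcal S}(\overline{X_s})$ is generated as a triangulated category by the objects $j_{t!}M$ for strata $j_t\colon X_t\hookrightarrow\overline{X_s}$ and $M\in\DMT(X_t)$, and dually that $\DMT^{!}_{\mathcal S}(\overline{X_s})$ is generated by the $j_{t\ast}M$. Since all the functors in sight ($j^\ast$, $i^\ast$, $i^!$) are triangulated, it suffices to check the claims on such generators. For part (1), the functor $j^\ast$ is restriction to the open stratum, and a generator $j_{t!}M$ pulls back to $M$ if $X_t=X_s$ is the open stratum, and to $0$ otherwise (by the base-change isomorphisms of the six-functor formalism applied to the cartesian square with empty intersection); either way the result lies in $\DMT(X_s)$, so $j^\ast$ restricts as claimed.

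For part (2), I would argue that for a generator $j_{t!}M\in\DMT^{\ast}_{\mathcal S}(\overline{X_s})$, the restriction $i^\ast j_{t!}M$ to $\overline{X_s}\setminus X_s$ is again of the form $k_{t!}N$ for an inclusion $k_t$ of a stratum in $\overline{X_s}\setminus X_s$ and $N\in\DMT(X_t)$ (if $X_t$ is contained in the closed complement), or is $0$ (if $X_t$ is the open stratum $X_s$ itself); this is exactly the kind of base-change computation that appears in the proof of \prettyref{lem:genZ}. Hence $i^\ast$ maps generators of $\DMT^{\ast}_{\mathcal S}(\overline{X_s})$ into $\DMT^{\ast}_{\mathcal S}(\overline{X_s}\setminus X_s)=\DMT_{\mathcal S}(\overline{X_s}\setminus X_s)$, using the Whitney--Tate hypothesis to identify the $\ast$- and $!$-versions. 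For $i^!$ one runs the dual argument using the $j_{t\ast}$-generators of $\DMT^{!}_{\mathcal S}(\overline{X_s})$ together with the base-change isomorphism $i^!j_{t\ast}\cong (\text{suitable})_\ast\, i'^!$ from item~(3) of the dix~le\c{c}ons list. An alternative, perhaps cleaner, route is to observe that $\DMT_{\mathcal S}(\overline{X_s})$ is by definition (\prettyref{defin:dmts} via \prettyref{DMTk}) the subcategory of $M$ with $j_r^\ast M$ and $j_r^! M$ mixed Tate on every stratum $X_r$; since the strata of $\overline{X_s}\setminus X_s$ are among those of $\overline{X_s}$, and $i$ is a closed immersion so that $j_r^\ast i^\ast=(j_r)^\ast$ and $j_r^! i^!=(j_r)^!$ for strata $X_r$ in the complement, membership in $\DMT_{\mathcal S}$ is immediate. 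This formulation avoids generators entirely and just uses functoriality of $(-)^\ast$ and $(-)^!$.

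The final sentence — that axioms 1.4.3.1 and 1.4.3.2 of \cite{BBD} hold — then follows formally: those axioms require precisely that the recollement functors $(j^\ast, j_\ast)$, $(i^\ast, i_\ast, i^!)$ restrict to the triangulated subcategories under consideration and retain their adjunction relations, which they do because adjunctions are inherited by full triangulated subcategories once one knows the functors preserve them. The main obstacle, such as it is, is bookkeeping: one must be careful that $j_\ast$ and $i_\ast$ (not just $j^\ast$, $i^\ast$, $i^!$) also restrict, so that the recollement is genuinely internal to $\DMT_{\mathcal S}$; this again follows from the Whitney--Tate condition together with \prettyref{lem:genZ}, since $j_{s\ast}M\in\DMT^{!}_{\mathcal S}(X)=\DMT_{\mathcal S}(X)$ by definition of Whitney--Tate, and $i_\ast$ preserves the defining condition because $j_t^\ast i_\ast$ and $j_t^! i_\ast$ are computed by base change along the closed immersion $i$. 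No step requires more than the axiomatics of motivic triangulated categories plus the Whitney--Tate hypothesis, so the proof is genuinely short.
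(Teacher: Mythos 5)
Your proof is correct, but it takes a genuinely different route from the paper's. The paper reduces to the recollement-style generators $j_!M$, $j_\ast M$ (for $M\in\DMT(X_s)$) and $i_\ast N$ (for $N\in\DMT_{\mathcal S}(\overline{X_s}\setminus X_s)$); the easy compositions are dispatched by the standard identities, but the two non-trivial ones, $i^\ast j_\ast$ and $i^! j_!$, are then handled by a separate induction over the strata of the closed complement, using localization triangles and the Whitney--Tate hypothesis at each step. You avoid these hard compositions altogether: either by feeding the $j_{t!}$-generators of \prettyref{lem:genZ} to $i^\ast$ and the $j_{t\ast}$-generators to $i^!$, where proper/smooth base change makes every computation immediate, or, even more directly, by checking the defining stratumwise condition, since $k_r^\ast\, i^\ast = j_r^\ast$ and $k_r^!\, i^! = j_r^!$ for the inclusion $k_r$ of a stratum $X_r$ of the complement. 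What makes this shortcut legitimate is the equality $\DMT^\ast_{\mathcal S}=\DMT^!_{\mathcal S}=\DMT_{\mathcal S}$ from the remark after \prettyref{defin:WT} (applied to $\overline{X_s}$ and to its closed complement, both of which inherit the Whitney--Tate property), which already packages the Whitney--Tate input; the paper's induction in effect re-derives the relevant consequence of this equality, in the slightly finer form $i_t^\ast j_\ast M\in\DMT_{\mathcal S}(\overline{X_t})$ for all stratum closures. Your closing discussion of the adjunction axioms, namely that $j_!$, $j_\ast$ and $i_\ast$ must also be seen to restrict so that the adjunctions are inherited by the full subcategories, is exactly the point the paper makes at the end of its proof, and your justification (Whitney--Tate for $j_\ast$, base change along the closed immersion for $i_\ast$) is sound. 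One cosmetic slip: the Whitney--Tate definition literally gives $j_{s\ast}M\in\DMT^\ast_{\mathcal S}(X)$ rather than $\DMT^!_{\mathcal S}(X)$; membership in the latter then follows from the equality of the two categories, so nothing breaks.
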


\begin{proof}
(1) By the definition of
$\DMT_{\mathcal{S}}(\overline{X_s})$ as triangulated
subcategory generated by the images of $i_\ast=i_!$, $j_\ast$ and
$j_!$ and the fact that $j^\ast$ is a triangulated functor, it
suffices to prove the assertion for these generators. For elements of
the form $j_\ast M$ and $j_!M$, the claim follows from the well-known
identifications $j^\ast j_\ast\cong \op{id}\cong j^\ast j_!$ and
$i^\ast i_\ast\cong \op{id}\cong i^! i_\ast$.  
For $M\in \DMT_{\mathcal{S}}(Z)$, we have $j^\ast
i_\ast=0$. Hence the claim follows.

(2) Now let $M\in \DMT(U)$. We want to prove that the images
of the functors $i^\ast j_\ast$ and $i^!j_!$ lie in
$\DMT_{\mathcal{S}}(\overline{X_s}\setminus X_s)$. Since
$i^\ast j_\ast$ is dual to $i^!j_!$ and the motivic duality restricts
to $\DMT_{\mathcal{S}}(\overline{X_s}\setminus X_s)$, it
suffices to prove one of the assertions. 

We prove by induction that for each stratum $X_t$ in
$\overline{X_s}\setminus X_s$ with inclusion
$i_t:\overline{X_t}\hookrightarrow \overline{X_s}$ we have
$i_t^\ast j_\ast M\in
\DMT_{\mathcal{S}}(\overline{X_t})$. This will 
in particular prove the claim. For a closed stratum $X_t$ in
$\overline{X_s}$, this follows from the assumption that
$(X,\mathcal{S})$ is Whitney--Tate. For the
inductive step, we use the  localization
sequence of the motivic triangulated category $\mathscr{T}$ on
$\mathscr{T}(\overline{X_t})$. We denote by $i_Y:Y=\overline{X_t}\setminus
X_t\hookrightarrow \overline{X_t}$ the closed immersion and by
$j_t:X_t\hookrightarrow \overline{X_t}$ its open complement. The
inductive assumption is  that the claim is true for $Y$,
i.e., $i_Y^\ast i_t^\ast j_\ast M\in  \DMT_{\mathcal{S}}(Y)$. Then, by
assumption that $(X,\mathcal{S})$ is Whitney--Tate again, we also have
$j_t^\ast i_t^\ast j_\ast M\in   
\DMT(X_t)$. The localization sequence decomposes
$i_t^\ast j_\ast M$ as 
$$
(j_t)_! j_t^\ast i_t^\ast j_\ast M\rightarrow i_t^\ast j_\ast
M\rightarrow (i_Y)_\ast i_Y^\ast i_t^\ast j_\ast M\rightarrow 
(j_t)_! j_t^\ast i_t^\ast j_\ast M[1]. 
$$
By what was said above, the first and third term are in
$\DMT_{\mathcal{S}}(\overline{X_t})$, which proves the claim.

Finally, the Axioms 1.4.3.1 and 1.4.3.2 follow since all the six functors
restrict to the categories $\DMT_{\mathcal{S}}$. Since these are full
subcategories, the corresponding adjunctions between the functors also
restrict to $\DMT_{\mathcal{S}}$. 
\end{proof}

We list some of the further consequences of the glueing formalism for
t-structures from \cite[Section 1.4]{BBD}.

First of all, we note that there are modified versions of the six
functors. For a stratified scheme $X$, a stratum $X_s$ and the
inclusions $i:\overline{X_s}\setminus X_s\hookrightarrow
\overline{X_s}$ and $j:X_s\hookrightarrow \overline{X_s}$, we can
define the following functors:
$$
{}^pj_!,
{}^pj_\ast:\op{MT}(X_s)\leftrightarrows
\PMT_{\mathcal{S}}(\overline{X_s}): {}^pj^!={}^pj^\ast
$$
$$
{}^pi_!={}^pi_\ast:\PMT_{\mathcal{S}}(\overline{X_s}\setminus
X_s)\leftrightarrows \PMT_{\mathcal{S}}(\overline{X_s}): 
{}^pi^!,{}^pi^\ast. 
$$
These form adjunctions ${}^pj_!\dashv {}^pj^\ast\dashv {}^pj_\ast$ and
${}^pi^\ast\dashv {}^pi_\ast\dashv {}^pi^!$, cf. \cite[Proposition
1.4.16]{BBD}. 
There is also a modified analogue of the localization sequences: for
each perverse mixed Tate motive $M\in
\PMT_{\mathcal{S}}(\overline{X_w})$, there are by
\cite[Lemma 1.4.19]{BBD} exact sequences
$$
0\rightarrow {}^pi_\ast \mathcal{H}^{-1}i^\ast M\rightarrow {}^pj_!{}^pj^\ast
M\rightarrow M\rightarrow {}^pi_\ast{}^pi^\ast M\rightarrow 0
$$
$$
0\rightarrow {}^pi_\ast{}^pi^!M\rightarrow M\rightarrow
{}^pj_\ast{}^pj^\ast M\rightarrow {}^pi_\ast \mathcal{H}^1i^!M\rightarrow 0.
$$

As in \cite[Definition 1.4.22]{BBD}, we can define a functor
``intermediate extension'' as
$$
j_{!\ast}:\op{MT}(X_s)\rightarrow
\PMT_{\mathcal{S}}(\overline{X_s}): M\mapsto \op{Im}\left({}^p
  j_!M\rightarrow {}^pj_\ast M\right). 
$$
Note that an intermediate extension of Chow motives has already been
considered in \cite{wildeshaus:intermediate} also in situations where
the motivic t-structure is not available.

Finally, \cite[Proposition 1.4.26]{BBD} characterizes the simple
perverse mixed Tate motives in
$\PMT_{\mathcal{S}}(\overline{X_s})$ as those of
the form ${}^pi_\ast M$ for $M$ a simple perverse mixed Tate motive in  
$\PMT_{\mathcal{S}}(\overline{X_s}\setminus X_s)$
and those of the form $j_{!\ast}\mathbb{Q}(a)[-p(s)]$, $a\in\mathbb{Z}$.
The representation-theoretic significance of these objects, the
intersection complexes, will be discussed in the next section.

\section{Motivic graded versions}
\label{sec:cato} 

In this section, we discuss graded versions of category $\mathcal{O}$
arising from motivic triangulated categories. We consider two
versions, an $\ell$-adic and a Hodge version. 

\begin{Bemerkungl}\label{Real-ladic} 
For the $\ell$-adic version, let $k=\mathbb{F}_q$ be a finite field, 
and consider the motivic triangulated categories $\mathscr{T}=\Bmot$
or $\mathscr{T}=\mathbf{DA}_{\et}$ over $\op{Sch}/k$. 
Let us consider a prime $\ell$ different from the characteristic of
  $k$. For a $k$-variety $X$ we consider
 the derived category $\op{Der}(X\times_k\bar k;\DQ_\ell)$
of the category of $\ell$-adic sheaves on $X\times_k\bar k$. 
The $\ell$-adic realization of \cite{cisinski:deglise} followed by
pulling back to the geometric situation gives  triangulated functors
$$\op{Real}_\ell: \Bmotc(X;\DQ_\ell)\ra \op{Der}^{\op{b}}(X\times_k\bar
k;\DQ_\ell)$$ 
compatible with all six functors of Grothendieck, where we take
motives with $\DQ_\ell$-coefficients for better compatibility.  
For an affinely Whitney--Tate stratified variety  $(X,\mathcal S)$,
we denote by $\op{Der}_{\mathcal S}(X\times_k\bar
k;\DQ_\ell)\subset \op{Der}^{\op{b}}(X\times_k\bar k;\DQ_\ell)$
the full triangulated subcategory of all complexes whose
restrictions to all strata are constant of finite rank.
 Then the above realizations 
induce triangulated functors 
$$\op{Real}_\ell: \DMT_{\mathcal S}(X;\DQ_\ell)\ra \op{Der}_{\mathcal
  S}(X\times_k\bar k;\DQ_\ell)$$
Any choice of an isomorphism $\op{Real}_\ell(\underline{\op{pt}}(1))\cong
\op{Real}_\ell(\underline{\op{pt}})$ leads to natural isomorphisms 
$\op{Real}_\ell\mathcal F(n)\sira \op{Real}_\ell\mathcal F$. 
\end{Bemerkungl}

\begin{Bemerkungl}\label{Real-hodge} 
For the Hodge version, let $k=\mathbb{C}$ 
and consider the motivic triangulated categories
$\mathscr{T}=\mathcal{E}_{\op{GrH}}$ over $\op{Sch}/k$. 
For a  complex variety $X$ we consider
 the derived category $\op{Der}(X;\mathbb{C})$
of the category of sheaves of $\mathbb{C}$-vector spaces on $X$. 
The Hodge realization of \cite{drew} gives  triangulated functors
$$\op{Real}_H: \mathscr{T}(X;\mathbb{C})\ra
\op{Der}^{\op{b}}(X;\mathbb{C})$$  
compatible with all six functors of Grothendieck.
For an affinely Whitney--Tate stratified variety  $(X,\mathcal S)$,
we denote by $\op{Der}_{\mathcal S}(X;\mathbb{C})\subset
\op{Der}^{\op{b}}(X;\mathbb{C})$ 
the full triangulated subcategory of all complexes whose
restrictions to all strata are constant of finite rank.
 Then the above realizations 
induce triangulated functors 
$$\op{Real}_H: \DMT_{\mathcal S}(X;\mathbb{C})\ra \op{Der}_{\mathcal
  S}(X;\mathbb{C})$$
Any choice of an isomorphism $\op{Real}_H(\underline{\op{pt}}(1))\cong
\op{Real}_H(\underline{\op{pt}})$ leads to natural isomorphisms 
$\op{Real}_H\mathcal F(n)\sira \op{Real}_H\mathcal F$. 
\end{Bemerkungl}

Note that in both these cases, the weight and grading condition on the motivic triangulated category $\mathscr{T}$ are satisfied, so that all the previously established results are applicable. 

\begin{theorem}
\label{thm:fulg} 
Let $(X,\mathcal S)$ be an affinely Whitney--Tate stratified variety.
\begin{enumerate}
\item 
In the situation of \ref{Real-ladic},  for
any  $\mathcal F, \mathcal G\in \DMT_{\mathcal S}(X;\DQ_\ell)$, 
the realization functor together with the  isomorphisms in \ref{Real-ladic}
above leads to isomorphisms
$$ \bigoplus_{n\in\DZ}\DMT_{\mathcal S}(\mathcal F, \mathcal G(n))
\sira \op{Der}_{\mathcal S}(\op{Real}_\ell\mathcal F, \op{Real}_\ell\mathcal
G).$$
\item In the situation of \ref{Real-hodge},  for
any  $\mathcal F, \mathcal G\in \DMT_{\mathcal S}(X;\mathbb{C})$, 
the realization functor together with the  isomorphisms in \ref{Real-hodge}
above leads to isomorphisms
$$ \bigoplus_{n\in\DZ}\DMT_{\mathcal S}(\mathcal F, \mathcal G(n))
\sira \op{Der}_{\mathcal S}(\op{Real}_H\mathcal F, \op{Real}_H\mathcal G)$$
\end{enumerate}
\end{theorem}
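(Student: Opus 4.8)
The plan is to reduce the statement to the computation of morphisms between Tate motives over the strata, which is exactly where the grading condition is used. First I would fix notation: write $\op{Real}$ for $\op{Real}_\ell$ or $\op{Real}_H$ uniformly, since both cases are handled by the same argument once we know that the realization functor commutes with the six operations and that the source and target categories are glued from the same combinatorial data. Both $\DMT_{\mathcal S}(X)$ and $\op{Der}_{\mathcal S}(X\times_k\bar k;\DQ_\ell)$ (resp. $\op{Der}_{\mathcal S}(X;\mathbb C)$) are generated, as triangulated categories, by the objects $j_{s!}M$ for strata $j_s:X_s\hra X$ and $M\in\DMT(X_s)$ (resp. constant sheaves on $X_s$), by \prettyref{lem:genZ} and its topological analogue. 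Since both sides are triangulated functors of each variable and the source generators map to the target generators under $\op{Real}$ (using that $\op{Real}$ commutes with $j_{s!}$ and that $\op{Real}(\mathbb Q_{X_s}(n))$ is, after the chosen trivialization of the Tate twist, the constant sheaf), a standard dévissage/five-lemma argument reduces the claim to the case $\mathcal F=j_{s!}M$, $\mathcal G=j_{t*}N$ with $M\in\DMT(X_s)$, $N\in\DMT(X_t)$.

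Next I would use adjunction to rewrite the left-hand side as
$$
\bigoplus_{n\in\DZ}\DMT_{\mathcal S}(j_{s!}M,\ j_{t*}N(n))
\;\cong\;
\bigoplus_{n\in\DZ}\mathscr T_{X_s}\bigl(M,\ j_s^! j_{t*}N(n)\bigr),
$$
and similarly for the target. If $X_s$ and $X_t$ are different strata, $j_s^!j_{t*}N$ is a constant mixed Tate motive on $X_s$ by the Whitney--Tate condition, and using proper/smooth base change one identifies it compatibly on both sides; the case $s\ne t$ then reduces to the same computation over a single stratum, and the case $s=t$ reduces directly to morphisms $\bigoplus_n\mathscr T_{X_s}(M,N(n))$ for $M,N\in\DMT(X_s)\cong\DMT(k)$ using that $X_s\cong\mathbb A^{n_s}$ and the homotopy property. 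So the whole statement comes down to: the realization functor induces an isomorphism
$$
\bigoplus_{n\in\DZ}\DMT(k)\bigl(M,\ N(n)\bigr)\;\sira\;\op{Der}^{\op b}(\DQ_\ell\textrm{-modf})(\op{Real}\,M,\op{Real}\,N)
$$
(resp. the same with $\mathbb C$-coefficients). By the grading condition, $\DMT(k)\cong\op{Der}^{\op b}(\mathbb K\textrm{-Modf}^{\DZ})$, and $\bigoplus_{n\in\DZ}\DMT(k)(M,N(n))$ is precisely what one gets by forgetting the Tate grading; on the other side $\op{Der}^{\op b}(\DQ_\ell\textrm{-modf})$ is the degrading of $\op{Der}^{\op b}(\DQ_\ell\textrm{-Modf}^{\DZ})$, and the $\ell$-adic (resp. Hodge) realization restricted to $\DMT(k)$ is exactly this degrading functor. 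This last point — that the realization functor, restricted to mixed Tate motives over the base field, agrees with the degrading of graded vector spaces — is the key input; it follows from the compatibility of $\op{Real}$ with tensor products and the normalization $\op{Real}(\mathbb Q(1))\cong\mathbb Q$, together with the fact that $\op{Real}$ on $\DMT(k)$ is $t$-exact for the natural $t$-structures.

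The main obstacle I expect is the bookkeeping in the dévissage: one must check that all the identifications ($j_s^!j_{t*}$, base change isomorphisms, the trivialization of Tate twists) are compatible on the motivic and the realized side, so that the comparison map is genuinely an isomorphism of functors and not just an abstract bijection stratum by stratum. Concretely, the subtlety is that the isomorphisms $\op{Real}\,\mathcal F(n)\sira\op{Real}\,\mathcal F$ depend on a choice, and one has to verify that summing over all $n\in\DZ$ and using these isomorphisms produces a well-defined map that is compatible with the triangulated structure and with the localization triangles used in the induction. Once one is careful to set up the comparison as a morphism of $\DZ$-graded (by Tate twist) functors from the start — rather than checking bijectivity pointwise — the five-lemma goes through and the reduction to the single-stratum grading computation is clean; the grading computation itself is then immediate from the grading condition and the recalled structure of $\DMT(k)$ in \prettyref{sec:mtm}.
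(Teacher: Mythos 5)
Your proposal is correct and follows essentially the same route as the paper: dévissage via the generation of $\DMT_{\mathcal S}(X)$ by costandard objects $j_{s!}M$ and standard objects $j_{t*}N$ (\prettyref{lem:genZ}), adjunction/base change to a single stratum, and the conclusion via homotopy invariance and the grading condition as in \ref{eqf}. The only cosmetic difference is in the cross-strata case, where the paper simply uses $j_t^!j_{s*}=0$ for distinct strata while you route it through the Whitney--Tate condition back to the single-stratum computation; both work.
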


\begin{proof}% \footnote{Scheint mir weit entfernt: Hier gibts noch
% gar keine abelsche Kategorie. Aber gut, meinetwegen.}
% \sout{The claim is a version of condition $(\ast)^i_{M,N}$ from}
% \cite[Section 4.3]{BGSo}. 
We give the proof of (1), the proof of (2) is similar.
We know from \prettyref{sec:dmt} that $\DMT_{\mathcal S}$
is generated as a triangulated category  by the 
shifted twisted costandard objects
$j_{s!}\underline{X}_s(n)$ as well as by the shifted twisted standard objects 
$j_{s*}\underline{X}_s(m)$. 
By devissage, it is sufficient to check the claim for
$\mathcal F$ costandard and $\mathcal G$ standard. In this case
however, we can use base change to switch to the case of a single
stratum,which  follows from \ref{eqf}: the identification of
morphisms in $\DMT(\underline{\op{pt}})$ with Adams eigenspaces of
Quillen $\op{K}$-theory implies that
$\Bmot(\underline{\op{pt}},\underline{\op{pt}}(p)[q])\neq 0$ 
only for $p=q=0$, in which case this is a one-dimensional 
vector space over $\DQ$ generated by the identity morphism of
$\underline{\op{pt}}$, and then the claim follows from homotopy invariance.
\end{proof}
% \begin{Bemerkungl}
% \cemph{Before it read:}
% $$
% \op{Hom}_{\Bmotc(k)}(\mathbb{Q}(a),\mathbb{Q}(b)[n])=\left\{\begin{array}{ll}
%     \mathbb{Q} & n=0, a=b\\
%     0 & \textrm{otherwise}
%   \end{array}\right.,
% $$
% \cemph{However, we agreed Hom should be internal.}
% \end{Bemerkungl}
\begin{Bemerkungl}\label{raed} 
Let $(X,\mathcal S)$ be an 
affinely Whitney--Tate stratified variety.
By compatibility with the six functors, the realization from 
\ref{Real-ladic} or \ref{Real-hodge} induces an exact functor between
 the corresponding categories of perverse sheaves
$$\op{Real}_\ell: \PMT_{\mathcal S}(X;\DQ_\ell)\ra \op{Perv}_{\mathcal
  S}(X\times_k\bar k;\DQ_\ell)$$
$$\op{Real}_H: \PMT_{\mathcal S}(X;\mathbb{C})\ra \op{Perv}_{\mathcal
  S}(X;\mathbb{C})$$

Clearly, a stratified mixed Tate motive in $\DMT_{\mathcal S}(X)$ is perverse if and only if its realization is perverse. We deduce from 
\cite[4.1.3]{BBD} that the costandard objects $\Delta_s\pdef
j_{s!}\underline{X}_s[\op{dim}X_s]$  
as well as  the standard objects 
$\nabla_s\pdef j_{s*}\underline{X}_s[\op{dim}X_s]$ are actually 
perverse motives, i.e., they belong to $\PMT_{\mathcal S}(X)$. 
As an aside, let us remark that the last statement even follows with
$\DQ$-coefficients. 
\end{Bemerkungl}

\begin{remark}
It would be much more satisfying to have a ``motivic'' proof that the
standard and costandard objects are perverse, without having to resort
to checking it on \'etale realization. However, this would require a
version of Artin vanishing in the motivic setting, which at the moment
does not seem to be known. We thank Rahbar Virk for discussions about
this point. Actually, in the case of
$\mathscr{T}=\mathcal{E}_{\op{GrH}}$, it might actually be possible to
translate the statements known in complex geometry to the ``motivic
setting'', but we have not checked that. 
\end{remark}

\begin{lemma}
\label{lem:cond4}
Assume the situation in \ref{Real-ladic} or \ref{Real-hodge}, and let
$(X,\mathcal{S})$ be 
an affinely Whitney--Tate stratified variety. Consider the category
$\PMT_{\mathcal{S}}(X)$, i.e., we take perverse motives for the
middle perversity. Let $j:U\to X$ be an open stratum of dimension $d$. 
\begin{enumerate}
\item The object $j_{!\ast}\mathbb{Q}[d]$ is simple.
\item The object $j_!\mathbb{Q}[d]$ is the projective cover of
  $j_{!\ast}\mathbb{Q}[d]$. 
\item The object $j_\ast\mathbb{Q}[d]$ is the injective hull of
  $j_{!\ast}\mathbb{Q}[d]$. 
\end{enumerate}
\end{lemma}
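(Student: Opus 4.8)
The plan is to deduce (1) from the classification of simple perverse mixed Tate motives, to prove (2) by showing that $\Delta\pdef j_!\mathbb{Q}[d]$ is projective with simple head $j_{!\ast}\mathbb{Q}[d]$, and to obtain (3) from (2) by Verdier duality. For (1): the open stratum $U\cong\mathbb{A}^d$ carries the perversity value $p(U)=-d$, so $\mathbb{Q}[d]=\mathbb{Q}(0)[-p(U)]$ is a simple object of the perverse heart of $\DMT(U)$ with respect to Levine's $t$-structure (\prettyref{thm:levbs}); hence by the classification recalled after \prettyref{prop:adj}, i.e.\ \cite[Proposition 1.4.26]{BBD}, its intermediate extension $j_{!\ast}\mathbb{Q}[d]$ is simple in $\PMT_{\mathcal S}(X)$.

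For (2), recall from \ref{raed} that $\Delta=j_!\mathbb{Q}[d]$ already lies in $\PMT_{\mathcal S}(X)$, so ${}^pj_!\mathbb{Q}[d]=\Delta$ and the intermediate extension is a quotient of $\Delta$, yielding a canonical epimorphism $\Delta\sra j_{!\ast}\mathbb{Q}[d]$. To see that $\Delta$ is projective, take any $M\in\PMT_{\mathcal S}(X)$: the perversity conditions together with $j^\ast=j^!$ force $j^\ast M[d]\in\op{MT}(U)$, say $j^\ast M=N[d]$ with $N$ in the heart of Levine's $t$-structure, and adjunction gives
$$\DMT_{\mathcal S}(X)(\Delta,M[1])\cong\DMT(U)(\mathbb{Q}[d],N[d+1])\cong\DMT(U)(\mathbb{Q},N[1]).$$
Since $U\cong\mathbb{A}^d$, homotopy invariance identifies $\DMT(U)$ with $\DMT(k)$ and $\op{MT}(U)$ with $\op{MT}(k)$, which in the situations \ref{Real-ladic} and \ref{Real-hodge} is a semisimple category (vanishing of the rational $K$-theory of a finite field; respectively finite-dimensional $\DZ$-graded $\mathbb{C}$-vector spaces), while $\DMT(U)\cong\op{Der}^{\op{b}}(\op{MT}(U))$ by Wildeshaus (using \prettyref{prop:vanish}). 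Hence the displayed group is $\op{Ext}^1_{\op{MT}(U)}(\mathbb{Q},N)=0$, and since $\op{Ext}^1$ in the heart of a $t$-structure is computed by $[1]$-morphisms in the ambient category, $\op{Ext}^1_{\PMT_{\mathcal S}(X)}(\Delta,M)=0$ for all $M$, so $\Delta$ is projective. For the head, the same adjunction yields $\DMT_{\mathcal S}(X)(\Delta,S)\cong\DMT(U)(\mathbb{Q}[d],j^\ast S)$ for a simple $S\in\PMT_{\mathcal S}(X)$; by the classification of simples either $j^\ast S=0$ (when $S$ is supported in $X\setminus U$) or $S\cong j_{!\ast}\mathbb{Q}(a)[d]$ with $j^\ast S=\mathbb{Q}(a)[d]$, giving $\op{Hom}_{\op{MT}(U)}(\mathbb{Q},\mathbb{Q}(a))$, which is $\mathbb{Q}$ if $a=0$ and $0$ otherwise.

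Thus $j_{!\ast}\mathbb{Q}[d]$ is the unique simple quotient of $\Delta$, occurring with one-dimensional multiplicity space; together with $\op{End}(j_{!\ast}\mathbb{Q}[d])\cong\mathbb{Q}$ (endomorphisms of a simple intermediate extension embed into those of its restriction to $U$) this shows $\Delta/\op{rad}\Delta\cong j_{!\ast}\mathbb{Q}[d]$. Since $X$ has finitely many strata and the $\ast$-restrictions to strata of any object of $\PMT_{\mathcal S}(X)$ are compact, hence finite-dimensional, mixed Tate motives, the category $\PMT_{\mathcal S}(X)$ has finite length, so a projective object with simple head is the projective cover of that head; this proves (2). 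For (3): Verdier duality $D_X$ restricts to an exact contravariant autoequivalence of $\DMT_{\mathcal S}(X)$ that preserves the middle-perversity heart $\PMT_{\mathcal S}(X)$, and the six-functor duality identities give $D_X(j_!\mathbb{Q}[d])\cong j_\ast D_U(\mathbb{Q}[d])\cong(j_\ast\mathbb{Q}[d])(d)$ and $D_X(j_{!\ast}\mathbb{Q}[d])\cong(j_{!\ast}\mathbb{Q}[d])(d)$. Applying $D_X$ to statement (2) and noting that the Tate twist is an autoequivalence of $\PMT_{\mathcal S}(X)$, we conclude that $j_\ast\mathbb{Q}[d]$ is the injective hull of $j_{!\ast}\mathbb{Q}[d]$.

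The main obstacle throughout is making rigorous the reduction of the Hom- and Ext-computations in $\PMT_{\mathcal S}(X)$ to the single stratum $U$ — the identity ``$\op{Ext}^1$ in the heart $=$ $[1]$-morphisms'', the equivalence $\DMT(U)\cong\op{Der}^{\op{b}}(\op{MT}(U))$, and the semisimplicity of $\op{MT}(U)$ — together with the finite-length bookkeeping that upgrades ``projective with simple head'' to ``projective cover''. Everything else is formal manipulation with the six functors and the BBD gluing formalism.
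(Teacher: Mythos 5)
Your proof is correct and follows the same overall skeleton as the paper's: part (1) via \cite[Proposition 1.4.26]{BBD}, part (2) by identifying $\op{Ext}^1$ in the heart with $[1]$-morphisms in $\DMT_{\mathcal S}(X)$ and reducing to a vanishing over the point, and part (3) by Verdier duality. The two places where you genuinely diverge are worth noting. For projectivity, the paper checks the $\op{Ext}^1$-vanishing only against the simple objects $(i_s)_{!\ast}\mathbb{Q}(a)[d_s]$, using $i^\ast j_!=0$ and $j^\ast i_!=0$ plus projectivity of $\mathbb{Q}[d]$ in $\DMT(k)$ (and implicitly dévissage through finite length); you instead test against an arbitrary perverse $M$, observing that for the open stratum $j^\ast M=j^!M$ is concentrated in a single motivic degree, so the adjunction $(j_!,j^!)$ collapses everything to $\DMT(k)(\mathbb{Q},N[1])=0$ --- this is more self-contained and avoids the reduction to simples. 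For the projective-cover step, the paper argues that the kernel of $j_!\mathbb{Q}[d]\twoheadrightarrow j_{!\ast}\mathbb{Q}[d]$ is supported on the closed complement and hence superfluous (via the exact sequences before \cite[Corollaire 1.4.24]{BBD}), which needs no finiteness input; you instead compute $\op{Hom}(\Delta,S)$ for all simples $S$ to see the head is simple and then invoke finite length of $\PMT_{\mathcal S}(X)$. That appeal is legitimate and non-circular (artinianness is proved independently of this lemma in \prettyref{prop:PrOn}), but your one-line justification of it is thin; either cite/reproduce the induction over strata reducing to artinianness of $\op{MT}(k)$, or use the paper's support argument, which sidesteps finite length altogether. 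The duality bookkeeping in (3), including the Tate twists $D_X(j_!\mathbb{Q}[d])\cong(j_\ast\mathbb{Q}[d])(d)$, is correct and simply makes explicit what the paper dismisses as ``(2) and (3) are dual''.
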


\begin{proof}
We have seen in \ref{raed} above (using $\ell$-adic realization) that
all the objects appearing are indeed perverse motives,
i.e., that $j_{!\ast}\mathbb{Q}[d]$, $j_!\mathbb{Q}[d]$ and
$j_\ast\mathbb{Q}[d]$ are in $\PMT_{\mathcal{S}}(X)$. 

As mentioned earlier, (1) is a consequence of \cite[Proposition
1.4.26]{BBD}. The statements (2) and (3) are dual, we only prove (2). 

We first note that $\mathbb{Q}[d]$ is a projective object in
$\DMT(k)$: by assumption, we can identify $\DMT(k)$ with the
bounded derived category of graded $\mathbb{Q}$-vector spaces (with
homogeneous maps). The category $\op{MT}(k)[d]$ then
consists of graded vector spaces, considered as complexes concentrated
in degree $d$. In that case, projectivity of $\mathbb{Q}$ is obvious. 

Now we discuss projectivity of $j_!\mathbb{Q}[d]$. We are grateful to
Rahbar Virk for pointing out the following argument. To prove
projectivity it suffices to show vanishing of
$\op{Ext}^1(j_!\mathbb{Q}[d],M)=0$, where 
$\op{Ext}^1$ is to be interpreted as morphisms in the derived category
${\op{Der}_{\PMT_{\mathcal{S}}(X)}}(j_!\mathbb{Q}[d],M[1])$. The
latter can be identified with Yoneda $\op{Ext}^1$, and via
\cite[Corollary 1.1.10, Theorem 1.3.6]{BBD} with $\op{Ext}^1$ in the
category $\DMT_{\mathcal{S}}(X)$. Using the adjunctions of the
six-functor formalism and the vanishing of $j^\ast i_!$ and $i^\ast
j_!$, we find $\op{Ext}^1(j_!\mathbb{Q}[d],(i_s)_{!*}\mathbb{Q}[1])=0$
for any stratum other than the open. On the open stratum, the
vanishing of the $\op{Ext}^1$ follows from $\mathbb{Q}[d]$ being
projective in $\DMT(k)$ and homotopy invariance. 

To see that $j_!\mathbb{Q}[d]\to j_{!\ast}\mathbb{Q}[d]$ is the
projective cover, we also use an adjunction argument. It follows from
short exact sequences before \cite[Corollaire 1.4.24]{BBD} that the
kernel of the surjection $j_!\mathbb{Q}[d]\to j_{!\ast}\mathbb{Q}[d]$
is supported on the complement of $U$. Any submodule $M$ of
$j_!\mathbb{Q}[d]$ whose sum with the kernel equals $j_!\mathbb{Q}[d]$
then has to be $j_!\mathbb{Q}[d]$, so $j_!\mathbb{Q}[d]$ is in fact
the projective cover of $j_{!\ast}\mathbb{Q}[d]$. 
\end{proof}

\begin{proposition}
\label{prop:PrOn}
Assume the situation in \ref{Real-ladic} or \ref{Real-hodge}, and let
$(X,\mathcal S)$ be an  
affinely Whitney--Tate stratified variety. Then the abelian category 
 $\PMT_{\mathcal S} (X)$ has finite homological dimension
and  enough  projective objects and each of
 those has a finite filtration  with  subquotients of the form
 $\Delta_s (\nu)$ for $ s\in\mathcal S$    and $\nu \in
 \mathbb Z$. Similarly,  it has  enough  injective objects and each of
 those has a finite filtration  with  subquotients of the form
 $\nabla_s (\nu)$.  
\end{proposition}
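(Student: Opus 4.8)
The plan is to argue by induction on the number of strata, using the BBD glueing formalism already employed in \prettyref{thm:perv}. Fix an open stratum $j\colon U\hra X$ with $d=\dim U$ and its closed complement $i\colon Z\hra X$, and use the modified functors ${}^pj_!,{}^pj^\ast,{}^pj_\ast$, ${}^pi^\ast,{}^pi_\ast={}^pi_!,{}^pi^!$ together with the two four-term exact sequences recalled after \prettyref{prop:adj}. In the base case of one stratum the category is $\op{MT}(U)[d]$, and by the grading condition $\op{MT}(k)$ is the semisimple category of finite-dimensional graded $\mathbb{K}$-vector spaces; hence $\op{MT}(U)[d]$ has homological dimension $0$, every object is projective and injective, and $\Delta_U(\nu)=\nabla_U(\nu)=\underline U(\nu)[d]$ is its own flag. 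In general $\PMT_{\mathcal S}(X)$ is a finite-length category with finitely many simple objects, the $L_t(\nu):=j_{t!\ast}\underline{X_t}(\nu)[\dim X_t]$ for $t\in\mathcal S$ and $\nu\in\DZ$ (the description recalled after \prettyref{prop:adj}); so it suffices to produce, for each such simple, a projective cover carrying a finite $\Delta$-flag, and dually an injective hull carrying a finite $\nabla$-flag.

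First I would record the identification of Ext-groups with morphism spaces in the ambient triangulated category,
$$
\op{Ext}^n_{\PMT_{\mathcal S}(X)}(M,N)\;\cong\;\DMT_{\mathcal S}(X)(M,N[n]),
$$
which for $n\le 1$ is automatic by \cite[1.1.10, 1.3.6]{BBD} (as already used in \prettyref{lem:cond4}) and for all $n$ follows by d\'evissage over the strata from the Wildeshaus equivalence $\op{Der}^{\op{b}}(\op{MT}(k))\sira\DMT(k)$ and the glueing lemmas of \cite[1.4.3]{BBD}. Feeding in the recollement triangles $j_!j^\ast\to\op{id}\to i_\ast i^\ast$ and the adjunctions, the right-hand morphism spaces reduce inductively to $\DMT(\op{pt})$-morphism spaces between Tate motives; in the cases at hand these are Adams eigenspaces of Quillen $K$-theory, respectively graded $\op{Hom}$-spaces of graded vector spaces, hence concentrated in a single cohomological degree, and a count over the finitely many strata bounds $n$, giving the finite homological dimension. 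A special case, used below, is the Ext-orthogonality of standard and costandard objects,
$$
\op{Ext}^{>0}_{\PMT_{\mathcal S}(X)}(\Delta_s(\nu),\nabla_t(\mu))=0,\qquad
\op{Hom}_{\PMT_{\mathcal S}(X)}(\Delta_s(\nu),\nabla_t(\mu))=\begin{cases}\DQ,&(s,\nu)=(t,\mu),\\0,&\text{otherwise,}\end{cases}
$$
which follows at once from $j_t^\ast j_{s!}=0$ for $s\neq t$ (extension by zero has vanishing stalks off its own stratum) and the computation of morphisms in $\DMT(\op{pt})$.

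Next comes the construction of the projectives, by a second induction which adds one open stratum at a time. Order the strata so that each $U_i=X_{s_1}\cup\dots\cup X_{s_i}$ is open; then $U_{i-1}$ is closed in $U_i$, write $\iota\colon U_{i-1}\hra U_i$ and $v\colon X_{s_i}\hra U_i$, and recall the standard object $\Delta_{s_i}(\nu')=v_!\underline{X_{s_i}}(\nu')[\dim X_{s_i}]$ surjecting onto the simple $L_{s_i}(\nu')$. Given the projective cover $P^{(i-1)}$ of a simple $L$ in $\PMT_{\mathcal S}(U_{i-1})$, carrying by induction a finite $\Delta$-flag, the pushforward $\iota_\ast P^{(i-1)}$ is in general not projective in $\PMT_{\mathcal S}(U_i)$; I would repair it by the universal extension
$$
0\longrightarrow \bigoplus_{\nu'}\Delta_{s_i}(\nu')^{\oplus m_{\nu'}}\longrightarrow P^{(i)}\longrightarrow \iota_\ast P^{(i-1)}\longrightarrow 0,
$$
with $m_{\nu'}=\dim_{\DQ}\op{Ext}^1_{\PMT_{\mathcal S}(U_i)}(\iota_\ast P^{(i-1)},L_{s_i}(\nu'))$ --- almost all zero by finite homological dimension --- and with extension class obtained by lifting a basis of $\op{Ext}^1(\iota_\ast P^{(i-1)},L_{s_i}(\nu'))$ along the surjection $\op{Ext}^1(\iota_\ast P^{(i-1)},\Delta_{s_i}(\nu'))\twoheadrightarrow\op{Ext}^1(\iota_\ast P^{(i-1)},L_{s_i}(\nu'))$. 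That surjection holds because $\op{Ext}^2_{\PMT_{\mathcal S}(U_i)}(\iota_\ast P^{(i-1)},\op{rad}\Delta_{s_i}(\nu'))\cong\op{Ext}^2_{\PMT_{\mathcal S}(U_{i-1})}(P^{(i-1)},\iota^\ast\op{rad}\Delta_{s_i}(\nu'))=0$, using that $\op{rad}\Delta_{s_i}(\nu')$ is supported on $U_{i-1}$, the Ext-identification of the previous paragraph, and projectivity of $P^{(i-1)}$. Since $\iota_\ast$ is exact and $\iota_\ast\Delta^{U_{i-1}}_{s'}(\mu')=\Delta^{U_i}_{s'}(\mu')$, the object $\iota_\ast P^{(i-1)}$ inherits a finite $\Delta$-flag, hence so does $P^{(i)}$, and $P^{(i)}\twoheadrightarrow\iota_\ast P^{(i-1)}\twoheadrightarrow\iota_\ast L$.

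Finally, to see that $P^{(i)}$ is the projective cover of $\iota_\ast L$: the category being of finite length, projectivity amounts to $\op{Ext}^1(P^{(i)},L')=0$ for every simple $L'$. For $L'=\iota_\ast L''$ this follows from $\op{Ext}^1(\iota_\ast P^{(i-1)},\iota_\ast L'')\cong\op{Ext}^1_{\PMT_{\mathcal S}(U_{i-1})}(P^{(i-1)},L'')=0$ --- the isomorphism since $v^\ast$ kills any extension of $\iota_\ast$-objects by $\iota_\ast$-objects, forcing it into the image of the full exact functor $\iota_\ast$ --- together with $\op{Ext}^1(\Delta_{s_i}(\nu'),\iota_\ast L'')=0$, which holds because $v_!\dashv v^\ast$ turns it into a $\DMT(X_{s_i})$-morphism space involving $v^\ast\iota_\ast L''=0$; for $L'=L_{s_i}(\mu)$ it follows from $\op{Ext}^1(\Delta_{s_i}(\nu'),L_{s_i}(\mu))\cong\DMT(\op{pt})(\underline{\op{pt}}(\nu'),\underline{\op{pt}}(\mu)[1])=0$ (semisimplicity of $\op{MT}(k)$, which is exactly where the finite-field or Hodge hypothesis enters) and the fact that the connecting maps were arranged surjective. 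The same computations give $\op{Hom}(P^{(i)},L')=0$ for $L'\not\cong L$, so $P^{(i)}$ has simple top $L$, is indecomposable, and is the projective cover, carrying the $\Delta$-flag above; after $N$ steps this gives enough projectives with finite $\Delta$-flags. The injective statement is strictly dual: replace $\Delta_{s_i}=v_!(\ldots)$ by $\nabla_{s_i}=v_\ast(\ldots)$, correct $\iota_\ast I^{(i-1)}$ from the other side, and invoke \prettyref{lem:cond4}(3) that $j_\ast\underline U(\nu)[d]$ is the injective hull of $L_U(\nu)$. I expect the main obstacle to be precisely the glueing step --- establishing that a single correction by standard objects $\Delta_{s_i}(\nu')$ suffices, i.e.\ the surjectivity $\op{Ext}^1(\iota_\ast P^{(i-1)},\Delta_{s_i}(\nu'))\twoheadrightarrow\op{Ext}^1(\iota_\ast P^{(i-1)},L_{s_i}(\nu'))$ and the attendant bookkeeping of $\Delta$-flags --- which is where the vanishing $j_t^\ast j_{s!}=0$, the projectivity of $P^{(i-1)}$, and the identification $\op{Ext}^n_{\PMT_{\mathcal S}}\cong\DMT_{\mathcal S}(-,-[n])$ all come together, and which is the motivic incarnation of the highest-weight-category arguments of \cite{BGSo}.
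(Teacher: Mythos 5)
Your overall strategy -- re-proving the highest-weight formalism by hand, adding one stratum at a time and correcting $\iota_\ast P^{(i-1)}$ by a universal extension with standard objects -- is viable and is essentially the classical construction behind \cite[Theorem 3.2.1]{BGSo}. But as written there is a genuine gap at the point you yourself flag as the crux. Your surjectivity $\op{Ext}^1(\iota_\ast P^{(i-1)},\Delta_{s_i}(\nu'))\twoheadrightarrow\op{Ext}^1(\iota_\ast P^{(i-1)},L_{s_i}(\nu'))$ rests on the vanishing of $\op{Ext}^2(\iota_\ast P^{(i-1)},\op{rad}\Delta_{s_i}(\nu'))$, which you obtain by adjunction from ``projectivity of $P^{(i-1)}$'' via the asserted identification $\op{Ext}^n_{\PMT_{\mathcal S}}(M,N)\cong\DMT_{\mathcal S}(M,N[n])$ for all $n$ (your finite-homological-dimension claim rests on the same identification). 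This identification is not something you can get by a quick d\'evissage: it is precisely the derived-equivalence statement \prettyref{thm:PrOoi}, whose proof in the paper uses \prettyref{prop:PrOn} (enough projectives with $\Delta$-flags, enough injectives with $\nabla$-flags, finite resolutions), so invoking it here is circular. What is automatic from \cite{BBD} is only the comparison in degrees $\leq 1$ and injectivity in degree $2$; moreover, a $\Delta$-flag on $P^{(i-1)}$ by itself only gives $\DMT(P^{(i-1)},\nabla_t(\mu)[n])=0$ (that is \prettyref{lem:vanish}), not $\DMT(P^{(i-1)},R[2])=0$ for an arbitrary perverse $R$ such as $\iota^\ast\op{rad}\Delta$. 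The argument can be repaired, but only by strengthening your induction hypothesis to carry, on $U_{i-1}$, both the injective statement (injectives with $\nabla$-flags) and the vanishing $\DMT_{U_{i-1}}(P,M[n])=0$ for projective $P$ and all perverse $M$, $n>0$, proved there by the $\Delta$-flag/$\nabla$-flag/finite-injective-resolution bootstrap that the paper carries out in the proof of \prettyref{thm:PrOoi}; none of this bookkeeping is in your write-up. (Two smaller points: your filtration $U_i$ is closed, not open, in $X$ if $U_{i-1}$ is to be closed in $U_i$ and $X_{s_i}$ open in $U_i$; and the finiteness of the set of twists $\nu'$ with $m_{\nu'}\neq 0$ comes from the finitely many twists occurring in a $\Delta$-flag of $P^{(i-1)}$, not from finite homological dimension.)

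For comparison, the paper takes a much shorter route: it checks the hypotheses of the axiomatic result \cite[Theorem 3.2.1]{BGSo} (in a variant where ``finitely many simples'' -- which fails here because of Tate twists -- is replaced by a descending chain condition on the support order), with the key inputs being artinianness via the glueing exact sequences of \cite{BBD}, the projective cover/injective hull statement of \prettyref{lem:cond4} for open strata, support conditions for the kernel and cokernel of $j_!\to j_{!\ast}$, and the $\Delta$--$\nabla$ orthogonality of \prettyref{lem:vanish}; finite homological dimension and the $\Delta$- and $\nabla$-flags then come as part of the conclusion of that theorem rather than having to be re-derived by hand.
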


\begin{proof}
We want to apply \cite[Theorem 3.2.1]{BGSo}. We note that a version of
this results is true where (2) is replaced by the requirement that the
partial order in (3) satisfies the descending chain condition. This is
necessary because condition (2) is not satisfied in our situtation:
for each stratum $X_s$ of dimension $d_s$ with $j:X_s\to\overline{X_s}$
and $i:\overline{X_s} \to X$, we have that all $i_\ast\circ
j_{!\ast}\mathbb{Q}(a)[d_s]$, $a\in\mathbb{Z}$, are simple.  

The strengthened condition (3) is then still true, the partial order
is given by the inclusion of support of $M\in\PMT_{(B)}(Y)$ and
the descending chain condition follows since there are only finitely
many strata in $Y$. 

Condition (1) is satisfied, i.e., $\PMT_{(B)}(Y)$ is an artinian
category, every object has finite length: the functors ${}^pi_\ast$
etc. are defined by applying $i_\ast$ and then truncating. Therefore,
these functors preserve finite length of 
objects. We can use the exact sequences of \cite[Lemme 1.4.19]{BBD} to
inductively reduce the finite length assertion to artinianness of
$\op{MT}(k)$. The latter is clear since $\op{MT}(k)$ obviously  is equivalent
to the category of finite-dimensional graded vector  spaces. 

Condition (4) is established in \prettyref{lem:cond4}. As mentioned
above, the short exact sequences before \cite[Corollaire 1.4.24]{BBD}
imply that the kernel of  $j_!\mathbb{Q}[d]\to
j_{!\ast}\mathbb{Q}[d]$ and the cokernel of $j_{!\ast}\mathbb{Q}[d]$
are supported on the complement of $U$, whence Condition (5). 

Finally, \cite[Lemma 3.2.4]{BBD} allows to reduce Condition (6) to the
vanishing of \prettyref{lem:vanish} below. 
\end{proof}
%\cemph{I propose the following simplification to \ref{lem:vanishold}.}

\begin{lemma}
\label{lem:vanish}
Assume the situation in \ref{Real-ladic} or \ref{Real-hodge}, and  let
$(X,\mathcal{S})$ be 
an affinely Whitney--Tate stratified variety. For any two strata
$j_t:X_t\to X$ and $j_s:X_s\to X$ and $(n,a)\neq (0,0)$, we have 
$$
\DMT_{\mathcal{S}}(j_{t!}\underline{X_t},
j_{s*}\underline{X_s}(a)[n])=0.
$$
\end{lemma}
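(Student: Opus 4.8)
The plan is to reduce the computation to a single pair of strata by adjunction, and then to the case of a point. First I would use the $(j_s^\ast,j_{s\ast})$-adjunction to rewrite
\[
\DMT_{\mathcal{S}}(j_{t!}\underline{X_t},j_{s\ast}\underline{X_s}(a)[n])
=\mathscr{T}_X(j_{t!}\underline{X_t},j_{s\ast}\underline{X_s}(a)[n])
\cong\mathscr{T}_{X_s}(j_s^\ast j_{t!}\underline{X_t},\underline{X_s}(a)[n]),
\]
so that the whole problem becomes the identification of the motive $j_s^\ast j_{t!}\underline{X_t}\in\mathscr{T}(X_s)$.

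Here I would distinguish two cases. If $s\neq t$, the strata $X_s$ and $X_t$ are disjoint locally closed subvarieties of $X$, so the fibre product $X_s\times_X X_t$ has empty underlying space and hence is the empty scheme. Applying the base change isomorphism $g^\ast f_!\sira f'_!g'^\ast$ of the six-functor formalism to the cartesian square built from $f=j_t\colon X_t\to X$ and $g=j_s\colon X_s\to X$ exhibits $j_s^\ast j_{t!}$ as a composite factoring through $\mathscr{T}(\emptyset)$, which vanishes by the localization property. Thus $j_s^\ast j_{t!}\underline{X_t}=0$ and the Hom-space is zero; note that in this case no hypothesis on $(n,a)$ is required. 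If $s=t$, I would factor the locally closed immersion $j_t$ as an open immersion followed by a closed immersion and combine $j^\ast j_!\cong\op{id}$ for open immersions with the isomorphism $i^\ast i_\ast\cong\op{id}$ for closed immersions (localization property) to get $j_t^\ast j_{t!}\cong\op{id}$. The Hom-space then becomes $\mathscr{T}_{X_t}(\underline{X_t},\underline{X_t}(a)[n])$, and since $X_t\cong\mathbb{A}^{n_t}$ the homotopy property makes the unit $1\to p_\ast p^\ast$ for the structure morphism $p\colon X_t\to\op{pt}$ an isomorphism, so $p^\ast$ is fully faithful and this space is canonically identified with $\mathscr{T}_{\op{pt}}(\underline{\op{pt}},\underline{\op{pt}}(a)[n])$. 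By the grading condition, cf. \ref{eqf}, this last space vanishes for $(n,a)\neq(0,0)$, which completes the argument.

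The only genuinely load-bearing observation is that $j_s^\ast j_{t!}$ is zero on disjoint strata — the motivic incarnation of the fact that extension by zero restricts to zero — and in the axiomatic setting this is nothing more than base change through the empty scheme. Everything else is the one-point vanishing already underlying \ref{eqf}, so there is no serious obstacle; in particular the Whitney--Tate hypothesis is not used in this lemma.
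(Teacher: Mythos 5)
Your proof is correct and is essentially the paper's argument in mirror image: the paper adjoins on the other side, using $(j_{t!},j_t^!)$ to reduce to the vanishing $j_t^!j_{s\ast}=0$ for distinct strata and $j_t^!j_{t\ast}\cong\op{id}$ for $s=t$, then concludes exactly as you do via homotopy invariance for the affine stratum and the grading condition \ref{eqf}. The base-change-through-$\emptyset$ justification and the factorization giving $j_t^\ast j_{t!}\cong\op{id}$ are fine, so there is nothing missing.
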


\begin{proof}
If $X_t\neq X_s$, then $j_t^!j_{s\ast}=0$ implies the vanishing
directly. If $X_t=X_s$, then remark first $j_t^!j_{t\ast}=\op{id}$,
since this holds as well for an open as for a closed embedding.
Thus we are reduced to showing 
$
\DMT(\underline{D},
\underline{D}(a)[n])=0
$
for $(n,a)\neq (0,0)$ and $D$ an affine space, and this follows from
homotopy invariance and \ref{eqf}. 
\end{proof}

% \begin{lemma}
% \label{lem:vanishold}
% Let $k=\mathbb{F}_q$ be a finite field, and let $(X,\mathcal{S})$ be
% an affinely Whitney-Tate stratified variety. For any two strata
% $j_t:X_t\to X$ and $i_s:X_s\to X$ and each  $n\neq 0$, we have 
% $$
% \op{Hom}_{\DMT_{\mathcal{S}}(X)}((j_t)_!\mathbb{Q}_{X_t}(a)[d_t],
% (i_s)_\ast \mathbb{Q}_{X_s}(b)[d_s+n])=0.
% $$
% \end{lemma}

% \begin{proof}
% If $X_t\neq X_s$, then $j^!i_\ast=0$ implies the vanishing
% directly. If $X_t=X_s$, then we use projection formulas as follows:
% for $j:X_s\to\overline{X_s}$ and $i:\overline{X_s}\to X$, we have
% \begin{eqnarray*}
% &&
% i_\ast j_\ast\op{Hom}_{\DMT(X_s)}(\mathbb{Q}_{X_s}(a)[d_s],\mathbb{Q}_{X_s}(b)[d_s+n])\\
% &\cong  & 
% i_\ast j_\ast\op{Hom}_{\DMT(X_s)}(\mathbb{Q}_{X_s}(a)[d_s],j^\ast
% j_\ast\mathbb{Q}_{X_s}(b)[d_s+n]) \\ & \cong &
% i_\ast \op{Hom}_{\DMT_{\mathcal{S}}(\overline{X_s})}(j_!\mathbb{Q}_{X_s}(a)[d_s],
% j_\ast\mathbb{Q}_{X_s}(b)[d_s+n])\\ &\cong &
% i_\ast\op{Hom}_{\DMT_{\mathcal{S}}(\overline{X_s})}(i^\ast
% i_!j_!\mathbb{Q}_{X_s}(a)[d_s],
% j_\ast\mathbb{Q}_{X_s}(b)[d_s+n]) \\ & \cong & 
% \op{Hom}_{\DMT_{\mathcal{S}}(X)}(i_!j_!\mathbb{Q}_{X_s}(a)[d_s],
% i_\ast j_\ast\mathbb{Q}_{X_s}(b)[d_s+n]).
% \end{eqnarray*}
% But the first line is trivial whenever $n=0$, again because we are
% over a finite field.
% \end{proof}

\begin{theorem}
\label{thm:cato}
    Let $(X,\mathcal S)$ be an affinely Whitney--Tate stratified
    variety.  
\begin{enumerate}
\item In the situation of \ref{Real-ladic},
    the realization functor
$$\op{Real}_\ell: \PMT_{\mathcal S}(X;\DQ_\ell)\ra
\op{Perv}_{\mathcal S}(X\times_k\bar k;\DQ_\ell)$$ considered in \ref{raed} is a degrading functor in the sense of
\cite{BGSo}.
\item In the situation of \ref{Real-hodge},
    the realization functor
$$\op{Real}_H: \PMT_{\mathcal S}(X;\mathbb{C})\ra
\op{Perv}_{\mathcal S}(X;\mathbb{C})$$
 considered in \ref{raed} is a
degrading functor in the sense of 
\cite{BGSo}.
\end{enumerate}
\end{theorem}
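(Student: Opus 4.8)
The plan is to check directly the requirements of \cite[\S4.3]{BGSo} exhibiting $\PMT_{\mathcal S}(X)$, together with the functor $\op{Real}_\ell$, as a graded version of $\op{Perv}_{\mathcal S}(X\times_k\bar k;\DQ_\ell)$; I would carry out (1) in detail and observe that (2) follows by the same argument with $\op{Real}_H$ and \prettyref{thm:fulg}(2) in place of $\op{Real}_\ell$ and \prettyref{thm:fulg}(1). The first step is to fix the grading shift $\langle 1\rangle$ to be the Tate twist $(1)$ on $\PMT_{\mathcal S}(X)$ --- for the literal normalisation of \cite{BGSo} one formally adjoins a square root of the Tate twist, which affects nothing below. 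Under the grading condition $\mathbb Q(1)$ lies in the heart $\op{MT}(k)$ of the motivic t-structure on $\DMT(X_s)\cong\DMT(k)$, so tensoring with it is t-exact on each stratum; since the perverse t-structure of \prettyref{thm:perv} is glued from these via the $j_s^\ast$ and $j_s^!$, and the Tate twist commutes with all six operations, $(1)$ is t-exact for the perverse t-structure and hence an exact autoequivalence of $\PMT_{\mathcal S}(X)$. By \ref{raed} the functor $\op{Real}_\ell$ is exact on perverse objects, and the identifications chosen in \ref{Real-ladic} give $\op{Real}_\ell\circ\langle1\rangle\cong\op{Real}_\ell$.

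Next I would match simple objects. Being exact and commuting with $j_{s!}$, $j_{s\ast}$ and $i_{s\ast}$, the functor $\op{Real}_\ell$ commutes with perverse truncation and with intermediate extension, so it sends $j_{!\ast}\mathbb Q(a)[\op{dim}X_s]$ to $\op{IC}(\overline{X_s},\DQ_\ell)$ and the objects $\Delta_s(a),\nabla_s(a)$ of \ref{raed} to the geometric standard and costandard objects, all independently of $a\in\DZ$. By the description recalled at the end of \prettyref{sec:tstructure}, i.e.\ \cite[Proposition 1.4.26]{BBD}, the simple objects of $\PMT_{\mathcal S}(X)$ are precisely the $i_{s\ast}j_{!\ast}\mathbb Q(a)[\op{dim}X_s]$ with $s\in\mathcal S$ and $a\in\DZ$, pairwise non-isomorphic (distinguished, say, by their H\'ebert weights), while the simple objects of the target are the $\op{IC}(\overline{X_s},\DQ_\ell)$, one per stratum. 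Hence $\op{Real}_\ell$ induces a bijection between the $\langle\DZ\rangle$-orbits of simple objects upstairs and the simple objects downstairs; in particular every simple object of the target lies in the essential image.

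The core step, where \prettyref{thm:fulg} does the work, is the comparison of morphism spaces. For $\mathcal F,\mathcal G\in\PMT_{\mathcal S}(X)$ the groups $\op{Hom}$ and $\op{Ext}^1$ in the two hearts coincide with $\DMT_{\mathcal S}(\mathcal F,\mathcal G(a))$, $\DMT_{\mathcal S}(\mathcal F,\mathcal G(a)[1])$ and with $\op{Der}_{\mathcal S}(\op{Real}_\ell\mathcal F,\op{Real}_\ell\mathcal G)$, $\op{Der}_{\mathcal S}(\op{Real}_\ell\mathcal F,\op{Real}_\ell\mathcal G[1])$ respectively, so applying \prettyref{thm:fulg}(1) to $\mathcal G$ and to $\mathcal G[1]$ yields an isomorphism
\begin{equation*}
\bigoplus_{a\in\DZ}\op{Hom}_{\PMT_{\mathcal S}(X)}(\mathcal F,\mathcal G\langle a\rangle)\;\sira\;\op{Hom}_{\op{Perv}}(\op{Real}_\ell\mathcal F,\op{Real}_\ell\mathcal G)
\end{equation*}
together with the analogous isomorphism with $\op{Ext}^1$ in place of $\op{Hom}$; in particular $\op{Real}_\ell$ is faithful. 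By \prettyref{prop:PrOn} the category $\PMT_{\mathcal S}(X)$ has finite homological dimension and enough projectives, so let $P$ be the direct sum of its indecomposable projectives, one per orbit of simples; the $\op{Ext}^1$-isomorphism forces $\op{Ext}^1_{\op{Perv}}(\op{Real}_\ell P,L)=0$ for every simple object $L$ of the (finite length) category $\op{Perv}_{\mathcal S}(X\times_k\bar k;\DQ_\ell)$, whence $\op{Real}_\ell P$ is projective by dévissage, and it is a projective generator because $\op{Hom}_{\op{Perv}}(\op{Real}_\ell P,L)\supseteq\op{Hom}_{\PMT_{\mathcal S}(X)}(P,L')\ne 0$ for the simple $L'$ with $\op{Real}_\ell L'=L$.

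Finally I would assemble: $\PMT_{\mathcal S}(X)$ is finite length (\prettyref{prop:PrOn}) with $\langle1\rangle$ an exact autoequivalence satisfying $\op{Real}_\ell\circ\langle1\rangle\cong\op{Real}_\ell$, $\op{Real}_\ell$ induces a bijection on simples up to shift and carries a projective generator $P$ to a projective generator, and the displayed Hom-isomorphism with $\mathcal F=\mathcal G=P$ endows $A:=\op{End}_{\op{Perv}}(\op{Real}_\ell P)$ with the $\DZ$-grading $\tilde A:=\bigoplus_a\op{Hom}_{\PMT_{\mathcal S}(X)}(P,P\langle a\rangle)$; then, by \cite[\S4.3]{BGSo}, the functor $M\mapsto\bigoplus_a\op{Hom}_{\PMT_{\mathcal S}(X)}(P,M(a))$ is an equivalence $\PMT_{\mathcal S}(X)\sira\tilde A\op{-Modf}^{\DZ}$ intertwining $\op{Real}_\ell$ with the grading-forgetting functor $\tilde A\op{-Modf}^{\DZ}\to A\op{-Modf}=\op{Perv}_{\mathcal S}(X\times_k\bar k;\DQ_\ell)$, which is exactly the statement that $\op{Real}_\ell$ is a degrading functor. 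I expect the only real friction to be the careful bookkeeping against the precise axioms of \cite{BGSo}, and in particular the same condition-(2)-versus-descending-chain-condition issue already dealt with in the proof of \prettyref{prop:PrOn}, which arises because each stratum $X_s$ contributes the whole $\DZ$-family $i_{s\ast}j_{!\ast}\mathbb Q(a)[\op{dim}X_s]$ of simple objects; the descending chain condition is supplied by finiteness of $\mathcal S$ and the partial order by supports. The Hodge case (2) is word for word the same, using \prettyref{thm:fulg}(2) and \ref{Real-hodge}.
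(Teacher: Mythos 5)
Your argument is correct in substance and rests on exactly the same two pillars as the paper's proof --- \prettyref{prop:PrOn} (enough projectives, finite length, $\Delta$-flags) and \prettyref{thm:fulg} (full faithfulness of the realization after summing over Tate twists) --- but it packages them differently. The paper stays at the triangulated level: it takes the degrading property to be the isomorphism $\bigoplus_{n}\op{Der}(P,Q(n))\sira\op{Der}(\op{Real}P,\op{Real}Q)$ for complexes, reduces via enough projectives to the bounded homotopy categories of projective objects (asserting without comment that projectives realize to projectives), and then quotes \prettyref{thm:fulg} for single projectives. You instead verify the literal module-theoretic definition of a degrading functor from \cite[\S 4.3]{BGSo} at the abelian level: t-exactness of the twist, matching of simples up to twist with the $\op{IC}$-sheaves, the $\op{Ext}^1$-comparison showing that a projective generator realizes to a projective generator, and finally the identification of $\PMT_{\mathcal S}(X)$ with graded modules over $\tilde A=\bigoplus_a\op{Hom}(P,P(a))$ and of $\op{Perv}_{\mathcal S}$ with modules over $A=\op{End}(\op{Real}_\ell P)$, intertwined by the forgetful functor. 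What your route buys is explicitness: in particular your d\'evissage argument ($\op{Ext}^1$ against simples vanishes, hence $\op{Real}_\ell P$ is projective) is precisely the justification behind the paper's phrase ``these clearly go to projectives'', and your construction exhibits the graded ring concretely; what the paper's route buys is brevity and the slightly stronger derived-level Hom-isomorphism for arbitrary complexes. Two small points to tidy if you write this up: in the generator step the nonvanishing should be read off from the full sum $\bigoplus_a\op{Hom}(P,L'(a))$ rather than the single summand $\op{Hom}(P,L')$ (the simple realizing to a given $\op{IC}$ is only well defined up to twist), and the identification of $\op{Ext}^1$ in the hearts with morphisms of degree one in $\DMT_{\mathcal S}$ resp.\ $\op{Der}_{\mathcal S}$ should be justified exactly as in the proof of \prettyref{lem:cond4}, via \cite[Corollary 1.1.10, Theorem 1.3.6]{BBD}.
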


\begin{proof}
Again, we only consider the $\ell$-adic realization, the Hodge
realization argument being similar.
We need to show that the induced functor
$$\op{Real}_\ell: \op{Der}^{\op{b}}(\PMT_{\mathcal S}(X;\DQ_\ell))\ra \op{Der}^{\op{b}}(\op{Perv}_{\mathcal S}(X\times_k\bar
k;\DQ_\ell))$$ induces isomorphisms
$$ \bigoplus_{n\in\DZ}\op{Der}(P, Q(n))
\sira \op{Der}(\op{Real}P, \op{Real}Q)$$
for any complexes $P,Q$. But since there are enough projectives, by
\prettyref{prop:PrOn}, and these clearly go to projectives, we just
need to show the analogous statement for the functor
$\op{Real}: \op{Hot}^{\op{b}}(p\PMT_{\mathcal S}(X;\DQ_\ell))\ra \op{Hot}^{\op{b}}(p\op{Perv}_{\mathcal S}(X\times_k\bar
k;\DQ_\ell))$ on the bounded homotopy category of projective objects.
For single projective objects however we already know it from  
 \prettyref{thm:fulg}, and from there the extension to the
bounded homotopy categories is immediate.
\end{proof}
% Indeed, this follows immediately combining Theorem 
% with \prettyref{prop:PrOoi}
% and \prettyref{prop:PrOop}.

\begin{theorem}
\label{thm:PrOoi}
Assume the situation in \ref{Real-ladic} or \ref{Real-hodge}.
Let $(X,\mathcal S)$ be an 
affinely Whitney--Tate stratified variety
and let $p\PMT_{\mathcal S} (X)$ be the additive category of projective 
perverse objects. Then
we get equivalences of categories 
 $$\op{Der}^{\op{b}}(\PMT_{\mathcal S} (X))
\stackrel{\approx}{\leftarrow}\op{Hot}^{\op{b}}(p\PMT_{\mathcal S} (X))
\sirra \DMT_{\mathcal S} (X)$$ 
by the obvious functor towards the left and a tilting functor
as in \prettyref{prop:rvt} towards the right.
% The composition will be called
%  $$\op{real}:\op{Der}^{\op{b}}(\PMT_{\mathcal S} (X))
% \sirra \DMT_{\mathcal S} (X)$$ 
\end{theorem}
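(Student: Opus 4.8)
The plan is to obtain both equivalences by assembling results already in place: \prettyref{prop:PrOn}, which supplies enough projectives and finite homological dimension for $\PMT_{\mathcal S}(X)$, together with the general tilting statement \prettyref{prop:rvt}, which applies here because in the setting of \ref{Real-ladic} resp.\ \ref{Real-hodge} the category $\DMT_{\mathcal S}(X)$ sits as a full subcategory of a localization of the derived category of an abelian category, exactly as exploited in the proof of \prettyref{thm:Twer}. The functor towards the left is the tautological one, sending a bounded complex of projective perverse objects to its class in $\op{Der}^{\op{b}}(\PMT_{\mathcal S}(X))$. Since by \prettyref{prop:PrOn} the abelian category $\PMT_{\mathcal S}(X)$ has enough projectives and finite homological dimension, this is an equivalence by the standard homological algebra fact that for such a category $\mathcal A$ the canonical functor $\op{Hot}^{\op{b}}(p\mathcal A)\to\op{Der}^{\op{b}}(\mathcal A)$ is an equivalence; finite homological dimension is precisely what guarantees that the bounded homotopy category of projectives already suffices.

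For the functor towards the right, the plan is to check that the projective perverse objects form a tilting collection inside $\DMT_{\mathcal S}(X)$, so that \prettyref{prop:rvt} yields a fully faithful functor $\op{Hot}^{\op{b}}(p\PMT_{\mathcal S}(X))\hookrightarrow\DMT_{\mathcal S}(X)$. Two points need verification. First, $\DMT_{\mathcal S}(X)(P,Q[a])=0$ for $P,Q$ projective perverse and $a\neq 0$: for $a<0$ this is automatic since $P,Q$ lie in the heart of the perverse $t$-structure, and for $a>0$ one identifies $\DMT_{\mathcal S}(X)(P,Q[a])$ with $\op{Ext}^a_{\PMT_{\mathcal S}(X)}(P,Q)$ via the comparison \cite[Corollary 1.1.10, Theorem 1.3.6]{BBD} --- the same input already used in \prettyref{lem:cond4} and \prettyref{prop:PrOn} --- and this vanishes since $P$ is projective. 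Second, the projective perverse objects generate $\DMT_{\mathcal S}(X)$ as a triangulated category: by \ref{raed} together with d\'evissage the standard objects $\Delta_s(\nu)$ generate, and by \prettyref{prop:PrOn} each $\Delta_s(\nu)$ admits a finite resolution by projective perverse objects inside the abelian category $\PMT_{\mathcal S}(X)$; since short exact sequences in the heart of a $t$-structure give distinguished triangles, this exhibits each $\Delta_s(\nu)$ in the triangulated subcategory generated by $p\PMT_{\mathcal S}(X)$. Consequently the fully faithful functor $\op{Hot}^{\op{b}}(p\PMT_{\mathcal S}(X))\to\DMT_{\mathcal S}(X)$ has essential image a triangulated subcategory containing a generating set, hence is an equivalence.

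It remains only to remark that the resulting composite equivalence $\op{Der}^{\op{b}}(\PMT_{\mathcal S}(X))\sirra\DMT_{\mathcal S}(X)$ restricts to the identity on the heart $\PMT_{\mathcal S}(X)$, i.e.\ it is the realization functor attached to the perverse $t$-structure; this is not needed for the statement as such, but it pins the equivalence down canonically. The only step carrying any real content is the $\op{Ext}$-comparison used in the first vanishing above --- the assertion that passing to the bounded derived category of the heart does not change the higher $\op{Ext}$-groups computed in $\DMT_{\mathcal S}(X)$ --- and this is exactly the BBD input already invoked earlier in the paper, so no genuinely new obstacle arises: the theorem is essentially the packaging of \prettyref{prop:PrOn} together with \prettyref{prop:rvt}.
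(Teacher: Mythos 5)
Your left-hand equivalence and your generation argument are fine and match the paper, but the key vanishing step has a genuine gap. You claim that for $a>0$ one may identify $\DMT_{\mathcal S}(X)(P,Q[a])$ with $\op{Ext}^a_{\PMT_{\mathcal S}(X)}(P,Q)$ ``via \cite[Corollary 1.1.10, Theorem 1.3.6]{BBD}'' and then conclude vanishing from projectivity of $P$. The BBD input used earlier in the paper (in \prettyref{lem:cond4}) is only the comparison in degrees $0$ and $1$: for any t-structure, morphisms $A\to B[1]$ in the ambient triangulated category agree with Yoneda $\op{Ext}^1$ in the heart. For $a\geq 2$ there is only a natural map $\op{Ext}^a_{\PMT_{\mathcal S}(X)}(P,Q)\to\DMT_{\mathcal S}(X)(P,Q[a])$, and its bijectivity is not a formal consequence of the t-structure axioms; it is essentially equivalent to full faithfulness of the realization functor $\op{Der}^{\op{b}}(\PMT_{\mathcal S}(X))\to\DMT_{\mathcal S}(X)$, i.e.\ to the very equivalence the theorem asserts. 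So as written your first verification is circular: projectivity kills the Yoneda $\op{Ext}$'s, but it does not by itself kill $\DMT_{\mathcal S}(X)(P,M[a])$ for $a\geq 2$.

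The paper closes exactly this gap by a direct computation in the ambient category, and this is the real content of the proof. By \prettyref{prop:PrOn}, projective perverse objects have finite $\Delta$-flags, injective perverse objects have finite $\nabla$-flags, and $\PMT_{\mathcal S}(X)$ has finite homological dimension; by \prettyref{lem:vanish}, $\DMT_{\mathcal S}(\Delta_t,\nabla_s(a)[n])=0$ for $(n,a)\neq(0,0)$. Induction on a $\Delta$-flag of $P$ gives $\DMT_{\mathcal S}(P,\nabla_t[n])=0$ for $n\neq 0$, then induction on a $\nabla$-flag gives $\DMT_{\mathcal S}(P,I[n])=0$ for every injective $I$ and $n\neq 0$; finally, since every perverse object admits a finite injective resolution, a two-out-of-three argument along the associated triangles (the boundary case $n=1$ being the legitimate $\op{Ext}^1$ comparison, and $n<0$ being clear from the t-structure) yields $\DMT_{\mathcal S}(P,M[n])=0$ for all perverse $M$ and $n\neq 0$, which is the hypothesis needed to apply \prettyref{prop:rvt}. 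If you replace your BBD appeal for $a\geq2$ by this flag-and-resolution argument (or an equivalent one), the rest of your proposal goes through.
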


% \cemph{I am confused about the existence of this realization functor,
% because I don't understand the problem of Wildeshaus in
% f-categories and Tate motives: 
%  Remark 1.7 l"a"st mich perplex: Warum reicht es 
% nicht, zur opponierten abelschen Kategorie "uberzugehen, um in 
% der Situation zu landen, die in [BBD] behandelt wird? OK, ich hab das jetzt
% von
% ihm gelernt: Man braucht in [BBD] $\op{Der}^+$ und genug Injektive.
% Hoffentlich l"a"st sich das mit Spaltenstein vermeiden! Sonst w"u"ste aber
% auch Fritz H"ormann, wie es geht.}

\begin{proof}
  The equivalence to the left follows easily from \ref{prop:PrOn}.
To obtain the tilting equivalence, it will be sufficient to show 
$\DMT_{\mathcal S}(P,M[n])=0$ for
$P, M\in \PMT_{\mathcal S} (X)$ with $P$ projective and $n\neq 0$.
By an induction on a $\Delta$-flag of our projective $P$
we deduce
 $\DMT_{\mathcal S}(P,\nabla_t[n])=0$ for $n\neq 0$.
By an induction on a $\nabla$-flag, we  get
$\DMT_{\mathcal S}(P,I[n])=0$ for $n\neq 0$
and any injective object $I\in \PMT_{\mathcal S} (X)$.
Now remember we needed  to show 
$\DMT_{\mathcal S}(P,M[n])=0$ for
$P, M\in \PMT_{\mathcal S} (X)$ with $P$ projective and $n\neq 0$.
For $n<0$ or $n=1$ this is clear anyhow. 
Thus if it is ok for two terms of a short exact sequence, it is also
ok for the third term. Thus if it is ok for all terms of
a finite resolution of a given object, it will also be ok for
the given object itself. But  by  \prettyref{lem:vanish} 
it is ok  for injective objects, and every object has
a finite injective resolution.  
\end{proof}

\begin{corollary}\label{bhu}
Assume the situation in \ref{Real-ladic} or \ref{Real-hodge}.
  Let $(X,\mathcal S)$ be an 
affinely Whitney--Tate stratified variety.
Then:
\begin{enumerate}
\item All simple perverse motives $\mathcal L\in\PMT_{\mathcal S}(X)$ are
  up to a shift in the heart of the weight structure, in formulas $\mathcal
  L\in(\DMT_{\mathcal S}(X))_{w=p}$ for some $p\in \DZ$;
\item
All perverse motives $\mathcal L\in\PMT_{\mathcal S}(X)$, 
which are pure of a given weight, are semisimple;
\item
All pure motives $\mathcal L\in\DMT_{\mathcal S}(X)_{w=p}$
are isomorphic to the direct sum of their perverse cohomology objects, which
in
turn are perverse semisimple.
\end{enumerate}

\end{corollary}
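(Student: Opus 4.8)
\emph{The plan} is to transplant to the motivic setting the arguments of \cite[\S 5.1, \S 5.3, \S 5.4]{BBD} relating weights with the perverse $t$-structure, with the H\'ebert--Bondarko weight structure of \prettyref{prop:hebbswz} in the role of mixedness. The technical backbone to be established first is the motivic analogue of \cite[5.1.14, 5.3.1]{BBD}: for every $n\in\DZ$, the full subcategory $\PMT_{\mathcal S}(X)\cap\DMT_{\mathcal S}(X)_{w\le n}$ (resp.\ $\PMT_{\mathcal S}(X)\cap\DMT_{\mathcal S}(X)_{w\ge n}$) is a Serre subcategory of $\PMT_{\mathcal S}(X)$, and the perverse truncations $^p\tau_{\le m}$, $^p\tau_{\ge m}$ map $\DMT_{\mathcal S}(X)_{w\le n}$ into itself (resp.\ $\DMT_{\mathcal S}(X)_{w\ge n}$ into itself). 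I would prove this by induction on the number of strata, using the recollement attached to an open stratum $j\colon U\hra X$ with closed complement $i\colon Z\hra X$ together with the weight-exactness properties from \prettyref{thm:hebert} ($j^\ast,j_!$ are $w$-left exact; $j_\ast,i_\ast,i^!$ are $w$-right exact); the base case over a point is immediate, since $\DMT(k)=\op{Der}^{\op{b}}(\mathbb{Q}\op{-modf}^{\DZ})$ is the derived category of a semisimple category, so every complex splits into its cohomology and the weight conditions become truncation conditions on graded vector spaces. Alternatively, the backbone could be imported from \cite{BBD} via the realization functors of \ref{Real-ladic}/\ref{Real-hodge} and compatibility of the motivic weight structure with the classical one; but the inductive argument is cleaner and is exactly where semisimplicity of $\DMT(k)$ removes the usual difficulties.

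Granting the backbone, part~(1) follows quickly. By \cite[Proposition 1.4.26]{BBD} and induction on the strata --- using that a closed pushforward $\overline\nu_{s\ast}=\overline\nu_{s!}$ is both $w$-left and $w$-right exact, hence preserves purity --- every simple perverse mixed Tate motive is of the form $j_{!\ast}\mathbb{Q}(a)[d]$ for $j\colon X_s\hra\overline{X_s}$ the inclusion of the open stratum and $d=\dim X_s$. Since $\Delta_s(a)=j_!\mathbb{Q}_{X_s}(a)[d]$ and $\nabla_s(a)=j_\ast\mathbb{Q}_{X_s}(a)[d]$ are perverse by \ref{raed}, $w$-left exactness of $j_!$ gives $\Delta_s(a)\in\DMT_{\mathcal S}(X)_{w\le d-2a}$ and $w$-right exactness of $j_\ast$ gives $\nabla_s(a)\in\DMT_{\mathcal S}(X)_{w\ge d-2a}$ (here $\mathbb{Q}_{X_s}(a)[d]$ is pure of weight $d-2a$ because $X_s$ is regular); as $j_{!\ast}\mathbb{Q}(a)[d]$ is a perverse quotient of $\Delta_s(a)$ and a perverse subobject of $\nabla_s(a)$, the Serre property pins it down in $\DMT_{\mathcal S}(X)_{w=d-2a}$.

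For part~(2), let $\mathcal L\in\PMT_{\mathcal S}(X)$ be pure of weight $p$. It has finite length ($\PMT_{\mathcal S}(X)$ is artinian, cf.\ the proof of \prettyref{prop:PrOn}); along a composition series each term is a subobject of $\mathcal L$, hence pure of weight $p$ by the Serre property, and the successive quotients are simple, hence pure of weight $p$ by~(1). Each extension $0\to\mathcal L_{k-1}\to\mathcal L_k\to L_{(k)}\to 0$ then has class in $\op{Ext}^1_{\PMT_{\mathcal S}(X)}(L_{(k)},\mathcal L_{k-1})\cong\DMT_{\mathcal S}(L_{(k)},\mathcal L_{k-1}[1])$ (Yoneda $\op{Ext}^1$ agrees with morphisms in $\DMT_{\mathcal S}(X)=\op{Der}^{\op{b}}(\PMT_{\mathcal S}(X))$, cf.\ \prettyref{thm:PrOoi}), which vanishes because $L_{(k)}\in\DMT_{\mathcal S}(X)_{w\le p}$ is orthogonal to $\mathcal L_{k-1}[1]\in\DMT_{\mathcal S}(X)_{w\ge p+1}$; so all extensions split and $\mathcal L$ is semisimple. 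For part~(3), let $M\in\DMT_{\mathcal S}(X)_{w=p}$; applying the truncation part of the backbone to the perverse truncation triangles shows that $^pH^i(M)$ is perverse and pure of weight $p+i$, hence semisimple by~(2), and the $^pH^i(M)$ have pairwise distinct weights. The perverse Postnikov tower of $M$ then splits: the obstruction at stage $i$ lies in $\bigoplus_{j<i}\DMT_{\mathcal S}({}^pH^iM,{}^pH^jM[i-j+1])$, and for $j<i$ one has $^pH^iM\in\DMT_{\mathcal S}(X)_{w\le p+i}$ orthogonal to $^pH^jM[i-j+1]\in\DMT_{\mathcal S}(X)_{w\ge p+i+1}$. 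Thus $M\cong\bigoplus_i {}^pH^i(M)[-i]$ with semisimple perverse summands.

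The main obstacle is the backbone: the Serre property and the weight-exactness of the perverse truncations are the only non-formal input, everything else being a formal consequence of the single orthogonality axiom of the weight structure. In its inductive step I expect to need the same care as in \cite[\S 5.1]{BBD} when $j_!$ and $i^\ast$ fail to be exact for the perverse $t$-structure --- working with $^pj_!$, $^pi^\ast$ and the long exact perverse-cohomology sequences of the recollement triangles --- but with the decisive simplification that over a point the relevant abelian category is semisimple, so no new extensions can appear.
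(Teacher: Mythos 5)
Your argument is correct in outline, but it takes a genuinely different route from the paper. The paper settles (1) and (2) in one stroke: by \prettyref{thm:cato} (resting on \prettyref{thm:fulg}) the $\ell$-adic resp.\ Hodge realization is a degrading functor on stratified mixed Tate motives, so the corresponding statements of \cite{BBD} (weights of simple perverse sheaves, semisimplicity of pure perverse sheaves over $\bar k$) are simply transported back along the realization; for (3) it only records that the perverse cohomology objects of a pure object are again pure of the appropriate weights (again read off via realization) and then splits the perverse Postnikov tower using the orthogonality axiom of the weight structure --- that last step is the same as yours. You instead propose a realization-free proof whose key lemma is the motivic analogue of \cite[5.1.14, 5.3.1]{BBD}: perverse truncations preserve the weight bounds of \prettyref{prop:hebbswz}, and the weight-bounded perverse objects form Serre subcategories, proved by induction on strata using H{\'e}bert's weight-exactness (\prettyref{thm:hebert}) and the semisimplicity of $\DMT(k)$ under the grading condition; granted this backbone, (1)--(3) are indeed formal consequences of the single orthogonality axiom, exactly as you argue (and your use of \prettyref{thm:PrOoi} to identify $\op{Ext}^1$ is harmless, though for $\op{Ext}^1$ the identification holds for any t-structure). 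What each approach buys: the paper's proof is two lines but is tied to the situations \ref{Real-ladic}/\ref{Real-hodge}, where a realization compatible with the six functors and faithful on stratified Tate motives exists; yours would work for any $\mathscr T$ satisfying the weight and grading conditions, but the backbone --- which you rightly single out as the only non-formal input --- is real work that the paper never carries out, essentially a motivic rerun of \cite[Section 5.1]{BBD}, and would have to be written out in full (in particular the interaction of ${}^pi^\ast$, ${}^pj_!$ with the weight bounds in the inductive step) for your proof to be complete. A minor remark: your bookkeeping, namely that ${}^p\mathcal H^i$ of a pure object of weight $p$ is pure of weight $p+i$, is the one consistent with the paper's conventions $\mathcal C_{w\geq n}=\mathcal C_{w\geq 0}[n]$ and $w(\mathbb Q(p)[q])=q-2p$; the formula $w=-n$ in the paper's own proof is a sign/indexing slip and affects neither argument, since the splitting only needs that successive perverse cohomologies sit in weights separated by the shifts.
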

\begin{proof}
  The first two points follow 
using \prettyref{thm:cato}
from the analogous result for $\ell$-adic sheaves 
in \cite{BBD} by applying a suitable realization functor. 
The same argument shows that all perverse cohomology objects of a pure
object are pure, more precisely given 
$\mathcal F\in\DMT_{\mathcal S}(X)_{w=0}$ 
we have 
$^p\mathcal H^n\mathcal F\in \DMT_{\mathcal S}(X)_{w=-n}$
for the $n$-th perverse cohomology object. This in turn says 
by the definition of a weight structure, that the triangles inductively 
putting together the object $\mathcal F$ from its perverse cohomology objects
all have the relevant map zero and so the object $\mathcal F$ has to be
the direct sum of its perverse cohomology objects.
\end{proof}
\begin{Bemerkungl}\label{Koszul} 
 If $(X,\mathcal S)$ is an affinely Whitney--Tate stratified variety, whose
  pure objects are even pointwise pure, we deduce that 
 the category of perverse motives $\PMT_{\mathcal S}(X)$
 has the  ``Koszul property'': Given two simple objects $\mathcal N,\mathcal
 M$
of weights $ n,m$, the only nonzero extensions from the first to the second  
with respect to the abelian category  
$\PMT_{\mathcal S}(X)$ are in $\op{Ext}^{n-m}(\mathcal N,\mathcal
 M)$. To see this, one may use \prettyref{thm:PrOoi}
to identify the Ext-group in question with $\mathscr{T}(\mathcal N,\mathcal
 M[n-m])$ and then compute it by the spectral sequence explained in 
\cite[3.4.1]{BGSo}. In particular $\PMT_{\mathcal S}(X)$ is then, up to
formally adding a square root of the Tate twist, equivalent to the category of
finite dimensional modules over a Koszul ring of finite dimension over $\DQ$.
All this is, up to the very satisfying interpretation by true motives,
already contained in \cite{BGSo}. The details are left to the reader.
\end{Bemerkungl}

\appendix

\section{%Whitney-Tate s
Stratifications and singularities with mixed
  Tate resolutions}
\label{sec:WTS}

In the following section, we provide another criterion for a
stratified variety $(X,\mathcal{S})$ to be Whitney--Tate. This will
also provide another proof of \prettyref{prop:erpt2}. In the following
section, we fix a motivic triangulated category $\mathscr{T}$. 

We first recall \cite[Theorem 4.4]{wildeshaus:intermediate}:

\begin{proposition}
Let $(X,\mathcal{S})$ be a smooth stratified scheme such that for each
stratum $X_s$, the closure $\overline{X_s}$ is also smooth. Then for each
pair of strata $X_s$ and $X_t$ with $i_t:X_t\hookrightarrow
\overline{X_s}$, the compositions 
$
i_t^\ast\circ j_\ast, i_t^!\circ j_!:\mathscr{T}(X_s)\rightarrow
\mathscr{T}(X_t) 
$
preserve the triangulated subcategories of mixed Tate motives. 
In particular, the stratification is Whitney--Tate.  
\end{proposition}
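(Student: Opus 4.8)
Set $Y=\overline{X_s}$, write $j\colon X_s\hra Y$ for the open inclusion of the dense stratum and, for a stratum $X_t\subset Y$, $i_t\colon X_t\hra Y$ for its (locally closed) inclusion. The plan is first to reduce everything to the constant motive. Since $\DMT(X_s)$ is generated as a triangulated subcategory of $\mathscr{T}(X_s)$ by the Tate twists $\mathbb{Q}_{X_s}(n)$, since $j_\ast$ and $j_!$ are triangulated and commute with Tate twist by the projection formula (property~(4) of the list above, $\mathbb{Q}(n)$ being $\otimes$-invertible), and since $i_t^\ast$ and $i_t^!$ are likewise triangulated and twist-compatible, it suffices to show $i_t^\ast j_\ast\underline{Y}\in\DMT(X_t)$ and $i_t^!j_!\underline{Y}\in\DMT(X_t)$ for every stratum $X_t\subset Y$. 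By Verdier duality on $Y$ (property~(9), using that strata and their closures are regular, hence that duality preserves mixed Tate motives on these smooth pieces) the two statements are interchanged, so I would treat only the $\ast$-statement. This also yields the Whitney--Tate property of $(X,\mathcal{S})$ itself: with $\nu_s\colon Y\hra X$ the closed inclusion of the closure one has $j_{s\ast}=\nu_{s\ast}j_\ast$, and base change along the proper $\nu_s$ identifies $j_t^\ast\nu_{s\ast}$ with $i_t^\ast$ if $X_t\subset Y$ and with $0$ otherwise.

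The only nonformal ingredient is relative purity. For any stratum $X_t\subset Y$ the closure $\overline{X_t}$ is smooth and closed in the smooth scheme $Y$, so property~(7) gives $b_t^!\underline{Y}\simeq\underline{\overline{X_t}}(c_t)[2c_t]$ for the closed inclusion $b_t\colon\overline{X_t}\hra Y$ and a suitable integer $c_t$; restricting along the open dense inclusion $X_t\hra\overline{X_t}$ and using $j^!=j^\ast$ for open immersions shows $i_t^!\underline{Y}=\underline{X_t}(c_t)[2c_t]$, while of course $i_t^\ast\underline{Y}=\underline{X_t}$. Hence $\underline{Y}$ already lies in $\DMT^\ast_{\mathcal{S}}(Y)\cap\DMT^!_{\mathcal{S}}(Y)$, with no recursion needed.

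Next I would set up the induction, on the number of strata of $Y$. Writing $i\colon Z=Y\setminus X_s\hra Y$ for the closed complement of the open stratum, the localization triangle $i_\ast i^!\underline{Y}\to\underline{Y}\to j_\ast\underline{X_s}\to i_\ast i^!\underline{Y}[1]$ together with the previous paragraph reduces the $\ast$-statement for $\underline{X_s}$ to the claim that $i^!\underline{Y}\in\DMT^\ast_{\mathcal{S}}(Z)$ (the open stratum $X_s$ itself is immediate since $j^\ast j_\ast\underline{X_s}=\underline{X_s}$). Now for each stratum $X_t\subset Z$ with inclusion $\rho_t\colon X_t\hra Z$ one has $\rho_t^!i^!\underline{Y}=i_t^!\underline{Y}$, which by the purity computation above is mixed Tate; thus $i^!\underline{Y}\in\DMT^!_{\mathcal{S}}(Z)$. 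By the remark following \prettyref{defin:WT} it therefore suffices to know that $(Z,\mathcal{S})$ is Whitney--Tate, for then $\DMT^!_{\mathcal{S}}(Z)=\DMT^\ast_{\mathcal{S}}(Z)$.

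This last point is where the real work sits, and it is the main obstacle. The boundary $Z$ is the finite union $Z=\bigcup_t\overline{X_t}$ of the closures of its maximal strata; each $\overline{X_t}$ is smooth with strictly fewer strata than $Y$, hence Whitney--Tate by the inductive hypothesis. So what is needed is a gluing lemma: a finite union of closed unions of strata, each Whitney--Tate, is Whitney--Tate. I would prove this by a secondary induction on the number of pieces, using the Mayer--Vietoris triangle attached to a closed cover $Z=Z_1\cup Z_2$ (with $Z_1\cap Z_2$ again a closed union of strata, and closed unions of strata inside a Whitney--Tate variety being again Whitney--Tate by base change along closed immersions), and checking that under base change along the closed inclusions $Z_i\hra Z$ the functors $j_{a\ast}$, $j_a^\ast$ occurring restrict to the stratified mixed Tate subcategories. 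The rest is formal bookkeeping with localization triangles; the points to be careful about are the well-foundedness of both inductions (the stratum count strictly drops for each $\overline{X_t}\subset Z$ and in the Mayer--Vietoris recursion) and that the base-change isomorphisms of properties~(3) and~(6) genuinely restrict to the subcategories $\DMT_{\mathcal{S}}$.
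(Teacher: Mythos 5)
The paper itself offers no proof of this statement: it is simply recalled from \cite[Theorem 4.4]{wildeshaus:intermediate}, and only the companion criterion of Appendix~\ref{sec:WTS} (singular closures admitting mixed Tate resolutions) is proved in the text. Judged on its own, your argument is essentially correct and follows the expected route, which is also the skeleton of the paper's proof of that companion proposition: reduction to the constant motive via twist-compatibility, duality exchanging the $\ast$- and $!$-statements, the localization triangle $i_\ast i^!\underline{Y}\to\underline{Y}\to j_\ast\underline{X_s}\to i_\ast i^!\underline{Y}[1]$ combined with absolute purity for the smooth closed pairs $\overline{X_t}\subset Y$, and induction on the number of strata, converting the resulting $!$-information into $\ast$-information through Whitney--Tateness of the boundary and \prettyref{lem:genZ}. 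Two remarks on the last step. First, your ``main obstacle'' is not one: Whitney--Tateness is a condition on pairs of strata, and the same factorization-plus-proper-base-change observation you already use for $(X,\mathcal S)$ itself --- write $(j_{r,Z})_\ast=\nu_\ast (j'_r)_\ast$ with $\nu:\overline{X_r}\hra Z$ closed and note that $j_{t,Z}^\ast\nu_\ast$ is the computation inside $\overline{X_r}$ if $X_t\subset\overline{X_r}$ and $0$ otherwise --- shows that the Whitney--Tate property of $Z$ follows immediately from the inductive hypothesis applied to each $\overline{X_r}$ with $X_r\subset Z$; no Mayer--Vietoris gluing lemma is needed. Second, if you do pursue the closed-cover route, beware that the naive base change of $j_{a\ast}$ along a closed immersion $k_i:Z_i\hra Z$ is false in general (restricting the pushforward from one branch of a nodal curve to the other branch gives a nonzero skyscraper, not $0$); it is only after factoring $j_{a\ast}$ through $\overline{X_a}$ and applying proper base change that restriction behaves well, and then, since $Z_i$ is a union of strata, each intersection $X_t\cap Z_i$ is $X_t$ or empty, which again renders the Mayer--Vietoris triangle superfluous. (Also, early on you write $i_t^\ast j_\ast\underline{Y}$ where $i_t^\ast j_\ast\underline{X_s}$ is meant; harmless, but worth fixing.) With the final step streamlined in this way, your proposal is a complete and correct proof, in substance the argument of the cited theorem of Wildeshaus.
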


Next, we provide a criterion for being Whitney--Tate which covers some
cases where  closures of strata are not regular. 
In this more general setting, we can still argue as in the smooth case
after a suitable resolution of singularities. This, however, requires
that there is a resolution such that the motives of the
fibers of the resolution are mixed Tate.  The condition is a motivic
version of condition $(\ast)$ in \cite[Section 1.4]{BGSo}.

\begin{proposition}
Let $(X,\mathcal{S})$ be a stratified scheme. Assume
that for each stratum $X_s$ of $X$ there exists a resolution of
singularities 
$\rho_s:\widetilde{X_s}\rightarrow\overline{X_s}$ with the following
properties: 
\begin{enumerate}
\item $\rho_s$ is surjective and proper, 
\item $\widetilde{X_s}$ is smooth, 
\item for each stratum $X_t\hookrightarrow\overline{X_s}$, the restriction
$\widetilde{X_s}\times_{\overline{X_s}}X_t$  satisfies 
$$
\op{M}_{X_t}\left(\widetilde{X_s}\times_{\overline{X_s}}X_t\right)\in
    \DMT(X_t). 
$$
\end{enumerate}

Then for each pair of
strata $X_s$ and $X_t$ with $i_t:X_t\hookrightarrow \overline{X_s}$,
the compositions
$$
i_t^\ast\circ j_\ast, i_t^!\circ j_!:\mathscr{T}(X_s)\rightarrow \mathscr{T}(X_t) 
$$
preserve the triangulated subcategories of mixed Tate motives.  
In particular, the stratification is Whitney--Tate.  
\end{proposition}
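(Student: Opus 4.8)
The plan is to imitate the proof of the preceding proposition (the regular-closure case) with the resolution $\rho_s$ in place of $\overline{X_s}$ itself. Fix strata $X_s, X_t$ with $i_t:X_t\hookrightarrow\overline{X_s}$ and write $j:X_s\hookrightarrow\overline{X_s}$ for the open inclusion. Since $\rho_s:\widetilde{X_s}\to\overline{X_s}$ is proper, we have $\rho_{s!}=\rho_{s\ast}$, and because it is surjective and $\widetilde{X_s}$ is smooth, we may use $\rho_s$ to replace $\overline{X_s}$ by its resolution: more precisely, I would argue that a mixed Tate motive on $X_s$ pushed forward (via $j_\ast$) and then restricted (via $i_t^\ast$) can be computed after pulling everything back along $\rho_s$. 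The key point is that the open subset $\rho_s^{-1}(X_s)\to X_s$ is an isomorphism if $\rho_s$ is chosen to be an isomorphism over the open stratum — but in general one only knows that the fibers over strata are mixed Tate, so one has to be a little more careful.

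First I would set up the cartesian squares
\[
\begin{CD}
\widetilde{X_s}\times_{\overline{X_s}}X_t @>{i_t'}>> \widetilde{X_s} @<{j'}<< \widetilde{X_s}\times_{\overline{X_s}}X_s\\
@V{\rho_t}VV @V{\rho_s}VV @VV{\rho_s'}V\\
X_t @>{i_t}>> \overline{X_s} @<{j}<< X_s
\end{CD}
\]
and record the base-change isomorphisms from the six-functor formalism: $i_t^\ast\rho_{s\ast}\cong\rho_{t\ast}(i_t')^\ast$, $j^\ast\rho_{s\ast}\cong\rho_{s\ast}'(j')^\ast$, and the dual statements for $\rho^!$ and the $!$-functors (items (3) and (4) in the list of properties of motivic triangulated categories, together with the properness of the $\rho$'s). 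The second point is that $\rho_s':\widetilde{X_s}\times_{\overline{X_s}}X_s\to X_s$ has mixed Tate fibers by hypothesis (3), hence $(\rho_s')_\ast$ and $(\rho_s')^\ast$ (and their $!$-variants) send mixed Tate motives to mixed Tate motives — one needs here that the motive $\op{M}_{X_t}(\widetilde{X_s}\times_{\overline{X_s}}X_t)$ lying in $\DMT(X_t)$ controls the pushforward, via the projection formula. The third point is that $\widetilde{X_s}$ is smooth with a stratification whose strata are the preimages of the strata of $\overline{X_s}$ intersected with suitable pieces; applying the regular-closure proposition (or rather an inductive version of it) to $\widetilde{X_s}$, one gets that $i_t'^\ast j'_\ast$ and $i_t'^!j'_!$ preserve mixed Tate motives on the resolution.

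Putting these together: start with $M\in\DMT(X_s)$. Pull back along $\rho_s'$ to get $(\rho_s')^\ast M\in\DMT(\widetilde{X_s}\times_{\overline{X_s}}X_s)$; apply $j'_\ast$, then $(i_t')^\ast$, staying inside mixed Tate motives on $\widetilde{X_s}\times_{\overline{X_s}}X_t$ by the smooth case; finally push forward along $\rho_t$, which preserves mixed Tate motives because its fibers are mixed Tate. The base-change identities then identify $\rho_{t\ast}(i_t')^\ast j'_\ast(\rho_s')^\ast M$ with $i_t^\ast j_\ast\rho'_{s\ast}(\rho_s')^\ast M$; the remaining task is to get back from $\rho'_{s\ast}(\rho_s')^\ast M$ to $M$ itself, which is where I expect the main obstacle to be — one needs that $M$ is a retract of, or can be recovered by a devissage from, $\rho'_{s\ast}(\rho_s')^\ast M$, using the mixed Tate-ness of the fibers of $\rho_s'$ and the fact that $\DMT^*_{\mathcal S}$ is generated as a triangulated category by the relevant twisted constant motives (Lemma~\ref{lem:genZ}). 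Concretely, since $\rho_s'$ is proper with mixed Tate fibers, $\rho'_{s\ast}\mathbb{Q}$ is a mixed Tate motive on $X_s$, and by the projection formula $\rho'_{s\ast}(\rho_s')^\ast M\cong M\otimes\rho'_{s\ast}\mathbb{Q}$; if the resolution is chosen to be an isomorphism over $X_s$ (which one may always arrange, resolving only over the singular locus, which avoids the open dense stratum) then $\rho'_{s\ast}\mathbb{Q}\cong\mathbb{Q}$ and we are done immediately. If one does not want to assume that, one does a descending induction on strata, using the localization triangle to reduce to the open stratum where $\rho_s$ is generically an isomorphism. The $!$-statement is handled by the same argument after applying Verdier duality (item (10) in the six-functor list), and the final sentence "In particular the stratification is Whitney--Tate" is then immediate from Definition~\ref{defin:WT}.
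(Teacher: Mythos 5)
Your overall strategy -- transport the problem along the resolution by proper base change and use hypothesis (3) to get back down -- is indeed the strategy of the paper's proof, but two of your steps have genuine gaps. The first is the claim that $i_t'^\ast j'_\ast$ and $i_t'^! j'_!$ preserve mixed Tate motives on $\widetilde{X_s}$ ``by applying the regular-closure proposition (or an inductive version of it) to $\widetilde{X_s}$''. That criterion requires the closures of the strata to be smooth; on $\widetilde{X_s}$ the relevant strata are the preimages $\rho_s^{-1}(X_r)$, which in general are neither smooth nor have smooth closures (nor are they affine spaces), so the cited proposition does not apply, and the statement you are invoking upstairs is a Whitney--Tate-type assertion of exactly the same difficulty as the one to be proved -- this is where essentially all the content of the proposition sits, and it cannot be waved through. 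The paper gets around it by first reducing to the single object $\mathbb{Q}$ (compatibility with twists plus the fact that $\DMT(X_s)$ is generated by the $\mathbb{Q}(n)$), then running an induction on $\dim X_s$: upstairs one only needs to control the constant motive, which is done by one localization triangle on the smooth ambient $\widetilde{X_s}$ together with absolute purity, and the difference between $(j')_\ast\mathbb{Q}$ and $\rho_s^\ast (j_s)_\ast\mathbb{Q}$ is handled by a d\'evissage over the intermediate preimage strata $\rho_s^{-1}(X_r)$, where the base-change identity $(\rho_t)_\ast (i_t')^\ast (j_r')_\ast \cong i_t^\ast (j_r)_\ast (\rho_r)_\ast$, the inductive hypothesis for $i_t^\ast (j_r)_\ast$, and hypothesis (3) for $(\rho_r)_\ast$ finish the argument.

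The second gap is your recovery of $M$ from $\rho'_{s\ast}(\rho'_s)^\ast M \cong M\otimes \rho'_{s\ast}\mathbb{Q}$. You propose to assume the resolution is an isomorphism over the open stratum, ``which one may always arrange''; you may not. The hypotheses hand you one specific map with properties (1)--(3), and you cannot trade it for a better-behaved resolution without losing property (3), which is the only nontrivial input -- indeed the proposition is designed for situations (e.g.\ Schubert varieties over finite fields, with Bott--Samelson maps) where general resolution of singularities is not even available, so ``resolving only over the singular locus'' is not an option. Your fallback, ``a descending induction on strata using the localization triangle'', is precisely where the remaining work lies and is not carried out; in the paper this role is played by the reduction to $\mathbb{Q}$ and the induction on $\dim X_s$ described above. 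As written, the proposal therefore does not constitute a proof, although its skeleton (the cartesian squares, proper base change, and the use of hypothesis (3) via the projection formula to see that $(\rho_t)_\ast$ preserves mixed Tate objects) coincides with the paper's.
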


\begin{proof}
Since $i_t^\ast j_\ast$ is dual to $i_t^!j_!$ and the motivic duality
restricts to $\DMT(X_t)$, it suffices to prove one of the
assertions.  Since $i_t^\ast j_\ast$ is compatible with Tate twists,
it suffices to prove that $i_t^\ast j_\ast\mathbb{Q}\in
\DMT(X_t)$. 

For the proof, we now fix $X_t$. Without loss of generality, we can
assume that $X_t$ is closed in $X$. If not, we consider the scheme
$X\setminus (\overline{X_t}\setminus X_t)$. This still satisfies the
conditions, and the closed complement $\overline{X_t}\setminus X_t$
does not enter the computations of the compositions of functors.

The proof that $i_t^\ast \circ(j_s)_\ast\mathbb{Q}$ is mixed Tate now
proceeds by induction on the dimension of $X_s$. Therefore, assume
that for all $X_r$ with $X_r\hookrightarrow\overline{X_s}$, the claim is
satisfied, i.e. $i_t^\ast\circ (j_r)_\ast\mathbb{Q}$ is mixed Tate. 
Now consider the following diagram:
\begin{center}
  \begin{minipage}[c]{10cm}
    \xymatrix{
      \widetilde{X_s}\times_{\overline{X_s}} X_t \ar[r]^{i_t'} \ar[d]_{p_t} &
      \widetilde{X_s} \ar[d]^p & 
      \widetilde{X_s}\times_{\overline{X_s}}X_s \ar[l]_{j'} \ar[d]^{p_s}\\ 
      X_t\ar[r]_{i_t} & \overline{X_s} &X_s \ar[l]^j
    }
  \end{minipage}
\end{center}
The arrow $p$ in the middle is the resolution of singularities
provided by the assumption. The rest of the diagram consists of
restricting $p$ to the strata $X_s$ and $X_t$. 

Proper base change for the left square states $i_t^\ast p_\ast M\cong
(p_t)_\ast (i_t')^\ast M$. Then it suffices to show that $(i_t')^\ast
p^\ast j_\ast$ is in
$\DMT(\widetilde{X_s}\times_{\overline{X_s}} X_t)$, and
that $(p_t)_\ast$ preserves mixed Tate motives.

The fact that $(p_t)_\ast$ preserves mixed Tate motives follows from
part (3) of our assumption: as $(p_t)_\ast$ commutes with Tate twists,
it suffices to show that $(p_t)_\ast\mathbb{Q}$ is contained in 
$\DMT(X_t)$. But by assumption,
$$
(p_t)_\ast\mathbb{Q}\cong
\op{M}_{X_t}(\widetilde{X_s}\times_{\overline{X_s}} X_t) \in
\DMT(X_t).
$$

To prove that $(i_t')^\ast p^\ast j_\ast$ is in
$\DMT(\widetilde{X_s}\times_{\overline{X_s}} X_t)$, we
employ the localization sequence in the situation $X=\widetilde{X_s}$,
$i_t':Z=\widetilde{X_s}\times_{\overline{X_s}}X_t\hookrightarrow X$ and
$j':U=X\setminus Z\hookrightarrow X$.  
In that situation, the localization sequence for $\mathbb{Q}\in
\mathscr{T}(\widetilde{X_s})$ has the form
$$
(i_t')_\ast (i_t')^!\mathbb{Q}\rightarrow\mathbb{Q}\rightarrow (j')_\ast
(j')^\ast\mathbb{Q}\rightarrow (i_t')_\ast (i_t')^!\mathbb{Q}[1].
$$
But $(j')^\ast\mathbb{Q}_X\cong \mathbb{Q}_U$ and
$(i_t')^\ast\mathbb{Q}_X\cong \mathbb{Q}_Z$. By absolute purity, we have
$(i_t')^!\mathbb{Q}_Z\cong\mathbb{Q}(-d)[-2d]$ with $d$ the
codimension of $Z$ in $X$. Restricting this sequence using $(i_t')^\ast$
provides the following triangle in $\mathscr{T}(Z)$:
$$
\mathbb{Q}_Z(-d)[-2d]\rightarrow \mathbb{Q}_Z\rightarrow (i_t')^\ast (j')_\ast
\mathbb{Q}_U\rightarrow \mathbb{Q}_Z(-d)[-2d+1].
$$

It suffices to show that the difference between the motives $(p_t)_\ast
(i_t')^\ast (j')_\ast\mathbb{Q}_U$ and $(p_t)_\ast (i_t')^\ast p^\ast(j_s)_\ast
\mathbb{Q}_{X_s}$ is mixed 
Tate. The preimages $p^{-1}(X_r)$ provide a stratification of
$\widetilde{X_s}$ by part (2) of the assumption. Inductively applying
a  localization argument similar to the one used in \cite[Theorem
4.4]{wildeshaus:intermediate}  to $U=\widetilde{X_s}\setminus Z$ by
taking out smooth closed strata, we see that the difference between
$j_\ast \mathbb{Q}_U$ and $p^\ast(j_s)_\ast 
\mathbb{Q}_{X_s}$ is given by extensions of mixed Tate motives on the
strata $p^{-1}(X_r)$. Therefore, it suffices to show that for each
stratum $X_r$ in $\overline{X_s}$, the functor $(p_t)_\ast\circ
(i'_t)^\ast \circ
(j_r')_\ast:\mathscr{T}(\widetilde{X_s}\times_{\overline{X_s}}X_r)\rightarrow  
\mathscr{T}(X_t)$ preserves mixed Tate motives. By the inductive
assumption, this is true for 
$i_t^\ast\circ
(j_r)_\ast:\mathscr{T}(X_r)\rightarrow\mathscr{T}(X_t)$, and by part
(3) of our assumption, it is also true for $i_t^\ast\circ
(j_r)_\ast\circ (p_r)_\ast$. Obviously $(j_r)_\ast\circ
(p_r)_\ast\cong p_\ast\circ (j'_r)_\ast$. By proper base change,
$i_t^\ast\circ p_\ast\cong (p_t)_\ast\circ (i_t')^\ast$. Combining
these, we find that $(p_t)_\ast\circ (i_t')^\ast\circ (j_r')_\ast\cong
i_t^\ast\circ (j_r)_\ast\circ (p_r)_\ast$. Hence, this latter
composition preserves mixed Tate motives, which finishes the proof.
\end{proof}

\begin{proposition}
\label{prop:erpt1}
Let $G$ be a split reductive group, let $B\subset G$ be a Borel
subgroup, and denote by $(B)$ the stratification of $G/B$ by Schubert
cells. Then for each $w\in W$, the Bott--Samelson 
resolution $\rho_w:BS(w)\rightarrow\overline{X_w}$ of
Demazure--Hansen has the following 
properties: 
\begin{enumerate}
\item $\rho_w$ is surjective and proper, 
\item $BS(w)$ is smooth, 
\item for each $v\in W$ with $X_v\in\overline{X_w}$, the restriction
$BS(w)\times_{\overline{X_w}}X_v$ satisfies 
$$
\op{M}_{X_v}\left(BS(w)\times_{\overline{X_w}}X_v\right)\in
    \DMT(X_v). 
$$
\end{enumerate}
In particular, the Bruhat stratification of a 
flag variety is Whitney--Tate. 
\end{proposition}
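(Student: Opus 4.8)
The plan is to verify the three hypotheses of the preceding proposition for the Demazure--Hansen resolution and then to quote that proposition. Fix $v\in W$, pick a reduced word $\underline v=(s_1,\dots,s_\ell)$ for $v$, and let $BS(\underline v)=P_{s_1}\times_B\cdots\times_B P_{s_\ell}/B$ be the associated Bott--Samelson variety with multiplication map $\rho_v\colon BS(\underline v)\to\overline{X_v}\subseteq G/B$. Conditions (1) and (2) are classical: $BS(\underline v)$ is an iterated $\mathbb P^1$-bundle over a point, hence smooth and projective, and $\rho_v$ is a proper surjection onto $\overline{X_v}$, birational over the open cell. The substance is condition (3): for every Bruhat cell $X_u\subseteq\overline{X_v}$ the proper morphism $p_u\colon\rho_v^{-1}(X_u)\to X_u$ should satisfy $(p_u)_*\mathbb Q\in\DMT(X_u)$; here, as in the proof of the preceding proposition, $\op{M}_{X_u}(\rho_v^{-1}(X_u))$ is to be read as $(p_u)_!\mathbb Q=(p_u)_*\mathbb Q$, since $p_u$ is proper but not in general smooth.

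First I would trivialise this fibration. By the Bruhat decomposition recalled above, $X_u=BuB/B$ is a single $B$-orbit on which the unipotent subgroup $U_u:=\prod_{\alpha\in R^+\cap uR^-}U_\alpha\subseteq B$ acts simply transitively, giving an isomorphism $X_u\cong U_u\cong\mathbb A^{\ell(u)}$ via $u'\mapsto u'\dot uB$. Since $\rho_v$ is $B$-equivariant, $\rho_v^{-1}(X_u)$ is $U_u$-stable and $p_u$ is $U_u$-equivariant over the simply transitive $U_u$-variety $X_u$; the standard argument for equivariant morphisms over a homogeneous base then produces a $U_u$-equivariant isomorphism $\rho_v^{-1}(X_u)\cong U_u\times F$, where $F:=\rho_v^{-1}(\dot uB)$ is the (proper) Bott--Samelson fibre over the point $\dot uB$, compatibly with the projections to $X_u\cong U_u$. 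Applying base change to the cartesian square with vertical maps $p_u$ and $b\colon F\to\op{pt}$ and horizontal maps $a\colon U_u\to\op{pt}$ and $\op{pr}_F\colon U_u\times F\to F$, one obtains $(p_u)_*\mathbb Q_{U_u\times F}\cong a^*b_*\mathbb Q_F$. As $a^*$ is a triangulated monoidal functor sending $\mathbb Q(n)$ to $\mathbb Q(n)$, it carries $\DMT(\op{pt})$ into $\DMT(X_u)$, so the whole of (3) reduces to the single statement that the motive $b_*\mathbb Q_F=b_!\mathbb Q_F$ of the Bott--Samelson fibre $F$ is mixed Tate.

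To establish that, the plan is to exhibit $F$ as paved by affine spaces. Intersecting $F$ with the cells $C_\varepsilon\cong\mathbb A^{|\varepsilon|}$ ($\varepsilon\in\{0,1\}^\ell$) of the iterated $\mathbb P^1$-bundle $BS(\underline v)$, one checks that each $F\cap C_\varepsilon$ is empty or an affine space. The natural way to carry this out is an induction on $\ell$ using the $\mathbb P^1$-fibration $\pi\colon BS(\underline v)\to BS(s_1,\dots,s_{\ell-1})$ together with the cartesian square relating $\rho_v$, $\rho_{(s_1,\dots,s_{\ell-1})}$ and the projection $G/B\to G/P_{s_\ell}$: under $\pi$ the fibre $F$ is carried to the preimage, under the shorter resolution, of a $\mathbb P^1$-fibre of $G/B\to G/P_{s_\ell}$, which splits into a point and an $\mathbb A^1$, thereby reducing to shorter words. (The same square yields, if one prefers, a direct induction showing that $\rho_v^{-1}(X_u)\to X_u$ is assembled from affine-space bundles, bypassing the trivialisation.) Granting that $F$ is paved by affines, $\mathbb Q_F$ is built by the stratification triangles from the $k_{\varepsilon!}\mathbb Q_{F\cap C_\varepsilon}$, so $b_!\mathbb Q_F$ is built from $(F\cap C_\varepsilon\to\op{pt})_!\mathbb Q\cong\mathbb Q(-m_\varepsilon)[-2m_\varepsilon]$ (using homotopy invariance together with the purity isomorphism $f_\sharp\cong f_!(d)[2d]$), and hence lies in $\DMT(\op{pt})$. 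Combining the three paragraphs gives condition (3), the preceding proposition then shows that $(G/B,(B))$ is Whitney--Tate, and the reduction in the proof of \prettyref{prop:erpt2} extends this to all $G/P$.

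The main obstacle I anticipate is exactly this last input --- that Bott--Samelson fibres are paved by affine spaces, equivalently have mixed Tate motives. The total space $BS(\underline v)$ is visibly cellular, but the fibres are genuinely singular, and the verification that $F\cap C_\varepsilon$ is affine rests on the Bruhat-combinatorial bookkeeping of how the product map interlaces the $\mathbb P^1$-towers of $BS(\underline v)$ and of $G/B\to G/P_{s_\ell}$; this is classical but not a one-line matter. A subsidiary point to keep track of is that $\op{M}$ must be read throughout as a compactly supported motive, via $p_!=p_*$ for proper $p$, since none of the structure maps in play are smooth.
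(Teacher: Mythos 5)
Your proposal is correct and follows essentially the same route as the paper: verify (1) and (2) as classical, and reduce (3), via equivariance over the cell and localization triangles, to the fact that the fibres of the Bott--Samelson resolution are paved by affine spaces. The paper simply cites \cite{haines} for that affine paving (the crux you identified), whereas you sketch it by induction along the $\mathbb{P}^1$-bundle structure; otherwise the arguments agree.
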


\begin{proof}
Properties (1) and (2) are well known. Property (3) follows by
iterative use of the localization sequence once we can show that for
each point $x$ of $\overline{X_w}$, the fibre of the Bott--Samelson
resolution $\rho_w^{-1}(x)$ has a paving by affine spaces. This is the
case, as discussed in \cite{haines}. 
\end{proof}

\section{General tilting equivalences}
\label{sec:tilting}

Here we formulate a general tilting-type theorem. 
Possible sources for statements of this type are \cite{RickMo,
  Kel}. We will sketch a proof and discuss the  tilting functor. 
\begin{proposition}
\label{prop:rvt} 
Let  $\mathcal A$ be an abelian
category and  $(T_i)_{i\in I}$ a family of complexes in 
$ \op{Hot}(\cal A)$ 
 such that for all $i,j\in I$ and $ n\in\DZ$ we have  
$\op{Hot}_{\cal A}(T_i,T_j[n])\sira \op{Der}_{\cal A}(T_i,T_j[n])$ and
$$\op{Der}_{\mathcal A}(T_i,T_j[n])\neq 0\;\RA \; n=0
$$
Then the  embedding of the full additive subcategory
of $
\op{Der}(\mathcal A)$ 
 generated by the  objects 
$T_i$
 can be extended to 
a fully faithful 
triangulated functor 
  $$\op{Hot}^{\op{b}}\left(\op{add}( T_i\mid i\in I)\right)
\stackrel{\sim}{\hra} 
\op{Der}(\mathcal A)$$ 
\end{proposition}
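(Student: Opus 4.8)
The plan is to realise the functor as a \emph{convolution} (totalization) functor and then deduce full faithfulness by d\'evissage, along the lines of Rickard and Keller. First I would fix a differential graded enhancement $\mathscr{C}$ of $\op{Der}(\mathcal{A})$, for instance the dg category of $\op{K}$-injective complexes in $\mathcal{A}$; such an enhancement exists for the abelian categories occurring in \ref{conv9}, and this is the only input beyond the axiomatics. Lift each $T_i$ to an object $\tilde T_i\in\mathscr{C}$ with $\tilde T_i\cong T_i$ in $\op{Der}(\mathcal{A})$, and let $\mathscr{B}\subset\mathscr{C}$ be the full dg subcategory on the $\tilde T_i$. By construction $\op{H}^n\op{Hom}_{\mathscr{C}}(\tilde T_i,\tilde T_j)=\op{Der}_{\mathcal{A}}(T_i,T_j[n])$, so the two hypotheses say exactly that this graded space is concentrated in degree $n=0$, where it agrees with $\op{Hot}_{\mathcal{A}}(T_i,T_j)$. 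In particular $\mathscr{B}$ has cohomology concentrated in degree $0$, and $\op{H}^0(\mathscr{B})$ is canonically the additive category $\op{add}(T_i\mid i\in I)$.

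Next I would construct the functor $\Phi\colon\op{Hot}^{\op{b}}(\op{add}(T_i\mid i\in I))\to\op{Der}(\mathcal{A})$. An object is a bounded complex $C=(\cdots\to C^m\xrightarrow{d^m}C^{m+1}\to\cdots)$ with terms in $\op{add}(T_i)$ and differentials in $\op{Hot}(\mathcal{A})$. Using the identification $\op{Hot}_{\mathcal{A}}(C^m,C^{m+1})=\op{Der}_{\mathcal{A}}(C^m,C^{m+1})$ one lifts each $d^m$ to a genuine degree‑zero cocycle $\tilde d^m$ in $\op{Hom}_{\mathscr{C}}(\tilde C^m,\tilde C^{m+1})$; since $d^{m+1}d^m=0$ in $\op{Hot}(\mathcal{A})$, hence in $\op{Der}(\mathcal{A})$, the composite $\tilde d^{m+1}\tilde d^m$ is a coboundary, and one wants to assemble the $\tilde d^m$ together with chosen contracting homotopies into a one‑sided twisted complex over $\mathscr{C}$, whose total object then represents $\Phi(C)$. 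Beyond this given degree‑zero datum, all obstructions and ambiguities — in solving the resulting Maurer--Cartan recursion, and afterwards in lifting a chain map $C\to C'$, in checking that $\Phi$ is a functor, that it is triangulated, and that it is independent of all choices up to canonical isomorphism — lie in groups of the form $\op{Der}_{\mathcal{A}}(C^a,C^b[m])$ with $m<0$, and these vanish because $C^a,C^b\in\op{add}(T_i)$ and $\op{Der}_{\mathcal{A}}(T_i,T_j[m])=0$ for $m\neq 0$ by hypothesis. Conceptually this is just the statement that $\mathscr{B}$, having cohomology in degree $0$, is formal, hence quasi‑equivalent to the ordinary additive category $\op{add}(T_i)$; consequently $\op{Hot}^{\op{b}}(\op{add}(T_i))\simeq\op{Perf}(\mathscr{B})$ and $\Phi$ is the functor on perfect complexes induced by the dg inclusion $\mathscr{B}\hookrightarrow\mathscr{C}$, which is where \cite{RickMo,Kel} enter. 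By construction $\Phi$ restricts, on complexes concentrated in degree $0$, to the given embedding of $\op{add}(T_i)$ into $\op{Der}(\mathcal{A})$. I expect this step — organising the homotopy‑coherent choices so that $\Phi$ is well defined, functorial and exact — to be the only genuine obstacle; everything else is formal.

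Finally I would prove full faithfulness, i.e.\ that $\Phi$ induces bijections $\op{Hot}^{\op{b}}(\op{add}(T_i))(C,C'[n])\sira\op{Der}_{\mathcal{A}}(\Phi C,\Phi C'[n])$ for all $n$. For $C=T_i$ and $C'=T_j$ placed in degree $0$ the left‑hand side is $\op{Hot}_{\mathcal{A}}(T_i,T_j)$ when $n=0$ and $0$ otherwise, while the right‑hand side is $\op{Der}_{\mathcal{A}}(T_i,T_j[n])$, which by the second hypothesis vanishes for $n\neq 0$ and by the first equals $\op{Hot}_{\mathcal{A}}(T_i,T_j)$ for $n=0$; so the two sides agree on the generators. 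A standard d\'evissage extends this to all objects: every object of $\op{Hot}^{\op{b}}(\op{add}(T_i))$ is built from finitely many shifts $T_i[n]$ by iterated mapping cones (stupid truncations, plus passing to summands), and $\Phi$ is triangulated, so applying the long exact $\op{Hom}$‑sequences and the five lemma, first in the variable $C'$ and then in $C$, reduces full faithfulness for arbitrary $C,C'$ to the case of the generators already treated. This yields the fully faithful triangulated functor of the statement; essential surjectivity, when needed, is a separate matter settled in each application by checking that the $T_i$ generate the target triangulated category.
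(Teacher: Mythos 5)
Your argument runs on the same engine as the paper's proof: the endomorphism dg category of the chosen representatives has cohomology concentrated in degree zero, hence is formal, and Keller--Rickard derived Morita theory then embeds $\op{Hot}^{\op{b}}(\op{add}(T_i\mid i\in I))$ into $\op{Der}(\mathcal A)$. The paper implements this with the Hom complexes of the given $T_i$ formed inside the category of complexes (an endomorphism dg ring $E$, or a ringoid for infinite $I$), the truncation zigzag $\mathcal H E \leftarrow \mathcal Z^{0}E\oplus E^{<0}\to E$, and the equivalence of derived categories of dg modules under quasi-isomorphism; you instead stay inside a dg enhancement $\mathscr C$ of $\op{Der}(\mathcal A)$ and build the functor via one-sided twisted complexes, killing the Maurer--Cartan obstructions with the vanishing of negative-degree Homs. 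These are equivalent technologies, and your d\'evissage for full faithfulness is fine.

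The one genuine divergence is your starting point, and it is a (mild, easily repaired) loss of generality: you assume a K-injective dg enhancement of $\op{Der}(\mathcal A)$, which is not granted by the proposition (it is stated for an arbitrary abelian category), and once the $T_i$ are lifted to K-injective objects the hypothesis $\op{Hot}_{\mathcal A}(T_i,T_j[n])\sira\op{Der}_{\mathcal A}(T_i,T_j[n])$ becomes automatic and is never used --- a sign that you have replaced a hypothesis rather than exploited it. The intended reading is the opposite: that hypothesis is precisely what allows one to dispense with any enhancement of $\op{Der}(\mathcal A)$ and work with the honest Hom complexes of the given $T_i$, i.e.\ in an enhancement of $\op{Hot}(\mathcal A)$; the two hypotheses then yield both that the cohomology of the endomorphism dg ringoid sits in degree zero and, by d\'evissage, that the triangulated subcategory generated by the $T_i$ in $\op{Hot}(\mathcal A)$ maps equivalently onto the one in $\op{Der}(\mathcal A)$. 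Your construction goes through verbatim if you take $\mathscr C$ to be the dg category of all complexes over $\mathcal A$ and compose with the localization $\op{Hot}(\mathcal A)\to\op{Der}(\mathcal A)$, whose full faithfulness on the relevant subcategory is exactly hypothesis (1) plus d\'evissage; with that adjustment your proof matches the stated generality, and the enhancement you invoke is only needed in the applications, as in \ref{conv9} and \ref{rem:tilt}.
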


\begin{proof}
For simplicity let us first consider the case of a finite family of 
objects $T_1,\ldots, T_r$. 
Let us consider the complex $T = \bigoplus_i T_i$. Its
endomorphism complex 
$$E := \op{End}(T)\pdef\bigoplus_{n}\mathcal{A}(T,T[n])$$
 has a natural structure of a 
dg-ring with idempotents $1_i \in E$ given by the
projection to each factor. Then the 
localization functor induces by devissage an equivalence 
between the full
triangulated subcategories
\[
\langle T_1, \dots, T_r \rangle_{\Delta} 
\]
generated by the objects $T_i$ 
in $ \op{Hot}(\mathcal{A})$ and $ \op{Der}(\mathcal{A})$ respectively.
On the other hand the
functor $\op{Hom}(T, \;)$
induces an equivalence from the first of these
 triangulated categories  to the full triangulated
subcategory
\[
\langle 1_1 E, \dots, 1_r E \rangle_{\Delta} \subset \op{dgDer-} E
\]
generated by the right dg-modules $1_i
E$ in the localization $\op{dgDer-} E$
of the category of right dg-modules over $E$ by quasi-isomorphisms. 
Now recall that for any quasi-isomorphism $D
\stackrel{\sim}{\longrightarrow} E$ of 
dg-rings  the restriction induces an equivalence of
triangulated categories
\[
\op{dgDer-} E  \sirra  \op{dgDer-}D
\]
Up to this point we did not need the  condition
$\op{Der}_{\mathcal A}(T_i,T_j[n])\neq 0\RA  n=0
$.
This additional assumption however implies 
that the cohomology $\mathcal H E$ of $E$ is concentrated in
degree zero. We therefore have  quasiisomorphisms
$$
\mathcal H E\quad\stackrel{\sim}{\longleftarrow} \quad \mathcal
Z^{0} E \oplus E^{<0}\quad \stackrel{\sim}{\longrightarrow} \quad E
$$
of dg-rings. Let us abbreviate  $H\pdef \mathcal H E$.  
Under the equivalence of triangulated categories 
$$
\op{Der}(\op{mod-}H)= \op{dgDer-} H
\sirra  \op{dgDer-}E 
$$
defined by our quasi-isomorphisms the objects $1_iH$ will correspond to $1_iE$.
In addition, the localization functor 
$\op{Hot}(\op{mod-}H)  \ra \op{Der}(\op{mod-}H)$ 
induces, again by devissage, an equivalence between the full
triangulated subcategories  generated  by the right modules $1_i H$ in
both of these triangulated categories.  
The first of these triangulated categories
in turn coincides with the homotopy category
$$\op{Hot}^{\op{b}}(\op{add}(1_1 H,\dots,1_rH))$$
 of the full additive subcategory
$\op{add}(1_1 H,\dots,1_rH)\subset \op{mod-}H$ generated by
our right $H$-modules. 
Now sure enough the obvious maps give  isomorphisms
$1_iH1_j\sira\op{Mod}_{H}(1_jH, 1_iH)$ to the space of 
homomorphisms of right $H$-modules  
and $1_iH1_j\sira\op{Der}_{\mathcal A}(T_j, T_i)$. 
This gives us an equivalence 
$$\op{add}(1_1 H,\dots,1_rH)\sirra \op{add}(T_1 ,\dots,T_r)$$
 of additive categories and finishes the proof of the proposition
in the case of a finite family of objects. 

The general case follows similar lines. Instead of a single generator,
we have to consider categories enriched in abelian
groups. Objects like these are called ringoids or rings with many
objects in the literature. The usual definitions of modules still
apply to rings with many objects, and the above proof works in that
setting. More details can be found in  \cite{Kel}. 
\end{proof}

\begin{Bemerkungl}
\label{rem:tilt}
  Given objects $\bar T_i\in\op{Der}(\mathcal A)$ with 
$(\op{Der}_{\mathcal A}(\bar T_i,\bar T_j[n])\neq 0\;\RA \; n=0
)$ we can quite often find  representatives $T_i\in\op{Hot}(\mathcal A)$
with the properties required in the Proposition
by choosing some kind of projective or injective resolutions. 
\end{Bemerkungl}

\begin{Bemerkungl}
\label{rem:tiltue}
In the setting of \prettyref{prop:rvt}, suppose in addition that $\op{Hot}\mathcal A$ and $\op{Der}\mathcal A$ admit countable direct sums, that the localization functor preserves those, and that all the objects $T_i$ are compact in $\op{Hot}\mathcal A$ and $\op{Der}\mathcal A$. Then the embedding $\op{add}^\infty( T_i\mid i\in I)\subset \op{Der}(\mathcal A)$ of the full additive subcategory consisting of all countable direct sums of copies of objects among the $T_i$  can be extended to a fully faithful triangulated functor 
$$
\op{Hot}^{\op{b}}\left(\op{add}^\infty( T_i\mid i\in I)\right) \stackrel{\sim}{\hra} \op{Der}(\mathcal A)
$$ 
The argument stays essentially the same. The conditions that $\op{Hot}\mathcal A$ and $\op{Der}\mathcal A$ admit countable direct sums and that the localization functor preserves those are satisfied  for example in the case where $\mathcal A$ is a category of sheaves, since in this case a right adjoint for the localization functor can be obtained by choosing  K-injective resolutions following \cite{Spa}.    
\end{Bemerkungl}

\bibliographystyle{amsalpha}\bibliography{pub}

\end{document}